\numberwithin{equation}{section}
\DeclareMathOperator{\supp}{supp}
\theoremstyle{plain}
\newtheorem{theorem}{Theorem}[section]
\newtheorem{prop}[theorem]{Proposition}
\newtheorem{lemma}[theorem]{Lemma}
\newtheorem{corollary}[theorem]{Corollary}
\theoremstyle{definition}
\newtheorem{definition}[theorem]{Definition}
\newtheorem{proposition}[theorem]{Proposition}
\newcommand{\R}{\mathbb{R}}
\newcommand{\C}{\mathbb{C}}
\newtheorem{rem}{\it Remark}[section]
\def\R{\mathbb R}
\def\C{\mathbb C}
\def\12{\frac{1}{2}}
\def\b1 {\dot{B}_1^{-\alpha,1}}
\begin{document}

\setcounter{tocdepth}{3} \allowdisplaybreaks

\title[singular integral operators, Littlewood-Paley theory and Hardy spaces]
{Singular integral operators, $T1$  theorem,  Littlewood-Paley theory and Hardy spaces\\ in Dunkl Setting
}

\author[CQ. Tan, Ya Han, Yo. Han, M.-Y. Lee, and J. Li]{Chaoqian Tan, Yanchang Han, Yongsheng Han, Ming-Yi Lee and  Ji Li }

\subjclass[2010]{Primary 42B35; Secondary 43A85, 42B25, 42B30}

\keywords{Dunkl-Calder\'on-Zygmund singular integral, $T1$ theorem, Littlewood-Paley theory,
%operators
Hardy space}
\begin{abstract}
    The purpose of this paper is to introduce a new class of singular integral operators in the Dunkl setting involving both the Euclidean metric and the Dunkl metric. Then we provide the $T1$ theorem, the criterion for the boundedness on $L^2$ for these operators. Applying this  singular integral operator theory, we establish the Littlewood-Paley theory and the Dunkl-Hardy spaces. As applications, the boundedness of singular integral operators, particularly, the Dunkl-Rieze transforms, on the Dunkl-Hardy spaces is given. The $L^2$ theory and the singular integral operator theory play crucial roles. New tools developed in this paper include the weak-type discrete Calder\'on reproducing formulae, new test functions and distributions, the Littlewood-Paley, the wavelet-type  decomposition, and molecule characterizations of the Dunkl-Hardy space, Coifman's approximation to the identity and the decomposition of the identity operator on $L^2$, Meyer's commutation Lemma, and new almost orthogonal estimates in the Dunkl setting.
\end{abstract}

\maketitle
\begin{spacing}{1.5}
    \tableofcontents
\end{spacing}
\section{Introduction and statement of main results}

It is well known that group structures enter in a decisive way in
harmonic analysis. In this paper, we develop harmonic analysis in
the Dunkl setting which is associated with finite reflection groups
on the Euclidean space. This particular group structure is
conducting the analysis. Indeed, in the Dunkl setting, there are
corresponding Dunkl transform, the Dunkl translation and convolution
operators. Therefore, the Dunkl setting does not fall in the scope
of spaces of homogeneous type in the sense of Coifman and Weiss.
More precisely, in the Dunkl setting, we introduce the
Dunkl-Calder\'on-Zygmund singular integral operators with operator
kernels involving both the Euclidean metric and the so called Dunkl
"metric" deduced by the finite reflection groups. This new class of
singular integrals covers the well-known Dunkl Riesz transforms and
generalizes the classical Calder\'on-Zygmund singular integrals on
space of homogeneous type. Moreover, we establish the  $T1$ theorem
to provide the criterion for the boundedness on $L^2$ for the
Dunkl-Calder\'on-Zygmund singular integral operators. Further, we
also establish the Littlewood-Paley theory and the Dunkl-Hardy
spaces $H_d^p$ for $p\leqslant 1$ and close to $1$, in which $H_d^p$
when $p<1$ was missing in the Dunkl literature. We characterize
$H_d^p$ via Littlewood--Paley theory, the wavelet-type and atomic decomposition, and molecule theory in the Dunkl setting. As applications, we obtain the
boundedness of this new singular integrals on the Dunkl-Hardy
spaces. The crucial ideas used in this paper are the $L^2$ theory
and the Dunkl-Calder\'on-Zygmund singular integral operator theory.
New tools developed in this paper are new test functions and
distributions, the weak-type discrete Calder\'on reproducing
formula, and several almost orthogonal estimates in the Dunkl
setting.

We now state the background and main results in more details.

\subsection{{Preliminaries and questions in the Dunkl setting}}
\ \\

The classical Fourier transform, initially defined on
$L^1(\mathbb{R}^{N}),$ extends to an isometry of
$L^2(\mathbb{R}^{N})$ and satisfies certain properties with
translation, dilation and rotation groups. Dunkl introduced a
similar transform, the Dunkl transform, which enjoys properties
similar to the classical Fourier transform. This transform is
defined by
$$ \hat{f}(x)=c_h\int_{\mathbb{R}^{N}} E(x, -iy)f(y)h^2_{\kappa}(y)dy,$$
where the usual character $e^{-i\langle x,y\rangle}$ is replaced by
$E(x, -iy)=V_{\kappa}(e^{-i\langle .,y\rangle})(x)$ for some
positive linear operator $V_{\kappa}$ and the weight functions
$h_{\kappa}$ are invariant under a finite reflection group $G$ on
$\mathbb{R}^{N},$ see \cite{D2}. Particularly, the Dunkl transform
satisfies the Plancheral identity, namely, $\|\hat{f}\|_2=\|f\|_2$
and  if the parameter $\kappa=0,$ then $h_{\kappa}(x)=1$ and
$V_{\kappa}=id,$ thus the Dunkl transform becomes the classical
Fourier transform.

The classical Fourier transform behaves well with the translation
operator. However, the measure $h_{\kappa}^2(x)dx$ is no longer
invariant under the usual translation. In \cite{TX1} the translation
operator related to Dunkl transform then is defined on the Dunkl
transform side by
$$ {\widehat{\tau_y f}}(x)=E(y, -ix) \hat{f}(x)$$
for all $x\in \mathbb{R}^{N}.$

When the function $f$ is in the Schwartz class, the above equality
holds pointwise. As an operator on $L^2(\mathbb R^N,h^2_{\kappa}),$
$\tau_y$ is bounded. However, it is not at all clear whether the
translation operator can be defined for $L^p$ functions with
$p\not=2.$ Even the $L^p$ boundedness of $\tau_y$ on the dense
subspace of Schwartz class for $p\not=2$ is still open. So far, an
explicit formula for $\tau_y$ is known only in the cases: when $f$
is a radial function or when $G={\mathbb{Z}}^{N}_2.$ To be precise,
in \cite{R4} it was proved that if $f$ is a radial Schwartz function
and $f(x)=f_0(\|x\|),$ then
$$ \tau_y f(x)=V_{\kappa}[f_0((\|x\|^2+\|y\|^2-2\|x\|\|y\|\langle x',  {\color{red}\cdot}\rangle)^{\frac{1}{2}})](y'),$$
where $x'=\frac{x}{\|x\|}$ for non-zero $x\in \mathbb R^N.$

For $f, g\in L^2(\mathbb{R}^{N},h^2_{\kappa}),$ their convolution
can be defined in terms of the translation operator by
$$f{\ast}_{\kappa} g(x)=\int_{\mathbb{R}^{N}}f(y)\tau_{x}g^{\vee}(y)h^2_{\kappa}(y)dy,$$
where $g^{\vee}(y)=g(-y).$

In \cite{D2}, Dunkl also introduced so-called Dunkl operators, that
is a family of first order differential-difference operators which
play the role similar to the usual partial differentiation for the
reflection group. To be precise, denote the standard inner product
in the Euclidean space $\mathbb{R}^{N}$  by $\langle
x,y\rangle=\sum\limits_{j=1}^Nx_jy_j$ and the corresponding norm by
$\|x\|=\{\sum\limits_{j=1}^N|x_j|^2\}^\frac{1}{2}$. Let
$B(x,r):=\{y\in \mathbb{R}^{N}:\|x-y\|<r\}$ stand for the ball with
center  $ x\in\mathbb{R}^{N}$  and radius $r>0$. Let $R$ be a root
system in $\mathbb{R}^{N}$ normalized so that
$\langle\alpha,\alpha\rangle=2$ for $\alpha\in R$ with $R_+$ a
fixed positive subsystem, and $G$ be the finite reflection group
generated by the reflections $\sigma_\alpha$ ($\alpha\in R$), where
$\sigma_{\alpha}x=x-\langle\alpha,x\rangle\alpha$ for
$x\in\mathbb{R}^{N}$. We set
$$V(x,y,r):=\max\{\omega(B(x,r)),\omega(B(y,r))\}$$ and denote
$d(x,y)=\min\limits_{\sigma\in G}\|x-\sigma(y)\|$ by the distance (the so-called Dunkl "metric") between two G-orbits $\mathcal{O}(x)$ and $\mathcal{O}(y)$.
Obviously, $d(x,y)\leqslant \|x-y\|, d(x,y)=d(y,x)$ and $d(x,y)\leqslant
d(x,z)+d(z,y)$ for all $x,y,z\in \R^N.$ Moreover,
$\omega(B(x,r))\sim \omega(B(y,r))$ when $d(x,y)\sim r,$ and
$\omega(B(x,r))\leqslant \omega(B_d(x,r))\leqslant
|G|\omega(B(x,r)),$ where $B_d(x,r):=\{y\in
\mathbb{R}^{N}:d(x,y)<r\}.$

A \textsl{ multiplicity function} $\kappa$ defined on $R$ (invariant
under $G$) is fixed $\geq0$ throughout this paper. Let
\begin{eqnarray*}
    d\omega(x)=\prod_{\alpha\in R}|\langle\alpha,x\rangle|^{\kappa(\alpha)}dx
\end{eqnarray*}
be the associated measure in $\mathbb{R}^{N},$ (see \cite{ADH}),
where, here and subsequently, $dx$ stands for the Lebesgue measure
in $\mathbb{R}^{N}$. We denote by $\textbf{N} = N
+\sum\limits_{\alpha\in R}\kappa(\alpha)$ the homogeneous dimension
of the system. Clearly,
$$\omega(B(tx, tr)) = t^\textbf{N}\omega(B(x, r))$$
and
$$\int_{\mathbb{R}^{N}}f(x)d\omega(x)=\int_{\mathbb{R}^{N}}\frac{1}{t^\textbf{N}}f(\frac{x}{t})d\omega(x)$$
for $f \in L^1(\R^N,\omega)$, $t>0$.

Observe that for $x\in \R^N$ and $r>0,$
$$\omega(B(x,r))\sim r^N \prod_{\alpha\in R}\big(|\langle\alpha,x\rangle|+r\big)^{\kappa(\alpha)}$$
and hence, $\inf\limits_{x\in \R^N}\omega(B(x,1))\geqslant c>0.$

Moreover,
\begin{equation}\label{rd1.1}
C^{-1}\bigg(\frac{r_2}{r_1}\bigg)^{N}\leqslant
\frac{w(B(x,r_2))}{w(B(x,r_1))}\leqslant C\bigg(\frac{r_2}{r_1}\bigg)^{\textbf N} \qquad\text{for}\quad
0<r_1<r_2.
\end{equation}
This implies that $d\omega(x)$ satisfies the doubling and reverse
doubling properties, that is, there is a constant $C > 0$ such that
for all $ x\in \mathbb{R}^{N}, r>0$ and $\lambda \geqslant 1,$
\begin{eqnarray}\label{rd}
C^{-1}\lambda^{N}\omega(B(x, r)) \leqslant \omega(B(x, \lambda
r))\leqslant C\lambda^{\textbf N}\omega(B(x, r)).
\end{eqnarray}

The Dunkl operators $T_j$ are defined by
$$T_jf(x)=\partial_j f(x) + \sum\limits_{\alpha\in R^+}\frac{\kappa(\alpha)}{2}\langle\alpha, e_j\rangle\frac{f(x)-f(\sigma_\alpha(x))}{\langle \alpha, x\rangle},$$
where $e_1, \cdots, e_N$ are the standard unit vectors of $\mathbb
R^N.$

The Dunkl Laplacian related to $R$ and $\kappa$ is the operator
$\triangle=\sum\limits^{N}_{j=1} T^2_j,$ which is equivalent to

$$\triangle f(x)=\triangle_{\mathbb R^N} f(x) + \sum\limits_{\alpha \in R}\kappa(\alpha)\delta_{\alpha}f(x),$$
where $
\delta_{\alpha}f(x)=\frac{\partial_{\alpha}f(x)}{\langle\alpha,
x\rangle}
-\frac{f(x)-f(\sigma_{\alpha}(x))}{\langle\alpha,
x\rangle^2}.$

The operator $\triangle$ is essentially self-adjoint on
$L^2(\R^N,\omega)$(\cite{ADH}) and generates the heat semigroup
$$H_tf(x)=e^{t\triangle}f(x)=\int_{\mathbb R^N}h_t(x,y)f(y)d\omega(y),$$
where the heat kernel $h_t(x,y)$ is a $C^\infty$ function for all
$t>0, x, y\in \mathbb R^N$ and satisfies $h_t(x, y)=h_t(y, x)>0$ and
$\int_{\mathbb R^N} h_t(x, y) d\omega(y)=1.$

The Poisson semigroup is given by
$$P_tf(x)=\pi^{-\frac{1}{2}}\int^\infty_0 e^{-u}exp(\frac{t^2}{4u}\triangle)f(x)\frac{du}{u^{\frac{1}{2}}}$$
and $u(x, t)=P_tf(x),$ so-called the Dunkl Poisson integral, solves
the boundary value problem
$$\begin{cases}
       & (\partial_t^2 + \triangle_x)u(x,t)=0,\\[3pt]
       & u(x,0)=f(x)
    \end{cases}
$$
in the half-space $\mathbb R^N_+,$ see \cite{ADH}.

All these tools, the Dunkl transform, Laplacian and Poisson integral
together with the Dunkl translation and convolution operators,
opened the door for developing the harmonic analysis related to the
Dunkl setting, which includes the Littlewood-Paley theory, Hardy
spaces and singular integral operators. To be more precise, in
\cite{ADH}, the Littlewood-Paley theory was established and the
Hardy space $H^1(\R^N)$ was characterized by the area integrals,
maximal fuction and the Riesz transforms, see also \cite{ABDH}. The atomic decomposition of $H^1(\R^N)$ was provided in \cite{DH1}. The boundedness of
singular integral convolution operators and the H\"ormander
multipliers was given by \cite{DH2} and \cite{DH3}, respectively.
See \cite{AAS, AGS, AH, AS, A, DW, DX, DK, D1, D3, Jeu, LL,
R1, R2, R3, R5, RV, S, TX2} for other topics related to the Dunkl
setting.

Some natural questions arise: Are there the Calder\'on-Zygmund
singular integral operator theory, particularly, the $T1$ theorem,
the Littlewood-Paley theory and Hardy spaces $H^p(\R^N)$ for $p\leq
1$ in the Dunkl setting?

In this paper, we answer these questions. To reach our goal, it is
well known that in the $\R^N$ and, even more general setting, spaces
of homogeneous type in the sense of Coifman and Weiss, the
Littlewood-Paley theory gives a uniform treatment of function spaces
and provides a powerful tool for providing the boundedness for
Calder\'on-Zygmund  singular integral operators on these function
spaces. The key point is that before developing the classical
Littlewood-Paley theory, one should establish the Calder\'on
reproducing formula. To see this, let $\psi$ be a Schwartz function
satisfying the following conditions:
\begin{itemize}
    \item[(i)] $\operatorname{supp} \widehat{\psi} \subset\left\{\xi \in {\R^N}: 1 / 2 \leqslant|\xi| \leqslant 2\right\}$;
    \item[(ii)] $\left|\widehat{\psi}(\xi)\right| \geqslant C>0 \text { for all } 3 / 5 \leqslant|\xi| \leqslant 5 / 3$.
\end{itemize}
Calder\'{o}n in \cite{C} provided the following formula:
\begin{equation}\label{crf1}
f=\sum\limits_{k=-\infty}^{\infty} \phi_{k} * \psi_{k} * f,
\end{equation}
where $\phi$ satisfies the properties similar to $\psi,
\psi_k(x)=2^{kN}\psi(2^kx),$ the series converges in
$L^2({\R^N}).$ We point out that the Fourier transform is the
main tool to show the above formula.

This formula is called to be the Calder\'on reproducing formula.
Based on this formula, one can define the following square function

$$S(f)(x)=\left\{\sum\limits_{k=-\infty}^{\infty}|\psi_k\ast f(x)|^2\right\}^{1 /2}.$$
The Littlewood-Paley $L^p, 1<p<\infty,$ theory is given by the
following estimates:
$$\|S(f)\|_p\sim \|f\|_p.$$
It is well known that the classical Hardy space $H^p$ can be
characterized by such a square function. However, in applications,
for example, to get the atomic decomposition and the dual space of
$H^p$, the discrete square function is a powerful version. To this
end, one needs the following discrete Calder\'on reproducing
formula:
\begin{equation}\label{1.4}
f(x)=\sum\limits_{k=-\infty}^{\infty}\sum\limits_{Q}|Q|
\phi_{Q}(x-x_Q) \psi_{Q}\ast f(x_Q),
\end{equation}
where $\phi$ and $\psi$ are same as in the formula \eqref{crf1}, $Q$
are all dyadic cubes with the side length $2^{-k}$,
$\phi_{Q}=\phi_k, \psi_{Q}=\psi_k$, $x_Q$ are all dyadic points at
the left lower corner of $Q$ and the series converges in
$L^p({\R^N}), 1<p<\infty$,
$S_{\infty}\left({\R^N}\right),$ Schwartz functions with all
moment cancellation conditions, and
$S^{\prime}\left({\R^N}\right) /
\mathcal{P}\left({\R^N}\right),$ the space of Schwartz
distributions modulo the space of all polynomials. Again, the proof
of  this discrete Calder\'on reproducing formula depends on the
compact support of the Fourier transform.

This discrete Calder\'on reproducing formula leads the following
discrete Littlewood-Paley square function
\begin{equation}\label{1.5}
S_d(f)(x)=\left\{\sum\limits_{k=-\infty}^{\infty}\sum\limits_{Q}|\psi_{Q}\ast
f(x_Q)|^2\chi_Q(x) \right\}^{1 /2}.
\end{equation}
See \cite{FJ} for more details.

To generalize the classical Calder\'on-Zygmund singular integral
operator theory to more general setting, Coifman and Weiss
introduced spaces of homogeneous type, see \cite{CW}. As we point
out that the classical harmonic analysis rely on the Fourier
transform. Needless to say, the Fourier transform does not exist on
spaces of homogeneous type in the sense of Coifman and Weiss.
However, David, Journ$\acute{\text{e}}$ and Semmes in \cite{DJS}
developed the Littlewood-Paley theory for spaces of homogeneous
type. The key tools they used are Coifman's approximation to the
identity and Coifman's decomposition of the identity on $L^2.$ To be
precise, spaces they considered are $(X, \rho, \mu)$ with quasi-metric
$\rho$ satisfying some regularity properties and measure $\mu$
satisfying the condition $\mu(B(x,r))\sim r^n, n>0,$ which is much
stronger than the doubling condition. Applying Coifman's
approximation to the identity, that is, $\{S_k\}_{k=-\infty}^\infty$
is an approximation to the identity on $L^2(X, \mu)$ where
$\{S_k(x,y)\}_{k=1}^\infty,$ the kernels of
$\{S_k\}_{k=-\infty}^\infty,$ satisfy certain size and regularity
conditions, and together using Coifman's decomposition of the
identity on $L^2$, namely
\begin{equation}\label{1. 6}
\begin{aligned}
I &=\sum\limits_{k=-\infty}^{\infty} D_{k}
=\left(\sum\limits_{k=-\infty}^{\infty} D_{k}\right)\left(\sum\limits_{j=-\infty}^{\infty} D_{j}\right) \\
&=\sum\limits_{|k-j| \leqslant N} D_{k} D_{j}+\sum\limits_{|k-j|>N}
D_{k} D_{j} =T_{N}+R_{N},
\end{aligned}
\end{equation}
where $D_k=S_{k+1}-S_k,$ David, Journ$\acute{\text{e}}$ and Semmes
proved that $R_N$ is bounded on $L^p(X)$, $1 <p< \infty$, with the
operator norm less than 1 if $N$ is large enough and thus,
$T_{N}^{-1},$ the inverse of $T_N,$ is bounded on $L^p(X),
1<p<\infty.$ Denote $D_{k}^N = \sum\limits_{|j|\leqslant N} D_{k+j}$
for $k\in \mathbb Z$, they obtained the following Calder\'{o}n-type
reproducing formulae:
\begin{equation}\label{1.7}
f=\sum\limits_{k=-\infty}^{\infty} T_{N}^{-1}D_{k}^{N} D_{k}(f)
=\sum\limits_{k=-\infty}^{\infty} D_{k}D_{k}^{N} T_{N}^{-1}(f)
\end{equation}
where the series converge in $L^p(X), 1 <p<\infty $. Using this
formula, they were able to obtain the Littlewood-Paley theory for
the space $L^p(X)$: There exists a constant $C > 0$ such that for
all $f \in L^p(X), 1 <p< \infty$,
\begin{equation*}
C^{-1}\|f\|_{L^p(X)} \leqslant \Big\| \Big\{
\sum\limits_{k={-\infty}}^{\infty} |D_k(f)|^2 \Big\}^\frac{1}{2}
\Big\|_{L^p(X)} \leqslant C\|f\|_{L^p(X)}.
\end{equation*}
Applying the above Littlewood-Paley estimates, they showed the
remarkable $T1$ theorem on such spaces of homogeneous type.

However, the presence of the operator $T_{N}^{-1}$ prevents one from
developing Littlewood-Paley characterizations for other spaces, such
as the Hardy spaces, by simply applying the Calder\'on-type formula
in \eqref{crf1}. To obtain the Calder\'on reproducing formula
similar to one given in \eqref{crf1}, in \cite{HS}, the space of
test functions, $\mathcal{M}(X),$ a suitable analogue of
$S_{\infty}\left(\mathbb{R}^{d}\right)$ was introduced. The key idea
is to show that $T_{N}^{-1}$ is a Calder\'on-Zygmund operator whose
kernel has some additional second order smoothness and is bounded on
$\mathcal{M}(X).$ Since then  $T_{N}^{-1}D_k^N$ satisfy nice
estimates sufficiently similar to those satisfied by $D_k$ itself.
Precisely, the Calder\'{o}n reproducing formula provided in
\cite{HS} is given by the following: There exist families of
operators $\left\{\widetilde{D}_{k}\right\}_{k=-\infty}^{\infty}$
and $\left\{\bar{D}_{k}\right\}_{k=-\infty}^{\infty}$ such that
\begin{equation}\label{crf2}
f=\sum\limits_{k=-\infty}^{\infty} \widetilde{D}_{k} D_{k}(f)
=\sum\limits_{k=-\infty}^{\infty} D_{k} \bar{D}_{k}(f),
\end{equation}
where the series converge in the norms of the space $L^p(X), 1 <p<
\infty$, the space $\mathcal{M}(X)$ and the dual space
$(\mathcal{M}(X) )'$, respectively.

Note that the formula in \eqref{crf2} is similar to that in
\eqref{crf1}. In \cite{H2}, applying Coifman's decomposition of the
identity together with the boundedness of Calder\'on-Zygmund
operators satisfying the second order smoothness on the test
function space, the following discrete Calder\'on reproducing
formula was established:
\begin{equation}\label{crf3}
f=\sum\limits_{k=-\infty}^{\infty}\sum\limits_{Q}\mu(Q)
{\widetilde {\widetilde{D}}}_{k}(x, x_Q) D_{k}(f)(x_Q)
=\sum\limits_{k=-\infty}^{\infty}\sum\limits_{Q}\mu(Q) D_{k}(x,x_Q)
{\bar{\bar{D}}}_{k}(f)(x_Q),
\end{equation}
where $Q$ are dyadic cubes in the sense of Christ, $x_Q$ are any
fixed point in $Q,$ and the series converge in the norms of the
space $L^p(X), 1 <p< \infty$, the space $\mathcal{M}(X)$ and the
dual space $(\mathcal{M}(X) )'$, respectively.

We remark that the Calder\'on-Zygmund operator theory plays a
crucial role for the proofs of the Calder\'on reproducing formulae
in \eqref{crf2} and \eqref{crf3}. Using formulae \eqref{crf3}, the
Hardy space $H^p$ theory was established, which includes the
Littlewood-Paley characterization of $H^p,$ atomic decomposition and
dual space of $H^p$, and the $T1$ theorem of the boundedness of
singular integrals on these spaces. See \cite{H1, H2, HS}.

In \cite{HMY}, motivated by the work of Nagel and Stein on the
several complex variables in \cite{NS}, they considered spaces of
homogeneous type $(X, \rho, \mu),$ where the quasi-metric $\rho$ satisfies
some regularity properties and the measure $\mu$ satisfies the
doubling condition and the reverse doubling condition, that is,
there are constants $\kappa >0$ and $c \in (0, 1]$ such that
\begin{eqnarray}\label{reverse doubling condition}
c \lambda^\kappa \mu ( B (x, r) ) \leqslant \mu ( B (x, \lambda r) )
\end{eqnarray}
for all $x \in X$, $0 < r < \displaystyle\sup_{x, y \in X} \rho (x, y)
/ 2$ and $1 \leqslant \lambda < \displaystyle\sup_{x, y \in X} \rho (x,
y) / 2r. $

Applying Coifman's approximation to the identity and Coifman's
decomposition of the identity together with some modified test
functions, they provided the discrete Calder\'on reproducing
formulae as in \eqref{crf3} and established the Hardy space theory
in this setting. We would like to point out that the reverse
doubling condition of $\mu$ and the Calder\'on-Zygmund operator
theory play a crucial role for the boundedness of $T_N^{-1}$ on the
space of test functions. See \cite{HMY} for the Littlewood-Paley
characterization of the Hardy space and its applications in this
setting.

Notice that the regularity of the quasi-metric $\rho$ is key fact for
constructing Coifman's approximation to the identity and the reverse
doubling condition is crucial to get the boundedness of $T_N^{-1}$
on the space of test functions. To develop the Littlewood-Paley
theory on space of homogeneous $(X, \rho, \mu),$ where the quasi-metric
$\rho$ has no any regularity and the measure $\mu$ satisfies the
doubling condition only, a new approach is required.

Adapting the developed randomized dyadic structure on space of
homogeneous type $(X, \rho, \mu)$ where $\rho$ is the quasi-metric without
any regularity and the measure $\mu$ satisfies the doubling
condition only, in \cite{AuH}, Auscher and Hyt\"onen builded a
remarkable orthonormal basis of H\"older-continuos wavelet with
exponential decay. Using this wavelet basis they provided a
universal Calder\'on reproducing formula to study and develop
function spaces theory and singular integrals. More precisely, they
discussed $L^p, 1<p<\infty$ spaces, BMO and gave a proof of the $T1$
theorem in this general setting. See more details in \cite{AuH}.
Applying Auscher-Hyt\"onen's orthonormal basis, the Hardy space and
the product Hardy space were developed in \cite{HHL, HLW}.\\

\subsection{Statement of main results }
\ \\

The main results of this paper are (i) the
Dunkl-Calder\'on-Zygmund singular integral operators and the
$T1$ theorem; (ii) the Littlewood-Paley theory and the
Hardy spaces; (iii) the boundedness for the
Dunkl-Calder\'on-Zygmund singular integral operators, particularly,
the Dunkl-Rieze transforms, on the Dunkl-Hardy space.

\subsubsection{ Dunkl-Calder\'on-Zygmund singular integral operator and $T1$ theorem}
\ \\

As mentioned above, we can consider the Dunkl setting, $(\Bbb R^N,
\|\cdot\|, \omega),$ as a space of homogeneous type in the sense of
Coifman and Wiess. Note that the measure $\omega$ satisfies the
doubling and the reverse doubling properties. Let $C^\eta_0(\Bbb
R^N), \eta>0,$ denote the space of continuous functions $f$ with compact
support and
$$\|f\|_{\boldsymbol{\eta}}:=\sup\limits_{x\ne y} \frac{|f(x)-f(y)|}{\|x-y\|^{\boldsymbol{\eta}}}<\infty. $$
The classical Calder\'on-Zygmund singular integral operator in
$(\Bbb R^N,\|\cdot\|, \omega)$ is defined by the following
\begin{definition}\label{SIO}
    An operator $T:C_0^\eta(\mathbb{R}^N)\rightarrow(C_0^\eta(\mathbb{R}^N))'$ with $\eta>0,$ is said to be a Calder\'on-Zygmund singular integral operator if $K(x,y),$ the kernel of $T,$ satisfies the following estimates: for some $0<\varepsilon\leqslant 1,$
    \begin{eqnarray}\label{size of C-Z-S-I-O}
    |K(x, y)|
    \leqslant {{C}\over {\omega(B(x,\|x- y\|))}}
    \end{eqnarray}
    for all $x\not= y$;
    \begin{eqnarray}\label{x smooth of C-Z-S-I-O}
    |  K(x, y) - K(x', y) |
    \leqslant \Big({\|x-x'\|\over \|x- y\|}\Big)^\varepsilon {C\over \omega(B(x,\|x- y\|))}
    \end{eqnarray}
    for $\|x- x'\|\leqslant \frac{1}{2} \|x- y\|$;
    \begin{eqnarray}\label{y smooth of C-Z-S-I-O}
    |  K(x, y) - K(x, y') |
    \leqslant \Big({\|y-y'\|\over \|x- y\|}\Big)^\varepsilon {C\over \omega(B(x,\|x- y\|))}
    \end{eqnarray}
    for $\|y- y'\|\leqslant \frac{1}{2} \|x- y\|.$
    Moreover,
    $$\langle T(f),g\rangle=\int_{\R^N}\int_{\R^N} K(x,y)f(x)g(y)d\omega(x)d\omega(y)$$
    for $\supp f\cap \supp g=\emptyset.$
\end{definition}
See \cite{HMY} for more details.

However, the following motivation leads to consider a new class of
the Calder\'on-Zygmund singular integral operators in the Dunkl
setting. Recall that $p_t(x,y)$ is the Dunkl-Poisson kernel.
Applying the size and smoothness conditions of $p_t(x,y)$(see
\cite{DH1}) implies that $K(x,y)=\int_0^\infty p_t(x,y)\frac{dt}{t}$
satisfies the following estimates( see the Proposition \ref{pr300}
in {\bf Section 2}): for any $0<\varepsilon<1,$

\begin{equation*}
|K(x,y)|\lesssim
\frac1{\omega(B(x,d(x,y)))}\boldsymbol{\Big(\frac{d(x,y)}{\|x-y\|}\Big)^\varepsilon}
\end{equation*}
for all $x\not= y;$
\begin{equation*}
|K(x,y)-K(x,y')|\lesssim
\Big(\frac{\|y-y'\|}{d(x,y)}\Big)^\varepsilon\frac{1}{\omega(B(x,d(x,y)))}\boldsymbol{\Big(\frac{d(x,y)}{\|x-y\|}\Big)^\varepsilon}
\end{equation*}
for $\|y-y'\|\leqslant  d(x,y)/2;$
\begin{equation*}
|K(x',y)-K(x,y)|\lesssim
\Big(\frac{\|x-x'\|}{d(x,y)}\Big)^\varepsilon\frac1{\omega(B(x,d(x,y)))}\boldsymbol{\Big(\frac{d(x,y)}{\|x-y\|}\Big)^\varepsilon}
\end{equation*}
for $\|x-x'\|\leqslant  d(x,y)/2.$

This motivation leas to introduce the following
Dunkl-Calder\'on-Zygmund singular integral operators.
\begin{definition}\label{sio}
    An operator $T: C_0^\eta(\mathbb{R}^N)\rightarrow(C_0^\eta(\mathbb{R}^N))'$ with $\eta>0,$ is said to be a Dunkl-Calder\'on-Zygmund singular integral operator if $K(x,y),$ the kernel of $T$,
    satisfies the following estimates: for some $0<\varepsilon\leqslant 1,$
    \begin{equation}\label{si}
    |K(x,y)|\lesssim \frac1{\omega(B(x,d(x,y)))}
    \boldsymbol{\Big(\frac{d(x,y)}{\|x-y\|}\Big)^\varepsilon}
    \end{equation}
    for all $x\not= y;$
    \begin{equation}\label{smooth y3}
    |K(x,y)-K(x,y')|\lesssim \boldsymbol{\Big(\frac{\|y-y'\|}{\|x-y\|}\Big)^\varepsilon}\frac{1}{\omega(B(x,d(x,y)))}
    \end{equation}
    for $\|y-y'\|\leqslant  d(x,y)/2;$
    \begin{equation}\label{smooth x3}
    |K(x',y)-K(x,y)|\lesssim \boldsymbol{\Big(\frac{\|x-x'\|}{\|x-y\|}\Big)^\varepsilon}\frac1{\omega(B(x,d(x,y)))}
    \end{equation}
    for $\|x-x'\|\leqslant  d(x,y)/2.$

    Moreover,
    $$\langle T(f),g\rangle=\int_{\R^N}\int_{\R^N} K(x,y)f(x)g(y)d\omega(x)d\omega(y)$$
    for $\verb"supp" f\cap \verb"supp" g=\emptyset.$

    A Dunkl-Calder\'on-Zygmund singular integral operator is said to be the Dunkl-Calder\'on-Zygmund operator if it extends a bounded operator on $L^2(\R^N).$
\end{definition}
We remark that the size and regularity conditions of the
Dunkl-Calder\'on-Zygmund singular integral operator are much weaker
than the classical Calder\'on-Zygmund singular integral operators
given in space of homogeneous type in the sense of Coifman and
Weiss. Indeed, by the reverse doubling condition on the measure
$\omega$ in \eqref{rd}, $\omega(B(x,\|x- y\|))=\omega(B(x,\frac{\|x-
y\|}{d(x,y)}\cdot d(x,y))\geqslant C \big(\frac{\|x-
y\|}{d(x,y)}\big)^{N}\omega(B(x,d(x,y))).$ Thus,
\begin{eqnarray}\label{size DCZ}
\frac{1}{\omega(B(x,\|x- y\|))}&\lesssim& \Big(\frac{d(x,y)}{\|x-
y\|}\Big)^{N}\frac{1}{\omega(B(x,d(x,y)))}\\ &\lesssim&
\Big(\frac{d(x,y)}{\|x-y\|}\Big)^\varepsilon\frac1{\omega(B(x,d(x,y)))}
\end{eqnarray}
and if $\|x-x'\|\leqslant \frac{1}{2}d(x,y)\leqslant
\frac{1}{2}\|x-y\|,$ then,

\begin{equation}\label{sm of DCZ}
\begin{aligned}
\Big({\|x-x'\|\over \|x-
y\|}\Big)^\varepsilon\frac{1}{\omega(B(x,\|x-y\|))}
&\leqslant C \Big(\frac{\|x-x'\|}{\|x-y\|}\Big)^\varepsilon \frac1{\omega(B(x,d(x,y)))}\Big(\frac{d(x,y)}{\|x- y\|}\Big)^{N}\\
&\leqslant C\Big(\frac{\|x-x'\|}{\|x-y\|}\Big)^\varepsilon
\frac1{\omega(B(x,d(x,y)))}.
\end{aligned}
\end{equation}

Further, if $K(x,y)$ satisfies the above size condition \eqref{si},
then $K(x,y)$ is locally integrable in $\{\R^N\times \R^N: x\not = y
\}.$ Indeed, for any fixed $x\in \R^N$ and $0<\delta<R<\infty,$ by
the doubling properties of the measure $\omega,$ (see the details in
{\bf Section 2})
$$\int_{\delta<\|x-y\|<R} |K(x,y)| d\omega(y)\leqslant C \frac1{\delta}\int_{d(x,y)<R} \frac{d(x,y)}{\omega(B(x,d(x,y))}d\omega(y)\leqslant C\frac{R}{\delta}<\infty.$$

It is well known that if $T$ is the classical Calder\'on-Zygmund
operator on $\R^N,$ then $T$ is bounded on $L^p(\R^N), 1<p<\infty,$
from $H^1(\R^N)$ to $L^1(\R^N)$ and from $L^\infty(\R^N)$ to
$BMO(\R^N).$ These results still hold for the
Dunkl-Calder\'on-Zygmund operators.
\begin{theorem}\label{th1.1}
    Suppose that $T$ is a Dunkl-Calder\'on-Zygmund operator. Then $T$ is bounded on $L^p(\R^N,\omega), 1<p<\infty,$ from $H_d^1(\R^N,\omega)$ to $L^1(\R^N,\omega)$
    and from $L^\infty(\R^N,\omega)$ to $BMO_d(\R^N,\omega).$
\end{theorem}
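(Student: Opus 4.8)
The plan is to establish the three boundedness statements in a logical order, using the $L^2$ boundedness hypothesis as the starting point together with the size and smoothness estimates \eqref{si}--\eqref{smooth x3} for the kernel $K$. I would first prove the $L^p$ boundedness for $1<p<\infty$ by the classical Calder\'on--Zygmund interpolation machinery adapted to the space of homogeneous type $(\mathbb R^N,\|\cdot\|,\omega)$. The key observation, already recorded in the excerpt in \eqref{size DCZ} and \eqref{sm of DCZ}, is that the Dunkl-type kernel bounds, although weaker than the classical ones, still imply the classical Calder\'on--Zygmund size and H\"ormander-type regularity conditions relative to the Euclidean metric and the measure $\omega$: from \eqref{si} and the reverse doubling bound one gets $|K(x,y)|\lesssim 1/\omega(B(x,\|x-y\|))$, and from \eqref{smooth x3}--\eqref{smooth y3} together with \eqref{sm of DCZ} one gets the H\"ormander condition in the $x$- and $y$-variables with the Euclidean metric. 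Hence $T$ falls under the standard Calder\'on--Zygmund theory on spaces of homogeneous type: from $L^2$ boundedness one obtains weak-type $(1,1)$ via the Calder\'on--Zygmund decomposition, and then $L^p$ for $1<p\le 2$ by Marcinkiewicz interpolation and $L^p$ for $2<p<\infty$ by duality (note $T^*$ has kernel $K(y,x)$, which satisfies the same estimates by symmetry of the hypotheses).

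Next I would prove the $H^1_d\to L^1$ boundedness using the atomic decomposition of $H^1_d(\mathbb R^N,\omega)$ (available from the earlier development, or from \cite{DH1}). It suffices to show $\|Ta\|_{L^1}\lesssim 1$ uniformly over all $H^1_d$-atoms $a$ supported in a ball $B=B(x_0,r)$ with $\|a\|_\infty\le \omega(B)^{-1}$ and $\int a\,d\omega=0$. One splits $\int_{\mathbb R^N}|Ta|\,d\omega = \int_{2B}|Ta|\,d\omega + \int_{(2B)^c}|Ta|\,d\omega$. The local piece is controlled by Cauchy--Schwarz and the $L^2$ bound: $\int_{2B}|Ta|\,d\omega \le \omega(2B)^{1/2}\|Ta\|_2 \lesssim \omega(2B)^{1/2}\|a\|_2 \lesssim 1$ by doubling and the atom normalization. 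For the far piece, one uses the cancellation of the atom to write $Ta(x)=\int_B [K(x,y)-K(x,x_0)]a(y)\,d\omega(y)$; here is where some care is needed, because the natural smoothness estimate \eqref{smooth y3} involves the Euclidean factor $\|y-x_0\|/\|x-y\|$ rather than $d(x,y)$, so the resulting bound is $\big(\|y-x_0\|/\|x-y\|\big)^\varepsilon \omega(B(x,d(x,x_0)))^{-1}$. One then integrates over $x\in(2B)^c$; using $\|y-x_0\|\le r\lesssim \|x-x_0\|\sim\|x-y\|$ and the reverse-doubling/doubling estimates for $\omega$ in terms of $d(x,x_0)$ (together with the elementary fact $\|x-y\|\ge d(x,y)$ and $\omega(B(x,d(x,x_0)))\gtrsim$ suitable powers) one sums the geometric series over dyadic annuli $2^j r\le \|x-x_0\|<2^{j+1}r$ and obtains a uniform bound. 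The main technical point is to bookkeep the two metrics correctly and to absorb the possibly small factor $\omega(B(x,d))^{-1}$ against $\omega(B(x_0,r))^{-1}$; this is where the interplay $\omega(B(x,r))\sim r^N\prod_\alpha(|\langle\alpha,x\rangle|+r)^{\kappa(\alpha)}$ and $d(x,y)\le\|x-y\|$ is used.

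Finally, the $L^\infty\to \mathrm{BMO}_d$ boundedness follows by a duality/direct argument. Given $f\in L^\infty$, one must show that for every ball $B_d=B_d(x_0,r)$ in the Dunkl metric, the mean oscillation $\frac{1}{\omega(B_d)}\int_{B_d}|Tf - c_{B_d}|\,d\omega$ is bounded by $C\|f\|_\infty$ for an appropriate constant $c_{B_d}$. Write $f = f\chi_{2B_d} + f\chi_{(2B_d)^c} =: f_1+f_2$ and take $c_{B_d}=T f_2(x_0)$ (suitably interpreted). For $f_1$: $\int_{B_d}|Tf_1|\,d\omega \le \omega(B_d)^{1/2}\|Tf_1\|_2 \lesssim \omega(B_d)^{1/2}\|f_1\|_2 \le \omega(B_d)^{1/2}\omega(2B_d)^{1/2}\|f\|_\infty \lesssim \omega(B_d)\|f\|_\infty$ using doubling. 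For $f_2$: for $x\in B_d$ one estimates $|Tf_2(x)-Tf_2(x_0)| \le \int_{(2B_d)^c}|K(x,y)-K(x_0,y)||f(y)|\,d\omega(y)$, and applies \eqref{smooth x3}; again the Euclidean factor $\|x-x_0\|/\|x-y\|$ appears, and since $d(x,x_0)<r$ implies $\|x-x_0\|$ could be as large as $\sim r$ while $\|x-y\|\gtrsim d(x_0,y)\gtrsim 2^j r$ on the $j$-th annulus, the estimate $\big(\|x-x_0\|/\|x-y\|\big)^\varepsilon \lesssim 2^{-j\varepsilon}$ holds, and summing $2^{-j\varepsilon}$ against $\omega(B_d(x_0,2^{j}r))/\omega(B(x,d(x,y)))\cdot\|f\|_\infty$ over $j\ge 1$ gives $\lesssim \|f\|_\infty$, using the equivalence $\omega(B_d(x,t))\sim\omega(B(x,t))$ up to the constant $|G|$.

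The step I expect to be the main obstacle is the far-field estimate in the $H^1_d\to L^1$ bound (and its dual in the $L^\infty\to\mathrm{BMO}_d$ bound): the smoothness hypotheses \eqref{smooth y3}--\eqref{smooth x3} are stated with the \emph{Euclidean} distance $\|x-y\|$ in the numerator's denominator but with the \emph{Dunkl} measure $\omega(B(x,d(x,y)))$ in the denominator, so one must carefully exploit $d(x,y)\le\|x-y\|$ and the precise growth $\omega(B(x,r))\sim r^N\prod_{\alpha\in R}(|\langle\alpha,x\rangle|+r)^{\kappa(\alpha)}$, possibly decomposing the region of integration according to both the Euclidean and the Dunkl distances (i.e. handling the part where $\|x-y\|\sim d(x,y)$ and the part where $\|x-y\|\gg d(x,y)$ separately, the latter benefitting from the extra decay $(d(x,y)/\|x-y\|)^\varepsilon$ coming from \eqref{si}-type bounds after using cancellation). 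Once that bookkeeping is done correctly, the geometric summation over dyadic annuli is routine.
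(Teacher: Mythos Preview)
Your proposal has a genuine gap that recurs in all three parts. The claim that the Dunkl kernel bounds \eqref{si}--\eqref{smooth x3} imply the classical Calder\'on--Zygmund bounds on $(\mathbb R^N,\|\cdot\|,\omega)$ is backwards: the inequalities \eqref{size DCZ}--\eqref{sm of DCZ} you cite show precisely that the classical bounds imply the Dunkl ones, not the converse. Since $d(x,y)\le\|x-y\|$, the factor $1/\omega(B(x,d(x,y)))$ can be much \emph{larger} than $1/\omega(B(x,\|x-y\|))$, and the smoothness estimate \eqref{smooth y3} is only asserted under the \emph{stronger} constraint $\|y-y'\|\le d(x,y)/2$. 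So you cannot invoke the off-the-shelf Calder\'on--Zygmund theory on the Euclidean space of homogeneous type.

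The concrete failure in your $H^1_d\to L^1$ and weak-$(1,1)$ arguments is the choice of the near/far split by Euclidean balls. If the atom is supported in $B(x_0,r)$ and you take $x\in(2B)^c$ in the Euclidean sense, then $d(x,x_0)$ may still be arbitrarily small (when $x$ is near a reflection $\sigma(x_0)$), so the hypothesis $\|y-x_0\|\le d(x,y)/2$ of \eqref{smooth y3} need not hold, and even when it does the resulting bound $1/\omega(B(x,d(x,x_0)))$ is not integrable over that far region. The paper's remedy is to define the exceptional set (and the far region for atoms) using the Dunkl distance: one takes $\{x:d(x,x_0)>Cr\}$ as the far region, so that $d(x,y)\gtrsim r$ for $y$ in the atom's support, the smoothness condition applies, and Lemma~\ref{lem1} then controls the integral in $x$. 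Likewise, $BMO_d$ here is the ordinary $BMO$ with \emph{Euclidean} balls (not Dunkl balls as you assume), but the splitting $f=f_1+f_2$ in the oscillation estimate must be done with the Dunkl-metric enlargement $\mathcal O_B=\{y:d(x_0,y)\le 2R\}$: for $x$ in the Euclidean ball one has $\|x-x_0\|<R\le\tfrac12 d(x_0,y)$ on the far piece, so \eqref{smooth x3} gives $(\|x-x_0\|/\|x_0-y\|)^\varepsilon\le (R/d(x_0,y))^\varepsilon$, which is summable. Your closing paragraph correctly anticipates trouble in exactly this place, but the resolution is not a double annular decomposition in both metrics; it is the single replacement of the Euclidean distance by the Dunkl distance in the near/far split.
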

Here $H_d^1(\R^N,\omega)$ is the Dunkl-Hardy space introduced in
\cite {ADH} and $BMO_d(\R^N,\omega)$ is the classical $BMO$ function
defined in $(\Bbb R^N, \|\cdot\|, \omega),$ as a space of
homogeneous type in the sense of Coifman and Wiess.

If $T$ is a Dunkl-convolution operator, the Dunkl transform is the
main tool for providing the $L^2(\R^N, \omega)$ boundedness of $T.$
However, beyond the convolution operators, it becomes indispensable
to obtain a criterion for $L^2(\R^N,\omega)$ continuity. As Meyer
pointed out that without such a criterion of the $L^2(\R^N,
\omega),$ the theory of the Calder\'on-Zygmund singular integral
operators collapses like a house of cards. In the classical case,
one such criterion is the remarkable $T1$ theorem of David and
Journ\'e. In Dunkl setting, we have a similar $T1$ theorem.

Before describing the $T1$ theorem, we need to extend the definition
of the Dunkl-Calder\'on-Zygmund operators to bounded functions in
$C^\eta(\R^N).$ The idea for doing this is to define $T(b)$ for $b\in
C^\eta(\R^N),$ as a distribution on $C^\eta_{0,0}(\R^N)=\{f: f\in
C^\eta_0, \int_{\R^N} f(x)d\omega(x)=0\}.$ To this end, given $f\in
C^\eta_{0,0}(\R^N)$ with the support contained in the ball $B(x_0,
R)$ for some $x_0\in \R^N$ and $R>0.$ Let $\eta (x)= 1$ for $x\in
B_d(x_0, 2R)$ and $\eta(x)= 0$ for $x\in \big(B_d(x_0, 4R )\big)^c.$
Write $b=\eta b + (1-\eta)b$ and formlly, $\langle Tb,
f\rangle=\langle T(b\eta), f\rangle +\langle T(1-\eta)b, f\rangle.$
The first term $\langle Tb\eta, f\rangle$ is well defined. By the
cancellation condition of $f$ and the fact that if $x\in \supp f$
and $y\in \supp \eta,$ we can write
\begin{align*}
\langle T(1-\eta)b, f\rangle
&=\int_{\R^N}\int_{\R^N} K(x,y)(1-\eta(y))b(y)f(x)d\omega(y)d\omega(x)\\
&=\int_{\R^N}\int_{\R^N}
[K(x,y)-K(x_0,y)](1-\eta(y))b(y)f(x)d\omega(y) d\omega(x).
\end{align*}
Observe that when $x\in B(x_0, R)$ and $y\notin B_d(x_0, 2R),
\|x-x_0\|\leqslant R\leqslant \frac{1}{2}d(x_0,y).$ Applying the
smoothness condition of $K(x,y)$ implies that
\begin{align*}
&\int_{\R^N}\int_{\R^N} |K(x,y)-K(x_0,y)||f(x)|d\omega(y) d\omega(x)\\
&\qquad\leqslant C\int_{\|x-x_0\|\leqslant R}\int_{d(x_0,y)\geqslant 2R}\frac{1}{\omega(B(x_0,y))}\Big(\frac{\|x-x_0\|}{\|x_0-y\|}\Big)^\varepsilon |f(x)|d\omega(y) d\omega(x)\\
&\qquad\leqslant C\int_{\|x-x_0\|\leqslant R}\int_{d(x_0,y)\geqslant 2R}\frac{1}{\omega(B(x_0,y))}\Big(\frac{\|x-x_0\|}{d(x_0,y)}\Big)^\varepsilon |f(x)|d\omega(y) d\omega(x)\\
&\qquad \leqslant C\|f\|_1
\end{align*}
and thus, $\langle Tb, f\rangle$ is well defined. It is easy to see
that this definition is independent of the choice of the function
$\eta.$ Therefore, $T(b)$ is a distribution in
$\Big(C^\eta_{0,0}\Big)^\prime.$ Now $T(1)=0$ means that for any
$f\in C^\eta_{0,0}, \langle T1,f\rangle=0.$ This is equivalent to
$T^*(f)=0$ for any $f\in C^\eta_{0,0}.$ The definition of $T^*(1)=0$
is defined similarly.

If considering  $(\Bbb R^N, \|\cdot\|, \omega)$ as a space of
homogeneous type in the sense of Coifman and Wiess, the classical
weak boundedness property of $T$ says that for $f,g\in
C_0^\eta(\R^N)$ with supp$f,g\subseteq B(x_0,t)$ and $ x_0\in \Bbb
R^n,t>0$, then there exists a constant $C$ such that
$$|\langle Tf,g\rangle|\leqslant C\omega(B(x_0,t))t^{2\eta}\|f\|_{\eta}\|g\|_{\eta}.$$
Indeed, if $f\in C_0^\eta(\R^N)$ with supp$f\subseteq B(x_0,t)$ and
$ x_0\in \Bbb R^n, t>0$, then $$\|f\|^2_2=\int_{B(x_0,t)}
|f(x)|^2d\omega(x)=\int_{B(x_0,t)} |f(x)-f(x_1)|^2d\omega,$$ where
$t<\|x_1-x_0\|\leqslant 2t$ and thus, $f(x_1)=0.$ This implies that
$\int_{B(x_0,t)}|f(x)-f(x_1)|^2d\omega(x)\leqslant C
t^{2\eta}\|f\|^2_\eta \omega(B(x_0,t))$ and we get
$$\|f\|_2\leqslant C\big(\omega(B(x_0,t))\big)^{1/2} t^\eta \|f\|_\eta.$$
Suppose that $T$ is bounded on $L^2(\R^N,\omega)$ and $f,g\in
C_0^\eta(\R^N)$ with supp$f, g\subseteq B(x_0,t)$ and $x_0\in \Bbb
R^n,t>0,$ then
$$|\langle  Tf,g\rangle|\leqslant \|T\|\|f\|_2\|g\|_2\leqslant C\omega(B(x_0,t))t^{2\eta}\|f\|_{\eta}\|g\|_{\eta}.$$
In our situation, the {\it weak boundedness property} (WBP) in the
Dunkl setting is defined by following
\begin{definition}\label{wbp}
    The Dunkl-Calder\'on-Zygmund singular integral operator $T$ with the distribution kernel $K(x,y)$ is said to have the weak boundedness property
    if there exist $\eta>0$ and $C<\infty$ such that
    $$|\langle K, f\rangle |\leqslant C\max\{\omega(B(x_0,r)), \omega(B(y_0,r))\}$$
    for all $f\in C^\eta_0(\mathbb R^N\times \mathbb R^N)$ with supp$(f)\subseteq B(x_0,r)\times B(y_0,r), x_0, y_0\in \mathbb R^N, \|f\|_\infty\leqslant 1, \|f(\cdot,y)\|_\eta\leqslant r^{-\eta}$ for all $y\in \R^N$ and $\|f(x,\cdot)\|_\eta\leqslant r^{-\eta}$ for all $x\in \mathbb R^N$.
\end{definition}
If the operator $T$ has the weak boundedness property, we denote by
$T\in WBP.$ It is easy to see that if the operator $T$ is bounded on
$L^2(\R^N),$ then $T$ satisfies the weak boundedness property
defined by the Definition \ref{wbp}.

It is well known that in the classical case, the almost orthogonal
estimates are fundamental tools for the proof of the $T1$ theorem.
The following result provides such a tool in the Dunkl setting.

\begin{theorem}\label{1.3}
    Suppose that $T$ is the Dunkl-Calder\'on-Zygmund singular integral operator with $T(1)=T^*(1)=0$ and  $T\in WBP.$ Then $T$ maps $\mathbb M(\beta, \gamma, r, x_0)$ to  $\widetilde{\mathbb M}(\beta, \gamma', r, x_0)$
    with $0<\beta<\varepsilon, 0<\gamma'<\gamma<\varepsilon,$ where $\varepsilon$ is the exponent of the regularity of the kernel of $T.$ Moreover, there exists a constant $C$ such that
    $$\|T(f)\|_{\widetilde{\mathbb M}(\beta, \gamma', r, x_0)}\leqslant C\|f\|_{{\mathbb M}(\beta, \gamma, r, x_0)}.$$
\end{theorem}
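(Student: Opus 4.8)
The plan is to verify, for $f\in\mathbb M(\beta,\gamma,r,x_0)$ of unit norm, the three properties that place $T(f)$ in $\widetilde{\mathbb M}(\beta,\gamma',r,x_0)$ --- its size bound, its H\"older regularity, and its moment condition --- with the quantitative inequality following from the constants produced along the way. All quantities in sight are covariant under dilations and translations, so one may normalise the scale in the bookkeeping, but I would keep $(r,x_0)$ explicit to track the two competing geometries. The moment relation $\int_{\R^N}T(f)\,d\omega=0$ is immediate: for $f$ in the (moment‑carrying) test space this is, by definition, precisely the distributional statement $T^*(1)=0$, read through the extension of $T$ to bounded H\"older functions described above.

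For the size bound I would fix $x$, put $\rho:=r+d(x,x_0)$, and split $T(f)(x)=\int_{\R^N}K(x,y)f(y)\,d\omega(y)$ into a local integral over $B_d(x,c\rho)$ and a global one over its complement, moving freely between Dunkl and Euclidean balls via $\omega(B(x,s))\leqslant\omega(B_d(x,s))\leqslant|G|\,\omega(B(x,s))$. On the local piece write $f(y)=\big(f(y)-f(x)\big)+f(x)$: the difference is absorbed using the H\"older estimate for $f$ against the kernel size \eqref{si} (the near‑diagonal integrability being the bound $\int_{\delta<\|x-y\|<R}|K(x,y)|\,d\omega(y)\lesssim R/\delta$ of Section 2), while the term $f(x)\int_{B_d(x,c\rho)}K(x,y)\,d\omega(y)$ is controlled by the weak boundedness property of Definition \ref{wbp} together with $T(1)=0$ after a smooth truncation. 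On the global piece I use the moment condition of $f$ to replace $K(x,y)$ by $K(x,y)-K(x,x_0)$ and invoke the second‑variable smoothness \eqref{smooth y3}; the Euclidean gain $(\|y-x_0\|/\|x-y\|)^\varepsilon$ is then turned into genuine decay in $d$ exactly as in \eqref{size DCZ}--\eqref{sm of DCZ}, the reverse doubling \eqref{rd} converting $1/\omega(B(x,\|x-y\|))$ into $(d(x,y)/\|x-y\|)^{N}\omega(B(x,d(x,y)))^{-1}$. Since part of the available decay is spent in this conversion, the output decay exponent drops from $\gamma$ to some $\gamma'<\gamma$, and $\gamma<\varepsilon$ leaves room for the loss.

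For the regularity estimate I would fix $x,x'$ with $\|x-x'\|\leqslant\tfrac12\rho$ (the complementary range reduces to the size bound just proved) and split $\R^N$ into $B_d(x,A\|x-x'\|)$ and its complement for a large $A$. On the inner ball one estimates $T(f)(x)$ and $T(f)(x')$ separately by the local argument above; since the H\"older seminorm of $f$ is now integrated over a set of radius $\sim\|x-x'\|$, this automatically produces the factor $\big(\|x-x'\|/\rho\big)^\beta$, and the constraint $\beta<\varepsilon$ is what both keeps the kernel size integrable against $\|x-y\|^\beta$ there and lets the kernel's regularity deliver the output exponent $\beta$. On the outer region one writes $K(x,y)-K(x',y)$, applies the first‑variable smoothness \eqref{smooth x3}, again converts the Euclidean gain into $d$‑decay through \eqref{sm of DCZ}, and uses the moment condition of $f$ (or $T(1)=0$) to force the tail to converge; this is the second place where $\gamma$ is lost to $\gamma'$.

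The step I expect to be the main obstacle is the systematic management of the two metrics, which is exactly why the target is $\widetilde{\mathbb M}$ rather than $\mathbb M$: the kernel's smoothness is measured in the Euclidean distance $\|x-y\|$, while the size of the output and the geometry of the test spaces live at the scale of the Dunkl distance $d(x,y)$, and the two disagree badly precisely when $y$ lies near a nontrivial reflection $\sigma(x)$ of $x$, where $d(x,y)\ll\|x-y\|$ and the kernel bound $\omega(B(x,d(x,y)))^{-1}$ is large. To tame those near‑reflection contributions one must spend the extra factor $\big(d(x,y)/\|x-y\|\big)^\varepsilon$ in \eqref{si}, and it is cleanest to decompose the $y$‑integral into the $|G|$ branches $\{\,\|x-\sigma(y)\|\sim d(x,y)\,\}$, $\sigma\in G$, using $d(x,y)=\min_{\sigma\in G}\|x-\sigma(y)\|$ and the exact invariance $\omega(B(x,s))=\omega(B(\sigma(x),s))$, and then run the local/global splitting above on each branch. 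Once this orbit bookkeeping is in place, the remaining work is the usual family of almost‑orthogonality estimates, and the conditions $0<\beta<\varepsilon$ and $0<\gamma'<\gamma<\varepsilon$ are exactly what is needed to absorb the metric conversions and the truncation errors.
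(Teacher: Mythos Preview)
Your outline uses the same ingredients as the paper --- H\"older regularity of $f$ near $x$, the moment condition near $x_0$, and a Meyer-type commutation argument for the constant term --- but the single local/global split at radius $c\rho=c(r+d(x,x_0))$ around $x$ is too coarse and creates a real gap when $d(x,x_0)=R\gg r$. First, on your local ball you cannot use the H\"older bound for $f$ over the whole range $d(x,y)\lesssim R$: the factor $(\|x-y\|/r)^\beta$ integrated out to scale $R$ produces an uncontrolled $(R/r)^\beta$, and there is no assumed relation between $\beta$ and $\gamma$ that would let you absorb it into $(r/R)^\gamma$. The paper's fix is to split once more at scale $r$: use smoothness of $f$ only on $d(x,y)\leqslant r$ (their $I_1$) and the \emph{size} of $f$ on $r<d(x,y)\lesssim R$ (their $I_2$). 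The $\gamma\to\gamma'$ loss actually occurs in $I_2$, not only in the tail as you suggest. Second, on your global piece, subtracting $K(x,x_0)$ and invoking \eqref{smooth y3} requires $\|y-x_0\|\leqslant d(x,y)/2$, which fails for $y$ far from $x_0$; the near-$x_0$ mass of $f$ must be isolated from the genuine tail. The paper therefore uses a three-piece split $f=f_1+f_2+f_3$ (near $x$, near $x_0$, far from both) rather than a two-piece one.

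Two further points. The paper never uses the orbit decomposition you propose; it works directly with smooth cutoffs $\theta(d(\cdot,x_0)/R)$ in the Dunkl metric, which already produce $G$-invariant regions and sidestep the branch bookkeeping. And the term $f(x)\int K(x,y)\chi(y)\,d\omega(y)$ is not a one-line consequence of WBP plus $T(1)=0$: it is handled by a dedicated Dunkl version of Meyer's commutation lemma (Lemma~\ref{lem2.5}), whose proof --- showing $\|T\chi\|_{L^\infty}\lesssim 1$ for normalised bumps $\chi$ --- is a substantial step and should be singled out. Finally, a minor caveat: the Dunkl structure is dilation-covariant but \emph{not} translation-invariant (the reflection group and the weight $\omega$ are tied to the origin), so one cannot normalise $x_0$ away.
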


Here $\mathbb M(\beta, \gamma, r, x_0)$ and  $\widetilde{\mathbb
M}(\beta, \gamma, r, x_0)$ are defined by following

\begin{definition}\label{sm}
    A function $f(x)$ is said to be a smooth molecule for $0<\beta\leqslant 1, \gamma>0, r>0$ and some fixed $x_0\in \R^N,$ if $f(x)$ satisfies the following conditions:
    \begin{center}
        \begin{equation}\label{sm1.17}
        |f(x)|\leqslant C \frac{1}{V(x,x_0,r+d(x,x_0))}\Big(\frac{r}{r+\boldsymbol{\|x-x_0\|}}\Big)^\gamma;
        \end{equation}
        \begin{equation}\label{sm1.18}
        \begin{aligned}
        |f(x)-f(x')|&\leqslant C  \Big(\frac{\|x-x'\|}{r}\Big)^\beta
        \Big\{   \frac{1}{V(x,x_0,r+d(x,x_0))}\Big(\frac{r}{r+\boldsymbol{\|x-x_0\|}}\Big)^\gamma\\
        &\qquad\qquad + \frac{1}{V(x',x_0,r+d(x',x_0))}\Big(\frac{r}{r+\boldsymbol{\|x'-x_0\|}}\Big)^\gamma\Big\};
        \end{aligned}
        \end{equation}
        \begin{equation}\label{sm1.19}
        \int_{\R^N} f(x) d\omega(x)=0.
        \end{equation}
    \end{center}
    If $f(x)$ is a smooth molecule, we denote $f(x)$ by $f\in \mathbb M(\beta, \gamma, r, x_0)$ and define the norm of $f$ by
    $$\|f\|_{\mathbb M(\beta, \gamma, r, x_0)}:=\inf\{C: (\ref{sm1.17})-(\ref{sm1.18})\ {\rm hold}\}.$$
\end{definition}
Observe that $t\partial_t p_t(x,y)$ with $p_t,$ the Poisson kernel,
is the smooth molecule with $\beta, \gamma<1, r=t, x_0=y$ for fixed
$y,$ and it is also the smooth molecule for $x_0=x$ for $x$ is
fixed.

\begin{definition}\label{wsm}
    A function $f(x)$ is said to be a weak smooth molecule for $0<\beta\leqslant 1, \gamma>0, r>0$ and some fixed $x_0\in \R^N,$ if $f(x)$ satisfies the following conditions:
    \begin{equation}\label{wsm1.20}
    |f(x)|\leqslant C \frac{1}{V(x,x_0,r+d(x,x_0))}\Big(\frac{r}{r+\boldsymbol{d(x,x_0)}}\Big)^\gamma;
    \end{equation}
    \begin{equation}\label{wsm1.21}
         \begin{aligned}
    |f(x)-f(x')|&\leqslant C  \Big(\frac{\|x-x'\|}{r}\Big)^\beta
    \Big\{   \frac{1}{V(x,x_0,r+d(x,x_0))}\Big(\frac{r}{r+\boldsymbol{d(x,x_0)}}\Big)^\gamma\\
    &\qquad\qquad + \frac{1}{V(x',x_0,r+d(x',x_0))}\Big(\frac{r}{r+\boldsymbol{d(x',x_0)}}\Big)^\gamma\Big\};
         \end{aligned}
    \end{equation}
    \begin{equation}\label{f1}
    \int_{\R^N} f(x) d\omega(x)=0.
    \end{equation}
    If $f(x)$ is a weak smooth molecule, we denote $f(x)$ by $f\in \widetilde {\mathbb M}(\beta, \gamma, r, x_0)$ and define the norm of $f$ by
    $$\|f\|_{\widetilde{\mathbb M}(\beta, \gamma, r, x_0)}:=\inf\{C: (\ref{wsm1.20})- (\ref{wsm1.21})\ {\rm hold} \}.$$
\end{definition}
We remark that all $\|x-x_0\|$ and $\|x'-x_0\|$ in the smooth molecule are placed by  $d(x,x_0)$ and $d(x',x_0)$ in the weak smooth molecule, respectively.

Now we can state the $T1$ theorem for Dunkl-Calder\'on-Zygmund
singular integral operators by the following
\begin{theorem}\label{1.2}
    Suppose that $T$ is a Dunkl-Calder\'on-Zygmund singular integral operator. Then $T$ extends to a bounded operator on $L^2(\R^N, \omega)$ if and only if
    (a) $T(1)(x)\in BMO_d(\R^N,\omega);$ (b) $T^*(1)(x)\in BMO_d(\R^N,\omega);$ (c) $T\in WBP.$

\end{theorem}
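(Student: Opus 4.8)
The plan is to follow the classical David–Journé scheme adapted to the Dunkl setting, relying crucially on Theorem \ref{1.3} and Coifman's decomposition of the identity. One direction is immediate: if $T$ is bounded on $L^2(\R^N,\omega)$, then $T\in WBP$ as already noted after Definition \ref{wbp}, and the standard duality argument together with the $T1$-definition of $T(1)$, $T^*(1)$ as distributions on $C^\eta_{0,0}$ shows that $T(1),T^*(1)\in BMO_d$. So the substance is the converse: assuming (a), (b), (c), produce the $L^2$ bound. First I would reduce to the case $T(1)=T^*(1)=0$. The reduction is carried out by subtracting paraproduct operators: one constructs $\Pi_{b}$ for $b=T(1)$ and $\Pi_{b'}^*$ for $b'=T^*(1)$, each a Dunkl-Calder\'on-Zygmund singular integral operator that is bounded on $L^2(\R^N,\omega)$ (this uses $b,b'\in BMO_d$ and a Carleson-measure / Littlewood–Paley argument in the Dunkl setting, built from the approximation to the identity $\{S_k\}$), and such that $\Pi_b(1)=b$, $\Pi_b^*(1)=0$, and similarly for $\Pi_{b'}^*$. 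Then $\widetilde T := T-\Pi_b-\Pi_{b'}^*$ is again a Dunkl-Calder\'on-Zygmund singular integral operator, it lies in $WBP$ (WBP is stable under adding $L^2$-bounded operators), and $\widetilde T(1)=\widetilde T^*(1)=0$; boundedness of $T$ follows from boundedness of $\widetilde T$.

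With the reduction in hand, assume $T(1)=T^*(1)=0$ and $T\in WBP$. The main step is to estimate, via Coifman's decomposition of the identity on $L^2(\R^N,\omega)$, the ``matrix coefficients'' $D_k T D_j$, where $D_k=S_{k+1}-S_k$ and $\{S_k\}$ is the Dunkl approximation to the identity. The key point is an almost-orthogonality estimate of the form
\begin{equation*}
\|D_k T D_j\|_{L^2\to L^2}\lesssim 2^{-|k-j|\theta}
\end{equation*}
for some $\theta>0$. To get this, I would fix $j$ and apply Theorem \ref{1.3} to the function $f = D_j(x,\cdot)$ (a constant multiple of a smooth molecule in $\mathbb M(\beta,\gamma,2^{-j},x_0)$, with $x_0$ the relevant point, since the kernel of $D_j$ has the right size, regularity, and cancellation built from $\{S_k\}$): Theorem \ref{1.3} says $T(D_j(x,\cdot))$ is a weak smooth molecule in $\widetilde{\mathbb M}(\beta,\gamma',2^{-j},x_0)$ with uniformly controlled norm. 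Pairing this with the kernel of $D_k$ — which again has size, smoothness and the cancellation $\int D_k(x,y)\,d\omega(y)=\int D_k(x,y)\,d\omega(x)=0$ — and exploiting the extra decay factors $(d(x,y)/\|x-y\|)^\varepsilon$ in the molecule estimates, a direct integration (the familiar two regimes $2^{-k}\le 2^{-j}$ and $2^{-k}>2^{-j}$, using the cancellation of $D_k$ in the first and of $D_j$ in the second) yields the kernel bound for $D_k T D_j$ with the geometric gain $2^{-|k-j|\theta}$ together with the correct spatial decay relative to the Dunkl metric $d$. A Schur-test / Cotlar–Stein argument then converts this into the operator-norm bound, and summing the Coifman decomposition $I=\sum_k D_k$ applied on both sides gives
\begin{equation*}
\|Tf\|_{L^2}^2 = \Big\|\sum_{k,j} D_k T D_j f\Big\|_{L^2}^2 \lesssim \sum_j \|D_j f\|_{L^2}^2 \lesssim \|f\|_{L^2}^2,
\end{equation*}
where the last inequality is the Littlewood–Paley estimate for $\{D_j\}$ in the Dunkl setting. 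Since $C^\eta_0(\R^N)$ is dense in $L^2(\R^N,\omega)$, this extends $T$ to a bounded operator.

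The hard part, I expect, will be two intertwined technical points. First, verifying that the paraproduct $\Pi_b$ built from $b\in BMO_d$ is genuinely a Dunkl-Calder\'on-Zygmund \emph{operator} — i.e. that it is $L^2$-bounded — requires the full Carleson-measure machinery (the equivalence between $b\in BMO_d$ and $\sum_k|D_k b(x)|^2\,d\omega(x)\,$ being a Carleson measure with respect to $\omega$ and the Dunkl balls) which must be set up carefully because $(\R^N,\|\cdot\|,\omega)$ is a genuine space of homogeneous type but the natural geometry here is the Dunkl metric $d$; one must track which balls ($B$ vs.\ $B_d$) appear and use $\omega(B(x,r))\le\omega(B_d(x,r))\le|G|\,\omega(B(x,r))$ throughout. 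Second, the almost-orthogonality computation for $D_kTD_j$ must honestly absorb the weaker-looking regularity conditions \eqref{smooth y3}–\eqref{smooth x3} (which are stated with $\|x-y\|$ in the denominator rather than $d(x,y)$) into the weak-molecule estimates; this is exactly the subtlety flagged in the remark distinguishing $\mathbb M$ from $\widetilde{\mathbb M}$, and it is why Theorem \ref{1.3} already does the bulk of the work — the remaining task is the bookkeeping that pairs a weak molecule against $D_k$ without losing the $2^{-|k-j|\theta}$ gain.
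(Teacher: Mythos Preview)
Your overall strategy matches the paper's: reduce to $T(1)=T^*(1)=0$ via paraproducts, then prove almost orthogonality for $D_k T D_j$ using Theorem~\ref{1.3} to turn $TD_j(\cdot,y)$ into a weak smooth molecule and pair it against $D_k$, and finish with Cotlar--Stein. Two points deserve comment.

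First, the paraproduct step is easier than you anticipate. The paper does \emph{not} build a Dunkl-specific paraproduct or set up a Carleson theory for the orbit metric $d$; it simply borrows the paraproduct $\Pi_b$ from the existing theory on the space of homogeneous type $(\R^N,\|\cdot\|,\omega)$ (Theorem~\ref{para}, citing \cite{HMY}). This works because, by \eqref{size DCZ}--\eqref{sm of DCZ}, every classical Calder\'on--Zygmund operator on $(\R^N,\|\cdot\|,\omega)$ is automatically a Dunkl-Calder\'on--Zygmund operator, so $\Pi_b$ already has the right kernel estimates. No tracking of $B$ versus $B_d$ is needed.

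Second, your final display $\|Tf\|_{L^2}^2=\|\sum_{k,j}D_kTD_jf\|_{L^2}^2$ hides a genuine convergence issue: a priori $T$ is only defined on $C_0^\eta$, but $D_jf$ is not compactly supported, and the identity $I=\sum_k D_k$ converges only in $L^2$, not in a topology in which $T$ is continuous. The paper handles this carefully by (i) extending $T$ to a continuous map $\Lambda^s\cap L^2\to (C_0^s)'$, and (ii) proving (Lemmas~\ref{Lemma 3.6}--\ref{Lemma 3.8}) that $T_M=\sum_k D_kD_k^M$ converges and is invertible on $\Lambda^s$ for $0<s<\tfrac12$. One then writes $f_0=T_Mf_1$ with $f_1=T_M^{-1}f_0\in\Lambda^s\cap L^2$, so the finite truncations $U_{L_1,L_2}f_1$ converge to $f_0$ in $\Lambda^s\cap L^2$, and $T$ can legitimately be passed through the limit. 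Your proposal skips this machinery; without it the manipulation of the double sum is formal.
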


We remark that the theory of Dunkl-Calder\'on-Zygmund singular integral operators plays a fundamental role for establishing the weak-type discrete Calder\'on reproducing formulae,
the Littlewood-Paley theory and the Hardy spaces in the Dunkl setting.\\

\subsubsection{{ Calder\'on reproducing formula and  Littlewood-Paley theory on $L^p, 1<p<\infty$}}
\ \\

We begin with the Calder\'on reproducing formula. Thanks \cite{ADH},
the authors provided such a formula as follows: for $f\in L^2(\R^N,
\omega),$
\begin{equation}\label{ccrf}
f(x)=\int_0^\infty \psi_{t}\ast q_{t}\ast f(x)\frac{dt}{t},
\end{equation}
where $q_tf=t\frac{\partial}{\partial t}p_tf$ with $p_t$ is the
Poisson kernel as mentioned above and $\psi\in C^\infty_0(B(0,1/4))$
is a radial function with $\int_{\R^N}\psi(x)d\omega(x)=0$.

Write

$$f(x)=\int_0^\infty \psi_t\ast q_t\ast f(x)\frac{dt}{t}=T_M(f)(x) + R_1(f)(x) + R_M(f)(x),$$
where
$$T_M(f)(x)=-\ln r\sum\limits_{j=-\infty}^\infty\sum\limits_{Q\in Q^j}w(Q) \psi_{j}(x,x_{Q})q_{j}\ast
f(x_{Q}),$$
$$R_1(f)(x)=-\sum\limits_{j=-\infty}^\infty\int_{r^{-j}}^{r^{-j+1}}
\Big[\psi_t\ast q_t\ast f(x)- \psi_{j}\ast q_{j}\ast f(x)\Big]\frac{dt}{t}$$
and
$$R_M(f)(x)=-\ln r\sum\limits_{j=-\infty}^\infty\sum\limits_{Q\in Q^j}\int_{Q}
\Big[\psi_{j}(x,y)q_{j}\ast f(y)-\psi_{j}(x,x_{Q})q_{j}\ast
f(x_{Q})\Big]d\omega(y),$$ where $\psi_j=\psi_{r^j}, q_{j}=q_{r^j},$ with $1<r\leqslant r_0$ for some
fixed $r_0, Q^j$ is the collection of all "$r$-dyadic cubes" $Q$
with the side length $r^{-M-j}$ for $M$ is some fixed large integer,
and $x_{Q}$ is any fixed point in the cube $Q.$

Applying the Coifman's decomposition of the identity on $L^2(\R^N)$
gives
$$I=T_M + R_1 + R_M.$$
We will show that $R_1$ and $R_M$ are Dunkl-Calder\'on-Zygmund
operators and the boundedness of $R_1$ and $R_M$ on $L^2(\R^N,
\omega)$ and $L^p(\R^N, \omega), 1<p<\infty,$ follows from the
Cotlar-Stein Lemma and Theorem \ref{th1.1}, respectively. Moreover,
$\|R_1+R_M\|_{p,p}< 1$ which implies that $(T_M)^{-1},$ the inverse
of $T_M,$ exists and is bounded on $L^p(\R^N, \omega), 1<p<\infty.$
This yields the following weak-type discrete Calder\'on reproducing
formula:

\begin{theorem}\label{1.5}
    If $f\in L^2(\R^N,\omega)\cap L^p(\R^N, \omega), 1<p<\infty,$ then there exists a function $h\in L^2(\R^N,\omega)\cap L^p(\R^N, \omega),$ such that $\|f\|_2\sim \|h\|_2$ and $\|f\|_p\sim \|h\|_p,$
    \begin{align*}
    f(x)=\sum\limits_{j=-\infty}^\infty\sum\limits_{Q\in Q^j}\omega(Q)
    \psi_{j}(x,x_{Q})q_{j}
    h(x_{Q}),
    \end{align*}
    where $q_jh(x)=q_j\ast h(x),$ the series converges in $L^2(\R^N,\omega)\cap L^p(\R^N, \omega)$ with $\psi_j=\psi_{r^j}, q_{j}=q_{r^j}, 1<r\leqslant r_0,$ for some fixed $r_0,$
    $Q^j$ is the collection of all "r-dyadic cubes" $Q$ with the side length $r^{-M-j}$ for $M$ is some fixed large integer, and $x_{Q}$ is any fixed point in the cube $Q.$
\end{theorem}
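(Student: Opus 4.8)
The plan is to deduce the discrete formula from the operator identity $I=T_M+R_1+R_M$ recorded above. Concretely, I will realize the claimed expansion as $f=T_M h$ with $h$ a fixed constant multiple of $T_M^{-1}f$ (the constant absorbing the factor $-\ln r$ in the definition of $T_M$), so that everything reduces to proving that $T_M$ is invertible both on $L^2(\R^N,\omega)$ and on $L^p(\R^N,\omega)$ for $1<p<\infty$, with $\|T_M^{\pm1}\|$ comparable to $1$. Starting from the continuous Calder\'on reproducing formula \eqref{ccrf}, one splits $\int_0^\infty\cdots\frac{dt}{t}=\sum_j\int_{r^{-j}}^{r^{-j+1}}\cdots\frac{dt}{t}$, freezes the scale inside each shell (replacing $\psi_t\ast q_t$ by $\psi_j\ast q_j$), and discretizes the remaining convolution integral against the $r$-dyadic partition $Q^j$ into cubes of side $r^{-M-j}$; this is exactly the decomposition $I=T_M+R_1+R_M$, with $R_1$ collecting the scale-freezing errors and $R_M$ the spatial-discretization errors. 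If one can show $\|R_1+R_M\|_{p\to p}<1$ for a suitable choice of $r$ close to $1$ and $M$ large, then $T_M=I-(R_1+R_M)$ is invertible by the Neumann series $T_M^{-1}=\sum_{k\ge0}(R_1+R_M)^k$, bounded on $L^p(\R^N,\omega)$ and (by the same argument on $L^2$) on $L^2(\R^N,\omega)$; the norm equivalences $\|f\|_p\sim\|h\|_p$ and $\|f\|_2\sim\|h\|_2$ and the convergence of the discrete series in $L^2\cap L^p$ then follow from the boundedness of $T_M^{\pm1}$ together with the convergence of \eqref{ccrf}.

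The substantive step is to show that $R_1$ and $R_M$ are Dunkl--Calder\'on--Zygmund operators in the sense of Definition \ref{sio}. The kernel of $R_1$ is $\sum_j\int_{r^{-j}}^{r^{-j+1}}\big[(\psi_t\ast q_t)(x,y)-(\psi_j\ast q_j)(x,y)\big]\frac{dt}{t}$ and that of $R_M$ is the analogous sum of cube-averaged differences of $(\psi_j\ast q_j)(x,y)$, so one needs sharp almost-orthogonality estimates for the composed kernels $(\psi_s\ast q_t)(x,y)$ with $s\sim t$, together with control of their differences in the scale variable and in the spatial variable. Since $\psi$ is compactly supported with mean zero and $q_t=t\partial_t p_t$ with $p_t$ the Dunkl--Poisson kernel, I would feed the size and regularity bounds for $p_t$ from \cite{DH1} into the convolution, extracting a kernel bound of the shape $\frac{1}{\omega(B(x,d(x,y)))}\big(\tfrac{d(x,y)}{\|x-y\|}\big)^\varepsilon$ times a decaying factor in $t$, with matching bounds for the $x$- and $y$-differences; the crucial point is the gain $\big(d(x,y)/\|x-y\|\big)^\varepsilon$, which comes from the interplay between the Euclidean support of $\psi$ and the $G$-invariance of the Poisson/heat kernels, and which is precisely what places $R_1$ and $R_M$ in the \emph{Dunkl}-CZ class rather than the classical one. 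Integrating/summing these estimates over the shells (resp.\ over the cubes $Q$), using that the frozen scale differs from $t$ by at most a factor $r$ and that the displacement inside each $Q$ is at most $r^{-M-j}$, then yields the two-metric size and smoothness conditions \eqref{si}--\eqref{smooth x3} for the kernels of $R_1$ and $R_M$.

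With these kernel estimates available, the $L^2(\R^N,\omega)$ boundedness of $R_1$ and of $R_M$ follows from the Cotlar--Stein lemma applied to the pieces indexed by $j$, the almost-orthogonality being quantitative in $|j-j'|$, and Theorem \ref{th1.1} upgrades this to $L^p(\R^N,\omega)$ boundedness for $1<p<\infty$. To close the argument one tracks constants: the per-shell scale-freezing error is $O(r-1)$ uniformly in $j$, so $\|R_1\|_{p\to p}\to0$ as $r\to1^+$; after fixing such an $r$, the per-cube discretization error at scale $j$ is $O(r^{-M\varepsilon})$ uniformly in $j$, so $\|R_M\|_{p\to p}\to0$ as $M\to\infty$. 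Choosing $r$ near $1$ and then $M$ large makes $\|R_1+R_M\|_{p\to p}<1$, which is what was needed.

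The main obstacle is the second step: establishing the two-metric almost-orthogonal kernel estimates for the compositions $\psi_s\ast q_t$ and for their shell and cube differences in the Dunkl setting. Because the Dunkl translation has no usable pointwise formula for a general reflection group $G$, one cannot manipulate $\psi_t\ast q_t$ directly and must instead derive everything from the size and smoothness of $p_t(x,y)$ combined with the compact Euclidean support and the cancellation of $\psi$; producing the decisive gain $\big(d(x,y)/\|x-y\|\big)^\varepsilon$, and keeping all the estimates quantitative enough to push the operator norm of $R_1+R_M$ strictly below $1$, is the delicate heart of the proof.
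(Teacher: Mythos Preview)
Your proposal is essentially the paper's proof: the decomposition $I=T_M+R_1+R_M$, the Cotlar--Stein argument for $L^2$, the verification that $R_1,R_M$ are Dunkl--Calder\'on--Zygmund operators so that Theorem~\ref{th1.1} gives $L^p$-boundedness with $\|R_1\|_{p,p}\lesssim (r-1)$ and $\|R_M\|_{p,p}\lesssim r^{-M}$, and the Neumann-series inversion $h=(T_M)^{-1}f$---all match. The one point where the paper differs from your sketch is the convergence of the discrete series in $L^p$: rather than deducing it from the convergence of the continuous formula~\eqref{ccrf} (which does not directly control the tails of the \emph{discrete} sum applied to $h$), the paper invokes the Littlewood--Paley estimate of Theorem~\ref{1.6} to bound $\big\|\sum_{|j|>n}\sum_{Q\in Q^j}\omega(Q)\psi_Q(\cdot,x_Q)q_Q h(x_Q)\big\|_p$ by $\big\|\big(\sum_{|j|>n}\sum_{Q\in Q^j}|q_Q h(x_Q)|^2\chi_Q\big)^{1/2}\big\|_p$, which tends to zero because $\|S(h)\|_p<\infty$.
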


This weak-type discrete Calder\'on reproducing formula leads to the
following discrete Littlewood-Paley square function.
\begin{definition}\label{dsf}
    For $f\in L^2(\R^N, \omega), S(f),$ the \emph{discrete Littlewood--Paley square function} of $f,$ is defined by
    \begin{eqnarray}\label{square_function}
    S(f)(x):= \Big\{
    \sum\limits_{j=-\infty}^\infty\sum\limits_{Q\in Q^j}|q_{Q}f(x_{Q})|^2\chi_{Q}
    (x) \Big\}^{1/2},
    \end{eqnarray}
    where $q_Q=q_j$ when $Q\in Q^j$ and $\chi_{Q}(x)$ is the characteristical function of the cube $Q.$
\end{definition}

Applying the Dunkl-Calder\'on-Zygmund singular integral operator
theory, the Littlewood-Paley $L^p, 1<p<\infty,$ estimates are given
by the following
\begin{theorem}\label{1.6}
    There exist two constants $C$ and $C'$ such that for $L^p(\R^N, \omega), 1<p<\infty,$
    $$C'\|f\|_p\leqslant \|S(f)\|_p\leqslant C\|f\|_p.$$
\end{theorem}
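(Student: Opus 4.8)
The plan is to derive both inequalities from the weak-type discrete Calderón reproducing formula of Theorem~\ref{1.5} together with the Dunkl-Calderón-Zygmund singular integral operator theory (Theorem~\ref{th1.1}) and a vector-valued Cotlar-Stein / almost-orthogonality argument. First I would treat the lower bound $\|S(f)\|_p \gtrsim \|f\|_p$. Given $f\in L^2\cap L^p$, apply Theorem~\ref{1.5} to write $f(x)=\sum_j\sum_{Q\in Q^j}\omega(Q)\,\psi_j(x,x_Q)\,q_jh(x_Q)$ with $\|h\|_p\sim\|f\|_p$. For $g\in L^{p'}(\R^N,\omega)$ with $\|g\|_{p'}\le 1$, pair and use the identity $q_jh(x_Q)=q_Qf'(x_Q)$ on a companion expansion, then by Cauchy-Schwarz in $(j,Q)$ dominate $|\langle f,g\rangle|$ by $\|S(f)\|_p$ times the $L^{p'}$-norm of a dual square function of $g$ built from $\{\psi_j\}$; the latter is controlled by $\|g\|_{p'}$ via the upper-bound half applied to the adjoint family. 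So the lower bound reduces to the upper bound for the transpose kernels, which satisfy the same size/regularity estimates.

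Next, for the upper bound $\|S(f)\|_p\lesssim\|f\|_p$, the key is that $q_j = q_{r^j}$ with $q_t=t\partial_t p_t$ is (for fixed scale) a smooth molecule in the sense of Definition~\ref{sm} centered at either variable with parameters $\beta,\gamma<1$, as remarked after that definition. Hence each $q_j$ and the discretized operator $f\mapsto \{q_jf(x_Q)\}$ behaves like a Littlewood-Paley piece of a Dunkl-Calderón-Zygmund operator. I would introduce the vector-valued operator $U: f\mapsto (q_jf)_{j\in\Z}$ into $L^p(\ell^2)$ and show it is bounded by checking that $UU^*$ and $U^*U$ are, respectively, a bounded operator on $L^p(\ell^2)$ and on $L^p$: the off-diagonal blocks $q_jq_k^*$ satisfy almost-orthogonality estimates $\|q_jq_k^*\|_{2\to2}\lesssim r^{-\delta|j-k|}$ (from the cancellation $\int q_j\,d\omega=0$ and the molecule estimates, paralleling the new almost orthogonal estimates announced in the abstract), while the full sum $\sum_j q_j^*q_j$ is a Dunkl-Calderón-Zygmund operator — its kernel $\sum_j\int q_j(x,z)q_j(z,y)\,d\omega(z)$ inherits size \eqref{si} and regularity \eqref{smooth y3}--\eqref{smooth x3} by summing the molecule bounds over scales, and it satisfies $WBP$ and $T1=T^*1=0$ from the moment cancellation of $q_j$. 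Theorem~\ref{th1.1} then gives its $L^p$ boundedness, $1<p<\infty$. Passing from the continuous square function to the discrete one $S(f)$ (replacing $q_jf(y)$ on $Q$ by $q_jf(x_Q)$) costs only an error estimated by the smoothness of $q_j$ on cubes of side $r^{-M-j}$, summable for $M$ large; here one uses the $L^p$ boundedness of the Hardy-Littlewood maximal function on $(\R^N,\omega)$.

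The main obstacle I anticipate is the vector-valued/almost-orthogonality bookkeeping in the Dunkl setting: the kernels $q_j(x,y)$ only satisfy the weak size bound involving $d(x,y)/\|x-y\|$ and the weak smooth-molecule estimates of Definition~\ref{wsm} (with $d(x,x_0)$, not $\|x-x_0\|$, in the numerator decay), so the composition estimates $q_jq_k^*$ and the kernel of $\sum_j q_j^*q_j$ must be carried out with the two metrics $\|\cdot\|$ and $d(\cdot,\cdot)$ simultaneously, using $\omega(B(x,\|x-y\|))\sim\omega(B(x,d(x,y)))$ only when $\|x-y\|\sim d(x,y)$ and otherwise exploiting the extra factor $(d/\|\cdot\|)^\varepsilon$. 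Verifying that the synthesized operator $\sum_j q_j^*q_j$ genuinely meets Definition~\ref{sio} — in particular that the gain $(d(x,y)/\|x-y\|)^\varepsilon$ survives the convolution over the intermediate variable $z$ — is the delicate computation; once it is in place, Theorem~\ref{th1.1} closes the argument and the rest is the standard discretization/maximal-function routine.
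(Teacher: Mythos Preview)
Your plan has a genuine gap in the upper bound for $p\neq 2$. You propose to show $U:f\mapsto(q_jf)_j$ is bounded from $L^p$ into $L^p(\ell^2)$ by checking that the scalar operator $U^*U=\sum_j q_j^*q_j$ is a Dunkl--Calder\'on--Zygmund operator and hence bounded on $L^p$. But the implication ``$U^*U$ bounded on $L^p$ $\Rightarrow$ $U$ bounded $L^p\to L^p(\ell^2)$'' is an $L^2$ identity ($\|Uf\|_2^2=\langle U^*Uf,f\rangle$) that has no analogue for $p\neq 2$; knowing $\|\sum_j q_j^*q_jf\|_p\lesssim\|f\|_p$ does not control $\|(\sum_j|q_jf|^2)^{1/2}\|_p$. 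The almost-orthogonality $\|q_jq_k^*\|_{2\to 2}\lesssim r^{-\delta|j-k|}$ likewise only feeds Cotlar--Stein, which is again an $L^2$ tool. To make a scalar-kernel route work you would instead need to show that the randomized sums $\sum_j\epsilon_jq_j$ are Dunkl--Calder\'on--Zygmund operators with \emph{uniform} kernel bounds and then invoke Khintchine's inequality---a different computation from the one you outline, and one that requires checking the smoothness of $\sum_j\epsilon_j q_j(x,y)$, not of $\sum_j\int q_j(x,z)q_j(z,y)\,d\omega(z)$.

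The paper avoids this entirely by treating $S$ directly as an $H$-valued Dunkl--Calder\'on--Zygmund operator with kernel $K(x,y)=\{\chi_Q(x)q_Q(x_Q,y)\}_Q$ and verifying the vector-valued smoothness condition
\[
\|K(x,y)-K(x,y')\|_H=\Big(\sum_Q\chi_Q(x)|q_Q(x_Q,y)-q_Q(x_Q,y')|^2\Big)^{1/2}\lesssim\Big(\frac{\|y-y'\|}{\|x-y\|}\Big)^\varepsilon\frac{1}{\omega(B(x,d(x,y)))}
\]
by splitting the sum over $j$ according to whether $r^{-j}\leqslant\|y-y'\|$, $\|y-y'\|\leqslant r^{-j}\leqslant\|x-y\|$, or $r^{-j}\geqslant\|x-y\|$, and using the size estimate of $q_Q$ in the first range and the smoothness estimate in the other two. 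This, together with the $L^2$ bound already established, feeds a vector-valued version of Theorem~\ref{th1.1} and yields $\|S(f)\|_p\lesssim\|f\|_p$ directly. The lower bound then follows by duality; invoking Theorem~\ref{1.5} for the lower bound, as you do, is unnecessary and risks circularity since the $L^p$-convergence of the series in Theorem~\ref{1.5} actually relies on Theorem~\ref{1.6}.
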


\subsubsection{{Littlewood-Paley theory and Hardy space}}
\ \\

The above discrete Littlewood-Paley theory leads to introduce the
Dunkl-Hardy space norm for $f\in L^2{(\R^N, \omega)}$ as follows:
\begin{definition}\label{hpn}
    For $f\in L^2{(\R^N, \omega)}, \|f\|_{H_d^p},$ the Dunkl-Hardy space norm of $f,$ is defined by $\|f\|_{H_d^p}:=\|S(f)\|_p$ for $0<p\leqslant 1.$
\end{definition}

Using the Dunkl-Calder\'on-Zygmund singular integral operator
theory, the weak-type discrete Calder\'on reproducing formula for
$f\in L^2{(\R^N, \omega)}$ with respect to the Dunkl-Hardy space
norm is given by the following:

\begin{theorem}\label{1.7}
    If $f\in L^2(\R^N, \omega)$ with $\|f\|_{H_d^p}<\infty,$ for $\frac{{\bf N}}{{\bf N}+1}<p\leqslant 1,$ then there exists a function $h\in L^2(\R^N,\omega),$ such that $\|f\|_2\sim \|h\|_2, \|f\|_{H_d^p} \sim \|h\|_{H_d^p}$ and
    \begin{align*}
    f(x)=\sum\limits_{j=-\infty}^\infty\sum\limits_{Q\in Q^j}\omega(Q)
    \psi_{Q}(x,x_{Q})q_{Q} h(x_{Q}),
    \end{align*}
    where $\psi_Q =\psi_j, q_Q=q_j$ for $Q\in Q^j$ and the series converges in $L^2{(\R^N,\omega)}$ norm and the Dunkl-Hardy space norm.
\end{theorem}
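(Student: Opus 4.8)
The strategy is to run the Calder\'on reproducing formula machinery of Theorem~\ref{1.5}, but now tracking the Dunkl-Hardy norm $\|\cdot\|_{H_d^p}$ in place of the $L^p$ norm, for $\frac{\mathbf N}{\mathbf N+1}<p\leqslant 1$. Starting from the continuous formula \eqref{ccrf}, we split $I=T_M+R_1+R_M$ exactly as in the paragraph preceding Theorem~\ref{1.5}. The first task is to show that $R_1$ and $R_M$ are Dunkl-Calder\'on-Zygmund operators (this is claimed in the text) and then to upgrade their boundedness from $L^p$ to the square-function/Hardy norm: concretely, to prove that $\|(R_1+R_M)f\|_{H_d^p}<\tfrac12\|f\|_{H_d^p}$ for $M$ large. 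This requires a vector-valued / almost-orthogonality estimate comparing $q_j\ast(R_1+R_M)f$ with $q_k\ast f$, i.e. an estimate on the composed kernels $q_j\ast R_\bullet\ast q_k$ that decays geometrically in $|j-k|$ and in $M$; combined with the discrete Littlewood--Paley characterization (Theorem~\ref{1.6} together with the molecule estimates of Theorem~\ref{1.3}) this gives the contraction on $H_d^p$. Once the contraction is in hand, $(T_M)^{-1}=\sum_{m\geqslant0}(R_1+R_M)^m$ converges on the Hardy space, $(T_M)^{-1}$ is bounded on $H_d^p$, and $\|(T_M)^{-1}f\|_{H_d^p}\sim\|f\|_{H_d^p}$.

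Next I would set $h=(T_M)^{-1}f$ and verify the conclusion directly. Applying $T_M$ to $h$ and using the definition of $T_M$ gives
\[
f(x)=T_M h(x)=-\ln r\sum_{j=-\infty}^\infty\sum_{Q\in Q^j}\omega(Q)\,\psi_j(x,x_Q)\,q_j\ast h(x_Q),
\]
which is the asserted expansion after absorbing the harmless constant $-\ln r$ (or, equivalently, replacing $r$ by the admissible value making $-\ln r$ disappear, or folding it into $h$; the paper's statement suppresses it). The norm equivalences $\|f\|_2\sim\|h\|_2$ and $\|f\|_{H_d^p}\sim\|h\|_{H_d^p}$ follow from the boundedness of $T_M$ and $(T_M)^{-1}$ on $L^2$ (already available) and on $H_d^p$ (just established). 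Convergence of the series in both norms is obtained by the same almost-orthogonality estimates: the partial sums over $|j|\leqslant J$, $Q\subset B(0,2^J)$ form a Cauchy sequence because the tail pieces are controlled by $q_k$-applied-to-$f$ square-sums that vanish as $J\to\infty$, using again Theorem~\ref{1.3} to see that $\psi_j(\cdot,x_Q)$ behaves like a (weak) smooth molecule and Theorem~\ref{1.6} to pass back to the Hardy norm.

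The restriction $p>\frac{\mathbf N}{\mathbf N+1}$ enters precisely in the molecule estimates: the smooth molecules in Definition~\ref{sm}--\ref{wsm} carry one order of smoothness and one order of cancellation, and the discrete square function controls the $H_d^p$ norm of a sum $\sum\lambda_Q m_Q$ of molecules only when $p$ exceeds the critical index $\frac{\mathbf N}{\mathbf N+\beta}$ determined by the smoothness exponent $\beta<\varepsilon\leqslant1$; choosing $\beta$ close enough to $1$ covers all $p$ in the stated range. I expect the main obstacle to be exactly the upgrade of the error-term bound from $L^p$ to $H_d^p$, i.e. proving $\|(R_1+R_M)f\|_{H_d^p}\le\tfrac12\|f\|_{H_d^p}$: this is where one must combine the new almost-orthogonal estimates in the Dunkl setting (the interplay between the Euclidean distance $\|x-y\|$ and the Dunkl distance $d(x,y)$ in the kernel bounds \eqref{si}--\eqref{smooth x3}) with the molecule-to-square-function passage, and where the discrepancy between $\|x-x_0\|$ in $\mathbb M$ and $d(x,x_0)$ in $\widetilde{\mathbb M}$ must be handled carefully so that composing $q_j$ with the error operators keeps one inside a usable molecule class. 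Everything else is bookkeeping built on Theorems~\ref{th1.1}, \ref{1.3}, \ref{1.5} and \ref{1.6}.
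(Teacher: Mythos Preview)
Your high-level plan matches the paper's: decompose $I=T_M+R_1+R_M$, prove that $R_1+R_M$ is a contraction in the $H_d^p$-norm, invert $T_M$, and set $h=(T_M)^{-1}f$. The difference lies in \emph{how} the contraction $\|S((R_1+R_M)f)\|_p\leqslant\tfrac12\|S(f)\|_p$ is obtained, and here your proposal has a gap.

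You propose to estimate the composed kernels $q_j R_\bullet q_k$ directly and feed this into a square-function bound. The trouble is that to exploit such an estimate you must expand $f$ via a discrete reproducing formula in terms of the coefficients $q_k f(x_Q)$---but that formula is exactly what Theorem~\ref{1.7} is establishing, so the argument would be circular (and Theorem~\ref{1.6}, which you invoke, is only for $1<p<\infty$). The paper avoids this by passing through the \emph{Coifman--Weiss} square function $S_{cw}$ built from the operators $D_k$ on the homogeneous-type space $(\mathbb R^N,\|\cdot\|,\omega)$, for which an \emph{independent} discrete Calder\'on reproducing formula is already available (Theorem~\ref{dCRh}). The contraction is proved as the three-step chain
\[
\|S(R_1 f)\|_p\;\lesssim\;\|S_{cw}(R_1 f)\|_p\;\lesssim\;\|R_1\|_{dcz}\,\|S_{cw}(f)\|_p\;\lesssim\;\|R_1\|_{dcz}\,\|S(f)\|_p,
\]
and similarly for $R_M$. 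Steps~1 and~3 compare $S$ and $S_{cw}$ via the \emph{strong} almost-orthogonal estimate \eqref{aoe2} (both $q_{Q'}$ and $D_k$ are smooth molecules). Step~2 expands $f$ by the $D_k$-formula and applies Lemma~\ref{Lem aoe} to $D_kR_1D_{k'}$; because $R_1$ sends smooth molecules only to \emph{weak} smooth molecules (Theorem~\ref{1.3}), the resulting bound involves the Dunkl metric $d(\cdot,\cdot)$, and this is handled by the finite-group decomposition in Lemma~\ref{exchange}. This $S_{cw}$-intermediary is the key technical device you are missing; once it is in place, the remaining parts of your plan (inverting $T_M$, norm equivalences, convergence of the series via tail estimates) coincide with the paper's argument.
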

We remark that the Dunkl-Calder\'on-Zygmund singular integral
operator theory plays a crucial role to obtain the sharp range
$\frac{{\bf N}}{{\bf N}+1}<p\leqslant 1$ which is the same as the
classical case.

Applying the above Theorem \ref{1.7} implies the following duality
estimate which will be a key idea for developing the Dunkl-Hardy
space theory:
\begin{proposition}\label{pr1}
    For $f, g\in L^2{(\R^N, \omega)}$ and $\frac{{\bf N}}{{\bf N}+1}<p\leqslant 1,$ then there exists a constant $C$ such that
    $$|\langle f, g\rangle|\leqslant C \|f\|_{H_d^p}\|g\|_{CMO_d^p}.$$
\end{proposition}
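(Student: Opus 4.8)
The plan is to deduce the estimate from the weak-type discrete Calder\'on reproducing formula of Theorem \ref{1.7}, combined with a stopping-time decomposition over the level sets of the discrete square function $S$. One may assume $\|f\|_{H_d^p}<\infty$ and $\|g\|_{CMO_d^p}<\infty$, since otherwise there is nothing to prove. First I would apply Theorem \ref{1.7} to $f$: there is $h\in L^2(\R^N,\omega)$ with $\|h\|_{H_d^p}\sim\|f\|_{H_d^p}$ and
\[
f=\sum_{j=-\infty}^{\infty}\sum_{Q\in Q^j}\omega(Q)\,\psi_Q(\cdot,x_Q)\,(q_Q h)(x_Q),
\]
the series converging in $L^2(\R^N,\omega)$. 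Since $g\in L^2(\R^N,\omega)$, one pairs term by term:
\[
\langle f,g\rangle=\sum_{j=-\infty}^{\infty}\sum_{Q\in Q^j}\omega(Q)\,(q_Q h)(x_Q)\,\lambda_Q(g),\qquad \lambda_Q(g):=\int_{\R^N}\psi_Q(x,x_Q)\,g(x)\,d\omega(x).
\]
Write $a_Q:=(q_Q h)(x_Q)$, so $S(h)(x)^2=\sum_{j}\sum_{Q\in Q^j}|a_Q|^2\chi_Q(x)$ and $\|S(h)\|_p=\|h\|_{H_d^p}$.

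The crucial ingredient is the Carleson-type bound
\[
\sum_{j=-\infty}^{\infty}\ \sum_{Q\in Q^j,\,Q\subseteq R}\omega(Q)\,|\lambda_Q(g)|^2\ \leqslant\ C\,\|g\|_{CMO_d^p}^2\,\omega(R)^{2/p-1}
\qquad\text{for every }r\text{-dyadic cube }R.
\]
Since $\psi$ is a fixed radial function with vanishing $\omega$-integral, $\lambda_Q(g)$ is, up to the symmetry of the Dunkl translation of radial functions, a Littlewood--Paley coefficient of $g$; hence this bound follows from the definition of $\|g\|_{CMO_d^p}$ together with the Dunkl-setting almost-orthogonality estimates, which permit replacing one family of admissible bumps by another at the level of Carleson sequences.

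Next comes the discrete duality argument. Set $\Omega_k:=\{x\in\R^N:S(h)(x)>2^k\}$ for $k\in\mathbb Z$; since $S(h)\in L^p$ one has $\omega(\Omega_k)\leqslant 2^{-kp}\|S(h)\|_p^p<\infty$ and $\omega\big(\bigcap_k\Omega_k\big)=0$. If $a_Q\neq0$ then $S(h)\geqslant|a_Q|$ on $Q$, so $Q\subseteq\Omega_k$ for $k$ small while $\omega(Q\cap\Omega_k)\to0$ as $k\to\infty$; hence there is a largest integer $k=k(Q)$ with $\omega(Q\cap\Omega_k)>\tfrac12\omega(Q)$, and we set $\mathcal B_k:=\{Q:k(Q)=k\}$. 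Let $\widetilde\Omega_k:=\{x:\mathcal M(\chi_{\Omega_k})(x)>c\}$ with $\mathcal M$ the Hardy--Littlewood maximal operator on the doubling space $(\R^N,\|\cdot\|,\omega)$ and $c>0$ small and fixed; then (using that $r$-dyadic cubes are comparable to balls of comparable $\omega$-measure) $Q\subseteq\widetilde\Omega_k$ for all $Q\in\mathcal B_k$, and $\omega(\widetilde\Omega_k)\leqslant C\,\omega(\Omega_k)$ by the weak-type $(1,1)$ inequality. Decompose $\widetilde\Omega_k$ into its maximal $r$-dyadic cubes $\{R^k_i\}_i$. By Cauchy--Schwarz layer by layer,
\[
|\langle f,g\rangle|\ \leqslant\ \sum_{k}\Big(\sum_{Q\in\mathcal B_k}\omega(Q)|a_Q|^2\Big)^{1/2}\Big(\sum_{Q\in\mathcal B_k}\omega(Q)|\lambda_Q(g)|^2\Big)^{1/2}.
\]
For the first factor, $\omega(Q)\leqslant2\,\omega(Q\setminus\Omega_{k+1})$ for $Q\in\mathcal B_k$ and $S(h)\leqslant2^{k+1}$ on $\Omega_{k+1}^{c}$; summing the pointwise inequality $\sum_{Q\in\mathcal B_k}|a_Q|^2\chi_Q\leqslant S(h)^2$ over $\widetilde\Omega_k\setminus\Omega_{k+1}$ yields $\sum_{Q\in\mathcal B_k}\omega(Q)|a_Q|^2\leqslant C\,2^{2k}\,\omega(\Omega_k)$. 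For the second factor, each $Q\in\mathcal B_k$ lies in some $R^k_i$, so the Carleson bound, the inequality $2/p-1\geqslant1$ (as $p\leqslant1$), the disjointness of the $R^k_i$, and the elementary inequality $\sum_i t_i^{\theta}\leqslant(\sum_i t_i)^{\theta}$ for $t_i\geqslant0$, $\theta\geqslant1$, give $\sum_{Q\in\mathcal B_k}\omega(Q)|\lambda_Q(g)|^2\leqslant C\,\|g\|_{CMO_d^p}^2\,\omega(\widetilde\Omega_k)^{2/p-1}\leqslant C\,\|g\|_{CMO_d^p}^2\,\omega(\Omega_k)^{2/p-1}$. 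Combining and using $p\leqslant1$ once more,
\[
|\langle f,g\rangle|\ \leqslant\ C\,\|g\|_{CMO_d^p}\sum_{k}2^{k}\,\omega(\Omega_k)^{1/p}\ \leqslant\ C\,\|g\|_{CMO_d^p}\Big(\sum_{k}2^{kp}\omega(\Omega_k)\Big)^{1/p}\ \leqslant\ C\,\|g\|_{CMO_d^p}\,\|S(h)\|_p\ \sim\ C\,\|g\|_{CMO_d^p}\,\|f\|_{H_d^p},
\]
which is the claim.

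I expect the main obstacle to be the Carleson-type bound on $\{\lambda_Q(g)\}$: one must show that the coefficients obtained by testing $g$ against the synthesis bumps $\psi_Q(\cdot,x_Q)$ of Theorem \ref{1.7} are controlled by $\|g\|_{CMO_d^p}$. This is precisely where the Dunkl-specific input is needed --- the new almost-orthogonality estimates comparing the $\psi$-family with the family defining $CMO_d^p$, together with a careful treatment of the Dunkl convolution and translation (in particular the symmetry and cancellation available for radial $\psi$). The remainder is the standard Carleson-measure/stopping-time machinery, which applies here because $(\R^N,\|\cdot\|,\omega)$ is a space of homogeneous type (so the maximal inequality and the maximal-cube decomposition are available) and because the restriction $p\leqslant1$ is exactly what makes the two superadditivity steps go through.
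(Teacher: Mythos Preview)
Your approach is essentially the same as the paper's: apply Theorem~\ref{1.7} to $f$, pair with $g$ term by term, group the cubes $Q$ according to the level sets $\Omega_k=\{S(h)>2^k\}$ via the stopping-time criterion $\omega(Q\cap\Omega_k)>\tfrac12\omega(Q)\geqslant\omega(Q\cap\Omega_{k+1})$, apply Cauchy--Schwarz, control the $a_Q$-factor by $2^{2k}\omega(\Omega_k)$ through integration over $\widetilde\Omega_k\setminus\Omega_{k+1}$, and control the $g$-factor by the $CMO_d^p$ norm together with the superadditivity $\sum t_i^\theta\leqslant(\sum t_i)^\theta$ for $\theta=2/p-1\geqslant1$. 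The paper organizes the Cauchy--Schwarz step slightly differently (within each maximal cube $Q_\ell^*$ of $B_\ell$ rather than over all of $\mathcal B_k$ at once, then using H\"older over the $Q_\ell^*$), but the substance is identical.

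One correction to your self-assessment: the ``main obstacle'' you anticipate --- the Carleson-type bound on $\{\lambda_Q(g)\}$ --- is in fact immediate here and requires no almost-orthogonality. By Definition~\ref{norm-CMO}, $\|g\|_{CMO_d^p}$ is defined \emph{precisely} via the quantities $\psi_Q g(x_Q)=\int\psi_Q(x_Q,y)g(y)\,d\omega(y)$, and since $\psi$ is radial the Dunkl-translation kernel is symmetric, $\psi_Q(x,x_Q)=\psi_Q(x_Q,x)$, so your $\lambda_Q(g)$ equals $\psi_Q g(x_Q)$ exactly. The Carleson inequality $\sum_{Q\subseteq R}\omega(Q)|\lambda_Q(g)|^2\leqslant\|g\|_{CMO_d^p}^2\,\omega(R)^{2/p-1}$ is therefore the definition itself, and the paper simply invokes it without further work.
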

Here for $L^2{(\R^N, \omega)}$ function, the norm of the
Dunkl-Carleson measure space $CMO_d^p(\R^N, \omega)$ is defined by

\begin{definition} \label{norm-CMO}
    Suppose that  $f\in L^2(\R^N, \omega).$
    The norm of $f\in {CMO}_d^p(\R^N, \omega)$ is defined by
    $$\|f\|_{CMO_d^p} := \sup\limits_{P}
    \Big\{ {1\over\omega(P)^{{2\over p} - 1}}
    \sum\limits_{ Q \subseteq P }\omega(Q)
    \big| \psi_{Q}f (x_Q)
    \big|^2 \Big\}^{1/2}<\infty$$
    for $0<p\leqslant 1$, where $P$ runs over all dyadic cubes and $\psi_{Q}=\psi_{j}$ when $Q\in Q^j$.
\end{definition}

The above Proposition \ref{pr1} means that each function $f\in
L^2(\R^N,\omega)$ with $\|f\|_{H_d^p}<\infty$ can be considered as a
continuous linear functional on $L^2{(\R^N, \omega)}\cap
CMO_d^p{(\R^N, \omega)},$ the subspace of $g\in L^2{(\R^N, \omega)}$
with the Dunkl-Carleson measure space norm
$\|g\|_{CMO_d^p}<\infty.$ Therefore, one can consider
$L^2{(\R^N, \omega)}\cap CMO_d^p{(\R^N, \omega)}$ as a new test
function space and define the Dunkl-Hardy space  $H_d^p$ as the
collection of some distributions on $L^2{(\R^N, \omega)}\cap
CMO_d^p{(\R^N, \omega)}$. More precisely, the Dunkl-Hardy space  is
defined by the following
\begin{definition} \label{hp}
    The Dunkl-Hardy space $H_d^p(\R^N, \omega), \frac{{\bf N}}{{\bf N}+1}<p\leqslant 1,$ is defined by the collection of all distributions $f\in (L^2{(\R^N, \omega)}\cap CMO_d^p{(\R^N, \omega)})^\prime$ such that $$f(x)=\sum\limits_{j=-\infty}^\infty\sum\limits_{Q\in Q^j}\omega(Q)
    \lambda_{Q}\psi_{Q}(x,x_{Q})$$
    with $\|\Big\{
    \sum\limits_{j=-\infty}^\infty\sum\limits_{Q\in Q^{j}}|\lambda_{Q}|^2\chi_{Q}
    \Big\}^{1/2}\|_p<\infty,$ where the series converges in the distribution sense and $\psi_Q=\psi_j$ if $Q\in Q^j.$

    If $f\in H_d^p(\R^N, \omega),$ the norm of $f$ is defined by
    $$\|f\|_{H_d^p}:=\inf \Big\{\Big\|\Big\{ \sum\limits_{j=-\infty}^\infty
    \sum\limits_{Q\in Q^{j}}|\lambda_{Q}|^2\chi_{Q} (x)
    \Big\}^{1/2}\Big\|_p\Big\},$$ where the infimum is taken over all
    $f(x)=\sum\limits_{j=-\infty}^\infty\sum\limits_{Q\in Q^{j}}\omega(Q)
    \lambda_{Q}\psi_{Q}(x,x_{Q}).$
\end{definition}
We remark that if $\|\Big\{
\sum\limits_{j=-\infty}^\infty\sum\limits_{Q\in Q^{j}}|\lambda_{Q}|^2\chi_{Q}
\Big\}^{1/2}\|_p<\infty,$ then the series $\sum\limits_{j=-\infty}^\infty\sum\limits_{Q\in Q^{j}}\omega(Q)
\lambda_{Q}\psi_{Q}(x,x_{Q})$ defines a distribution in $(L^2{(\R^N, \omega)}\cap CMO_d^p{(\R^N, \omega)})^\prime.$ See the proof in the {\bf Section 4}.

The following theorem is very useful in the proof of the boundedness
for the Dunkl-Calder\'on-Zygmund singular integral operators on the
Dunkl-Hardy spaces $H_d^p(\R^N,\omega).$
\begin{theorem}\label{1.8}
    $$H_d^p(\R^N, \omega)=\overline{L^2(\R^N,\omega)\cap H_d^p(\R^N, \omega)},$$
    where $\overline{L^2(\R^N,\omega)\cap H_d^p}$ is the collection of all distributions $f\in (L^2{(\R^N, \omega)}\cap CMO_d^p{(\R^N, \omega)})^\prime$ such that there exists a sequence $\{f_n\}_{n=1}^\infty$ in $L^2(\R^N,\omega)$ with $\|f_n-f_m\|_{H_d^p}\rightarrow 0$ as $n,m\rightarrow \infty.$ Moreover, $f_n$ converges to $f$ in $(L^2{(\R^N, \omega)}\cap CMO_d^p{(\R^N, \omega)})^\prime.$ 
\end{theorem}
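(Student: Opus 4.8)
The plan is to prove the two inclusions separately, working throughout in the range $\frac{\mathbf N}{\mathbf N+1}<p\le 1$ in which Theorem \ref{1.7} and Proposition \ref{pr1} apply; recall also that by Theorem \ref{1.7} (together with the square-function estimate for the wavelet-type decomposition) the two meanings of the $H_d^p$ quasi-norm of an $L^2(\R^N,\omega)$ function — via $S$ as in Definition \ref{hpn} and via the infimum over representations as in Definition \ref{hp} — are comparable. For the inclusion $H_d^p(\R^N,\omega)\subseteq\overline{L^2(\R^N,\omega)\cap H_d^p}$, fix for $f\in H_d^p(\R^N,\omega)$ a representation $f=\sum_{j}\sum_{Q\in Q^j}\omega(Q)\lambda_Q\psi_Q(\cdot,x_Q)$ with $\big\|\{\sum_{j,Q}|\lambda_Q|^2\chi_Q\}^{1/2}\big\|_p<\infty$, the series converging in $(L^2\cap CMO_d^p)'$, and let $f_N$ be its partial sums over an increasing exhaustion $E_N$ of the index set. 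Each $\psi_Q(\cdot,x_Q)$ belongs to $L^2(\R^N,\omega)\cap H_d^p$ (it is a smooth molecule up to normalization), so $f_N\in L^2(\R^N,\omega)$, and since the truncations $f_N$ and $f_N-f_M$ are themselves admissible representations, Definition \ref{hp} gives
$$\|f_N\|_{H_d^p}\le\Big\|\Big\{\sum_{j,Q}|\lambda_Q|^2\chi_Q\Big\}^{1/2}\Big\|_p<\infty,\qquad \|f_N-f_M\|_{H_d^p}\le\Big\|\Big\{\sum_{(j,Q)\notin E_M}|\lambda_Q|^2\chi_Q\Big\}^{1/2}\Big\|_p,$$
and the right side of the second inequality tends to $0$ as $M\to\infty$, uniformly in $N>M$, by dominated convergence (the integrand is dominated by $\{\sum_{j,Q}|\lambda_Q|^2\chi_Q\}^{p/2}\in L^1$ and tends to $0$ a.e.). Hence $f_N\in L^2\cap H_d^p$, the sequence $\{f_N\}$ is Cauchy in the $H_d^p$ quasi-norm, and $f_N\to f$ in $(L^2\cap CMO_d^p)'$, so $f\in\overline{L^2\cap H_d^p}$.

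For the reverse inclusion, take $f\in\overline{L^2\cap H_d^p}$ with an approximating sequence $\{f_n\}\subset L^2(\R^N,\omega)$ as in the statement; passing to a subsequence we may assume $\|f_{n+1}-f_n\|_{H_d^p}\le 2^{-n}$, so that $f=f_1+\sum_{n\ge 1}(f_{n+1}-f_n)$ in $(L^2\cap CMO_d^p)'$ by telescoping. Applying Theorem \ref{1.7} to the $L^2$ functions $f_1$ and $f_{n+1}-f_n$ produces discrete representations whose coefficient sequences $\lambda^{(0)},\lambda^{(1)},\dots$ satisfy $\big\|\{\sum_{j,Q}|\lambda^{(0)}_Q|^2\chi_Q\}^{1/2}\big\|_p\le C\|f_1\|_{H_d^p}$ and $\big\|\{\sum_{j,Q}|\lambda^{(n)}_Q|^2\chi_Q\}^{1/2}\big\|_p\le C2^{-n}$ for $n\ge 1$, the associated series converging in $L^2(\R^N,\omega)$. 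Set $\lambda_Q:=\sum_{n\ge 0}\lambda^{(n)}_Q$, which converges for each fixed $(j,Q)$ by the $\ell^2$-bounds just displayed; using the pointwise subadditivity $(\sum_n a_n)^p\le\sum_n a_n^p$ valid for $p\le 1$, together with Minkowski's inequality in $\ell^2$ over the cubes through a point,
$$\Big\|\Big\{\sum_{j,Q}|\lambda_Q|^2\chi_Q\Big\}^{1/2}\Big\|_p^p\le\sum_{n\ge 0}\Big\|\Big\{\sum_{j,Q}|\lambda^{(n)}_Q|^2\chi_Q\Big\}^{1/2}\Big\|_p^p\le C\|f_1\|_{H_d^p}^p+C\sum_{n\ge 1}2^{-np}<\infty.$$
By the remark following Definition \ref{hp}, $F:=\sum_{j,Q}\omega(Q)\lambda_Q\psi_Q(\cdot,x_Q)$ is then a well-defined element of $(L^2\cap CMO_d^p)'$ with finite $H_d^p$ quasi-norm.

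It remains to identify $F$ with $f$. For $g\in L^2\cap CMO_d^p$, the Carleson-type estimate $\sum_{j,Q}\omega(Q)|\lambda^{(n)}_Q|\,|\langle\psi_Q(\cdot,x_Q),g\rangle|\le C\big\|\{\sum_{j,Q}|\lambda^{(n)}_Q|^2\chi_Q\}^{1/2}\big\|_p\|g\|_{CMO_d^p}$ — the easy half of $H_d^p$–$CMO_d^p$ duality, which underlies the remark after Definition \ref{hp} — is summable in $n$; this justifies interchanging the sum over $n$ with the sum over $(j,Q)$ in $\langle F,g\rangle=\sum_{j,Q}\omega(Q)\big(\sum_{n\ge 0}\lambda^{(n)}_Q\big)\langle\psi_Q(\cdot,x_Q),g\rangle$, giving $\langle F,g\rangle=\sum_{n\ge 0}\langle f_1\ (\text{or }f_{n+1}-f_n),g\rangle=\langle f,g\rangle$, where the middle equality uses the $L^2$-convergence of each discrete representation. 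Hence $F=f$ in $(L^2\cap CMO_d^p)'$, so $f\in H_d^p(\R^N,\omega)$, completing the proof.

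The genuinely delicate point is the second inclusion: one must glue the countably many discrete Calderón representations furnished by Theorem \ref{1.7} into a single representation of $f$ and verify that it still converges in $(L^2\cap CMO_d^p)'$ to $f$. This rests on two facts, both already in hand: the $\|\cdot\|_p^p$-subadditivity ($p\le 1$) of the $L^p$ square-function estimates, which keeps the combined coefficient sequence in the relevant tent-type space; and the uniform Carleson control of the coefficient pairing against $CMO_d^p$ functions, which licenses the Fubini interchange identifying $F$ with $f$. The remaining verifications are routine consequences of Theorem \ref{1.7}, Proposition \ref{pr1}, and the remark following Definition \ref{hp}.
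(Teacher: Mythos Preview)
Your proof is correct. The first inclusion is handled essentially as in the paper: truncate a wavelet-type representation of $f$ to finite partial sums $f_n\in L^2$, and check that $(f_n)$ is Cauchy in $H_d^p$ and converges to $f$ in $(L^2\cap CMO_d^p)'$. A minor stylistic difference is that you bound $\|f_N-f_M\|_{H_d^p}$ directly via Definition~\ref{hp} (the truncated tail is itself an admissible representation), whereas the paper passes through the square function $S$ and the almost-orthogonality estimate; your route is slightly more direct.

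The reverse inclusion is where your argument genuinely diverges from the paper. The paper invokes Proposition~\ref{CRFdis1}: given an approximating sequence $\{f_n\}$, that proposition manufactures a single distribution $h\in (L^2\cap CMO_d^p)'$ (the limit of $(T_M)^{-1}f_n$) and asserts that $f$ itself admits the weak-type discrete Calder\'on representation with coefficients $\lambda_Q=-\ln r\, q_Q h(x_Q)$, so $f\in H_d^p$ in one stroke. You instead telescope $f=f_1+\sum_{n\ge1}(f_{n+1}-f_n)$, apply Theorem~\ref{1.7} to each piece to get coefficient sequences $\lambda^{(n)}$, splice them via $\lambda_Q=\sum_n\lambda_Q^{(n)}$, and use $\|\cdot\|_p^p$-subadditivity to keep the combined square function finite. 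The paper's route is shorter on the page because the limiting work is pre-packaged in Proposition~\ref{CRFdis1}; your route is more self-contained (it does not require constructing the limiting $h$) and makes transparent exactly which estimates --- the $p\le1$ subadditivity and the absolute Carleson pairing from the remark after Definition~\ref{hp} --- drive the gluing.
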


As a direct consequence of Theorem \ref{1.8}, we obtain the following
\begin{corollary}\label{cor1}
    The subspace $L^2(\R^N, \omega)\cap H_d^p(\R^N,\omega)$ is dense in $H_d^p(\R^N,\omega)$ for $\frac{{\bf N}}{{\bf N}+1}<p\leqslant 1.$\\
\end{corollary}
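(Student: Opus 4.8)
The plan is to realize both $H_d^p(\R^N,\omega)$ and $\overline{L^2(\R^N,\omega)\cap H_d^p(\R^N,\omega)}$ as the image of one ``synthesis'' operator acting on a complete space of coefficient arrays, and then to identify the two by matching limits. Let $\mathfrak s^p$ be the space of complex arrays $\lambda=\{\lambda_Q\}$ indexed by the $r$-dyadic cubes $Q\in\bigcup_j Q^j$, with quasi-norm
\begin{equation*}
\|\lambda\|_{\mathfrak s^p}:=\Big\|\Big\{\sum_{j=-\infty}^{\infty}\sum_{Q\in Q^j}|\lambda_Q|^2\chi_Q\Big\}^{1/2}\Big\|_p ,
\end{equation*}
and set $\Phi(\lambda):=\sum_j\sum_{Q\in Q^j}\omega(Q)\lambda_Q\psi_Q(\cdot,x_Q)$. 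I would first assemble three facts. (i) $\mathfrak s^p$ is complete: if $\{\lambda^{(n)}\}$ is $\mathfrak s^p$-Cauchy, then $|\lambda_Q^{(n)}-\lambda_Q^{(m)}|\,\omega(Q)^{1/p}\le\|\lambda^{(n)}-\lambda^{(m)}\|_{\mathfrak s^p}$ for each fixed $Q$, so $\lambda_Q^{(n)}\to\lambda_Q$ for some scalars $\lambda_Q$, and two applications of Fatou's lemma give $\lambda\in\mathfrak s^p$ with $\|\lambda^{(n)}-\lambda\|_{\mathfrak s^p}\to 0$. (ii) By the remark following Definition \ref{hp} (together with Proposition \ref{pr1}), $\Phi$ maps $\mathfrak s^p$ into $\big(L^2(\R^N,\omega)\cap CMO_d^p(\R^N,\omega)\big)'$ with the bound $|\langle\Phi(\lambda),g\rangle|\le C\|\lambda\|_{\mathfrak s^p}\|g\|_{CMO_d^p}$ for all $g\in L^2\cap CMO_d^p$; moreover, by Definition \ref{hp}, the range of $\Phi$ is precisely $H_d^p$ and $\|\Phi(\lambda)\|_{H_d^p}\le\|\lambda\|_{\mathfrak s^p}$. (iii) By Theorem \ref{1.7} and the discrete Littlewood--Paley theory of Section 4, every $f\in L^2(\R^N,\omega)$ with $\|f\|_{H_d^p}<\infty$ admits a decomposition $f=\Phi(\lambda^{(f)})$ with $\|\lambda^{(f)}\|_{\mathfrak s^p}\sim\|f\|_{H_d^p}$, where $f\mapsto\lambda^{(f)}$ is linear; in particular $\lambda^{(f-g)}=\lambda^{(f)}-\lambda^{(g)}$ and $\|\lambda^{(f)}-\lambda^{(g)}\|_{\mathfrak s^p}\sim\|f-g\|_{H_d^p}$ on $L^2\cap H_d^p$, and there the square-function norm of Definition \ref{hpn} and the decomposition norm of Definition \ref{hp} are equivalent.

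For the inclusion $\overline{L^2\cap H_d^p}\subseteq H_d^p$, suppose $f\in(L^2\cap CMO_d^p)'$ is approximated by $\{f_n\}\subset L^2\cap H_d^p$ with $\|f_n-f_m\|_{H_d^p}\to 0$ and $f_n\to f$ in $(L^2\cap CMO_d^p)'$. By (iii), the arrays $\lambda^{(n)}:=\lambda^{(f_n)}$ satisfy $\Phi(\lambda^{(n)})=f_n$ and are $\mathfrak s^p$-Cauchy, hence by (i) converge to some $\lambda\in\mathfrak s^p$; set $\widetilde f:=\Phi(\lambda)$, which lies in $H_d^p$ by (ii). For each $g\in L^2\cap CMO_d^p$, the bound in (ii) gives $\langle f_n,g\rangle=\langle\Phi(\lambda^{(n)}),g\rangle\to\langle\widetilde f,g\rangle$, whereas by hypothesis $\langle f_n,g\rangle\to\langle f,g\rangle$; hence $\langle f,g\rangle=\langle\widetilde f,g\rangle$ for all such $g$, so $f=\widetilde f\in H_d^p$.

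For the reverse inclusion, let $f\in H_d^p$, say $f=\Phi(\lambda)$ with $\lambda\in\mathfrak s^p$. Fix an increasing exhaustion $E_1\subseteq E_2\subseteq\cdots$ of the index set by finite subsets with $\bigcup_N E_N=\bigcup_j Q^j$, and put $f_N:=\Phi(\lambda\mathbf 1_{E_N})$, a finite linear combination of the functions $\psi_Q(\cdot,x_Q)$; since each $\psi_Q(\cdot,x_Q)\in L^2(\R^N,\omega)$, we get $f_N\in L^2(\R^N,\omega)\cap H_d^p$. Because $\{\sum_j\sum_{Q\in Q^j}|\lambda_Q|^2\chi_Q\}^{1/2}\in L^p$, dominated convergence gives $\|\lambda\mathbf 1_{E_N}-\lambda\|_{\mathfrak s^p}\to 0$, hence $\|f_N-f_M\|_{H_d^p}\le\|\lambda\mathbf 1_{E_N}-\lambda\mathbf 1_{E_M}\|_{\mathfrak s^p}\to 0$ (by (iii) and Definition \ref{hp}); so $\{f_N\}$ is $H_d^p$-Cauchy, and $f_N\to f$ in $(L^2\cap CMO_d^p)'$ because the defining series of $f$ converges there (Definition \ref{hp}). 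Thus $f\in\overline{L^2\cap H_d^p}$, and the two spaces coincide. The ``moreover'' assertion is immediate from Proposition \ref{pr1}: for $\{f_n\}\subset L^2\cap H_d^p$ with $\|f_n-f_m\|_{H_d^p}\to 0$ one has $|\langle f_n-f_m,g\rangle|\le C\|f_n-f_m\|_{H_d^p}\|g\|_{CMO_d^p}\to 0$, so $\langle f_n,g\rangle$ converges for each $g\in L^2\cap CMO_d^p$ and the limiting functional, of norm at most $C\sup_n\|f_n\|_{H_d^p}$, is the distributional limit $f$.

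The soft part — completeness of $\mathfrak s^p$ and the two limit-chasing arguments — should be routine. The main obstacle is the content of Section 4 encoded in (ii) and (iii): that on $L^2\cap H_d^p$ the discrete square-function norm $\|S(\cdot)\|_p$ is equivalent to the decomposition norm, i.e.\ that the synthesis map $\Phi$ and the analysis map $f\mapsto\lambda^{(f)}$ are bounded between $\mathfrak s^p$ and $L^2\cap H_d^p$ — which rests on the weak-type discrete Calder\'on reproducing formula (Theorem \ref{1.7}) and the new almost-orthogonal estimates in the Dunkl setting — and that $\Phi$ lands continuously in $(L^2\cap CMO_d^p)'$ via the duality estimate of Proposition \ref{pr1} and its extension in the remark after Definition \ref{hp}, since this is exactly what permits identifying the $H_d^p$-limit $\widetilde f$ with the distributional limit $f$. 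Injectivity of $\Phi$ is nowhere used; only its continuity and the uniqueness of limits in $(L^2\cap CMO_d^p)'$ matter.
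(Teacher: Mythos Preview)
Your proposal is correct and follows essentially the same route as the paper. In the paper, Corollary~\ref{cor1} is recorded as an immediate consequence of Theorem~\ref{1.8} with no separate argument; what you have written is, in effect, a self-contained proof of Theorem~\ref{1.8} itself, dressed in the sequence-space language of $\mathfrak s^p$ and the synthesis operator $\Phi$. Your ``reverse inclusion'' paragraph (truncate the wavelet-type decomposition to finite index sets, note the truncations lie in $L^2$, and use dominated convergence on the coefficient array) is exactly the paper's argument for $H_d^p\subseteq\overline{L^2\cap H_d^p}$, which is all the corollary actually requires; the paper uses the concrete exhaustion $E_n=\{(j,Q):|j|\le n,\ Q\subseteq B(0,n)\}$ and bounds $\|f_n-f_m\|_{H_d^p}$ via the square-function norm and Lemma~\ref{exchange}, whereas you bound it via the infimum norm of Definition~\ref{hp}---but these are equivalent on $L^2\cap H_d^p$ precisely by your fact (iii), which the paper establishes through Theorem~\ref{1.7}. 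Your first inclusion and the ``moreover'' clause likewise mirror the paper's use of Proposition~\ref{CRFdis1} and Proposition~\ref{pr1}. In short: same proof, slightly more abstract packaging, and you have proved a bit more than the corollary asks.
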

It is well known that the atomic decomposition is a powerful tool
for the boundedness of the classical Calder\'on-Zygmund operator on
the classical Hardy space. The following theorem gives such an
atomic decomposition for the Dunkl-Hardy space. We recall that a
function $a(x)$ is an $(p,2)$ atom if (i) $\supp (a)\subseteq Q,$
where $Q$ is a cube in $\R^N;$ (ii) $\|a\|_2\leq
\omega(Q)^{\frac{1}{2}-\frac{1}{p}};$ (iii) $\int_{\R^N}
a(x)d\omega(x)=0.$
\begin{theorem}\label{atom}
    Suppose $\frac{\bf N}{{\bf N}+1}<p\leqslant 1.$ If $f\in H_d^p(\R^N, \omega)$ then $f$ has an atomic decomposition. More precisely,  $f(x)=\sum\limits_{j=-\infty}^\infty\lambda_j a_j(x),$ where all $a_j$ are $(2, p)$ atoms and
    $$\sum\limits_{j=-\infty}^\infty|\lambda_j|^p\leqslant C\|f\|^p_{H_d^p}$$
    for some constant $C.$

    Conversely, if $f$ has an atomic decomposition $f(x)=\sum\limits_{j=-\infty}^\infty\lambda_j a_j(x),$ then $f\in H_d^p(\R^N, \omega)$ and
    $$\|f\|^p_{H_d^p}\leqslant C \sum\limits_{j=-\infty}^\infty|\lambda_j|^p.$$
\end{theorem}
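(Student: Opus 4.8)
The plan is to prove the two directions separately, each by an adaptation of the classical argument, with the Euclidean/Dunkl two--metric structure tracked carefully.

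\emph{Atomic decomposition of $f\in H_d^p$.} Using Definition~\ref{hp}, fix an expansion $f=\sum_{j}\sum_{Q\in Q^j}\omega(Q)\lambda_Q\psi_Q(\cdot,x_Q)$ with $\|S_\lambda\|_p\leqslant 2\|f\|_{H_d^p}$, where $S_\lambda:=\{\sum_{j,Q}|\lambda_Q|^2\chi_Q\}^{1/2}$. For $k\in\mathbb Z$ set $\Omega_k:=\{x:S_\lambda(x)>2^k\}$; these are open with $\omega(\Omega_k)\leqslant 2^{-kp}\|S_\lambda\|_p^p<\infty$. Call a cube $Q$ with $\lambda_Q\ne0$ of \emph{level} $k$ if $\omega(Q\cap\Omega_k)>\tfrac12\omega(Q)$ but $\omega(Q\cap\Omega_{k+1})\leqslant\tfrac12\omega(Q)$; since $0<S_\lambda<\infty$ a.e.\ on such a $Q$, each has exactly one level. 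Let $\{Q_{k,i}\}_i$ be the maximal dyadic cubes among $\{Q:\omega(Q\cap\Omega_k)>\tfrac12\omega(Q)\}$ (they exist since such cubes have $\omega$--measure $\leqslant2\omega(\Omega_k)$, hence bounded side length, and $\sum_i\omega(Q_{k,i})\leqslant2\sum_i\omega(Q_{k,i}\cap\Omega_k)\leqslant2\omega(\Omega_k)$ by disjointness). Every level--$k$ cube lies in a unique $Q_{k,i}$, so putting $h_{k,i}:=\sum_{Q\subseteq Q_{k,i},\ Q\ \text{level }k}\omega(Q)\lambda_Q\psi_Q(\cdot,x_Q)$ we get $f=\sum_{k,i}h_{k,i}$ after rearrangement (valid in $(L^2\cap CMO_d^p)'$). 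I would then check that each $h_{k,i}$ is a fixed multiple of a $(p,2)$--atom. Cancellation $\int h_{k,i}\,d\omega=0$ follows from $\int\psi_Q(\cdot,x_Q)\,d\omega=0$ together with $L^2$--convergence. For the support I would use the finite speed of propagation of Dunkl convolution by a radial compactly supported kernel: since $\psi\in C_0^\infty(B(0,1/4))$ is radial, the Dunkl translation formula for radial functions gives $\supp\psi_Q(\cdot,x_Q)\subseteq\overline{B_d(x_Q,c\,\ell(Q))}$, hence $\supp h_{k,i}\subseteq\overline{B_d(x_{Q_{k,i}},C\ell(Q_{k,i}))}$, a union of at most $|G|$ Euclidean cubes of side $\sim\ell(Q_{k,i})$ with total $\omega$--measure $\sim\omega(Q_{k,i})$ because $\omega(B_d(x,r))\sim\omega(B(x,r))$. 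For the size I would argue by duality: for $\|g\|_2\leqslant1$, $|\langle h_{k,i},g\rangle|\leqslant(\sum_Q\omega(Q)|\lambda_Q|^2)^{1/2}(\sum_Q\omega(Q)|\psi_Q\ast g(x_Q)|^2)^{1/2}$, where the second factor is $\lesssim\|g\|_2$ by the $L^2$ discrete square--function estimate (cf.\ Theorem~\ref{1.6}, applied with the Littlewood--Paley family $\{\psi_j\}$), and the first factor is $\leqslant 8\cdot4^k\,\omega(Q_{k,i})$ since a level--$k$ cube satisfies $\omega(Q)\leqslant2\omega(Q\setminus\Omega_{k+1})$ and $S_\lambda\leqslant2^{k+1}$ off $\Omega_{k+1}$. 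Thus $\|h_{k,i}\|_2\lesssim 2^k\omega(Q_{k,i})^{1/2}$; with $\lambda_{k,i}:=C\,2^k\omega(Q_{k,i})^{1/p}$ the function $a_{k,i}:=h_{k,i}/\lambda_{k,i}$ is a $(p,2)$--atom, and $\sum_{k,i}|\lambda_{k,i}|^p=C^p\sum_k2^{kp}\sum_i\omega(Q_{k,i})\lesssim\sum_k2^{kp}\omega(\Omega_k)\lesssim\|S_\lambda\|_p^p\lesssim\|f\|_{H_d^p}^p$, as required.

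\emph{Synthesis.} By the $p$--subadditivity of $\|\cdot\|_{H_d^p}^p$ — which follows from the pointwise bound $S(\sum_j\lambda_ja_j)\leqslant\sum_j|\lambda_j|\,S(a_j)$ and $(\sum b_j)^p\leqslant\sum b_j^p$ for $0<p\leqslant1$ — together with a Cauchy--sequence argument in $H_d^p$, it suffices to prove a uniform bound $\|a\|_{H_d^p}\leqslant C$ for a single $(p,2)$--atom $a$ supported on a cube $Q$ of side length $\ell$ centered at $x_0$. Split $\R^N=E^*\cup(E^*)^c$, with $E^*$ a fixed $d$--dilate of $Q$ so that $\omega(E^*)\sim\omega(Q)$. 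On $E^*$, H\"older and the $L^2$--boundedness of the square function (Theorem~\ref{1.6}) give $\|S(a)\|_{L^p(E^*)}\leqslant\omega(E^*)^{1/p-1/2}\|S(a)\|_{L^2}\lesssim\omega(Q)^{1/p-1/2}\|a\|_2\leqslant 1$. On $(E^*)^c$ one has $d(x,x_0)\gtrsim\ell$; writing, for $x\in Q'\in Q^j$, $q_{Q'}\ast a(x_{Q'})=\int[q_{Q'}(x_{Q'},y)-q_{Q'}(x_{Q'},x_0)]\,a(y)\,d\omega(y)$ via the moment condition of $a$, inserting the molecular size and smoothness estimates of $q_t=t\partial_tp_t$ (a smooth molecule for every $\beta,\gamma<1$, as noted after Definition~\ref{sm}), and summing the resulting geometric series in the scale, one obtains a pointwise estimate for $S(a)(x)$ that decays in $\|x-x_0\|$ with exponent $\gamma<1$ arbitrarily close to $1$; integrating $S(a)^p\,d\omega$ over $(E^*)^c$ and using the doubling/reverse--doubling of $\omega$ shows this integral is finite uniformly in $a$ precisely when $\tfrac{\mathbf N}{\mathbf N+1}<p\leqslant1$ (the borderline exponent $\gamma\uparrow1$ forcing the lower bound). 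Hence $a\in L^2$ with $\|S(a)\|_p\leqslant C$, i.e.\ $a\in H_d^p$ with $\|a\|_{H_d^p}\leqslant C$ (and, if one wishes, Theorem~\ref{1.7} then produces the corresponding $\psi_Q$--expansion of $a$ with controlled coefficients); $p$--subadditivity assembles the general $f=\sum_j\lambda_ja_j\in H_d^p$ with $\|f\|_{H_d^p}^p\lesssim\sum_j|\lambda_j|^p$, the series converging in $H_d^p$ and hence in $(L^2\cap CMO_d^p)'$.

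\emph{Main obstacle.} The delicate step is the pointwise estimate for $S(a)$ on $(E^*)^c$ in the synthesis direction: the Dunkl metric $d$ and the Euclidean metric occur in different parts of the molecular bounds for $q_t$ — the polynomial decay carries $\|\cdot\|$, while the volume factor is governed by $d$ — so one must keep track of which distance controls each term, and repeatedly use $\omega(B_d(x,r))\sim\omega(B(x,r))$, with the region where $\|x-x_0\|$ is much larger than $d(x,x_0)$ (near a reflection of $x_0$) treated separately using the then very strong $\|\cdot\|$--decay; this is also exactly where the sharp range $\tfrac{\mathbf N}{\mathbf N+1}<p\leqslant1$ gets used, matching the classical threshold. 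A secondary technical point, in the decomposition direction, is that the pieces $h_{k,i}$ are naturally supported on $d$--balls rather than on Euclidean cubes; one either splits each into the at most $|G|$ Euclidean cubes covering $\overline{B_d}$ (restoring the vanishing moment by subtracting averages, the fixed factor $|G|$ being harmless in the $\ell^p$ bookkeeping) or, equivalently, works throughout with atoms adapted to $d$--balls.
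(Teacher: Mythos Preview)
Your argument is essentially correct but takes a genuinely different route from the paper. The paper's proof is a single line: ``follows from Theorem~\ref{tm3}''. That theorem establishes the equivalence $H_d^p(\R^N,\omega)\cong H_{cw}^p(\R^N,\|\cdot\|,\omega)$ with equivalent norms (by showing $\|S(f)\|_p\sim\|S_{cw}(f)\|_p$ on the dense subspace $L^2\cap H_d^p$ and then passing to the closure), so the atomic decomposition is simply imported from the classical Coifman--Weiss theory on the space of homogeneous type $(\R^N,\|\cdot\|,\omega)$. In particular, the paper never has to worry about the $d$-ball versus Euclidean-cube support discrepancy you flagged: the atoms come from the classical side, where support in a Euclidean cube is built in.

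Your direct approach---stopping time on the level sets of $S_\lambda$ for the decomposition, and a near/far splitting with pointwise control of $S(a)$ for the synthesis---is the standard Frazier--Jawerth/Han--Sawyer machinery carried out from scratch in the Dunkl setting. It is more self-contained and makes explicit exactly where the Dunkl two-metric structure enters (the support of $\psi_Q(\cdot,x_Q)$ is a $d$-ball, and the decay of $q_t$ mixes $\|\cdot\|$ and $d$), whereas the paper hides all of this inside the norm-equivalence Theorem~\ref{tm3}. The price you pay is the support repair you describe (splitting $h_{k,i}$ over the at most $|G|$ Euclidean components of the $d$-ball and re-centering), which is routine but adds bookkeeping the paper avoids entirely. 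One minor point: your duality bound $\big(\sum_Q\omega(Q)|\psi_Q\ast g(x_Q)|^2\big)^{1/2}\lesssim\|g\|_2$ uses the $\psi_Q$-square function rather than the $q_Q$-square function of Theorem~\ref{1.6}, but the same almost-orthogonality proof applies since $\psi_t$ is also a smooth molecule with cancellation.
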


\subsubsection{Boundedness of Dunkl-Calder\'on-Zygmund singular integral operator on Dunkl-Hardy space}
\ \\

Let's recall  the singular integrals convolution operators in the
rational Dunkl setting, which was introduced in \cite{DH3}.
\begin{definition}\label{SIO1}
    For a positive integer $s,$ consider a kernel $K\in C^s(\R^N \setminus \{ 0 \})$ such that
    \begin{eqnarray*}
        {\rm (i)}\hspace{2cm}  \sup\limits_{0<a<b<\infty}\Big| \int_{a<|x|<b} K(x) d\omega(x) \Big|<\infty,
    \end{eqnarray*}

    \begin{eqnarray*}
        {\rm (ii)}\hspace{1cm}  \Big|\frac{\partial^\beta}{\partial x^\beta}K(x)\Big| \leqslant C\|x\|^{-N-|\beta|} \text{ for all } |\beta| \leqslant s,
    \end{eqnarray*}

    \begin{eqnarray*}
        {\rm (iii)}\hspace{1cm} \lim\limits_{\varepsilon \rightarrow 0} \int_{\varepsilon < |x| <1} K(x) d\omega(x) =L, \text{ where } L\in \C.
    \end{eqnarray*}
\end{definition}
Set $K^{t}(x)=K(x)(1-\phi(\frac{x}{t}))$, where $\phi$ is a fixed
radial $C^\infty$-function supported by the unit ball $B(0,1)$ such
that $\phi(x)=1$ for $\|x\|<\12$. The following result was shown in
\cite{DH3}:

\begin{theorem}\label{1.9}

    Suppose that $T^{t}(f)(x)=f\ast K^{t}(x)$ where $K(x)$ satisfies the above conditions. Then the limit $\lim\limits_{t\rightarrow 0}f\ast K^{t}(x)$ exists. Moreover, $T(f)(x)=\lim\limits_{t\rightarrow 0}f\ast K^{t}(x)$ is bounded on $L^p(\R^N,\omega)$ for $1<p<\infty$ and is of weak type $(1,1)$ as well.
\end{theorem}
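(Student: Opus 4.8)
The plan is to reduce Theorem \ref{1.9} to the Dunkl--Calder\'on--Zygmund operator theory developed above, together with a direct argument for the weak $(1,1)$ bound and for the existence of the limit. Concretely I would establish, in this order: (a) a uniform bound $\sup_{t>0}\|T^t\|_{L^2\to L^2}<\infty$; (b) that the kernel of the limiting operator $T$ is a Dunkl--Calder\'on--Zygmund kernel in the sense of Definition \ref{sio}, so that by (a) the operator $T$ is a Dunkl--Calder\'on--Zygmund operator and Theorem \ref{th1.1} gives boundedness on $L^p(\R^N,\omega)$ for $1<p<\infty$; (c) the existence of $\lim_{t\to0}f\ast K^t$ for $f$ in a dense class; and (d) the weak type $(1,1)$ estimate via the Calder\'on--Zygmund decomposition on the space of homogeneous type $(\R^N,\|\cdot\|,\omega)$. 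Items (a), (c) and (d) then combine to promote the convergence in (c) to all of $L^p$.

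For (a), Dunkl convolution is diagonalised by the Dunkl transform, so $\widehat{T^t f}=\widehat{K^t}\,\hat f$ and, by Plancherel, it suffices to prove $\sup_{t>0}\|\widehat{K^t}\|_\infty<\infty$. Writing $\widehat{K^t}(\xi)=c_h\int K(x)\bigl(1-\phi(x/t)\bigr)E(-ix,\xi)\,d\omega(x)$ and splitting the integral at $\|x\|=1/\|\xi\|$, on the inner region one uses the cancellation conditions (i) and (iii) of Definition \ref{SIO1} together with $|E(-ix,\xi)|\le1$ and the Lipschitz bound $|E(-ix,\xi)-1|\lesssim\|x\|\,\|\xi\|$; on the outer region one integrates by parts in the Dunkl sense, using that $E(-ix,\xi)$ is a joint eigenfunction of the Dunkl operators $T_j$ and the derivative estimates (ii) (legitimate once $s$ is taken large enough, as in \cite{DH3}). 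The cutoff $1-\phi(x/t)$ only improves integrability near the origin, so the bound is uniform in $t$.

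For (b), the kernel of $T^t$ is $K^t(x,y)=\tau_x\bigl((K^t)^\vee\bigr)(y)$, and letting $t\to0$ the kernel of $T$ is $K(x,y)=\tau_x(K^\vee)(y)$ away from the orbit of $x$. From the known pointwise bounds for the Dunkl translation of $C^s$ functions obeying decay estimates of type (ii) (the product-formula estimates available in the Dunkl literature, e.g. \cite{DH2,DH3}) one verifies \eqref{si}--\eqref{smooth x3}: the size bound \eqref{si} reflects that the convolution kernel inherits the singularity of $K$ along the whole orbit $\mathcal{O}(x)$, measured by the Dunkl metric $d$, while away from the orbit it is no larger than $\omega(B(x,\|x-y\|))^{-1}$, which by \eqref{rd} is dominated by the right side of \eqref{si}; the $x$-smoothness \eqref{smooth x3} comes from transporting the $s$ derivatives of $K$ through $\tau_x$, and the $y$-smoothness \eqref{smooth y3} follows by applying the same estimates to the transposed kernel, since $K^\vee$ again satisfies (i)--(iii). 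Hence $T$ is a Dunkl--Calder\'on--Zygmund operator, and Theorem \ref{th1.1} yields $\|Tf\|_p\le C\|f\|_p$ for $1<p<\infty$.

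For (c) and (d): for $f\in C_0^\infty(\R^N)$ and $0<t'<t<1$, $K^t-K^{t'}=K(x)\bigl(\phi(x/t')-\phi(x/t)\bigr)$ is supported in an annulus $\{c\,t'\le\|x\|\le t\}$ with $\bigl|\int(K^t-K^{t'})\,d\omega\bigr|\to0$ as $t,t'\to0$ by (iii); decomposing $f\ast(K^t-K^{t'})(x)=\int[f(y)-f(x)]\,\tau_x\bigl((K^t-K^{t'})^\vee\bigr)(y)\,d\omega(y)+f(x)\int(K^t-K^{t'})\,d\omega$ and using that $\tau_x$ of a function supported near the origin is supported near the orbit of $x$ shows that $\{T^tf\}$ is Cauchy, so $Tf=\lim_{t\to0}f\ast K^t$ exists; the uniform bounds from (a) and (b) then extend the convergence to all $f\in L^p$. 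For weak type $(1,1)$, apply the Calder\'on--Zygmund decomposition of $f$ at height $\lambda$ on $(\R^N,\|\cdot\|,\omega)$: $f=g+b$ with $b=\sum_Q b_Q$, $\supp b_Q\subseteq Q$ and $\int b_Q\,d\omega=0$; the good part is controlled by the $L^2$ bound, and for $b_Q$ the mean-zero property together with \eqref{smooth y3} gives $\int_{B_d(y_Q,\,C\ell_Q)^c}|Tb_Q|\,d\omega\le C\|b_Q\|_1$, while the exceptional set $\bigcup_Q B_d(y_Q,C\ell_Q)$ has $\omega$-measure $\lesssim\lambda^{-1}\|f\|_1$ by \eqref{rd}. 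The main obstacle is step (b): outside the radial and $\mathbb Z_2^N$ cases $\tau_x$ has no explicit kernel, so the size and regularity of $K(x,y)$ must be extracted from sharp pointwise bounds for $\tau_x$, and it is exactly there that the mixed Euclidean/Dunkl-metric shape of Definition \ref{sio} (Euclidean ratios in the numerators, Dunkl-metric balls in the denominators) is forced upon us.
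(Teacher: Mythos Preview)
The paper does not prove Theorem \ref{1.9}. It is quoted verbatim as a known result of Dziuba\'nski and Hejna \cite{DH3}, introduced by the sentence ``The following result was shown in \cite{DH3}'', and serves only as background motivating the Dunkl--Calder\'on--Zygmund framework of Definition \ref{sio}. So there is no ``paper's own proof'' to compare your proposal against.

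That said, your outline is essentially the strategy of \cite{DH3} itself, and you correctly identify the hard step: (b), the pointwise kernel estimates for $\tau_x(K^\vee)(y)$. This is not a consequence of anything proved in the present paper; it is precisely the main technical content of \cite{DH3}, which requires delicate estimates on Dunkl translations of non-radial functions and cannot be read off from the general Calder\'on--Zygmund machinery developed here. Two further soft spots in your sketch: in (c), the claim that ``$\tau_x$ of a function supported near the origin is supported near the orbit of $x$'' is not innocent---support properties of the Dunkl translation for non-radial functions are subtle (the correct statement involves the convex hull of $\mathcal O(x)\cup\mathcal O(y)$, not just a neighbourhood of $\mathcal O(x)$), so your Cauchy argument needs more care; and in (a), your integration-by-parts on the outer region presumes enough smoothness of $K$ to absorb the Dunkl operators, which is why Definition \ref{SIO1} requires $s$ derivatives rather than just one. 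None of this is fatal, but each of these points is genuinely nontrivial and is handled in \cite{DH3} rather than in the present paper.
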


The boundeness of the H\"ormander multiplier was proved in
\cite{DH2} as follows:
\begin{theorem}\label{1.10}
    Let $\psi\in C^\infty_c(\R^n)$ be a non-zero radial function such that supp $\psi\subseteq \R^N\setminus \{0\}.$ If $m$ is a function on $\R^N$ which satisfies the H\"omander condition
    $$ M=\sup_{t>0}\|\psi(\cdot)m(t\cdot)\|_{W^s_2}<\infty$$
    for some $s>\textbf{N},$ then the multiplier operator
    $$ T_m(f)={(m{\widehat f})}^{\vee},$$
    originally defined by the Dunkl trasform on $L^2(\R^N, \omega)\cap L^1(\R^N, \omega),$ is \\{\rm (A)} of weak type (1,1), \\{\rm(B)} of strong type (p,p) for $1<p<\infty,$ \\{\rm(C)} bounded on the Hardy space $H^1_{atom}.$
\end{theorem}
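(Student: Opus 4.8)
The plan is to realize $T_m$ as a Dunkl-convolution Calder\'on--Zygmund operator on the space of homogeneous type $(\R^N,d,\omega)$ and then run the classical machinery: an $L^2$ bound together with a H\"ormander-type integral condition on the Dunkl-convolution kernel will give (A) and (B), while an atom-by-atom estimate gives (C). First I would dispose of the $L^2$ bound. Since $s>\textbf{N}\ge N>N/2$, the Sobolev embedding $W^s_2(\R^N)\hookrightarrow L^\infty(\R^N)$ gives $\|\psi(\cdot)m(t\cdot)\|_\infty\lesssim M$ uniformly in $t>0$; because $\psi$ is a non-zero continuous radial function supported away from the origin, it is bounded below on some annulus, and choosing for each $\xi\ne0$ a scale $t$ with $\xi/t$ in that annulus yields $\|m\|_\infty\lesssim M$. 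The Plancherel identity for the Dunkl transform then gives $\|T_mf\|_{L^2(\R^N,\omega)}=\|m\widehat f\|_{L^2(\R^N,\omega)}\le\|m\|_\infty\|f\|_{L^2(\R^N,\omega)}\lesssim M\|f\|_{L^2(\R^N,\omega)}$, so $T_m$ extends to a bounded operator on $L^2(\R^N,\omega)$.

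Next I would decompose the symbol dyadically. Fix a radial $\chi\in C_c^\infty(\R^N\setminus\{0\})$ with $\sum_{j\in\Z}\chi(2^{-j}\xi)=1$ for $\xi\ne0$, set $m_j(\xi)=m(\xi)\chi(2^{-j}\xi)$ and let $K_j=(m_j)^\vee$. A covering argument upgrades the hypothesis to $\sup_j\|m(2^j\cdot)\chi\|_{W^s_2}\lesssim M$, and the dilation behaviour of the Dunkl transform gives $K_j(x)=2^{j\textbf{N}}\Phi_j(2^jx)$ with $\Phi_j=(m(2^j\cdot)\chi)^\vee$. Since $m(2^j\cdot)\chi$ is supported in a fixed annulus and belongs to $W^s_2$ with $s>\textbf{N}$, the inversion formula and integration by parts give, for each $L\le s$, the bound $|\Phi_j(x)|\lesssim M\,\omega(B(0,1))^{-1}(1+\|x\|)^{-L}$ and a companion estimate for $\Phi_j(x)-\Phi_j(x')$ carrying an extra factor $\|x-x'\|$. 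Transferring these through the Dunkl translation $\tau_x$, using the known estimates for generalized translates of band-limited Schwartz functions in the Dunkl setting (cf. \cite{ADH,DH2,DH3}), I would obtain for the kernel pieces $\mathcal K_j(x,y)=\tau_x\widetilde{K_j}(y)$, with $\widetilde g(y)=g(-y)$,
\begin{equation*}
|\mathcal K_j(x,y)|\lesssim\frac{M}{\omega(B(x,2^{-j}))}\bigl(1+2^{j}d(x,y)\bigr)^{-L}
\end{equation*}
and, for $\|y-y'\|\le d(x,y)/2$,
\begin{equation*}
|\mathcal K_j(x,y)-\mathcal K_j(x,y')|\lesssim\bigl(2^{j}\|y-y'\|\bigr)\frac{M}{\omega(B(x,2^{-j}))}\bigl(1+2^{j}d(x,y)\bigr)^{-L},
\end{equation*}
and symmetrically in the first variable.

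Summing these bounds over $j\in\Z$, splitting at $2^j\sim\|y-y'\|^{-1}$ and using the doubling and reverse-doubling properties \eqref{rd}, the kernel $\mathcal K=\sum_j\mathcal K_j$ of $T_m$ satisfies $|\mathcal K(x,y)|\lesssim M\,\omega(B(x,d(x,y)))^{-1}$ together with the H\"ormander condition
\begin{equation*}
\sup_{y\ne y'}\int_{d(x,y)\ge2\|y-y'\|}|\mathcal K(x,y)-\mathcal K(x,y')|\,d\omega(x)\lesssim M
\end{equation*}
and its analogue in the other variable; that is, $T_m$ is a Calder\'on--Zygmund operator on $(\R^N,d,\omega)$. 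Then (A), weak type $(1,1)$, follows from the Calder\'on--Zygmund decomposition relative to the doubling measure $\omega$, and (B), strong type $(p,p)$ for $1<p<\infty$, follows by Marcinkiewicz interpolation between (A) and the $L^2$ bound for $1<p\le2$ and by duality for $2\le p<\infty$, the adjoint $T_m^\ast=T_{\overline m}$ having symbol $\overline m$, which satisfies the same hypothesis.

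For (C) I would show $\|T_ma\|_{H^1_{atom}}\lesssim1$ for every $(1,2)$-atom $a$ supported in a ball $B=B(x_0,r)$ with $\|a\|_2\le\omega(B)^{-1/2}$ and $\int_{\R^N}a\,d\omega=0$. From $\widehat a(0)=0$ and $\|m\|_\infty<\infty$ one has $\int_{\R^N}T_ma\,d\omega=m(0)\widehat a(0)=0$. Then the classical argument applies: the part of $T_ma$ over $B_d(x_0,Cr)$ is controlled in $L^2$ by the first step, while for $x\notin B_d(x_0,Cr)$ the cancellation of $a$ lets one write $T_ma(x)=\int_{\R^N}[\mathcal K(x,y)-\mathcal K(x,x_0)]a(y)\,d\omega(y)$ and invoke the smoothness estimate of the third step; decomposing $T_ma$ along the Dunkl-annuli $B_d(x_0,2^{k+1}r)\setminus B_d(x_0,2^kr)$ then exhibits it as $\sum_k\lambda_ka_k$ with $(1,2)$-atoms $a_k$ and $\sum_k|\lambda_k|\lesssim1$, so $\|T_ma\|_{H^1_{atom}}\lesssim1$. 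The main obstacle throughout is the passage in the second step from the Euclidean decay and smoothness of $\Phi_j$ to the displayed bounds for $\mathcal K_j(x,y)$: since $\tau_x$ carries no workable explicit kernel, producing the sharp normalization $\omega(B(x,2^{-j}))^{-1}$, decay measured in the Dunkl distance $d(x,y)$, and, crucially, a genuine gain in the \emph{Euclidean} increment $\|y-y'\|$ rather than merely in $d(y,y')$ in the smoothness estimate requires the delicate Dunkl-specific analysis of generalized translates of band-limited functions.
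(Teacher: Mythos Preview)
The paper does not actually prove Theorem~\ref{1.10}: it is quoted from \cite{DH2} (Dziuba\'nski--Hejna) as background, alongside Theorem~\ref{1.9} from \cite{DH3}, to motivate the Dunkl-Calder\'on--Zygmund theory developed later. So there is no ``paper's own proof'' to compare against; your proposal is effectively a sketch of how one would recover the cited result.

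That said, your outline is the standard route and is essentially what \cite{DH2} does: dyadic Littlewood--Paley decomposition of the symbol, kernel estimates for the pieces via the Dunkl translation of band-limited functions, summation to a H\"ormander-type integral condition, then the Calder\'on--Zygmund machinery on $(\R^N,d,\omega)$. You correctly flag the genuinely hard step, namely transferring Euclidean decay and smoothness of $\Phi_j$ through $\tau_x$ to obtain decay in $d(x,y)$ with the right normalization $\omega(B(x,2^{-j}))^{-1}$ and a gain in the Euclidean increment $\|y-y'\|$; this is precisely the Dunkl-specific analysis carried out in \cite{DH2,DH3}.

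One point in your sketch of (C) is underspecified: cutting $T_ma$ on Dunkl-annuli $B_d(x_0,2^{k+1}r)\setminus B_d(x_0,2^kr)$ does not by itself produce functions with vanishing integral, so the pieces are not atoms without a further correction (subtracting and telescoping mean values, or instead showing $T_ma$ is a molecule). This is routine to fix, but as written the atomic decomposition step is incomplete.
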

Here the classical Sobolev norm is defined by
$$\|m\|_{W_2^s}=\|{\widehat m}(x)(1+\|x\|)^s\|_{L^2(dx)}.$$

The boundedness of the Dunkl-Calder\'on-Zygmund operators on the
Dunkl-Hardy space are following:
\begin{theorem}\label{1.11}
    Suppose that the Dunkl-Calder\'on-Zygmund operator with the kernel $K(x,y)$ satisfies the smoothness conditions only: for $0<\varepsilon\leqslant 1,$
    \begin{equation}\label{smooth y}
    |K(x,y)-K(x,y')|\leqslant C\Big(\frac{\|y-y'\|}{\|x-y\|}\Big)^\varepsilon\frac1{\omega(B(x,d(x,y)))}\qquad {\rm for}\ \|y-y'\|\leqslant d(x,y)/2.
    \end{equation}
    Then $T$ is bounded from the Dunkl-Hardy space $H_d^p(\R^N, \omega)$ to $L^p(\R^N, \omega)$ for $\frac{{\bf N}}{{\bf N}+\varepsilon}<p\leqslant 1.$

    When $p=1,$ the above smoothness condition can be replaced by the following H\"ormander condition:
    $$\int_{\|y-y'\|\leqslant d(x,y)/2}|K(x,y)-K(x,y')|d\omega(x)\leqslant C$$
    and $T$ is also bounded from $H_d^1(\R^N,\omega)$ to $L^1(\R^N,\omega).$
\end{theorem}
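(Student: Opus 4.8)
The plan is to reduce everything to a uniform bound on $(p,2)$-atoms and then pass to all of $H_d^p$ by density. Since $0<\varepsilon\leqslant1$, the range $\frac{\mathbf N}{\mathbf N+\varepsilon}<p\leqslant1$ is contained in the range $\frac{\mathbf N}{\mathbf N+1}<p\leqslant1$ of Theorem \ref{atom}, so every $f\in L^2(\R^N,\omega)\cap H_d^p(\R^N,\omega)$ has an atomic decomposition $f=\sum_j\lambda_j a_j$ with $\sum_j|\lambda_j|^p\leqslant C\|f\|_{H_d^p}^p$; from that construction one checks that, since $f\in L^2$, the series also converges in $L^2$. As $T$ is bounded on $L^2(\R^N,\omega)$ we may apply it termwise, $Tf=\sum_j\lambda_j Ta_j$ in $L^2$, and since $0<p\leqslant1$ the subadditivity of $t\mapsto t^p$ gives $\|Tf\|_p^p\leqslant\sum_j|\lambda_j|^p\|Ta_j\|_p^p$. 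Hence it suffices to prove
$$\|Ta\|_{L^p(\R^N,\omega)}\leqslant C\qquad\text{uniformly over all }(p,2)\text{-atoms }a.$$
Granting this, $\|Tf\|_p\leqslant C\|f\|_{H_d^p}$ on $L^2\cap H_d^p$, which by Corollary \ref{cor1} extends to $H_d^p$: if $f_n\in L^2\cap H_d^p$ with $f_n\to f$ in $H_d^p$, then $\{Tf_n\}$ is Cauchy in $L^p$, and we set $Tf:=\lim Tf_n$, the limit being independent of $\{f_n\}$ and agreeing with the $L^2$-operator on $L^2\cap H_d^p$.

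For the atom estimate, let $a$ be a $(p,2)$-atom supported in a cube $Q$ with centre $x_Q$ and side length $\ell$, so $\|a\|_2\leqslant\omega(Q)^{1/2-1/p}$ and $\int_{\R^N}a\,d\omega=0$, and put $Q^\ast:=B_d(x_Q,c\ell)$ with $c$ a large fixed constant. On $Q^\ast$, Hölder's inequality (legitimate since $p<2$), the $L^2$-boundedness of $T$, and $\omega(Q^\ast)\leqslant|G|\,\omega(B(x_Q,c\ell))\lesssim\omega(Q)$ give
$$\int_{Q^\ast}|Ta|^p\,d\omega\leqslant\Big(\int_{Q^\ast}|Ta|^2\,d\omega\Big)^{p/2}\omega(Q^\ast)^{1-p/2}\lesssim\|a\|_2^p\,\omega(Q)^{1-p/2}\lesssim\omega(Q)^{p(1/2-1/p)+1-p/2}=C.$$
For $x\notin Q^\ast$ and $y\in Q$ one has $\|y-x_Q\|\lesssim\ell$, hence (for $c$ large) $d(x,y)\sim d(x,x_Q)$, $\|y-x_Q\|\leqslant d(x,y)/2$, and $\omega(B(x,d(x,y)))\sim\omega(B(x_Q,d(x,x_Q)))$; using the cancellation of $a$, $Ta(x)=\int_Q[K(x,y)-K(x,x_Q)]a(y)\,d\omega(y)$, so by \eqref{smooth y}, $\|x-y\|\geqslant d(x,y)$, and Cauchy--Schwarz,
$$|Ta(x)|\lesssim\Big(\frac{\ell}{d(x,x_Q)}\Big)^\varepsilon\frac{\omega(Q)^{1/2}\,\|a\|_2}{\omega(B(x_Q,d(x,x_Q)))}\lesssim\Big(\frac{\ell}{d(x,x_Q)}\Big)^\varepsilon\frac{\omega(Q)^{1-1/p}}{\omega(B(x_Q,d(x,x_Q)))}.$$
Decomposing $(Q^\ast)^c$ into the Dunkl shells $\{2^k\ell\leqslant d(x,x_Q)<2^{k+1}\ell\}$ (with $2^k\gtrsim c$), each having $\omega$-measure $\lesssim\omega(B(x_Q,2^k\ell))$, and invoking the doubling estimate $\omega(B(x_Q,2^k\ell))\lesssim2^{k\mathbf N}\omega(Q)$ from \eqref{rd}, we obtain
$$\int_{(Q^\ast)^c}|Ta|^p\,d\omega\lesssim\omega(Q)^{p-1}\sum_{2^k\gtrsim c}2^{-k\varepsilon p}\,\omega(B(x_Q,2^k\ell))^{1-p}\lesssim\sum_k2^{k[\mathbf N(1-p)-\varepsilon p]},$$
a geometric series that sums to a constant exactly when $\mathbf N(1-p)-\varepsilon p<0$, i.e. $p>\frac{\mathbf N}{\mathbf N+\varepsilon}$. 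Adding the two parts yields $\|Ta\|_p\leqslant C$.

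For the case $p=1$ under the Hörmander hypothesis, the bound over $Q^\ast$ is as above (with $\int_{Q^\ast}|Ta|\,d\omega\leqslant\omega(Q^\ast)^{1/2}\|Ta\|_2\lesssim1$). Over $(Q^\ast)^c$ I would integrate in $x$ first: by $\int a=0$ and Fubini,
$$\int_{(Q^\ast)^c}|Ta(x)|\,d\omega(x)\leqslant\int_Q|a(y)|\Big(\int_{(Q^\ast)^c}|K(x,y)-K(x,x_Q)|\,d\omega(x)\Big)\,d\omega(y),$$
and since $(Q^\ast)^c\subseteq\{x:\|y-x_Q\|\leqslant d(x,y)/2\}$ for $y\in Q$ (again for $c$ large), the inner integral is $\leqslant C$ by the Hörmander condition, while $\int_Q|a|\,d\omega\leqslant\|a\|_2\,\omega(Q)^{1/2}\leqslant1$. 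The same dyadic computation, with $p=1$, also shows that the pointwise estimate \eqref{smooth y} implies this Hörmander condition, so the first assertion of the theorem already contains the case $p=1$.

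The individual estimates above are routine once the interplay of the Euclidean norm $\|\cdot\|$, the Dunkl distance $d$, and the comparisons $\omega(B(x,r))\sim\omega(B(y,r))$ for $d(x,y)\lesssim r$ and $\omega(B_d(x,r))\leqslant|G|\,\omega(B(x,r))$ is tracked carefully via \eqref{rd} (the one point to watch is that in the smoothness bound we only have $\|x-y\|\geqslant d(x,y)$, so the Euclidean denominator of \eqref{smooth y} is estimated from below by the Dunkl quantity, which is harmless since we bound above). The delicate point is instead the soft part: verifying that the atomic decomposition of an $L^2\cap H_d^p$ function supplied by Theorem \ref{atom} genuinely converges in $L^2$ (so that $T$ may be moved through the sum), and that the density extension furnished by Corollary \ref{cor1} produces an operator consistent with the a priori definition of $T$.
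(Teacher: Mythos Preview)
Your proposal is correct and follows essentially the same route as the paper's own proof: reduce by density and atomic decomposition to a uniform $L^p$ bound on $(p,2)$-atoms, split $\R^N$ into the Dunkl-neighbourhood $\{x:d(x,x_Q)\lesssim\ell(Q)\}$ and its complement, handle the near part by H\"older plus $L^2$-boundedness, and handle the far part via cancellation, the smoothness condition \eqref{smooth y}, and a dyadic annulus summation in the Dunkl metric leading to the geometric series with exponent $\mathbf N(1-p)-\varepsilon p$. The paper likewise defers the $p=1$ H\"ormander case to the argument already given in the proof of Theorem \ref{th1.1}, and it also takes for granted (as you flag) that the atomic decomposition of an $L^2\cap H_d^p$ function converges in $L^2$.
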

It is well known that the molecule theory of the Hardy space is a
powerful tool for providing the boundedness of the classical
Calder\'on-Zygmund operators on the cassical Hardy space. The
molecule theory for the Hardy spaces was developed by Coifman and
Weiss for space of homogeneous type $(X,\rho,\mu)$ where $\rho$ is
the measure distance and the measure $\mu$ satisfies the doubling
property, see page 594 in \cite{CW2}. In \cite{HHL}, the molecule
theory was established for $(X,\rho,\mu),$ where $\rho$ is the
quasi-metric without any regularity and the measure $\mu$ satisfies
the doubling property only. In this paper, applying the similar idea
as in \cite{HHL}, we develop the molecule theory for $(\R^N,
\|\cdot\|,\omega)$ in the Dunkl seting.
\begin{definition}\label{def-molecule}
    Suppose $ {\bf N\over \bf N+1} <p\leq1$. A function $m(x)\in L^2(\R^N,\omega)$ is said to be an $(p,2,\varepsilon, \eta)$ molecule centered at $x_0\in \R^N$ for the Dunkl-Hardy space $H_d^p(\R^N,\omega)$
    if $1\geqslant \varepsilon >\eta>0$, $ {\bf N\over \bf N+\varepsilon-\eta} <p\leq1, \int_{\R^N} m(x)d\omega(x)=0$ and
    \begin{align}\label{molecule}
    \Big( \int_{\R^N} m(x)^2 d\omega(x) \Big)\Big( \int_{\R^N} m(x)^2 \omega(B(x_0,\|x-x_0\|))^{1+{2\varepsilon-2\eta\over \bf N}} d\omega(x)\Big)^{({{\bf N}+2\varepsilon -2\eta\over \bf N}{p\over 2-p}-1)^{-1}}\leqslant 1.
    \end{align}
\end{definition}
Note that the fact that $ {\bf N\over \bf N+\varepsilon-\eta} <p$
implies $\frac{\varepsilon -\eta}{\bf
N}>\frac{1}{p}-1=\frac{1-p}{p}.$ Thus,${{\bf
N}+2\varepsilon-2\eta\over \bf N}{p\over 2-p}-1 =
(1+\frac{2\varepsilon -2\eta}{\bf
N})\frac{p}{2-p}-1>(1+\frac{2-2p}{p})\frac{p}{2-p}-1>0.$

The following result shows that each $(p,2,\varepsilon,\eta)$
molecule $m(x)$ belongs to $H_d^p(\R^N).$
\begin{theorem}\label{moleculeinHp}
    Suppose that $m$ is an $(p,2,\varepsilon,\eta)$ molecule. Then $m\in H_d^p(\R^N,\omega)$ and moreover,
    $$\|m\|_{H_d^p}\leqslant C,$$
    where the constant $C$ is independent of $m.$
\end{theorem}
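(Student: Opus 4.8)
The plan is to show that an $(p,2,\varepsilon,\eta)$ molecule $m$ centered at $x_0$ can be decomposed into a sum of $(p,2)$ atoms with good control on the coefficients, and then invoke the atomic characterization in Theorem \ref{atom} to conclude $m\in H_d^p(\R^N,\omega)$ with norm bounded by a constant. First I would, by rescaling and translating, normalize so that in \eqref{molecule} each of the two factors is comparable to a power of a single parameter; concretely, set $A:=\big(\int m^2\,d\omega\big)^{1/2}$ and $B:=\big(\int m^2\,\omega(B(x_0,\|x-x_0\|))^{1+(2\varepsilon-2\eta)/\mathbf N}\,d\omega\big)^{1/2}$, so the molecule condition reads $A^2 B^{2\theta}\le 1$ with $\theta=\big(\tfrac{\mathbf N+2\varepsilon-2\eta}{\mathbf N}\tfrac{p}{2-p}-1\big)^{-1}$, which is positive by the remark after Definition \ref{def-molecule}. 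Choose a radius $R>0$ (roughly $R\sim (B/A)^{\delta}$ for the appropriate exponent $\delta$) balancing the local and tail contributions.

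Next I would split $m=\sum_{k\ge 0} m_k$, where $m_0=m\,\chi_{B_d(x_0,R)}$ and $m_k=m\,\chi_{B_d(x_0,2^kR)\setminus B_d(x_0,2^{k-1}R)}$ for $k\ge 1$, using the Dunkl-ball annuli so that the doubling/reverse-doubling bounds \eqref{rd} and the comparison $\omega(B(x,r))\le\omega(B_d(x,r))\le|G|\omega(B(x,r))$ apply cleanly. Each $m_k$ is supported in a Dunkl ball $B_d(x_0,2^kR)$, which is contained in a bounded union of $\le |G|$ Euclidean balls of radius $2^kR$; on each of these $m_k$ has an $L^2$ norm that I control by Cauchy–Schwarz together with the weight estimate: on the $k$-th annulus $\omega(B(x_0,\|x-x_0\|))\gtrsim\omega(B(x_0,2^{k-1}R))$, so $\|m_k\|_2^2\le \omega(B(x_0,2^kR))^{-(1+(2\varepsilon-2\eta)/\mathbf N)}\,B^2\cdot C$ up to constants. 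Since $m_k$ need not have mean zero, I subtract a correction: set $\widetilde m_k=m_k-c_k\,\chi_{B_d(x_0,2^kR)}/\omega(B_d(x_0,2^kR))$ with $c_k=\int m_k\,d\omega$, and telescope the corrections upward (the standard molecule-to-atom device), so that $m=\sum_k \widetilde m_k+(\text{telescoped pieces})$, each of which is a multiple of a $(p,2)$ atom on the ball $B_d(x_0,2^kR)$. The cancellation $\int m\,d\omega=0$ guarantees the telescoped series of $c_k$'s converges and produces atoms rather than a leftover bump.

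Then I would estimate the atomic coefficients: each atom built from the $k$-th piece comes with coefficient $\lambda_k$ satisfying $|\lambda_k|\lesssim \|m_k\|_2\,\omega(B_d(x_0,2^kR))^{1/p-1/2}$ (plus the telescoping contributions $\sum_{j\ge k}|c_j|\omega(B_d(x_0,2^jR))^{-1}$, each bounded the same way via Cauchy–Schwarz $|c_j|\le\omega(B_d(x_0,2^jR))^{1/2}\|m_j\|_2$). Plugging in the annular $L^2$ bound gives $|\lambda_k|\lesssim B\,\omega(B(x_0,2^kR))^{1/p-1-(\varepsilon-\eta)/\mathbf N}$, and summing $\sum_k|\lambda_k|^p$ becomes a geometric series in $k$ whose ratio is controlled using \eqref{rd1.1}: the exponent $p\big(\tfrac1p-1-\tfrac{\varepsilon-\eta}{\mathbf N}\big)=1-p-\tfrac{p(\varepsilon-\eta)}{\mathbf N}$ paired against the growth $\omega(B(x_0,2^kR))\gtrsim 2^{kN}\omega(B(x_0,R))$ converges precisely because $p>\tfrac{\mathbf N}{\mathbf N+\varepsilon-\eta}$ — this is where that hypothesis is used. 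Optimizing the choice of $R$ against the $k=0$ term $A\,\omega(B(x_0,R))^{1/p-1/2}$ and the tail sum, and invoking $A^2B^{2\theta}\le 1$, one gets $\sum_k|\lambda_k|^p\le C$ uniformly in $m$; by the converse direction of Theorem \ref{atom} this yields $\|m\|_{H_d^p}\le C$. The main obstacle I expect is bookkeeping the interplay between the Euclidean metric (appearing implicitly through $\|x-x_0\|$ versus $d(x,x_0)$) and the Dunkl metric $d$: because the molecule weight in \eqref{molecule} uses $\omega(B(x_0,\|x-x_0\|))$ while atoms and the doubling estimates are most natural over $d$-balls, I must repeatedly use $d(x,x_0)\le\|x-x_0\|$ together with $\omega(B_d(x_0,r))\sim\omega(B(x_0,r))$ up to the factor $|G|$, and check that the annular decomposition in $d$-balls does not lose the weight gain on each annulus; keeping the exponent arithmetic consistent through the optimization of $R$ is the delicate computational heart of the argument.
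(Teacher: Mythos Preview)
Your strategy---annular decomposition of $m$, mean-zero corrections via telescoping, then Theorem \ref{atom}---is exactly the paper's (the classical Coifman--Weiss molecule-to-atom argument). The gap is your choice of \emph{Dunkl}-ball annuli $B_d(x_0,2^kR)\setminus B_d(x_0,2^{k-1}R)$. The $(p,2)$ atoms in Theorem \ref{atom} must be supported in a single Euclidean cube, but $B_d(x_0,r)=\bigcup_{\sigma\in G}\sigma^{-1}(B(x_0,r))$ is a union of $|G|$ Euclidean balls whose centers (the $G$-orbit of $x_0$) can be far apart; it generally cannot sit inside a Euclidean cube of comparable measure. Thus your corrected pieces $\widetilde m_k$, supported in $B_d(x_0,2^kR)$, are not multiples of $(p,2)$ atoms, and if you split them over the $|G|$ Euclidean components you destroy the cancellation you just arranged. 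This is not merely bookkeeping: the converse direction of Theorem \ref{atom} rests on the cancellation-plus-smoothness estimate $|D_k(x,y)-D_k(x,x_0)|$ for $y$ in the support, which needs $\|y-x_0\|$ small, not $d(y,x_0)$ small.

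The paper avoids this by working with \emph{Euclidean} annuli $\{x:2^{i-1}2^{i_0}\sigma\le\|x-x_0\|<2^i2^{i_0}\sigma\}$ from the start---also the natural choice since the molecule weight in \eqref{molecule} is $\omega(B(x_0,\|x-x_0\|))$, not $\omega(B(x_0,d(x,x_0)))$. A secondary difference: the paper's correction terms divide by the \emph{annulus} measure rather than the ball measure, so it inserts a stopping-time grouping of consecutive annuli to guarantee each block $\widetilde\chi_{j_k}$ has measure comparable to the enclosing ball (without this a thin annulus could make the correction blow up). Your variant with ball-normalized corrections $c_k\chi_{B_k}/\omega(B_k)$ legitimately avoids the stopping time, but only once you switch to Euclidean annuli so that the telescoped pieces land in genuine $(p,2)$ atoms.
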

Applying the above Theorem \ref{moleculeinHp}, we obtain the $T1$
Theorem for the bounedness of Dunkl-Calder\'on-Zygmund operators on
the Hardy space $H_d^p(\R^N).$
 \begin{theorem}\label{1.12}
  Suppose that $T$ is a Dunkl-Calder\'on-Zygmund operator with the kernel
 $K(x,y)$ satisfying the following smoothness condition only: when $M>\frac{\bf N}{2}, 0<\varepsilon\leqslant 1$ and $\|y-y'\|\leqslant \frac{1}{2}d(x,y),$
\begin{align*}
|K(x,y)-K(x,y')|&\leqslant
C\Big(\frac{\|y-y'\|}{\|x-y\|}\Big)^\varepsilon \frac{1}{\omega(B(x,
d(x,y)))}\Big(\frac{d(x,y)}{\|x-y\|}\Big)^{M}.
\end{align*}
Then $T$ is bounded on the Dunkl-Hardy space $H_d^p(\R^N,\omega),
\frac{\bf N}{\bf N +\varepsilon}<p\leqslant 1,$ if and only if
${T^*}(1)=0.$
\end{theorem}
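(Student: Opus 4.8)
The plan is to deduce both directions from the atomic decomposition of $H_d^p$ (Theorem \ref{atom}) together with the fact that every $(p,2,\varepsilon,\eta)$ molecule lies in $H_d^p$ with controlled norm (Theorem \ref{moleculeinHp}). For necessity, let $f\in C^\eta_{0,0}(\R^N)$; being continuous, compactly supported, square integrable and $\omega$-mean zero, $f$ is a constant multiple of a $(p,2)$ atom, so $f\in L^2(\R^N,\omega)\cap H_d^p(\R^N,\omega)$. As a Dunkl-Calder\'on-Zygmund operator, $T$ is bounded on $L^2(\R^N,\omega)$, and if it is also bounded on $H_d^p$ then $T(f)\in L^2(\R^N,\omega)\cap H_d^p(\R^N,\omega)$; every element of this intersection has vanishing $\omega$-integral, as is read off from the reproducing formula of Theorem \ref{1.7} (its constituents $\psi_Q(\cdot,x_Q)$ integrate to zero and the partial sums converge in $L^2$). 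Hence $\langle T^*(1),f\rangle=\langle 1,T(f)\rangle=\int_{\R^N}T(f)\,d\omega=0$ for all $f\in C^\eta_{0,0}$, i.e. $T^*(1)=0$.

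For sufficiency, assume $T^*(1)=0$. By Corollary \ref{cor1} and Theorem \ref{1.8} it suffices to prove the a priori bound $\|T(f)\|_{H_d^p}\leqslant C\|f\|_{H_d^p}$ for $f\in L^2(\R^N,\omega)\cap H_d^p(\R^N,\omega)$. Given such $f$, Theorem \ref{atom} provides $f=\sum_j\lambda_j a_j$ with $(p,2)$ atoms $a_j$ on cubes $Q_j$ and $\sum_j|\lambda_j|^p\leqslant C\|f\|_{H_d^p}^p$, the series converging in $L^2(\R^N,\omega)$, so by $L^2$-boundedness $T(f)=\sum_j\lambda_j T(a_j)$ in $L^2$ and in the distribution sense. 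Since $\|\cdot\|_{H_d^p}^p$ is countably subadditive (evident from the coefficient description in Definition \ref{hp}), everything reduces to showing that there is a constant $C$, independent of the atom, such that for a fixed $\eta\in(0,\varepsilon)$ with ${\bf N}/({\bf N}+\varepsilon-\eta)<p$ (possible because $p>{\bf N}/({\bf N}+\varepsilon)$), $C^{-1}T(a)$ is a $(p,2,\varepsilon,\eta)$ molecule in the sense of Definition \ref{def-molecule} for every $(p,2)$ atom $a$. Granting this, Theorem \ref{moleculeinHp} gives $\|T(a_j)\|_{H_d^p}\leqslant C$ uniformly, whence $\|T(f)\|_{H_d^p}^p\leqslant\sum_j|\lambda_j|^p\|T(a_j)\|_{H_d^p}^p\leqslant C\|f\|_{H_d^p}^p$.

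To see that $m:=T(a)$ is (a multiple of) a molecule, let $a$ be a $(p,2)$ atom on a cube $Q$ with centre $x_0$ and Euclidean side $r$. The vanishing moment $\int_{\R^N}m\,d\omega=\langle a,T^*(1)\rangle=0$ is exactly where the hypothesis $T^*(1)=0$ is used. For the size, $L^2$-boundedness gives $\|m\|_{L^2(\R^N,\omega)}\leqslant\|T\|_{2\to2}\|a\|_2\leqslant C\,\omega(Q)^{1/2-1/p}$, which controls the first factor in \eqref{molecule} and the contribution of $B_d(x_0,2r)$ to the weighted integral. For $x\notin B_d(x_0,2r)$ and $y\in Q$ one has $\|y-x_0\|\lesssim r\leqslant d(x,x_0)/2$, so $\int_Q a\,d\omega=0$ together with the hypothesized smoothness of $K$ yields
\[
|m(x)|=\Big|\int_Q(K(x,y)-K(x,x_0))\,a(y)\,d\omega(y)\Big|\leqslant C\Big(\frac{r}{\|x-x_0\|}\Big)^\varepsilon\frac{1}{\omega(B(x,d(x,x_0)))}\Big(\frac{d(x,x_0)}{\|x-x_0\|}\Big)^{M}\|a\|_1,
\]
with $\|a\|_1\leqslant\omega(Q)^{1/2}\|a\|_2\leqslant\omega(Q)^{1-1/p}$. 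I would then convert $\omega(B(x,d(x,x_0)))^{-1}$ into $\omega(B(x_0,\|x-x_0\|))^{-1}$ via the reverse-doubling inequality in \eqref{rd}, sum the resulting geometric series over the Dunkl annuli $2^kB_d(x_0,r)$ using \eqref{rd1.1}--\eqref{rd}, and combine with the local $L^2$ bound to get, after a routine computation of both integrals in \eqref{molecule},
\[
\Big(\int_{\R^N}m^2\,d\omega\Big)\Big(\int_{\R^N}m^2\,\omega(B(x_0,\|x-x_0\|))^{1+\frac{2\varepsilon-2\eta}{{\bf N}}}\,d\omega\Big)^{\big(\frac{{\bf N}+2\varepsilon-2\eta}{{\bf N}}\frac{p}{2-p}-1\big)^{-1}}\leqslant C;
\]
finiteness of the weighted integral is exactly where $M>{\bf N}/2$ enters. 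Rescaling $m$ by an appropriate power of this $C$ produces a genuine $(p,2,\varepsilon,\eta)$ molecule and closes the reduction.

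The main obstacle is this last step. Unlike the homogeneous-type situation of \cite{HHL} and of Theorem \ref{1.11}, the off-diagonal bound for $m=T(a)$ mixes the Euclidean distance $\|x-x_0\|$, the Dunkl distance $d(x,x_0)$, and the two non-comparable volumes $\omega(B(x,d(x,x_0)))$ and $\omega(B(x_0,\|x-x_0\|))$; keeping these under control through \eqref{rd1.1}--\eqref{rd}, checking that the decay exponents produced are compatible with $M>{\bf N}/2$ and with the range ${\bf N}/({\bf N}+\varepsilon)<p\leqslant 1$ so that $\eta$ can be chosen making $\big(\frac{{\bf N}+2\varepsilon-2\eta}{{\bf N}}\frac{p}{2-p}-1\big)^{-1}$ positive and finite, and verifying the second-moment-type normalization \eqref{molecule} in place of a pointwise molecule bound, is what makes this more delicate than the $H_d^p\to L^p$ statement already in Theorem \ref{1.11}.
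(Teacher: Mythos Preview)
Your overall strategy matches the paper's: sufficiency via showing that $T$ sends each $(p,2)$ atom to a constant multiple of a $(p,2,\varepsilon,\eta)$ molecule and then invoking Theorem~\ref{moleculeinHp}; necessity via showing that every $g\in L^2(\R^N,\omega)\cap H_d^p(\R^N,\omega)$ has $\int_{\R^N}g\,d\omega=0$. Two steps, however, need repair.

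\medskip
\noindent\textbf{Necessity.} You cannot ``read off'' $\int_{\R^N}T(f)\,d\omega=0$ from the facts that the reproducing formula of Theorem~\ref{1.7} converges in $L^2$ and that each $\psi_Q(\cdot,x_Q)$ has mean zero: the linear functional $g\mapsto\int_{\R^N}g\,d\omega$ is not continuous on $L^2(\R^N,\omega)$, so $L^2$ convergence does not let you integrate term by term. The paper fills this by first proving Proposition~\ref{HpinLp} (namely $\|g\|_{L^p}\leqslant C\|g\|_{H_d^p}$ on $L^2\cap H_d^p$), interpolating with $L^2$ to place $g$ in $L^1$, and then arguing with finite truncations of the (Coifman--Weiss) reproducing formula: the finite piece has integral zero by cancellation of $D_k$, while the tail is small in $L^1$ because it tends to zero simultaneously in $H_d^p$ (hence in $L^p$) and in $L^2$.

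\medskip
\noindent\textbf{Sufficiency.} The far-region molecular estimate cannot be closed by summing over Dunkl annuli alone. On a fixed Dunkl annulus $\{d(x_0,x)\sim 2^jr\}$ the Euclidean distance $\|x-x_0\|$ is not bounded above by a comparable quantity, yet both the weight $\omega(B(x_0,\|x-x_0\|))^{1+2(\varepsilon-\eta)/{\bf N}}$ and the gain $(d(x,x_0)/\|x-x_0\|)^{2M}$ depend on it. The paper performs a \emph{double} dyadic decomposition
\[
\big\{x:\ d(x_0,x)\sim 2^jr,\ \|x-x_0\|\sim 2^k\,d(x_0,x)\big\},\qquad j,k\geqslant 1,
\]
and uses doubling to replace $\omega(B(x_0,\|x-x_0\|))$ by at most $2^{k{\bf N}}\omega(B(x_0,2^jr))$. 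Collecting exponents, the $k$-sum carries the factor $2^{k({\bf N}-2M-2\eta)}$, which is geometric precisely because $M>{\bf N}/2$ and $\eta>0$; the $j$-sum carries $2^{-2j\eta}$. This double splitting is the mechanism your sketch is missing, and it is exactly where the hypothesis $M>{\bf N}/2$ is consumed; a single Dunkl-annulus sum has no lever to absorb the growing weight $\omega(B(x_0,\|x-x_0\|))$.
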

We remark that the size condition on the kernel of $T$ is not
required. Moreover, if $T$ is a classical Calder\'on-Zygmund
operator on space of homogeneous type $(\R^N, \|\cdot\|,\omega).$
Then, by the estimate in \eqref{sm of DCZ} with $N>\frac{\bf N}{2}, K(x,y),$ the kernel of
$T,$ satisfies the smoothness condition of the above Theorem
\ref{1.12}.

The general $T1$ Theorem for the Dunkl-Hardy space
$H_d^p(\R^N,\omega)$ is the following
\begin{theorem}\label{1.13}
    Suppose that $T$ is a Dunkl-Calder\'on-Zygmund operator. Then $T$ is bounded on $H_d^p(\R^N, \omega), \frac{{\bf N}}{{\bf N}+\varepsilon}<p\leqslant 1,$ where $\varepsilon$ is the exponent of the regularity of the kernel of $T,$ if and only if $T^*(1)=0.$
\end{theorem}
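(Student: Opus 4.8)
The plan is to reduce the general case to the special smoothness hypothesis of Theorem \ref{1.12} by splitting the kernel of $T$ into a ``main'' part that already carries the extra decay factor $\bigl(d(x,y)/\|x-y\|\bigr)^{M}$ and a ``remainder'' that is harmless on $H_d^p$. The necessity direction is immediate: if $T$ is bounded on $H_d^p(\R^N,\omega)$, then testing against atoms and using the duality of $H_d^p$ with $CMO_d^p$ (Proposition \ref{pr1}) forces $\langle T a, g\rangle$ to vanish when $g\in CMO_d^p$ is constant, which is precisely the statement $T^*(1)=0$; alternatively, one argues directly that $T^*(1)\in BMO_d$ by Theorem \ref{1.2}, and then that the $H_d^p$--$L^p$ pairing annihilates constants unless $T^*(1)=0$. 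So the content is the sufficiency direction.

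First I would invoke the weak-type discrete Calder\'on reproducing formula of Theorem \ref{1.7}: for $f\in L^2(\R^N,\omega)$ with $\|f\|_{H_d^p}<\infty$ we write $f(x)=\sum_{j}\sum_{Q\in Q^j}\omega(Q)\,\psi_Q(x,x_Q)\,q_Q h(x_Q)$ with $\|h\|_{H_d^p}\sim\|f\|_{H_d^p}$. Applying $T$ term by term, it suffices to control $\sum_{j}\sum_{Q\in Q^j}\omega(Q)\,\lambda_Q\, T\psi_Q(\cdot,x_Q)$ in $H_d^p$, where $\lambda_Q=q_Q h(x_Q)$. The key point is that each $\psi_Q(\cdot,x_Q)$ is (after normalization) a smooth molecule in $\mathbb M(\beta,\gamma,r^{j},x_Q)$ with cancellation. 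By Theorem \ref{1.3}, $T$ maps $\mathbb M(\beta,\gamma,r,x_0)$ into the weak smooth molecule class $\widetilde{\mathbb M}(\beta,\gamma',r,x_0)$; and by the size/smoothness bounds \eqref{si}, \eqref{smooth y3}, \eqref{smooth x3} one checks that a properly rescaled $T\psi_Q(\cdot,x_Q)$ is a fixed multiple of an $(p,2,\varepsilon,\eta)$ molecule centered at $x_Q$ in the sense of Definition \ref{def-molecule}, for suitable $\eta<\varepsilon$ with $\frac{\bf N}{\bf N+\varepsilon-\eta}<p$. Theorem \ref{moleculeinHp} then gives $\|T\psi_Q(\cdot,x_Q)\|_{H_d^p}\le C$ uniformly in $Q$, and summing the $p$-th powers against $\sum_Q|\lambda_Q|^p\le C\|f\|_{H_d^p}^p$ yields $\|Tf\|_{H_d^p}\le C\|f\|_{H_d^p}$ for $f\in L^2\cap H_d^p$.

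Finally I would remove the $L^2$ restriction by density: Corollary \ref{cor1} gives that $L^2(\R^N,\omega)\cap H_d^p(\R^N,\omega)$ is dense in $H_d^p(\R^N,\omega)$, and Theorem \ref{1.8} identifies $H_d^p$ with the completion of that subspace; since $T$ is bounded on the dense subspace with a uniform constant and is already continuous on the $CMO_d^p$-dual pairing, it extends uniquely and boundedly to all of $H_d^p(\R^N,\omega)$. The main obstacle, and the step that needs the most care, is the molecule verification: one must show that the weak smooth molecule estimates produced by Theorem \ref{1.3}—which involve $d(x,x_0)$ rather than $\|x-x_0\|$ in the decay—combined with the reverse doubling inequality \eqref{rd} and the $L^2$ bound on $T$, actually imply the two-factor integral bound \eqref{molecule} with the correct exponents; in particular the loss $\varepsilon\mapsto\varepsilon-\eta$ must be absorbed so that the range $\frac{\bf N}{\bf N+\varepsilon}<p\le 1$ is attained and not shrunk. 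This is where the interplay between the Euclidean metric $\|\cdot\|$ and the Dunkl metric $d$ in the kernel conditions is essential, and where one exploits that $T^*(1)=0$ guarantees the cancellation $\int_{\R^N} T\psi_Q(x,x_Q)\,d\omega(x)=0$ needed for $T\psi_Q(\cdot,x_Q)$ to be a genuine molecule.
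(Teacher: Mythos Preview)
Your proposal has a genuine gap in the sufficiency direction, and the gap is exactly the reason the paper needs a separate argument for Theorem \ref{1.13} distinct from Theorem \ref{1.12}.

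The molecule route you outline cannot close for a general Dunkl--Calder\'on--Zygmund operator. In the proof of Theorem \ref{1.12} the weighted $L^2$ integral in \eqref{molecule} is controlled by splitting $\{x:d(x_0,x)>2r\}$ into double annuli $d(x_0,x)\sim 2^jr$, $\|x-x_0\|\sim 2^k d(x_0,x)$, and the sum over $k$ contributes a factor $\sum_k 2^{k(\mathbf N-2M-2\eta)}$, which converges only because $M>\mathbf N/2$. The general smoothness hypothesis \eqref{smooth y3} carries no such factor $(d(x,y)/\|x-y\|)^M$; the resulting bound on $|T\psi_Q(x)|$ away from $x_Q$ is only $(r^{-j}/\|x-x_Q\|)^\varepsilon\,\omega(B(x_Q,d(x,x_Q)))^{-1}$, and the corresponding $k$--sum is $\sum_k 2^{k(\mathbf N-2\eta)}$, which diverges. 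So $T\psi_Q(\cdot,x_Q)$ is \emph{not} an $(p,2,\varepsilon,\eta)$ molecule in general, and Theorem \ref{moleculeinHp} does not apply. Your opening sentence hints at a kernel splitting that would manufacture the missing decay, but you never carry it out, and there is no obvious way to do so while keeping the remainder bounded on $H_d^p$.

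There is a second, independent error: the inequality ``$\sum_Q|\lambda_Q|^p\le C\|f\|_{H_d^p}^p$'' is false. The $H_d^p$ norm is $\big\|\big(\sum_Q|\lambda_Q|^2\chi_Q\big)^{1/2}\big\|_p$, which is not comparable to an $\ell^p$ sum of the $\lambda_Q$; even if each $T\psi_Q$ were a uniform molecule, the $p$-triangle inequality over all $Q$ would not recover $\|f\|_{H_d^p}$. Finally, your invocation of Theorem \ref{1.3} requires $T(1)=0$ as well as $T^*(1)=0$, and you give no mechanism to remove that extra hypothesis.

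The paper's proof avoids all of this by working directly with the square function. Assuming first $T(1)=T^*(1)=0$, it uses the almost orthogonality estimate of Lemma \ref{Lem aoe} for $q_{Q'}T\psi_Q(x_{Q'},x_Q)$, which produces decay in the Dunkl metric $d(x_{Q'},x_Q)$ rather than the Euclidean one; this is precisely enough for Lemma \ref{exchange} to transfer the square function of $Tf$ back to that of $f$ with no loss in the range of $p$. The condition $T(1)=0$ is then removed by subtracting the paraproduct $\Pi_{T(1)}$, which is a classical Calder\'on--Zygmund operator on $(\R^N,\|\cdot\|,\omega)$ and hence bounded on $H^p_{cw}\simeq H_d^p$ by Theorem \ref{tm3}.
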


As a consequence of Theorems \ref{1.11} and \ref{1.13}, we obtain
the boundedness of the Dunkl-Riesz transforms in the Dunkl-Hardy
space $H_d^p(\R^N, \omega).$

\begin{theorem}\label{1.14}
    The Dunkl-Riesz transforms $R_j, 1\leqslant j\leqslant N,$ are bounded on the Hardy space $H_d^p(\R^N, \omega)$ and from $H_d^p(\R^N, \omega)$ to $L^p(\R^N, \omega), \frac{{\bf N}}{{\bf N}+1}<p\leqslant 1.$
\end{theorem}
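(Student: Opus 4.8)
The plan is to recognize each Dunkl--Riesz transform $R_j$, $1\le j\le N$, as a Dunkl--Calder\'on--Zygmund operator in the sense of Definition \ref{sio} whose kernel is regular of exponent $\varepsilon$ for every $0<\varepsilon<1$ and which satisfies $R_j^*(1)=0$; granting this, Theorem \ref{1.11} gives the boundedness of $R_j$ from $H_d^p(\R^N,\omega)$ to $L^p(\R^N,\omega)$, and Theorem \ref{1.13} gives the boundedness of $R_j$ on $H_d^p(\R^N,\omega)$, in both cases for every $p$ with ${\bf N}/({\bf N}+1)<p\le1$ (for such a $p$ it suffices to fix $\varepsilon<1$ with ${\bf N}/({\bf N}+\varepsilon)<p$, which is possible precisely because ${\bf N}(\tfrac1p-1)<1$). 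So the proof reduces to three ingredients: (i) $L^2(\R^N,\omega)$--boundedness of $R_j$; (ii) the size and regularity estimates \eqref{si}, \eqref{smooth y3}, \eqref{smooth x3} for the kernel $R_j(x,y)$, valid for any $0<\varepsilon<1$; and (iii) the cancellation $R_j^*(1)=0$.

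For (i), write $R_j=T_j(-\triangle)^{-1/2}$; on the Dunkl--transform side this acts as multiplication by $\pm i\,\xi_j/\|\xi\|$, a symbol of modulus $\le1$, so $R_j$ extends to a bounded operator on $L^2(\R^N,\omega)$ by the Plancherel identity for the Dunkl transform, mapping into the orthogonal complement of the constants. For (ii), I would use the subordination formula $(-\triangle)^{-1/2}=c\int_0^\infty H_t\,t^{-1/2}\,dt$ to represent the kernel as $R_j(x,y)=c\int_0^\infty T_j^{(x)}h_t(x,y)\,t^{-1/2}\,dt$, and then insert the pointwise bounds for $T_j^{(x)}h_t(x,y)$, together with those for its $x$-- and $y$--differences, established in \cite{ADH,DH1}, into the $t$--integration, exactly along the lines of Proposition \ref{pr300} (where the same scheme produces \eqref{si}--\eqref{smooth x3} for $K=\int_0^\infty p_t(x,y)\frac{dt}{t}$). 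This yields \eqref{si}, \eqref{smooth y3} and \eqref{smooth x3} for $R_j(x,y)$ for every $0<\varepsilon<1$; in particular $R_j$ is a Dunkl--Calder\'on--Zygmund operator and the smoothness hypothesis \eqref{smooth y} of Theorem \ref{1.11} is met.

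For (iii), the Dunkl operators $T_j$ are skew--adjoint on $L^2(\R^N,\omega)$ and commute with $\triangle$, hence $R_j^*=-R_j$, and it is enough to check $R_j(1)=0$ in $BMO_d(\R^N,\omega)$, i.e.\ $\langle R_j1,f\rangle=0$ for every $f\in C^\eta_{0,0}(\R^N)$. Using the distributional definition of $T(1)$ on $C^\eta_{0,0}(\R^N)$ given in the discussion preceding Definition \ref{wbp}, this reduces to $\int_{\R^N}R_j(x,y)\,d\omega(x)=0$, which follows from the antisymmetry of the Riesz kernel together with $\int_{\R^N}R_j(x,y)\,d\omega(y)=0$ --- the latter coming from the subordination representation above, $\int_{\R^N}h_t(x,y)\,d\omega(y)=1$, and $T_j(1)=0$ --- exactly as in the Euclidean computation $R_j(1)=0$. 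With $R_j^*(1)=-R_j(1)=0$ in hand, Theorems \ref{1.11} and \ref{1.13} apply and give both conclusions of the theorem.

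The step that carries the real weight is (ii): one must push the regularity exponent of the Riesz kernel all the way to (arbitrarily close to) $1$, so that the sharp endpoint ${\bf N}/({\bf N}+1)<p$ is genuinely reached, which forces one to keep track of both the Euclidean distance $\|x-y\|$ and the Dunkl distance $d(x,y)$ throughout the $t$--integration, and it is in the difference estimates \eqref{smooth y3} and \eqref{smooth x3} that essentially all the available room in $\varepsilon$ is spent. A secondary, more technical point is the rigorous justification of the interchanges used for $R_j(1)=0$, needed because $1\notin L^2(\R^N,\omega)$.
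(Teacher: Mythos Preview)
Your overall strategy---verify that each $R_j$ is a Dunkl--Calder\'on--Zygmund operator with $R_j^*(1)=0$ and then invoke Theorems \ref{1.11} and \ref{1.13}---is precisely the paper's route, and your handling of the range of $p$ (choosing $\varepsilon<1$ with ${\bf N}/({\bf N}+\varepsilon)<p$) is correct.

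Two points of divergence are worth noting. For the kernel estimates (ii), your appeal to Proposition \ref{pr300} is only heuristic: that proposition treats kernels of the form $\int_0^\infty S_t(x,y)\,\frac{dt}{t}$, whereas the Riesz kernel is $c\int_0^\infty T_j^{(x)}h_t(x,y)\,\frac{dt}{\sqrt t}$ with $T_j h_t(x,y)=\frac{y_j-x_j}{2t}h_t(x,y)$, so neither the integrand nor the measure matches. The paper carries out the computation directly, using the Gaussian heat-kernel bounds of \cite[Theorem 3.1]{DH2} (not the Poisson bounds) and splitting the $t$-integral at $d(x,y)^2$ for the size estimate and at $\|x-y\|^2$ for the smoothness in $y$; the extra factor $|y_j-x_j|$ is what produces the gain $d(x,y)/\|x-y\|$ in the size bound. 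Your ``along the lines of'' is fair in spirit, but the details are genuinely different.

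More substantively, your argument for $R_j^*(1)=0$ via skew-adjointness and formal kernel integration is not the paper's, and the caveat you flag is exactly the problem: $R_j(\cdot,y)$ is not globally integrable, so the reduction to $\int R_j(x,y)\,d\omega(y)=0$ via $T_j(1)=0$ is purely formal and would need a nontrivial limiting argument to make rigorous. The paper avoids this entirely by bootstrapping: it quotes from \cite{ADH} that $R_j$ is bounded on $L^2(\R^N,\omega)$ and on $H_d^1(\R^N,\omega)$, and then applies Proposition \ref{HpinLp} together with the necessity argument in the proof of Theorem \ref{1.12} (any operator bounded on both $L^2$ and $H_d^p$ annihilates integrals, hence $T^*(1)=0$). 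Since the Dunkl--Riesz transforms are convolution operators this also gives $R_j(1)=0$. This route is cleaner and bypasses the convergence issues you anticipated.
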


The paper is organized as follows. In the next section, the almost
orthogonal estimates, the main tools in this paper will be provided.
The proofs of Theorems \ref{th1.1}, \ref{1.3} and \ref{1.2} will be
given. In section 3, we show Theorem \ref{1.5}, the weak-type
discrete Calder\'on reproducing formula Theorem \ref{1.5} and
Theorem \ref{1.6}, Littlewood-Paley theory on $L^p, 1<p<\infty.$ In
section 4, we discuss the Dunkl-Hardy theory and demonstrate Theorem
\ref{1.7}, Proposition \ref{pr1} and Theorems \ref{1.8} and
\ref{atom}. The boundedness of operators on the Hardy space,
molecule theory, Theorems \ref{1.11}, \ref{moleculeinHp} and
\ref{1.12}-\ref{1.13} will be included in the last section.

Before ending this section, some remarks must be in order. As
mentioned above, one can consider the Dunkl setting $(\Bbb R^N,
\|\cdot\|, \omega)$ as a space of homogeneous type in the sense of
Coifman and Wiess. As Meyer remarked in his preface to~\cite{DH},
\emph{``One is amazed by the dramatic changes that occurred in
analysis during the twentieth century. In the 1930s complex methods
and Fourier series played a seminal role. After many improvements,
mostly achieved by the Calder\'on--Zygmund school, the action takes
place today on spaces of homogeneous type. No group structure is
available, the Fourier transform is missing, but a version of
harmonic analysis is still present. Indeed the geometry is
conducting the analysis.''} The geometry involved in the Dunkl
setting, namely, the finite reflaction goups on $\R^N,$ plays a crucial role. More precisely, the geometric consideration is conducting the Dunkl transform,
translation and convolution operators. They are not appeared in genaral
spaces of homogeneous type in the sense of Coifman and Weiss.
Morever, they are also conducting the Riesz transforms, the
Dunkl-H\"olmander multipliers and the Dunkl-Hardy space in the Dunkl setting. The results of this paper indicate: (1) the operators, namely, the
Dunkl-Calder\'on-Zygmund singular integral theory conducted by the
geometry involved in the Dunkl setting are different from those
defined on spaces of homogeneous type in the sense of Coifman and
Weiss and this theory still plays a fundamental role in the
Dunkl setting; (2)  the Dunkl-Hardy spaces deduced by this geometry
are same as those defined on spaces of homogeneous type; (3) many
methods, such as, almost orthogonal estimates, Coifman's
approximation to the identity and the decomposition of the identity
operator, Meyer's commutation Lemma still can be applied to the
Dunkl setting.

\section{{Dunkl-Calder\'on-Zygmund Singular Integral Operators and $T1$ Theorem }}

\subsection{{Dunkl-Calder\'on-Zygmund Singular Integral Operators}}
\ \\

We begin with the following proposition which is the motivation for
introducing the Dunkl-Calder\'on-Zygmund singular integral
operators.
\begin{proposition}\label{pr300}
    Suppose that $S_t(x,y)$ satisfy the following conditions:
    \begin{eqnarray*}
        &\textup{(i)}& |S_t(x,y)|\lesssim  {1\over V(x,y,t+d(x,y))}\frac{t}{t+\|x-y\|},\\
        &\textup{(ii)}& |S_t(x,y)-S_t(x',y)|\\
        &&\lesssim {\|x-x'\|\over t} \Big({1\over V(\boldsymbol{x},y,t+d(\boldsymbol{x},y))}\frac{t}{t+\|\boldsymbol{x}-y\|}+{1\over V(\boldsymbol{x'},y,t+d(\boldsymbol{x'},y))}\frac{t}{t+\|\boldsymbol{x'}-y\|}\Big),\\
        &\textup{(iii)}& |S_t(x,y)-S_t(x,y')|\\
        &&\lesssim {\|y-y'\|\over t}\Big({1\over V(x,\boldsymbol{y},t+d(x,\boldsymbol{y}))}\frac{t}{t+\|x-\boldsymbol{y}\|}+{1\over V(x,\boldsymbol{y'},t+d(x,\boldsymbol{y'}))}\frac{t}{t+\|x-\boldsymbol{y'}\|}\Big),
    \end{eqnarray*}
    then $K(x,y)=\int_0^\infty S_t(x,y)\frac{dt}{t}$ satisfies the following estimates: for any $0<\varepsilon<1,$
    \begin{center}
        \begin{equation}\label{size3}
        |K(x,y)|\lesssim \Big(\frac{d(x,y)}{\|x-y\|}\Big)^\varepsilon\frac1{\omega(B(x,d(x,y)))};
        \end{equation}
        \begin{equation}\label{smooth y3}
        |K(x,y)-K(x,y')|\lesssim \Big(\frac{\|y-y'\|}{\|x-y\|}\Big)^\varepsilon\frac{1}{\omega(B(x,d(x,y)))} \qquad {\rm for}\ \|y-y'\|\leqslant  d(x,y)/2;
        \end{equation}
        \begin{equation}\label{smooth x3}
        |K(x',y)-K(x,y)|\lesssim \Big(\frac{\|x-x'\|}{\|x-y\|}\Big)^\varepsilon\frac1{\omega(B(x,d(x,y)))}\qquad {\rm for}\ \|x-x'\|\leqslant d(x,y)/2.
        \end{equation}
    \end{center}
\end{proposition}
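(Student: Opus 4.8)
The plan is to reduce everything to a single pointwise estimate on $S_t(x,y)$ anchored at the point $x$, and then carry out a few elementary one–variable integrations in $t$. First, since $V(x,y,r)=\max\{\omega(B(x,r)),\omega(B(y,r))\}\geqslant\omega(B(x,r))$ and, by the reverse doubling part of \eqref{rd} (whose exponent is the Euclidean dimension $N$), $\omega(B(x,t+d(x,y)))\gtrsim\big((t+d(x,y))/d(x,y)\big)^{N}\omega(B(x,d(x,y)))$, hypothesis (i) yields
\[
|S_t(x,y)|\lesssim\frac{1}{\omega(B(x,d(x,y)))}\Big(\frac{d(x,y)}{t+d(x,y)}\Big)^{N}\frac{t}{t+\|x-y\|}\qquad(t>0,\ x\neq y).
\]
Moreover, when $\|x-x'\|\leqslant d(x,y)/2$ the quantities $d(x',y)$, $\|x'-y\|$, $\omega(B(x',d(x',y)))$ and $V(x',y,t+d(x',y))$ are all comparable to their analogues at $x$ (by the triangle inequalities for $\|\cdot\|$ and $d$, together with the doubling property of $\omega$), so hypothesis (ii) combined with the bound above gives
\[
|S_t(x',y)-S_t(x,y)|\lesssim\frac{\|x-x'\|}{t}\cdot\frac{1}{\omega(B(x,d(x,y)))}\Big(\frac{d(x,y)}{t+d(x,y)}\Big)^{N}\frac{t}{t+\|x-y\|},
\]
and symmetrically, for $\|y-y'\|\leqslant d(x,y)/2$, hypothesis (iii) gives the analogous bound for the $y$–difference.

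For \eqref{size3}, I would set $a=d(x,y)\leqslant b=\|x-y\|$ and integrate the first display against $dt/t$, splitting at $t=b$. On $(0,b)$ one has $t+b\sim b$ and $\int_0^{b}\big(a/(t+a)\big)^{N}\,dt\lesssim a$ if $N>1$ and $\lesssim a\log(2b/a)$ if $N=1$; on $(b,\infty)$ the integrand is dominated by $a^{N}t^{-N-1}$, contributing $\lesssim a/b$. Hence $|K(x,y)|\lesssim\omega(B(x,d(x,y)))^{-1}\,\frac{a}{b}\big(1+\log(2b/a)\big)$. Since $a\leqslant b$, for every fixed $0<\varepsilon<1$ we have $\frac{a}{b}\big(1+\log(2b/a)\big)\leqslant C_\varepsilon(a/b)^{\varepsilon}$ (because $s^{-(1-\varepsilon)}(1+\log s)$ is bounded on $[1,\infty)$), which is exactly \eqref{size3}. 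Note it is precisely the borderline case $N=1$ — genuinely possible in the Dunkl setting, where the reverse doubling exponent is the Euclidean $N$ rather than the homogeneous dimension $\mathbf N$ — that forces the restriction $\varepsilon<1$.

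For \eqref{smooth x3} (and \eqref{smooth y3} is identical with (iii) in place of (ii)), I would put $c=\|x-x'\|\leqslant a/2\leqslant a\leqslant b$ and split $\int_0^{\infty}$ at $t=c$. On $(0,c)$ I bound $|K(x',y)-K(x,y)|\leqslant\int_0^{c}\big(|S_t(x,y)|+|S_t(x',y)|\big)\,dt/t$ and use the pointwise size bound, which on this range ($t\leqslant c\leqslant a\leqslant b$) collapses to $\int_0^{c}\frac{t}{b}\,\frac{dt}{t}=\frac{c}{b}$. On $(c,\infty)$ I use the second display; the extra factor $c/t$ leaves $c\int_c^{\infty}\big(a/(t+a)\big)^{N}\frac{dt}{(t+b)\,t}$, which, splitting further at $t=a$ and $t=b$, is $\lesssim\frac{c}{b}\big(1+\log(a/c)\big)$. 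Adding the two ranges, $|K(x',y)-K(x,y)|\lesssim\omega(B(x,d(x,y)))^{-1}\,\frac{c}{b}\big(1+\log(a/c)\big)$, and since $c\leqslant a\leqslant b$ the same device gives $\frac{c}{b}\big(1+\log(a/c)\big)\leqslant C_\varepsilon(c/b)^{\varepsilon}$ for any $0<\varepsilon<1$, i.e.\ \eqref{smooth x3}.

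The computations are all routine; the hard part, such as it is, will be the systematic bookkeeping of the logarithmic losses. These arise in two places — from the borderline reverse–doubling exponent $N$ (which may equal $1$) and, in the smoothness estimates, from the harmless extra factor $1/t$ carried by the difference bound — but in every instance the loss has the shape $\frac{a}{b}\big(1+\log(b/a)\big)$ with $a\leqslant b$, which is dominated by $(a/b)^{\varepsilon}$ for any $\varepsilon<1$. This is exactly why the conclusion is stated for arbitrary $0<\varepsilon<1$ rather than for $\varepsilon=1$.
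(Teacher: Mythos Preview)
Your proof is correct and follows essentially the same approach as the paper's: split the $t$-integral at the natural scales, use the size bound (i) for small $t$ and the difference bounds (ii)/(iii) for large $t$, invoke reverse doubling to produce the factor $(d(x,y)/(t+d(x,y)))^{N}$, and absorb the resulting logarithmic loss (which appears in the borderline case $N=1$ and from the extra $1/t$ in the smoothness step) into $(a/b)^{\varepsilon}$ for $\varepsilon<1$. The only cosmetic difference is that you apply reverse doubling once at the outset to obtain a single clean pointwise bound on $S_t$, whereas the paper applies it piecewise on each subinterval; your organization is slightly tidier but the mathematical content is identical.
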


\begin{proof}
    We first verify \eqref{size3}. By the condition (i),
    \begin{align*}
    |K(x,y)|&\lesssim \int_0^\infty |S_t(x,y)|\frac{dt}{t}= \int_0^{d(x,y)}\, {1\over V(x,y,t+d(x,y))}\frac{t}{t+\|x-y\|}\frac{dt}{t}\\
    &\qquad +\int_{d(x,y)}^{\|x-y\|}\, {1\over V(x,y,t+d(x,y))}\frac{t}{t+\|x-y\|}\frac{dt}{t}\\
    &\qquad +\int_{\|x-y\|}^\infty
    {1\over V(x,y,t+d(x,y))}\frac{t}{t+\|x-y\|}\frac{dt}{t}
    \\ &=:I_1+I_2+I_3.
    \end{align*}
    Note that $\omega(B(x,d(x,y)))\sim \omega(B(y,d(x,y)))\sim V(x,y,d(x,y))$, we obtain
    $$I_1\lesssim \frac{1}{\|x-y\|}\int_0^{d(x,y)} {1\over V(x,y,t+d(x,y))}dt\lesssim \frac{d(x,y)}{\|x-y\|}\frac1{\omega(B(x,d(x,y)))}.$$

    If $d(x,y)\leqslant t\leqslant \|x-y\|$, by using the reverse doubling condition on the measure $\omega,$ we get  $V(x,y, t+d(x,y))\geqslant \omega(B(x,t))\geqslant C\big(\frac{t}{d(x,y)}\big)^N\omega(B(x,d(x,y))).$
    Thus, we have
    \begin{align*}
    I_2\lesssim \frac{1}{\|x-y\|}\int_{d(x,y)}^{\|x-y\|}\big(\frac{d(x,y)}{t}\big)^N {1\over \omega(B(x,d(x,y)))}dt.
    \end{align*}
    Hence, if $N>1,$
    \begin{align*}
    I_2\lesssim\frac{d(x,y)}{\|x-y\|}\frac1{\omega(B(x,d(x,y)))}.
    \end{align*}
    If $N=1,$ then there exists $0<\varepsilon<1$ such that
    \begin{align*}
    I_2\lesssim \frac{d(x,y)}{\|x-y\|}\ln\Big(\frac{\|x-y\|}{d(x,y)}\Big){1\over \omega(B(x,d(x,y)))}
    \lesssim \Big(\frac{d(x,y)}{\|x-y\|}\Big)^\varepsilon{1\over \omega(B(x,d(x,y)))}.
    \end{align*}
    Again, by the reverse doubling property of the measure $\omega$, we see that
    $V(x,y,d(x,y))=V(x,y,\frac{d(x,y)}{t}t)\lesssim
    \big(\frac{d(x,y)}{t}\big)^{N} V(x,y,t)$ for
    $d(x,y)\leqslant\|x-y\|\leqslant t<\infty $.  Thus,
    $I_3$ can be estimated as follows
    \begin{align*}
    I_3&\lesssim \int_{\|x-y\|}^\infty {1\over V(x,y,d(x,y))}\Big(\frac{d(x,y)}{t}\Big)^{N}\frac{dt}{t} \lesssim \Big(\frac{d(x,y)}{\|x-y\|}\Big)^{N}{1\over \omega(B(x,d(x,y)))}\\
     &\lesssim \frac{d(x,y)}{\|x-y\|}{1\over \omega(B(x,d(x,y)))}.
    \end{align*}
    Hence we obtain that \eqref{size3} holds.

    To verify \eqref{smooth y3}, we write
    \begin{align*} |K(x,y)-K(x,y')|&\lesssim \int_0^\infty |S_t(x,y)-S_t(x,y')|\frac{dt}{t}
    \\&=\int_0^{\|y-y'\|} |S_t(x,y)-S_t(x,y')|\frac{dt}{t}+\int_{\|y-y'\|}^{\|x-y\|} |S_t(x,y)-S_t(x,y')|\frac{dt}{t}\\
    &\qquad +\int_{\|x-y\|}^\infty |S_t(x,y)-S_t(x,y')|\frac{dt}{t}
    \\&=:I\!I_1+I\!I_2+I\!I_3.
    \end{align*}

    Observe that when $d(y,y')\leqslant \|y-y'\|\leqslant \frac{1}{2}d(x,y)\leqslant \frac{1}{2}\|x-y\|,$   $d(x,y)\sim d(x,y')$ and $\|x-y\|\sim \|x-y'\|.$ Then applying condition (i), we get
    \begin{align*}
    I\!I_1&\lesssim \int_0^{\|y-y'\|} |S_t(x,y)-S_t(x,y')|\frac{dt}{t} \\
    & \lesssim \int_0^{\|y-y'\|} \Big\{{1\over V(x,\boldsymbol{y},t+d(x,\boldsymbol{y}))}\frac{t}{t+\|x-\boldsymbol{y}\|}+{1\over V(x,\boldsymbol{y'},t+d(x,\boldsymbol{y'}))}\frac{t}{t+\|x-\boldsymbol{y'}\|}\Big\}\frac{dt}{t}
    \\        &\lesssim \frac{\|y-y'\|}{\|x-y\|}{1\over \omega(B(x,d(x,y)))}.
    \end{align*}

    Applying condition (iii) implies that for any fixed $0<\varepsilon<1,$
    \begin{align*}
    I\!I_2&\lesssim \int_{\|y-y'\|}^{\|x-y\|} {\|y-y'\|\over t}\Big({1\over V(x,\boldsymbol{y},t+d(x,\boldsymbol{y}))}\frac{t}{t+\|x-\boldsymbol{y}\|}\\
    &\qquad\qquad\qquad +{1\over V(x,\boldsymbol{y'},t+d(x,\boldsymbol{y'}))}\frac{t}{t+\|x-\boldsymbol{y'}\|}\Big)\frac{dt}{t} \\
    & \lesssim  \frac{\|y-y'\|}{\|x-y\|}\ln\bigg(\frac{\|x-y\|}{\|y-y'\|}\bigg){1\over \omega(B(x,d(x,y)))}
    \\        &\lesssim \Big(\frac{\|y-y'\|}{\|x-y\|}\Big)^\varepsilon{1\over \omega(B(x,d(x,y)))}.
    \end{align*}

    Similarly,

    \begin{align*}
    I\!I_3&\lesssim \int_{\|x-y\|}^\infty  {\|y-y'\|\over t}\Big({1\over  V(x,\boldsymbol{y},t+d(x,\boldsymbol{y}))}\frac{t}{t+\|x-\boldsymbol{y}\|}\\
    &\hskip4.5cm +{1\over V(x,\boldsymbol{y'},t+d(x,\boldsymbol{y'}))}\frac{t}{t+\|x-\boldsymbol{y'}\|}\Big)\frac{dt}{t}\\
    & \lesssim  \int_{\|x-y\|}^\infty  {\|y-y'\|\over t}\Big({1\over  V(x,\boldsymbol{y},t+d(x,\boldsymbol{y}))}+{1\over V(x,\boldsymbol{y'},t+d(x,\boldsymbol{y'}))}\Big)\frac{dt}{t}
    \\        &\lesssim \frac{\|y-y'\|}{\|x-y\|}\cdot {1\over \omega(B(x,d(x,y)))}.
    \end{align*}

    The verification for \eqref{smooth x3} is similar and we omit the details here.

    The proof of Proposition \ref{pr300} is complete.
\end{proof}

Before proving  Theorem \ref{th1.1}, we first give the following
lemma.
\begin{lemma}\label{lem1}
    For any $\varepsilon,t>0,y\in \R^N$ , there exists a constant $C$ depending on $\varepsilon$ such that,
    $$\int_{\Bbb R^N}{1\over \omega(x,t+d(x,y))}\Big(\frac{t}{t+d(x,y)}\Big)^\varepsilon d\omega(x) \leqslant C.$$
\end{lemma}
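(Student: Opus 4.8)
The plan is to decompose $\R^N$ into $d$-annuli centred at $y$ and to sum a geometric series in $2^{-j\varepsilon}$. Fix $t>0$ and $y\in\R^N$, set $A_0:=\{x\in\R^N:d(x,y)<t\}$ and, for each integer $j\ge1$, $A_j:=\{x\in\R^N:2^{j-1}t\le d(x,y)<2^{j}t\}$. These sets partition $\R^N$, and on $A_j$ one has $t+d(x,y)\sim 2^{j}t$, so that $\big(t/(t+d(x,y))\big)^\varepsilon\lesssim 2^{-j\varepsilon}$ and, by the doubling property \eqref{rd}, $\omega(B(x,t+d(x,y)))\sim\omega(B(x,2^{j}t))$.

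The key step will be the uniform comparison $\omega(B(x,2^{j}t))\sim\omega(B(y,2^{j}t))$ for $x\in A_j$, with constants independent of $x,y,t$ and $j$. I would obtain it from the $G$-invariance of $\omega$ rather than from doubling alone: for $x\in A_j$ there is $\sigma\in G$ with $\|x-\sigma(y)\|=d(x,y)\le 2^{j}t$, whence $B(x,2^{j}t)\subseteq B(\sigma(y),2^{j+1}t)$ and $B(\sigma(y),2^{j}t)\subseteq B(x,2^{j+1}t)$; combining these inclusions with \eqref{rd} and with $\omega(B(\sigma(y),r))=\omega(B(y,r))$ yields the comparison. Together with the inclusion $A_j\subseteq B_d(y,2^{j}t)$ and the recorded bound $\omega(B_d(y,r))\le|G|\,\omega(B(y,r))$, this gives $\omega(A_j)\le|G|\,\omega(B(y,2^{j}t))$.

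Assembling the pieces, the contribution of $A_j$ to the integral is controlled by
\[
2^{-j\varepsilon}\int_{A_j}\frac{d\omega(x)}{\omega(B(x,2^{j}t))}
\;\lesssim\;2^{-j\varepsilon}\,\frac{\omega(A_j)}{\omega(B(y,2^{j}t))}
\;\lesssim\;|G|\,2^{-j\varepsilon},
\]
and summing over $j\ge0$ produces the finite bound, with $C=C(\varepsilon,|G|)=|G|/(1-2^{-\varepsilon})$. I do not expect a real obstacle here; the one point needing care is that the ball-volume comparison above must be carried out with constants that do not grow with $j$, so that the geometric series genuinely converges — this is precisely where the reflection-group invariance of $\omega$, and not merely the doubling of $\omega$ with respect to the Euclidean distance, enters.
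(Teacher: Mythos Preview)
Your proof is correct and follows essentially the same route as the paper's: decompose $\R^N$ into $d$-annuli $\{2^{j-1}t\le d(x,y)<2^{j}t\}$, exploit the $G$-invariance of $\omega$ together with doubling to compare $\omega(B(x,2^{j}t))$ with $\omega(B(y,2^{j}t))$, and sum the resulting geometric series in $2^{-j\varepsilon}$. The paper phrases the key step by covering the $d$-annulus with the $|G|$ Euclidean balls $\{\|x-\sigma(y)\|<2^{j}t\}$ and integrating each, whereas you invoke the recorded inequality $\omega(B_d(y,r))\le|G|\,\omega(B(y,r))$, but these are two sides of the same observation.
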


\begin{proof}
    Let $\displaystyle S=\int_{\Bbb R^N}{1\over \omega(x,t+d(x,y))}\Big(\frac{t}{t+d(x,y)}\Big)^\varepsilon d\omega(x)$, then
    \begin{align*}
    S &\lesssim
    \sum\limits_{j=1}^\infty\int_{t2^{j-1}\leqslant d(x,y)<t2^j}\frac1{\omega(x,t+d(x,y))}\bigg(\frac{t}{t+d(x,y)}\bigg)^{\varepsilon}d\omega(x)
    \\ &\qquad +\int_{d(x,y)<t}\frac1{\omega(x,t+d(x,y))}d\omega(x).
    \end{align*}
    Note that
    \begin{align*}
    \int_{d(x,y)<t}\frac1{\omega(x,t+d(x,y))}d\omega(x)
    &\lesssim \sum\limits_{\sigma \in G}\int_{\|\sigma(y)-x\|<t}\frac1{\omega(B(x,t))}d\omega(x) \\
    &\lesssim \sum\limits_{\sigma \in G}\int_{\|\sigma(y)-x\|<t}\frac1{\omega(B(\sigma(y),t))}d\omega(x) \\ &\lesssim 1.
    \end{align*}

    And for $j \geqslant 1$,
    \begin{align*}
    &\int_{t2^{j-1}\leqslant d(x,y)<t2^j}\frac1{\omega(x,t+d(x,y))}\bigg(\frac{t}{t+d(x,y)}\bigg)^{\varepsilon}d\omega(x) \\
    &\qquad \lesssim  2^{-j\varepsilon}\sum\limits_{\sigma \in G}\int_{\|\sigma(y)-x\|<t2^j}\frac1{\omega(B(x,t2^j))}d\omega(x)  \\
    &\qquad \lesssim 2^{-j\varepsilon}\sum\limits_{\sigma \in G}\int_{\|\sigma(y)-x\|<t2^j}\frac1{\omega(B(\sigma(y),t2^j))}d\omega(x)\\
    &\qquad \lesssim 2^{-j\varepsilon}.
    \end{align*}

    Therefore,
    \begin{align*}
    S\lesssim \sum\limits_{j=1}^\infty2^{-j(\varepsilon)}+1 \lesssim  1.
    \end{align*}
    This completes the proof of Lemma \ref{lem1}.
\end{proof}

We now show Theorem \ref{th1.1}.
\begin{proof}[{\bf The proof of Theorem \ref{th1.1}}]
To begin with, we first show the
week type (1,1) estimate. The idea is to apply the classical
Calder\'on-Zygmund decomposition. To this end, let $f\in
L^2(\R^N,\omega)\cap L^1$ and let $\lambda$ be a positive real
number. Then there exist disjoint cubes $Q_j$ and $f$ can be written
as the sum of a function $g\in L^1\cap L^2$, and of a series of
functions $\{b_j\}^\infty_{j=1}$ such that each function $b_j$ is
contained in the corresponding cube $Q_j.$ More precisely,
$g(x)=f(x)$ if $x\notin \cup Q_j$, whereas $g(x)=|Q_j|^{-1}\int_{Q_j
}f(x)d\omega(x)$ for $x\in Q_j$ and
$b(x)=f(x)-g(x)=\sum\limits_{j=1}^\infty b_j(x)$ where
$b_j(x)=f(x)-|Q_j|^{-1}\int_{Q_j }f(x)d\omega(x)$ for $x\in Q_j.$ It
is easy to see that
\begin{eqnarray*}
    &\textup{(i)}& f(x)=g(x)+\sum\limits_{j=1}^\infty b_j(x), x\in \R^N,\\
    &\textup{(ii)}& |g(x)|=|f(x)|\leqslant \lambda, \text{for\ almost\ all\ } x\notin \bigcup\limits_{j=1}^\infty Q_j,\\
    &\textup{(iii)}& |g(x)|\leqslant 2^N\lambda, \forall x\in \bigcup\limits_{j=1}^\infty Q_j,\\
    &\textup{(iv)}& |\bigcup\limits_{j=1}^\infty Q_j|\leqslant \lambda^{-1}\|f\|_1,\\
    &\textup{(v)}&  \|g\|_2\leqslant 2^N \lambda^{1/2}\|f\|^{1/2}_1,\\
    &\textup{(vi)}&  \int_{Q_j} |b_j(x)| d\omega(x)\leqslant 2^{N+1}\lambda|Q_j|,\\
    &\textup{(vii)}&  \int_{Q_j} b_j(x) d\omega(x)=0.
\end{eqnarray*}
Let $\mathcal O(\bigcup\limits\limits_{j=1}^\infty
4\sqrt{N}Q_j)=\{x: d(x,x_{Q_j})\leqslant \l(4\sqrt{N}Q_j), \text{for
each}\ j\}$ with $x_{Q_j},$ the center of $Q_j$ and
$\l(4\sqrt{N}Q_j)$  the side length of $4\sqrt{N}Q_j.$ Then
$$\omega\big(\mathcal O(\bigcup\limits_{j=1}^\infty 4\sqrt{N}Q_j)\big)\lesssim \sum\limits_{j=1}^\infty|Q_j|\lesssim \lambda^{-1}\|f\|_1$$
and
$$\lambda|\big\{x\in \mathcal O(\bigcup\limits_{j=1}^\infty 4\sqrt{N}Q_j)^c: |Tb(x)|\geqslant \lambda\big\}|\leqslant  \int_{\mathcal O(\bigcup\limits_{j=1}^\infty 4\sqrt{N}Q_j)^c} |Tb(x)|d\omega(x).$$
We estimate the last term above as follows. For $x\in \mathcal
O(\bigcup\limits_{j=1}^\infty 4\sqrt{N}Q_j)^c,$
$$Tb_j(x)=\int_{\R^N} K(x,y)b_j(y)d\omega(y)=\int_{Q_j}[K(x,y)-K(x,x_{Q_j})]b_j(y)d\omega(y),$$
where the cancellation condition of $b_j$ is used.

Observe that if $x\in \mathcal O(\bigcup\limits_{j=1}^\infty
4\sqrt{N}Q_j)^c, y\in Q_j$ and $y_j$ is the center of $Q_j$, then
$d(x,y)\geqslant 2\|y-x_{Q_j}\|$ and hence, applying the smoothness
condition of $K(x,y)$ gives
\begin{align*}
&\int_{\mathcal O(\bigcup\limits_{j=1}^\infty 4\sqrt{N}Q_j)^c}|Tb_j(x)|d\omega(x)\\
&\lesssim \int_{\mathcal O(\bigcup\limits_{j=1}^\infty 4\sqrt{N}Q_j)^c}\int_{Q_j}\Big(\frac{\|y-x_{Q_j}\|}{d(x,y)}\Big)^\varepsilon \frac{1}{\omega(x, d(x,y))}|b_j(y)|d\omega(y)d\omega(x)\\
&\lesssim \int_{Q_j}
\int_{\R^N}\Big(\frac{\|y-x_{Q_j}\|}{\|y-x_{Q_j}\|+d(x,y)}\Big)^\varepsilon
\frac{1}{\omega(x, \|y-x_{Q_j}\|+d(x,y))}d\omega(x)
|b_j(y)|d\omega(y)\\
&\lesssim \int_{Q_j} |b_j(y)|d\omega(y)\\
& \lesssim
2^{N+1}\lambda|Q_j|,
\end{align*}
where we applying the Lemma \ref{lem1} in the second inequality and
the property(vi) in the last inequality above. And hence, by (iv),
$$\int_{\mathcal O(\bigcup\limits_{j=1}^\infty 4\sqrt{N}Q_j)^c}|Tb(x)|d\omega(x)\lesssim \sum\limits_{j=1}^\infty\lambda|Q_j|\lesssim \|f\|_1.$$
All these estimates together with the $L^2$ boundedness of $T$ and (v) imply that
\begin{align*}
\omega\Big(\Big\{x: |Tf(x)|\geqslant \lambda\Big\}\Big) &\lesssim \omega\Big(\Big\{x: |Tg(x)|\geqslant {\lambda\over2}\Big\}\Big) +\omega\Big(\Big\{x: |Tb(x)|\geqslant {\lambda\over2}\Big\}\Big)\\
& \lesssim  \Big(\frac{\|g\|_2}{\lambda}\Big)^2 +\omega\big(\mathcal
O(\bigcup\limits_{j=1}^\infty 4\sqrt{N}Q_j)\big) +\lambda^{-1}
\int_{\mathcal O(\bigcup\limits_{j=1}^\infty 4\sqrt{N}Q_j)^c}|Tb(x)|d\omega(x)\\
&\lesssim \lambda^{-1}\|f\|_1,
\end{align*}
which implies that $T$ is of  weak type (1,1).

By interpolation, $T$ is bounded on $L^p, 1<p\leqslant 2.$ The same
proof applies to $T^*$ gives the $L^p, 1<p\leqslant 2$ boundedness
of $T^*$ and then, by the duality, $T$ is bounded on $L^p,
2\leqslant p<\infty.$

To show that $T$ is bounded from $H_d^1(\R^N,\omega)$ (introduced in
\cite{ADH}) to $L^1(\R^N,\omega),$ we apply the the atomic
decomposition of $H_d^1(\R^N,\omega)$ provided in \cite{DH1}. it
suffices to show that if $a(x)$ is an atom, that is, $a$ satisfies
{\rm (i)} support of $a$ is contained in a cube $Q$ in $\R^N;$ {\rm (ii)}
$\|a\|_2\leqslant \omega(Q)^{-\frac 12};$ {\rm (iii)} $\int_{\R^N}
a(x)d\omega(x)=0;$ then $\|T(a)\|_1\leqslant C,$ where the constant
$C$ is independent of $a.$ To this end, let $B=\{x:
d(x,x_Q)\leqslant 4\sqrt{N}\l(Q) \},$ where $x_Q$ is the center of
$Q$ and $\l(Q)$ is the side length of $Q.$ Write
$$\int_{\R^N} |T(a)(x)|d\omega(x)=\int_B |T(a)(x)|d\omega(x)+\int_{B^c} |T(a)(x)|d\omega(x).$$
The H\"older inequality, the $L^2$ boundedness of $T,$ and the size condition of $a$ imply that
$$\int_B |T(a)(x)|d\omega(x)\leqslant C \omega(B)^{\frac 12}\|a\|_2\leqslant C\omega(Q)^{\frac 12}\|a\|_2\leqslant C.$$
If $x\in B^c$ and $y\in Q,$ then $\|y-x_Q\|\leqslant
\frac{1}{2}d(x,x_Q).$ By the cancellation condition of $a$ and the
smoothness condition of the kernel $K(x,y)$,
\begin{align*}
T(a)(x)&=\int_Q K(x,y)a(y)d\omega(y)=\int_Q [K(x,y)-K(x,x_Q)]a(y)d\omega(y)\\
&\leqslant C\Big(\frac{\l(Q)}{\|x-x_Q\|}\Big)^\varepsilon \frac{1}{\omega(x, d(x,x_Q))}\|a\|_1.
\end{align*}
Therefore,
\begin{align*}
\int_{B^c} |T(a)(x)|d\omega(x)&\lesssim \int_{B^c} \Big(\frac{\l(Q)}{\|x-x_Q\|}\Big)^\varepsilon \frac{1}{\omega(x, d(x,x_Q))}\|a\|_1d\omega(x) \\
&\lesssim\int_{d(x,x_Q)\geqslant \l(Q)}
\Big(\frac{\l(Q)}{d(x,x_Q)}\Big)^\varepsilon \frac{1}{\omega(x,
    d(x,x_Q))}d\omega(x)\\
    &\lesssim 1,
\end{align*}
since $\|a\|_1\leqslant C$ and hence, $T$ is bounded from
$H_d^1(\R^N,\omega)$ to $L^1(\R^N,\omega).$

We now prove the $L^\infty-BMO_d(\R^N,\omega)$ boundedness of $T.$
We first provide a strict definition of $Tf(x)$ when $f\in L^\infty.$
To this end, we
follow the idea given in \cite{MC}. If $f\in L^\infty(\R^N,
\omega),$ we define the functions $f_j(x)$ by $f_j(x)=f(x),$ when
$\|x\|\leqslant j,$ and $f_j(x)=0,$ if $\|x\|>j.$ Since $f_j\in
L^2(\R^N, \omega),$  $T(f_j)$ is well defined by the action of $T$
on $L^2(\R^N,\omega).$ We claim that there exists a sequence $\{c_j\}_j$
of constants such that $T(f_j)-c_j$ converges, uniformly on any
compact set in $\R^N,$ to a function in $BMO_d(\R^N,\omega)$ which
will be defined by $T(f)$ modulo the constant functions. Indeed, set
$c_j=\int_{1\leqslant d(0,y)\leqslant j}K(0,y)d\omega(y).$ Observe
that, by the size condition on the kernel $K(x,y),$
\begin{align*}
c_j
&\leqslant C \int_{1\leqslant d(0,y)\leqslant j}\frac{1}{\omega(B(0,d(0,y)))}\Big(\frac{d(0,y)}{\|y\|}\Big)^\varepsilon d\omega(y)\\
&\leqslant C\frac{\omega(B(0,j))}{\omega(B(0,1))}{j}^\varepsilon
<\infty.
\end{align*}

To show $T(f_j)-c_j$ converges uniformly on the compact ball
$B(0,R),$ we split $f_j$ into $g+h_j,$ where $g(x)=f(x),$ when
$d(0,x)\leqslant 2R,$ and $g(x)=0,$ if $d(0,x)>2R.$ Taking $j>2R,$
we have, for $\|x\|\leqslant R,$
\begin{align*}
T(f_j)(x)&=T(g)(x)+T(h_j)(x)=T(g)(x)+\int_{2R\leqslant d(0,y)\le
    j}K(x,y)f(y)d\omega(y)\\
&= T(g)(x)+\int_{2R\leqslant d(0,y)\le
    j}[K(x,y)-K(0,y)]f(y)d\omega(y)+c_j-C(R),
\end{align*} where $C(R)=
\int_{1\leqslant d(0,y)\leqslant 2R}K(0,y)d\omega(y).$ Observe that
when $\|x\|\leqslant R,$ by the smoothness condition on the kernel
$K(x,y),$ we get
\begin{align*}
&\int_{2R\leqslant d(0,y)}|K(x,y)-K(0,y)|\cdot|f(y)|d\omega(y)\\
&\leqslant C \int_{2R\leqslant d(0,y)}\frac{1}{\omega(B(0,d(0,y)))}\Big(\frac{\|x\|}{\|y\|}\Big)^\varepsilon d\omega(y)\|f\|_{\infty}\\
&\leqslant C \int_{2R\le
    d(0,y)}\frac{1}{\omega(B(0,d(0,y)))}\Big(\frac{R}{d(0,y)}\Big)^\varepsilon
d\omega(y)\|f\|_{\infty}\\
&\leqslant C \|f\|_{\infty}.
\end{align*}
Thus the integral $\int_{2R\leqslant d(0,y)\le
    j}|K(x,y)-K(0,y)|\cdot|f(y)|d\omega(y)$ converges uniformly on
$\|x\|\leqslant R$ as $j$ tends to $\infty,$ which implies that
$T(f_j)-c_j$ converges uniformly on any compact set in $\R^N.$ We
remark that the smoothness condition on the kernel $K(x,y)$ can be
replaced by
$$\int_{d(x,y)\geqslant 2\|x-x'\|} |K(x,y)-K(x',y)|d\omega(y)\leqslant C.$$
Once  the $T(f)$ is defined with $f\in L^\infty(R^N,\omega)$ by the above
claim, we can show that $T(f)\in BMO_d(\R^N,\omega)$ and moreover,
$\|T(f)\|_{BMO_d}\leqslant C\|f\|_{\infty}.$ To this end, let $B$
denote an arbitrary ball with center $x_0$ and radius $R,$ and
$\mathcal O_B=\{y: d(x_0,y)\leqslant 2R\}.$ Then we write
$f=f_1+f_2,$ where $f_1$ is the multiplication of $f$ with the characteristic
function of $\mathcal O_B.$ We now define $T(f_2)(x)$ for $x\in B$
by the following absolutely convergent integral
$$T(f_2)(x)=\int_{d(x_0,y)\geqslant 2R}[K(x,y)-K(x_0,y)]f(y)d\omega(y).$$
Indeed, by the smoothness condition of $T,$
\begin{align*}
&\int_{d(x_0,y)\geqslant 2R}|K(x,y)-K(x_0,y)|\cdot|f(y)|d\omega(y)\\
&\leqslant  C\|f\|_\infty \int_{d(x_0,y)\geqslant
2R}\frac{1}{\omega(x_0,
    d(y,x_0))}\Big(\frac{R}{d(x_0,y)}\Big)^\varepsilon d\omega(y)\\
&\leqslant C\|f\|_\infty.
\end{align*}
%This definition of $T(f_2)(x)$ leads to define $T(f)(x)$ as follows.
Moreover, since $f_1\in L^\infty$ is supported in a bounded set $\mathcal O_B,$ we see that
$f_1\in L^2(\R^N,\omega).$  Hence $T(f_1)(x)$ is well-defined.

We can now give a strict definition of $T(f)(x)$ as follows:
$T(f)(x)=T(f_1)(x)+T(f_2)(x)$.
Further, this definition of $T(f)(x)$
is only differing by a constant, depending on $x_0$ and $R.$ To see
the proof that $T(f)$ belongs to $BMO_d(\R^N,\omega),$ we have
$$\|T(f_2)\|_\infty\leqslant C\|f\|_\infty$$
and, further,
$$\|T(f_1)\|_2\leqslant \|T\|\|f_1\|_2\leqslant C\|f\|_\infty \omega(\mathcal O_B)^{1/2}\|T\|.$$
We thus get
\begin{align*}
&\Big(\int_B\Big|T(f)(x)-\frac{1}{\omega(B)}\int_B
T(f)(y)d\omega(y)\Big|^2
d\omega(x)\Big)^{1/2}\\
&\leqslant C\|f\|_\infty\omega(B)^{1/2} + C\omega(\mathcal
O_B)^{1/2}\|f\|_\infty\|T\|\\
&\leqslant
C'\big(\omega(B)\big)^{1/2}\|f\|_\infty,
\end{align*} where the last inequality follows from the fact that $
\omega(\mathcal O_B)\sim \omega(B).$ The proof of Theorem
\ref{th1.1} is complete.
\end{proof}

\subsection{{Meyer's commutation Lemma}}
\ \\

Now we prove the Theorem \ref{1.3}. To this end, we first recall
Meyer's commutation Lemma, which plays a fundamental role in the
Calder\'on-Zygmund singular integral operator theory. Given
$\phi(x)\in C^\infty_0$ with $\supp \phi\subseteq B(x_0, r), r>0.$
Let $\theta(x)$ be a function in $C^\infty_0(\R^N)$ with
$\theta(x)=1$ if $|x|\leqslant 1$ and $\theta(x)=0$ if $|x|>2,$ and
$\eta_0(x)=\theta(\frac{|x-x_0|}{2r}).$ Meyer's commutation Lemma is
the following
\begin{lemma}\label{mcl1}(\cite{M})
    Suppose that $T$ is a classical Calder\'on-Zygmund singular integral operator with the kernel $K(x,y)$ satisfing the size and smoothness conditions. Moreover, $T$ has the weak boundedness property and $T(1)=0.$ Then
    $$T(\phi)(x)=\int_{\R^N} K(x,y)[\phi(y)-\phi(x)]\eta_0(y)dy+\phi(x)\int_{\R^N} K(x,y)\eta_0(y)dy$$
    with $\int_{\R^N} K(x,y)[\phi(y)-\phi(x)]\eta_0(y)dy=\lim\limits_{\delta \to 0}\int_{|x-y|\ge\delta}K(x,y)[\phi(y)-\phi(x)]\eta_0(y)dy,$ where the limit exists.
\end{lemma}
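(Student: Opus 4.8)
The plan is to prove the identity by localizing the evaluation point and using $T(1)=0$ to make sense of the piece that is not absolutely convergent. Write $B_j:=B(x_0,jr)$, so that $\supp\phi\subseteq B_1$, $\eta_0\equiv 1$ on $B_2$, and $\supp\eta_0\subseteq B_4$; in particular $\phi=\phi\,\eta_0$. For each fixed evaluation point $x$ I would use the splitting
\[
\phi(y)=\phi(y)\eta_0(y)=\bigl[\phi(y)-\phi(x)\bigr]\eta_0(y)+\phi(x)\,\eta_0(y),
\]
and treat the two summands separately: the first one produces a genuinely convergent principal-value integral, while the second — a constant multiple of $\eta_0$ — is exactly where the hypothesis $T(1)=0$ enters.

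First I would check that
\[
F(x):=\lim_{\delta\to 0}\int_{\|x-y\|\geq\delta}K(x,y)\bigl[\phi(y)-\phi(x)\bigr]\eta_0(y)\,d\omega(y)
\]
is well defined and defines a distribution. The function $y\mapsto[\phi(y)-\phi(x)]\eta_0(y)$ is compactly supported, and since $\phi$ is Lipschitz (being $C_0^\infty$), $|\phi(y)-\phi(x)|\lesssim\|x-y\|$; together with the size condition $|K(x,y)|\lesssim\omega(B(x,\|x-y\|))^{-1}$ this makes the integrand $O\bigl(\|x-y\|\,\omega(B(x,\|x-y\|))^{-1}\bigr)$ near the diagonal, which is integrable there by the doubling property, and away from the diagonal it is bounded with compact support, so the limit exists (and is in fact an absolutely convergent integral). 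A routine estimate, splitting the $y$-domain into dyadic annuli centered at $x$ and at $x_0$, then gives $|F(x)|\leq C(\phi)$ uniformly in $x$, so $F\in(C_0^\eta)'$.

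The core of the argument is to show $T(\phi)=F+\phi\,T(\eta_0)$ in $(C_0^\eta)'$. For $x\notin\supp\phi$ the kernel representation gives $T(\phi)(x)=\int K(x,y)\phi(y)\,d\omega(y)=\int K(x,y)\phi(y)\eta_0(y)\,d\omega(y)$, while $F(x)=\int K(x,y)\phi(y)\eta_0(y)\,d\omega(y)-\phi(x)\int K(x,y)\eta_0(y)\,d\omega(y)=\int K(x,y)\phi(y)\,d\omega(y)$ since $\phi(x)=0$; hence $T(\phi)-F$ is supported in $\overline{\supp\phi}\subseteq B_1$. For $x$ in a neighborhood of $\supp\phi$ I would insert an auxiliary cutoff $\zeta\in C_0^\infty$ with $\zeta\equiv 1$ on a small ball $B(x,\rho)$ and $\supp\zeta\subseteq B(x,2\rho)$, write $\phi=\phi\zeta+\phi(1-\zeta)$, and use three facts: (a) $T(\phi(1-\zeta))(x)=\int K(x,y)\phi(y)(1-\zeta(y))\,d\omega(y)$ by the kernel representation, as $\phi(1-\zeta)$ vanishes near $x$; (b) $T(\phi\zeta)(x)=\phi(x)\,T(\zeta)(x)+T\bigl((\phi-\phi(x))\zeta\bigr)(x)$, where $T(\zeta)(x)=-T(1-\zeta)(x)=-\int K(x,y)(1-\zeta(y))\,d\omega(y)$ because $T(1)=0$ as a distribution and $1-\zeta$ vanishes near $x$; and (c) $T\bigl((\phi-\phi(x))\zeta\bigr)(x)=\lim_{\delta\to 0}\int_{\|x-y\|\geq\delta}K(x,y)(\phi(y)-\phi(x))\zeta(y)\,d\omega(y)$. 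Granting (a)--(c), the $\zeta$-dependent terms recombine and one gets $T(\phi)(x)=F(x)+\phi(x)\,T(\eta_0)(x)$ in a neighborhood of $\supp\phi$; since $\phi\,T(\eta_0)$ and $T(\phi)-F$ are both supported in $\overline{\supp\phi}$, they agree everywhere, which proves the formula. Finally, to reach the exact form in the statement: since $\eta_0\equiv 1$ on $B_2\supseteq\supp\phi$ and $T(1)=0$, near $\supp\phi$ we have $T(\eta_0)=T(\eta_0-1)$, and $\eta_0-1$ is supported in $\{y:2r\leq\|y-x_0\|\leq 4r\}$, which lies at distance $\geq r$ from $\supp\phi$; hence $\phi(x)\,T(\eta_0)(x)=\phi(x)\int K(x,y)(\eta_0(y)-1)\,d\omega(y)$ there, and this is the sense in which $\phi(x)\int_{\R^N}K(x,y)\eta_0(y)\,d\omega(y)$ is to be understood.

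I expect the main obstacle to be fact (c): that $T$ applied to a Lipschitz, compactly supported function vanishing at $x$ is, at the point $x$, represented by the principal-value integral of the kernel against that function. This is exactly where the weak boundedness property is needed. The plan is to decompose $(\phi-\phi(x))\zeta$ into smooth bumps $\psi_k$ supported in the annuli $\|y-x\|\sim 2^{-k}$ with $\|\psi_k\|_\infty\lesssim 2^{-k}$ and controlled $C^\eta$-seminorm, estimate $\langle T\psi_k,g\rangle$ for test bumps $g$ concentrated at scale $2^{-k}$ about $x$ by invoking WBP, and sum over $k$; the convergence of this series, combined with the kernel smoothness required to pass from the distributional pairing to the pointwise value at $x$, is the delicate computation that the plan defers.
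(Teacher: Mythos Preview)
The paper does not give its own proof of this lemma; it is cited from Meyer~\cite{M}. What the paper does prove is the Dunkl-setting analogue, Lemma~\ref{lem2.5}, and that proof differs from your plan in an important structural way: it never works pointwise. Instead it fixes a test function $\psi$, computes the pairing $\langle T\phi,\psi\rangle=p+q$ with $p=\langle K,\chi_0(y)[\phi(y)-\phi(x)]\psi(x)\rangle$ and $q=\langle T\chi_0,\phi\psi\rangle$, and then handles $p$ by introducing a smooth diagonal cutoff $\lambda_\delta(x,y)=\theta(\|x-y\|/\delta)$: the off-diagonal part $p_{1,\delta}$ is an honest double integral controlled by size${}+{}$Lipschitz cancellation, while the near-diagonal part $p_{2,\delta}$ is shown to vanish as $\delta\to 0$ by building a partition of unity $\{\bar\eta_j\}$ at scale $\delta$, checking that each localized piece has the right $C^\eta$ norms, and applying WBP bump by bump to get $|p_{2,\delta}|\lesssim\delta^\eta$. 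The term $q$ is handled by showing $|T\chi_0(x)|\lesssim 1$ via $T(1)=0$ and a mean-zero test function argument.

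Your pointwise approach has a genuine gap in step (b). You write $T(\zeta)(x)=-T(1-\zeta)(x)$ and read both sides as numbers, but a priori $T(\zeta)$ is only a distribution, and the hypothesis $T(1)=0$ is a statement in $(C_{0,0}^\eta)'$: it says $\langle T(\eta),\psi\rangle+\langle T(1-\eta),\psi\rangle=0$ for mean-zero $\psi$, which determines $T(\zeta)+T(1-\zeta)$ only modulo constants, not pointwise. Making sense of $T(\zeta)(x)$ as a number at $x$ is precisely the content of the lemma you are proving, so invoking it for $\zeta$ is circular. The same issue recurs in your plan for (c): WBP controls pairings $\langle T\psi_k,g\rangle$, not point values $T\psi_k(x)$, and letting $g$ concentrate at $x$ is not bounded by WBP alone. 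The paper's device of carrying a generic test function $\psi$ through the entire computation and only identifying the resulting distribution with a locally bounded function at the very end is what makes the argument rigorous; you should reorganize your proof the same way.
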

Applying this lemma, Meyer obtained the boundedness of Calder\'on-Zygmund singular integral operators on smooth molecule space. As a consenquence, Meyer proved that all classical Calder\'on-Zygmund operators with
the conditions $T(1)=T^*(1)=0$ form an algebral. See \cite{M} for more details.

In \cite{DJS}, David, Journ\'e and Semmes stated Meyer's commutation Lemma as follows.
\begin{lemma}\label{mcl2}(\cite{DJS})
    Suppose that $T$ is a coutinuous operator from $C^\infty_0(\R^N)$ to $\big(C^\infty_0(\R^N)\big)^\prime$ with the kernel $K(x,y)$ satisfing the size condition and $T$ has the weak boundedness property. Then for all $f, g, h\in C^\infty_0(\R^N),$
    $$\langle f, T(gh)\rangle-\langle fg, T(h)\rangle=\int_{\R^N}\int_{\R^N} f(x)K(x,y)[g(y)-g(x)]h(y)dydx,$$
    where the integral absolutely converges.
\end{lemma}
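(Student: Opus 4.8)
The plan is to prove the identity by a smooth localization at a scale $\delta>0$ --- a partition of unity of size $\sim\delta$ --- and then let $\delta\to0$; the weak boundedness property will carry the only nontrivial estimate, and we will use neither any regularity of $K$ nor any cancellation of $T$. Two preliminary observations are needed. First, the right-hand side converges absolutely: since $g\in C^\infty_0(\R^N)$ is Lipschitz we have $|g(y)-g(x)|\lesssim\|x-y\|$, and together with the size bound for $K$ --- which gives $\int_{\|x-y\|<R}\|x-y\|\,|K(x,y)|\,dy\lesssim R$ uniformly in $x$, an estimate of the type already recorded in Section~1 --- the integrand is absolutely integrable over the compact set $\supp f\times\supp h$. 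Second, whenever $a,b\in C^\infty_0(\R^N)$ have disjoint supports, $\langle a,Tb\rangle=\iint a(x)K(x,y)b(y)\,dy\,dx$ by definition of the kernel, so in any configuration split off the diagonal the asserted identity holds trivially.

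Fix $0<\delta<1$ and a smooth partition of unity $\{\psi^\delta_k\}_{k\in\mathbb Z^N}$ with $\psi^\delta_k$ supported in a ball $Q^\delta_k$ of radius $\sim\delta$ centered at $z_k:=\delta k$, and write $h=\sum_k u_k$ with $u_k:=h\psi^\delta_k$ (a finite sum, as $\supp h$ is compact). By linearity, $\langle f,T(gh)\rangle-\langle fg,T(h)\rangle-R=\sum_k D_k$, where $R$ denotes the right-hand side and
\[
D_k:=\langle f,T(gu_k)\rangle-\langle fg,T(u_k)\rangle-\iint f(x)K(x,y)[g(y)-g(x)]\,u_k(y)\,dy\,dx.
\]
For each $k$ choose $\chi^\delta_k\in C^\infty_0(\R^N)$ with $\chi^\delta_k\equiv1$ on a neighbourhood of $Q^\delta_k$, supported in a ball $B_k$ of radius $r_k\sim\delta$ containing $Q^\delta_k$, with $\|\chi^\delta_k\|_\eta\lesssim\delta^{-\eta}$, and split $f=f\chi^\delta_k+f(1-\chi^\delta_k)$. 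The three pieces of $D_k$ carrying $f(1-\chi^\delta_k)$ cancel exactly, because by the off-diagonal formula the difference of its two operator pieces equals its integral piece. Hence $D_k$ reduces to the same expression with $f$ replaced by $f\chi^\delta_k$. Now write $g=g(z_k)+\widetilde g_k$: the constant $g(z_k)$ contributes the common term $g(z_k)\langle f\chi^\delta_k,T(u_k)\rangle$ to both surviving operator pairings and drops out, while $g(y)-g(x)=\widetilde g_k(y)-\widetilde g_k(x)$, so that
\[
D_k=\langle f\chi^\delta_k,T(\widetilde g_k u_k)\rangle-\langle f\chi^\delta_k\widetilde g_k,T(u_k)\rangle-\iint f(x)\chi^\delta_k(x)K(x,y)[\widetilde g_k(y)-\widetilde g_k(x)]\,u_k(y)\,dy\,dx,
\]
where every factor is now a $C^\infty_0$ function supported in $B_k$.

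It remains to estimate $D_k$. On $B_k$ one has $\|\widetilde g_k\|_\infty\lesssim\delta$ and $\|\widetilde g_k\|_\eta\lesssim\delta^{1-\eta}$ (Lipschitz bound on a set of diameter $\sim\delta$), while $\|f\chi^\delta_k\|_\eta,\|u_k\|_\eta\lesssim\delta^{-\eta}$ and $\|f\chi^\delta_k\|_\infty,\|u_k\|_\infty\lesssim1$; hence $\|\widetilde g_k u_k\|_\eta$ and $\|f\chi^\delta_k\widetilde g_k\|_\eta$ are $\lesssim\delta^{1-\eta}$. Applying the weak boundedness property on $B_k$ --- in the form $|\langle v,Tw\rangle|\lesssim\omega(B_k)\,r_k^{2\eta}\|v\|_\eta\|w\|_\eta$ for $v,w$ supported in $B_k$, which follows from Definition~\ref{wbp} after normalizing the test function $v\otimes w$ --- bounds the first two terms of $D_k$ by $\lesssim\omega(B_k)\,\delta^{2\eta}\cdot\delta^{-\eta}\cdot\delta^{1-\eta}=\delta\,\omega(B_k)$, the decisive gain $\delta^{1-\eta}$ coming from $\widetilde g_k$. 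The remaining integral runs over $\|x-y\|\lesssim\delta$ and, using $|\widetilde g_k(y)-\widetilde g_k(x)|\lesssim\|x-y\|$ and $\int_{\|x-y\|\lesssim\delta}\|x-y\|\,|K(x,y)|\,dy\lesssim\delta$, is $\lesssim\delta\int|u_k|\lesssim\delta\,\omega(B_k)$. Summing over the $\lesssim\delta^{-N}$ indices $k$ with $Q^\delta_k\cap\supp h\neq\emptyset$ and using the bounded overlap of the balls $B_k$ (so $\sum_k\omega(B_k)\lesssim1$) gives $|\sum_k D_k|\leqslant\sum_k|D_k|\lesssim\delta$. Since $\sum_k D_k=\langle f,T(gh)\rangle-\langle fg,T(h)\rangle-R$ is independent of $\delta$, letting $\delta\to0$ forces it to vanish, which is the claim.

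The main obstacle --- and the only place where the hypotheses beyond the size bound enter --- is the bookkeeping in the last two paragraphs: one must split so that the constant $g(z_k)$ cancels between the two operator terms, leaving only $\widetilde g_k=g-g(z_k)=O(\delta)$, and then check that the $\delta^{-\eta}$ losses created by the scale-$\delta$ cut-offs $\chi^\delta_k$ and $\psi^\delta_k$ are beaten by the $r_k^{2\eta}\sim\delta^{2\eta}$ factor in the weak boundedness property together with the extra $\delta^{1-\eta}$ from $\widetilde g_k$, so that $D_k=O(\delta\,\omega(B_k))$ and the summed error is $O(\delta)$. Everything else --- the absolute convergence of the right-hand side, the off-diagonal formula, and the elementary estimates on Hölder norms of products of bump functions --- is routine.
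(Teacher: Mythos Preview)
The paper does not prove this lemma: it is quoted from \cite{DJS} and merely stated, with the paper remarking that David, Journ\'e and Semmes used it to upgrade the class of test functions in the weak boundedness property. So there is no in-paper proof to compare against.

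Your argument is correct and is in fact the standard DJS-style proof. The skeleton --- partition of unity at scale $\delta$, exact cancellation of the off-diagonal pieces via the kernel representation, subtraction of the constant $g(z_k)$ so that the surviving near-diagonal pairings carry the extra factor $\widetilde g_k=O(\delta)$, and then the weak boundedness property beating the $\delta^{-\eta}$ losses from the cut-offs --- is exactly what the paper itself adapts (with the Dunkl modifications) in its proof of Lemma~\ref{lem2.5}. There the authors localize with $\lambda_\delta(x,y)=\theta(\|x-y\|/\delta)$ and a partition $\{\bar\eta_j\}$ built from a maximal $\delta$-net, check the H\"older norms of the resulting test functions $\lambda_\delta\bar\eta_j\chi_0[\phi(y)-\phi(x)]\psi(x)$ by the same case analysis you summarize, and conclude via WBP that the near-diagonal contribution is $\lesssim(r^N/\delta^N)\cdot\omega(B(y_j,4\delta))\,\delta^\eta\to0$. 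Your organization is slightly cleaner in that you partition in $y$ first and then cut off in $x$, which makes the cancellation of the constant term $g(z_k)$ more transparent, but the two arguments are essentially the same. One small point: the lemma is stated with Lebesgue measure $dy\,dx$, so your $\omega(B_k)$ should simply be $|B_k|\sim\delta^N$; this does not affect anything.
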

Using this lemma, they showed that if $T$ has the weak boundedness property with $C_0^\eta(\R^N)$ for $\eta>0,$ then $T$ has the weak boundedness property with $C_0^\infty(\R^N).$ See \cite{DJS} for more details.

In \cite{HS}, Meyer's commutation Lemma was proved for spaces of
homogeneous type in the sense of Coifman and Weiss. More precisely,
suppose that $(X, \rho, \mu)$ is a space of homogeneous type with
the measure satisfying $\mu(B(x, r))\sim r$ with $B(x,r)=\{y\in X:
\rho(x,y)<r\}$ and $r>0.$ Let $\theta$ be the same as above and
$\eta_0(x)=\theta(\frac{\rho(x_0,x)}{2r}), \eta_0+\eta_1=1.$ Then
Meyer's commutation Lemma is given by
\begin{lemma}\label{mcl3}(\cite{HS})
    Suppose that $T$ is a classical Calder\'on-Zygmund singular integral operator defined on space $(X, d, \mu)$ with the kernel $K(x,y)$ satisfing the size and smoothness conditions. Moreover, $T$ has the strong-weak boundedness property
    and $T(1)=0.$ Then
    $$\langle T\phi,\psi\rangle=\int_{\R^N}\int_{\{y:y\not=x\}} K(x,y)\Big\{[\phi(y)-\phi(x)]\eta_0(y)-\eta_1(y)\phi(x)\Big\}d\mu(y)d\mu(x)$$
    and
    $$\langle K(x,y),[\phi(y)-\phi(x)]\eta_0(y)\rangle=\lim_{\delta \to 0}\int_{\rho(x,y)\ge\delta}K(x,y)[\phi(y)-\phi(x)]\eta_0(y)dy$$
    where the limit exists.
\end{lemma}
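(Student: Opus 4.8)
The plan is to adapt Meyer's original commutation argument, organized so that only the size and smoothness bounds on $K$, the weak boundedness hypothesis, and the cancellation $T(1)=0$ are invoked; the geometry of $(X,\rho,\mu)$ enters only through the elementary inequality $|K(x,y)|\lesssim\mu(B(x,\rho(x,y)))^{-1}\sim\rho(x,y)^{-1}$, which is the size bound together with $\mu(B(x,r))\sim r$. There are two assertions: that the principal value $\langle K(x,y),[\phi(y)-\phi(x)]\eta_0(y)\rangle$ exists, and that $\langle T\phi,\psi\rangle$ is given by the stated double integral. The first is easy; the second reduces to two steps, namely a bilinear commutation identity (the analogue of Lemma \ref{mcl2} valid in this setting) and a use of $T(1)=0$ to identify the remainder term $\langle\psi\phi,T\eta_0\rangle$.

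For the principal value, fix $x$ with $\psi(x)\neq0$ and split the $y$-integral at $\rho(x,y)=1$. On $\{0<\rho(x,y)\le1\}$, combining $|K(x,y)|\lesssim\rho(x,y)^{-1}$ with the H\"older estimate $|\phi(y)-\phi(x)|\le\|\phi\|_{C^\eta}\rho(x,y)^\eta$ bounds the integrand by a constant multiple of $\rho(x,y)^{\eta-1}$, which is $\mu$-integrable near $x$; on $\{\rho(x,y)>1\}$ the cutoff $\eta_0$ confines $y$ to a fixed ball on which the integrand is bounded. Hence $\int_{\rho(x,y)>\delta}K(x,y)[\phi(y)-\phi(x)]\eta_0(y)\,d\mu(y)$ converges absolutely as $\delta\downarrow0$, which is the first assertion (and the limit is an honest, absolutely convergent integral).

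For the identity, I would first record the bilinear commutation formula in this setting — the analogue of Lemma \ref{mcl2}, valid whenever $T$ has the weak boundedness property and $K$ satisfies the size bound: for $f,g,h\in C_0^\eta(X)$,
\[\langle f,T(gh)\rangle-\langle fg,Th\rangle=\int_X\int_X f(x)K(x,y)[g(y)-g(x)]h(y)\,d\mu(y)\,d\mu(x),\]
the right-hand side converging absolutely (near the diagonal by the H\"older gain of $g(y)-g(x)$, away from it by the compact support of $h$); this is proved by the standard truncation argument, writing $K$ as a limit of smoothly truncated kernels and using the weak boundedness property to control the diagonal contribution. Applying this with $f=\psi$, $g=\phi$, $h=\eta_0$ and using $\phi\eta_0=\phi$ gives
\[\langle T\phi,\psi\rangle=\langle\psi\phi,T\eta_0\rangle+\int_X\int_X\psi(x)K(x,y)[\phi(y)-\phi(x)]\eta_0(y)\,d\mu(y)\,d\mu(x),\]
the last integral being exactly the principal value of the first assertion tested against $\psi$. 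It remains to identify $\langle\psi\phi,T\eta_0\rangle$. Since $\eta_0+\eta_1=1$ and $T(1)=0$ (as a functional on mean-zero test functions), one has $T\eta_0=-T\eta_1$ modulo constants; because $\eta_1$ is bounded, $T\eta_1$ is defined through the construction of $T$ on bounded functions, and the $\varepsilon$-smoothness of $K$ makes it an $L^1_{\mathrm{loc}}$ function (represented, modulo a constant, by an absolutely convergent subtracted-kernel integral), while $T(1)=0$ pins the constant. On $\supp(\psi\phi)$, which is disjoint from $\supp\eta_1$, this function equals $x\mapsto\int_X K(x,y)\eta_1(y)\,d\mu(y)$, so $\langle\psi\phi,T\eta_0\rangle=-\int_X\int_X\psi(x)\phi(x)K(x,y)\eta_1(y)\,d\mu(y)\,d\mu(x)$. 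Substituting into the previous display yields the stated formula.

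The main obstacle is this last step: the kernel integral $\int_X\int_X\psi\phi\,K\,\eta_1$ is not absolutely convergent (indeed $\int_X|K(x,y)|\,d\mu(y)$ over a region far from $x$ diverges), so the cancellation at infinity must be extracted from $T(1)=0$, with the smoothness of $K$ ensuring that $T\eta_1$ is a genuine function rather than merely a distribution modulo constants, and with the weak boundedness property supplying the very definition of $T$ applied to the bounded, non-compactly supported function $\eta_1$. Everything else — the diagonal estimates, the truncation argument establishing the bilinear formula, and the algebra of the cutoffs $\eta_0,\eta_1$ — is routine.
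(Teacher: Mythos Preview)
The paper does not prove this lemma; it is quoted from \cite{HS} as one of several historical versions of Meyer's commutation lemma (alongside Lemmas~\ref{mcl1} and~\ref{mcl2} and Definition~\ref{mcl4}), before the paper proves its own Dunkl-setting analogue, Lemma~\ref{lem2.5}. So there is no in-paper proof to compare against directly.

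That said, your outline is sound and in fact tracks closely the method the paper uses for Lemma~\ref{lem2.5}. Your existence argument for the principal value is correct. The step you label ``standard truncation argument'' for the bilinear commutation identity is exactly where the work lies, and in the $C^\eta$ (rather than $C^\infty$) setting it is not automatic: one cannot differentiate the cutoff. The paper's proof of Lemma~\ref{lem2.5} executes this step in detail by introducing $\lambda_\delta(x,y)=\theta(\|x-y\|/\delta)$, covering the near-diagonal region with a partition of unity $\{\bar\eta_j\}$ at scale $\delta$, checking by hand that each piece $\lambda_\delta\bar\eta_j\chi_0[\phi(y)-\phi(x)]\psi(x)$ satisfies the $C^\eta$ bounds required by the weak boundedness property, and summing over $O((r/\delta)^N)$ pieces with a gain of $\delta^\eta$ to force the diagonal contribution to zero. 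Your sketch points to this mechanism without carrying it out.

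The obstacle you flag in your final paragraph is genuine and is the one place your argument is incomplete rather than merely sketched: the term $-\int\!\!\int\psi(x)\phi(x)K(x,y)\eta_1(y)\,d\mu(y)\,d\mu(x)$ is not absolutely convergent (the $y$-integral over $\{\rho(x,y)\gtrsim r\}$ diverges logarithmically when $\mu(B(x,s))\sim s$), so the right-hand side of the lemma must be read through the distributional definition of $T$ on bounded non-compactly-supported functions, with $T(1)=0$ fixing the additive constant. You correctly identify this but do not resolve it. In the paper's proof of Lemma~\ref{lem2.5}, the corresponding step is handled differently: rather than writing $T\eta_0$ as a kernel integral, one shows directly that $|T\chi_0(x)|\lesssim1$ on $B(x_0,r)$ by pairing against mean-zero test functions (using $T(1)=0$ and the smoothness of $K$) to see that $T\chi_0=\alpha+\gamma$ with $\|\gamma\|_\infty\lesssim1$, and then using the weak boundedness property once more to bound the constant $\alpha$.
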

Applying this lemma, the boundedness on the test function space for classical Calder\'on-Zygmund integral operators defined on space $(X, \rho, \mu)$ with the kernel $K(x,y)$ satisfying the size and smoothness conditions together with some additional second order smoothness
and $T(1)=T^*(1)=0.$ And the Calder\'on reproducing formulae were established. See \cite {HS} for more details. See also \cite{HMY} for similar results on spaces of homogeneous type with the measure satisfing doubling and reverse doubling conditions.

To establish the $T1$ theorem for non-doubling measures, Tolsa
introduced the following definition.
\begin{definition}\label{mcl4}(\cite{Tol})
    Let $T$ be an SCZO with the kernel $K(x,y).$ We say that $T$ satisfies the commutation lemma of Meyer, and we write $T\in CLM,$ if for compactly supported functions $\phi, \psi, w\in L^\infty(\mu),$ with $\psi$ Lipschitz,
    the following identity holds:
    $$\langle T\phi, \psi w\rangle-\langle T(\phi\psi), w\rangle=\int_{\R^N} K(x,y)\big(\psi(y)-\psi(x)\big)\phi(x)w(y)d\mu(y)d\mu(x).$$
\end{definition}
This definition plays a crucial role in the proof of the $T1$ theorem for non-doubling measures. See \cite{Tol} for more details.

In this paper, we prove Meyer's commutation Lemma in the Dunkl setting as follows.
\begin{lemma}(Meyer's commutation Lemma)\label{lem2.5}
    Suppose that $T$ is a Dunkl-Calder\'on-Zygmund singular integral operator from $C^\eta_0$ to $(C^\eta_0)'$ satisfying
    $T\in WBP$ and $T(1)=0$.
    Then for any $M>1,$ there exists a positive constant $C_{M}$ depending on $M$ such that
    $$\|T\phi\|_{L^\infty(B(0,Mr)))} \leqslant C_{M}$$
    whenever there exist $x_0\in \mathbb R^N$ and $r>0$ such that $\supp$ $(\phi)\subseteq B(x_0,r)$
    with $\|\phi\|_\infty\leqslant 1$ and $\|\phi\|_\eta\leqslant r^{-\eta}$.
\end{lemma}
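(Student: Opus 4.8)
\emph{Plan of proof.} The plan is to follow the classical proof of Meyer's commutation lemma (compare Lemmas~\ref{mcl1} and~\ref{mcl3}), now carried out in the two-metric Dunkl framework. I would split the target ball $B(0,Mr)$ into the region away from $\supp\phi$ and the region near $\supp\phi$, and treat them by entirely different mechanisms: a direct kernel estimate for the first, and the hypotheses $T(1)=0$ and $T\in WBP$ for the second.

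\emph{Away from the support.} For $x\in B(0,Mr)$ with $\|x-x_0\|\geqslant 4r$, a Euclidean neighbourhood of $x$ misses $\supp\phi$, so the kernel representation gives the convergent integral $T\phi(x)=\int_{\R^N}K(x,y)\phi(y)\,d\omega(y)$. Using $\|\phi\|_\infty\leqslant1$, $\supp\phi\subseteq B(x_0,r)$ and the size bound \eqref{si}, the matter reduces to estimating $\int_{B(x_0,r)}\omega(B(x,d(x,y)))^{-1}\big(d(x,y)/\|x-y\|\big)^\varepsilon\,d\omega(y)$. Since $\|x-y\|\sim\|x-x_0\|=:\rho$ on $\supp\phi$, I would decompose into dyadic annuli in the \emph{Dunkl} metric $d(x,\cdot)$; using $\omega(\{d(x,y)<s\})\leqslant\omega(B_d(x,s))\leqslant|G|\,\omega(B(x,s))$, the annulus $\{d(x,y)\sim s\}$ contributes $\lesssim(s/\rho)^\varepsilon$, and the geometric series over $s\leqslant\rho$ sums to $\lesssim1$, uniformly in $x$. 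The point worth stressing is that although the Dunkl--Calder\'on--Zygmund size condition lacks the mean-cancellation axiom of classical Calder\'on--Zygmund kernels — so that $\int|K(x,\cdot)|$ over an annulus is only $O(\log)$ — the $\varepsilon$-power gain furnished by $d(x,y)\leqslant\|x-y\|$ is exactly what defeats the logarithm. Neither $T(1)=0$ nor the weak boundedness property is used here.

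\emph{Near the support.} Now fix $x$ with $\|x-x_0\|<4r$, and pick a smooth cut-off $\eta_0$ with $\eta_0\equiv1$ on $B_d(x,10r)\supseteq B(x_0,r)$, $\supp\eta_0\subseteq B_d(x,20r)$, $\|\eta_0\|_\infty\leqslant1$ and $\|\eta_0\|_\eta\lesssim r^{-\eta}$. Since $\eta_0\equiv1$ on $\supp\phi$ and $\eta_0(x)=1$, write $\phi=g+\phi(x)\eta_0$ with $g:=\phi-\phi(x)\eta_0\in C_0^\eta(\R^N)$, $g(x)=0$, $\|g\|_\infty\lesssim1$, $\|g\|_\eta\lesssim r^{-\eta}$, $\supp g\subseteq B_d(x,20r)$. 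For the term $T(g)(x)$: because $g(x)=0$ one has $|g(y)|\lesssim\min\{1,r^{-\eta}\|x-y\|^\eta\}$, so, combining with \eqref{si}, the integral $\int_{\R^N}K(x,y)g(y)\,d\omega(y)$ is absolutely convergent and $\lesssim1$, once more by a dyadic-annulus computation — here the $\eta$-gain near the diagonal and the $\varepsilon$-gain near the reflected copies $\sigma x$ of $x$ both override the logarithm (the absolute convergence near each $\sigma x$, $\sigma\ne e$, relies on $\varepsilon>0$ together with the vanishing of $d\omega$ on the walls); an approximate-identity argument at $x$, in which the weak boundedness property bounds the contribution of $g$ on $B(x,\delta)$ and lets it vanish as $\delta\to0$, then identifies this integral with the distributional value $T(g)(x)$. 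For the remaining term $\phi(x)\,T(\eta_0)(x)$: here $|\phi(x)|\leqslant1$, and the hypothesis $T(1)=0$ supplies the cancellation that is unavailable from the size condition alone; testing $T(1)=0$ against mean-zero bumps concentrated at $x$ shows that $T(\eta_0)$ is continuous at $x$ and that its value there is controlled, the (non-convergent) tail being regularised through kernel differences $K(x,y)-K(x',y)$ and estimated by the smoothness conditions \eqref{smooth y3}--\eqref{smooth x3}, exactly as in the $L^\infty$--$BMO_d$ part of the proof of Theorem~\ref{th1.1}. Adding the two pieces yields $|T\phi(x)|\lesssim1$ for $\|x-x_0\|<4r$, and together with the first region this gives $\|T\phi\|_{L^\infty(B(0,Mr))}\leqslant C_M$.

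\emph{The main obstacle.} The difficulty lives entirely in the near-the-support region and is twofold. First, every estimate must keep careful track of the interplay between the Dunkl metric $d$, which governs the kernel bounds and the quantity $\omega(B(x,d(x,y)))$, and the Euclidean metric, which governs $\supp\phi$ and $\|\phi\|_\eta$: near a reflection wall a point may be Euclidean-far yet Dunkl-close to $\supp\phi$, which is why the annular decompositions are taken in the Dunkl metric and why both smallness exponents $\varepsilon$ and $\eta$ must be spent to beat the logarithmic divergences the weak size condition would otherwise allow. Second — and this is the genuinely delicate step — since $T$ is not assumed bounded on $L^2$, and $T$ of a bounded function is defined only modulo constants with non-integrable tails, the pointwise values $T\phi(x)$, $T(g)(x)$, $T(\eta_0)(x)$ cannot simply be written down; each must be extracted through an approximate-identity limit, with $T\in WBP$ controlling the local part and the kernel-difference regularisation of Theorem~\ref{th1.1} controlling the tail, and one must check that the divergent pieces cancel in the limit. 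These are the mechanisms already used in the proof of Theorem~\ref{th1.1}, so the argument, though technical, introduces nothing conceptually new.
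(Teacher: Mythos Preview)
Your proposal is correct and follows the same core strategy as the paper --- the Meyer commutation decomposition $\phi=(\phi-\phi(x)\chi)+\phi(x)\chi$ with a Dunkl-ball cutoff $\chi$, a kernel-size estimate for the first piece using the $\eta$-gain at $x$ and the $\varepsilon$-gain at the reflected copies of $x$, and $T(1)=0$ together with WBP for the second. The paper's execution differs in two ways worth noting. First, it works distributionally throughout: rather than defining $T\phi(x)$ pointwise via an approximate-identity limit, it pairs with test functions $\psi$ supported in $B(x_0,Mr)$ and shows $|\langle T\phi,\psi\rangle|\lesssim\|\psi\|_1$ directly, which sidesteps your ``genuinely delicate step'' of extracting pointwise values. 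Second, it uses a single fixed cutoff $\chi_0(y)=\theta(d(y,x_0)/2r)$ rather than your $x$-dependent $\eta_0$; your ``away from support'' case is then absorbed into its term $p$, since the factor $\phi(x)$ kills the other term $q$ when $x\notin\supp\phi$. The paper's near-diagonal control (its $p_{2,\delta}\to0$) is carried out concretely by a partition of unity $\{\bar\eta_j\}$ at scale $\delta$ plus WBP on each patch, and its bound $|T\chi_0(x)|\lesssim1$ proceeds by first showing $T\chi_0=\alpha+\gamma$ with $|\gamma|\lesssim1$ (pairing with mean-zero $\psi$ and using kernel smoothness on the tail $-T\chi_1$) and then bounding the constant $\alpha$ by applying WBP once against a fixed bump --- this is the precise version of the step you describe more loosely. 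One small correction: the absolute convergence of $\int K(x,y)g(y)\,d\omega(y)$ near the reflected points $\sigma x$ does not rely on ``vanishing of $d\omega$ on the walls'' ($\sigma x$ need not lie on a wall when $\sigma x\ne x$); it comes purely from the factor $(d(x,y)/\|x-y\|)^\varepsilon$ in the size condition \eqref{si}, which is what makes the Dunkl annulus at scale $s$ contribute $O((s/\|x-y\|)^\varepsilon)$.
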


\begin{proof}
    Fix a function $\theta\in C^\infty(\mathbb R)$ with the following properties: $\theta(x)=1$ for $|x|\le1$ and $\theta(x)=0$ for $|x|>2$.
    Let $\chi_0(x)=\theta(\frac {d(x,x_0)}{2r})$ and $\chi_1=1-\chi_0$. Then $\phi=\phi\chi_0$ and for all $\psi\in C^\eta_0(\mathbb R^N)$ with $\supp \psi\subseteq B(x_0,Mr),$
    \begin{align*}
    \langle T\phi, \psi\rangle
    &= \langle K(x,y), \phi(y)\psi(x) \rangle =  \langle K(x,y), \chi_0(y)\phi(y)\psi(x) \rangle \\
    &= \langle K(x,y), \chi_0(y)[\phi(y)-\phi(x)]\psi(x) \rangle +
    \langle K(x,y), \chi_0(y)\phi(x)\psi(x) \rangle \\
    &=: p+q,
    \end{align*}
    where $K(x,y)$ is the distribution kernel of $T$.

    To estimate $p$, let $\lambda_\delta(x,y)=\theta(\frac{\|x-y\|}\delta)$. Then
    \begin{equation}\label{T2.5}
    \begin{aligned}
    p&=\langle K(x,y), (1-\lambda_\delta(x,y))\chi_0(y)[\phi(y)-\phi(x)]\psi(x) \rangle \\
    &\qquad + \langle K(x,y), \lambda_\delta(x,y)\chi_0(y)[\phi(y)-\phi(x)]\psi(x) \rangle \\
    &=: p_{1,\delta}+p_{2,\delta}.
    \end{aligned}
    \end{equation}

    Since $K$ is locally integrable on $\Omega=\{(x,y)\in \mathbb R^N\times \mathbb R^N: x\ne y\}.$
    By the size condition on $K(x,y)$ and the smoothness condition on $\phi$ together with the fact that if $\chi_0(y)\not=0, \psi(x)\not=0$ and $1-\lambda_\delta(x,y)\not=0,$
    then $\delta\leqslant \|x-y\|$ and $d(x,y)\leqslant (M+4)r.$ Thus,    the first term on the right side of \eqref{T2.5} satisfies

    \begin{eqnarray*}
        |p_{1,\delta}|&=& \bigg| \iint\limits_{\Omega} K(x,y)(1-\lambda_\delta(x,y))\chi_0(y)[\phi(y)-\phi(x)]\psi(x) d\omega(y)d\omega(x)\bigg| \\
        &\lesssim& \iint\limits_{d(x,y)\leqslant (M+4)r} \frac{1}{\omega(B(x,d(x,y)))} \Big(\frac{d(x,y)}{\|x-y\|}\Big)^\eta \Big(\frac{\|x-y\|}{r}\Big)^\eta|\psi(x)|
        d\omega(y)d\omega(x)\\
        &\lesssim& \|\psi\|_1.
    \end{eqnarray*}

    It remains to show that $\lim\limits_{\delta\to 0}p_{2,\delta}=0,$ that is,
    \begin{equation}\label{T2.6}
    \lim_{\delta\to 0}  \langle K(x,y), \lambda_\delta(x,y)\chi_0(y)[\phi(y)-\phi(x)]\psi(x) \rangle=0.
    \end{equation}
    To this end,  let $\{y_j\}_{j\in \mathbb Z}\in \mathbb R^N$ be the maximal collection of points satisfying
    \begin{equation}\label{T2.8}
    \frac12\delta<\inf\limits_{j\ne k} \|y_j-y_k\|\leqslant \delta.
    \end{equation}
    By observing that   $\{y_j\}_{j\in \mathbb Z}$ is a  maximal collection, we get that for each $x\in \mathbb R^N$ there exists a point $y_j$ such that $\|x-y_j\|\leqslant\delta$.
    Let $\eta_j(y)=\theta(\frac{\|y-y_j\|}\delta)$ and $\bar\eta_j(y)=[\sum\limits_{i=1}^\infty \eta_i(y)]^{-1}\eta_j(y)$. To see that $\bar \eta_j$ is well defined,
    it suffices to show that for any $y\in \mathbb R^N$, there are only finitely many $\eta_j$ with $\eta_j(y)\ne0$. This follows from the following fact:
    $\eta_j(y)\ne0 $ if and only if $\|y-y_j\|\leqslant 2\delta$ and hence this implies that $B(y_j,\delta)\subseteq B(y,4\delta)$. Inequality \eqref{T2.8} shows $B(y_j,\frac\delta4)\cap B(y_k, \frac\delta4)=\emptyset$
    for $j\ne k$ and hence there are at most $C_0$ points $y_j\in \mathbb R^N$ such that $B(y_j,\frac\delta4)\subseteq B(y, 4\delta)$. Now let $\Gamma=\{j: \bar\eta_j(y)\chi_0(y)\ne 0\}$.
    Then \#$\Gamma\leqslant Cr^N/\delta^N$ since $\supp$ $(\chi_0)\subseteq \bigcup\limits_{\sigma\in G}B(\sigma(x_0), 2r)$ and $\supp (\bar \eta_j)\subseteq B(y_j,2\delta)$.   We write
    $$\lambda_\delta(x,y)\chi_0(y)[\phi(y)-\phi(x)]\psi(x)
    =\sum\limits_{j\in \Gamma} \lambda_\delta(x,y)\bar\eta_j(y)\chi_0(y)[\phi(y)-\phi(x)]\psi(x),$$
    and
    \begin{align*}
    &\langle K(x,y), \lambda_\delta(x,y)\chi_0(y)[\phi(y)-\phi(x)]\psi(x) \rangle \\
    &=\sum\limits_{j\in \Gamma} \langle K(x,y), \lambda_\delta(x,y)\bar\eta_j(y)\chi_0(y)[\phi(y)-\phi(x)]\psi(x) \rangle.
    \end{align*}
    It is then easy to check that supp$(\lambda_\delta(x,y)\bar\eta_j(y)\chi_0(y)[\phi(y)-\phi(x)]\psi(x))\subseteq B(y_j,4\delta)\times B(y_j,2\delta)$ and
    $$\|\lambda_\delta(x,y)\bar\eta_j(y)\chi_0(y)[\phi(y)-\phi(x)]\psi(x)\|_\infty\leqslant C\delta^\eta,$$
    where $C$ is a constant depending only on $\theta,\phi,\psi,x_0,$ and $r$ but not on $\delta$ and $j$.

    We claim that
    \begin{equation}\label{T2.9}
    \|\lambda_\delta(\cdot,y)\bar\eta_j(y)\chi_0(y)[\phi(y)-\phi(\cdot)]\psi(\cdot)\|_\eta\lesssim 1,
    \end{equation}
    and
    \begin{equation}\label{T2.10}
    \|\lambda_\delta(x,\cdot)\bar\eta_j(\cdot)\chi_0(\cdot)[\phi(\cdot)-\phi(x)]\psi(x)\|_\eta\lesssim 1.
    \end{equation}

    Assuming \eqref{T2.9} and \eqref{T2.10} for the moment, since $T\in WBP$, we have
    \begin{align*}
    &|\langle K(x,y), \lambda_\delta(x,y)\chi_0(y)[\phi(y)-\phi(x)]\psi(x) \rangle|\\
    &\quad\leqslant \sum\limits_{j\in \Gamma} |\langle K(x,y), \lambda_\delta(x,y)\bar\eta_j(y)\chi_0(y)[\phi(y)-\phi(x)]\psi(x) \rangle | \\
    &\quad \leqslant \sum\limits_{j\in \Gamma} \omega( B(y_j,4\delta))\delta^\eta
    \lesssim \frac{r^N}{\delta^N}\sup_{j\in \Gamma}\omega(B(y_j,1))\delta^N\delta^\eta
    \lesssim \sup_{j\in \Gamma}\omega(B(y_j,1))r^N\delta^\eta.
    \end{align*}
    hence, \eqref{T2.6} holds.

    It remains to show \eqref{T2.9} nad \eqref{T2.10}. To check \eqref{T2.9}, it suffices to show that for given $x_1,x_2\in \mathbb R^N$ with $\|x_1-x_2\|\leqslant \delta$,
    $$|\bar\eta_j(y)\chi_0(y)| |\lambda_\delta(\boldsymbol{x_1},y)[\phi(y)-\phi(\boldsymbol{x_1})]\psi(\boldsymbol{x_1})-
    \lambda_\delta(\boldsymbol{x_2},y)[\phi(y)-\phi(\boldsymbol{x_2})]\psi(\boldsymbol{x_2})|\lesssim \|x_1-x_2\|^\eta,$$
    since if $\|x_1-x_2\|\geqslant \delta$, then the expansion on the left above is clearly bounded by
    \begin{align*}
    &|\bar\eta_j(y)\chi_0(y)| \{|\lambda_\delta(x_1,y)[\phi(y)-\phi(x_1)]\psi(x_1)|+
    |\lambda_\delta(x_2,y)[\phi(y)-\phi(x_2)]\psi(x_2)|\}\\
    &\lesssim \delta^\eta \leqslant \|x_1-x_2\|^\eta.
    \end{align*}
    By the construction of $\bar\eta_j$, it follows that
    $$|\bar\eta_j(y)\chi_0(y)|\lesssim 1$$
    for all $y\in \mathbb R^N.$ Thus
    \begin{align*}
    &|\bar\eta_j(y)\chi_0(y)| |\lambda_\delta(x_1,y)[\phi(y)-\phi(x_1)]\psi(x_1)-
    \lambda_\delta(x_2,y)[\phi(y)-\phi(x_2)]\psi(x_2)| \\
    &\quad\lesssim \big|\lambda_\delta(x_1,y)[\phi(y)-\phi(x_1)]\psi(x_1)-
    \lambda_\delta(x_2,y)[\phi(y)-\phi(x_2)]\psi(x_2)\big|  \\
    &\quad\lesssim  \big|\lambda_\delta(x_1,y)-\lambda_\delta(x_2,y)|[\phi(y)-\phi(x_1)]\psi(x_1)\big|
    + \big|\lambda_\delta(x_2,y)[\phi(x_1)-\phi(x_2)]\psi(x_1)\big|\\
    &\qquad    +\big|\lambda_\delta(x_2,y)[\phi(y)-\phi(x_2)]|\psi(x_1)-\psi(x_2)\big| \\
    &\quad=: I_1 +I_2 +I_3.
    \end{align*}
    Recall that $\|x_1-x_2\|\leqslant \delta$. If $\|x_1-y\|> 4\delta$, then $\lambda_\delta(x_1,y)=\lambda_\delta(x_2,y)=0$, so $I_1=0$. Thus we may assume that
    $\|x_1-y\|\leqslant 4\delta$,
    $$I_1\lesssim \Bigg|\frac{\|x_1-y\|}\delta-\frac{\|x_2-y\|}\delta\Bigg| \|x_1-y\|^\eta\lesssim \delta^{\eta-1}\|x_1-x_2\|
    \lesssim \|x_1-x_2\|^\eta,
    $$
    since we may assume $\eta\le1$. Terms $I_2$ and $I_3$ are easy to estimate:
    $$I_2+I_3 \lesssim  \|x_1-x_2\|^\eta,$$
    since we may assume that $\delta<1$.

    To check \eqref{T2.10} it suffices to show that for $y_1,y_2\in \mathbb R^N$ with $\|y_1-y_2\|\leqslant \delta$,
    $$|\lambda_\delta(x,\boldsymbol{y_1})\bar\eta_j(\boldsymbol{y_1})\chi_0(\boldsymbol{y_1})[\phi(\boldsymbol{y_1})-\phi(x)]\psi(x)-
    \lambda_\delta(x,\boldsymbol{y_2})\bar\eta_j(\boldsymbol{y_2})\chi_0(\boldsymbol{y_2})[\phi(\boldsymbol{y_2})-\phi(x)]\psi(x)|\lesssim |y_1-y_2|^\eta.$$
    Similarly, if $\|y_1-y_2\|\geqslant \delta$, then the expansion  on the left-hand side above is clearly bounded by
    \begin{align*}
    &|\lambda_\delta(x,y_1)\bar\eta_j(y_1)\chi_0(y_1)[\phi(y_1)-\phi(x)]\psi(x)|+
    |\lambda_\delta(x,y_2)\bar\eta_j(y_2)\chi_0(y_2)[\phi(y_2)-\phi(x)]\psi(x)|\\
    &\quad\lesssim \delta^\eta \leqslant \|y_1-y_2\|^\eta.
    \end{align*}
    Hence, suppose $\|y_1-y_2\|\leqslant \delta$ and write
    \begin{align*}
    &|\lambda_\delta(x,y_1)\bar\eta_j(y_1)\chi_0(y_1)[\phi(y_1)-\phi(x)]\psi(x)-
    \lambda_\delta(x,y_2)\bar\eta_j(y_2)\chi_0(y_2)[\phi(y_2)-\phi(x)]\psi(x)|\\
    &\leqslant \big|\lambda_\delta(x,y_1)-\lambda_\delta(x,y_2)|
    \bar\eta_j(y_1)\chi_0(y_1)[\phi(y_1)-\phi(x)]\psi(x)\big| \\
    &\quad+ \big|\lambda_\delta(x,y_2)[\bar\eta_j(y_1)-\bar\eta_j(y_2)]\chi_0(y_1)[\phi(y_1)-\phi(x)]\psi(x)\big| \\
    &\quad + \big|\lambda_\delta(x,y_2)\bar\eta_j(y_2)[\chi_0(y_1)-\chi_0(y_2)][\phi(y_1)-\phi(x)]\psi(x)\big| \\
    &\quad  + \big|\lambda_\delta(x,y_2)\bar\eta_j(y_2)\chi_0(y_2)[\phi(y_1)-\phi(y_2)]\psi(x)\big|\\
    &=:  I\!I_1+I\!I_2+I\!I_3+I\!I_4.
    \end{align*}
    If $\|x-y_1\|> 4\delta$, then $\lambda_\delta(x,y_1)=\lambda_\delta(x,y_2)=0$, so $I\!I_1=I\!I_2=I\!I_3=I\!I_4=0$. Thus we may assume that
    $\|x-y_1\|\leqslant 4\delta$,
    $$I\!I_1\lesssim \Bigg|\frac{\|x-y_1\|}\delta-\frac{\|x-y_2\|}\delta\Bigg| \|x-y_1\|^\eta\lesssim \delta^{\eta-1}\|y_1-y_2\|
    \lesssim \|y_1-y_2\|^\eta. $$
    And
    $$I\!I_2\lesssim  \Bigg|\frac{\|y_1-y_j\|}\delta-\frac{\|y_2-y_j\|}\delta\Bigg| \|y_1-x\|^\eta\lesssim \delta^{\eta-1}\|y_1-y_2\|
    \lesssim \|y_1-y_2\|^\eta.$$
    Similarly,
    $$I\!I_3\lesssim  \Bigg|\frac{d(y_1,x_0)}\delta-\frac{d(y_2,x_0)}\delta\Bigg| \|y_1-x\|^\eta\lesssim \delta^{\eta-1}d(y_1,y_2)\lesssim \delta^{\eta-1}\|y_1-y_2\|
    \lesssim \|y_1-y_2\|^\eta.$$
    It is clear that
    $$I\!I_4\lesssim \|y_1-y_2\|^\eta.$$
    This completes the proofs of \eqref{T2.9} and \eqref{T2.10} and we obtain
    $$|p|\lesssim \|\psi\|_{1}.$$

    To finish the proof of Lemma \ref{lem2.5}, we now estimate $q$. It suffices to show that for $x\in B(x_0,r)$,
    \begin{equation}\label{T2.11}
    |T\chi_0(x)|\lesssim 1.
    \end{equation}
    To see this, it is easy to check that $q=\langle T\chi_0, \phi\psi\rangle$, and hence \eqref{T2.11} implies
    $$|q|\leqslant \|T\chi_0\|_{L^\infty(B(x_0,r))}\|\phi\psi\|_{L^1(B(x_0,r))}\lesssim \|\psi\|_{1}.$$
    To show \eqref{T2.11}, let $\psi\in C^\eta(\mathbb R^N)$ with supp$(\psi) \subseteq B(x_0,r)$
    and $\int_{\R^N}\psi(x)d\omega(x)=0$. By the facts that $T(1)=0$ and $\int_{\R^N}\psi(x)d\omega(x)=0$, we obtain
    \begin{align*}
    |\langle T\chi_0, \psi\rangle| &=|- \langle T\chi_1,\psi\rangle|\\
    &=\Big|\int_{\R^N}\int_{\R^N} [K(x,y)-K(x_0,y)]\chi_1(y)\psi(x)d\omega(y)d\omega(x)\Big|.
    \end{align*}
    Observe that the supports of $\chi_1(y)$ and $\psi(x)$ imply $d(y,x_0)> 2r$ and $\|x-x_0\|\leqslant r,$ respectively. The smoothness condition of $K$ yields
    \begin{align*}
    |\langle T\chi_0, \psi\rangle| &\lesssim \iint\limits_{d(y,x_0)> 2r\geqslant 2\|x-x_0\|}\frac{1}{\omega(B(x,d(y,x_0)))} \Big(\frac{\|x-x_0\|}{\|y-x_0\|}\Big)^\varepsilon d\omega(y)|\psi(x)|d\omega(x)\\
    &\lesssim \iint\limits_{d(y,x_0)> 2r\geqslant 2\|x-x_0\|}\frac{1}{\omega(B(x,d(y,x_0)))} \Big(\frac{r}{d(y,x_0)}\Big)^\varepsilon d\omega(y)|\psi(x)|d\omega(x)\\
    &\lesssim  \int_{\R^N} |\psi(x)|d\omega(x).
    \end{align*}
    This implies that $T\chi_0(x)=\alpha+\gamma(x)$ for $x\in B(x_0,r)$ with $\alpha$ is a constant depending on $\chi_0$ and $\|\gamma(x)\|_\infty\leqslant C_0$ for some constant $C_0$ independent of $\chi_0.$
    To estimate $\alpha$, choose $\varphi
    \in C_0^\eta(\mathbb{R}^N)$ with supp $\varphi\subseteq B(x_0,r),  0\leqslant \varphi\leqslant 1, \|\varphi\|_\eta\leqslant r^{-\eta}$
    and $\int_{\R^N} \varphi(x)d\omega(x)=C_1 \omega(B(x_0,r)),$ for some constant $C_1$ independent of $r.$ We then use $T\in WBP$ to get
    $$\bigg|C_1\omega(B(x_0,r))\alpha + \int_{\R^N} \varphi(x)\gamma(x)d\omega(x)\bigg|=|\langle T\chi_0,\varphi\rangle |\leqslant C \omega(B(x_0,r)),$$
    which implies $|\alpha|\leqslant C_0+\frac{C}{C_1}$ and hence, the proof of Lemma \ref{lem2.5} is complete.
\end{proof}

We remark that if the operator $T$ and functions $\phi, \chi_0$ satisfy the conditions as given in the Lemma \ref{lem2.5},
then $T\phi(x)$ is a locally bounded function rather than a distribution. This fact will play a crucial role in the following proof of Theorem \ref{1.3}.

\subsection{{Boundedness of Dunkl-Calder\'on-Zygmund oprators on smooth molecule functions}}
\ \\

\begin{proof}[{\bf Proof of Theorem \ref{1.3}}]
    Suppose that $f(x)$ is a smooth molecule in $\mathbb M(\beta, \gamma, r, x_0),$
    we will show that $\|T(f)\|_{\widetilde{\mathbb M}(\beta, \gamma', r, x_0)}\leqslant C\|f\|_{{\mathbb M}(\beta, \gamma, r,
    x_0)},$where $0<\beta<\varepsilon,$$0<\gamma<\gamma'<\varepsilon$ and $\varepsilon$ is the exponent of the regularity of the kernel of $T.$
    We first estimate the size condition for $Tf(x).$ To this end, we consider two cases: Case (1): $d(x,x_0)\leqslant 5r$ and
    Case (2): $d(x,x_0)=R> 5r.$

    For the first case, set $1=\xi(y)+\eta(y)$, where $\xi(y)=\theta\Big(\frac{d(y,x_0)}{10r}\Big)$ with $\theta\in C_0^\infty(\mathbb{R}), \theta(x)=1$ for $\|x\|\leqslant 1$ and
    $\theta(x)=0$ for $\|x\|\geqslant 2.$ Applying the Lemma \ref{lem2.5}, we write
    \begin{align*}
    Tf(x)
    &= \langle K(x,y),(\xi(y)+\eta(y))f(y)\rangle
    \\&=  \int_{\R^N} K(x,y)\xi(y)(f(y)-f(x))d\omega(y)+f(x)\langle
    K(x,y),\xi(y)\rangle
    \\&\qquad +\int_{\R^N} K(x,y)\eta(y)f(y)d\omega(y)
    \\&=:  I_1+I_2+I_3.
    \end{align*}

    Applying the size condition for the kernel $K(x,y)$ in \eqref{si} and the smoothness condition for $f$ in \eqref{sm of DCZ}, we have
    \begin{align*}|I_1|
    &\lesssim  \int_{d(x,y)\leqslant 20r} |K(x,y)|\cdot|f(y)-f(x)|d\omega(y)
    \\&\lesssim  \int_{d(x,y)\leqslant 20r} \frac{1}{\omega(B(x,d(x,y)))}\Big(\frac{d(x,y)}{\|x-y\|}\Big)^\beta\Big(\frac{\|x-y\|}{r}\Big)^\beta\Big\{\frac{1}{V(\boldsymbol{y},x_0,r+d(\boldsymbol{y},x_0))}
    \\&\qquad\qquad\times  \Big(\frac{r}{r+\|\boldsymbol{y}-x_0\|}\Big)^\gamma+ \frac{1}{V(\boldsymbol{x},x_0,r+d(\boldsymbol{x},x_0))}\Big(\frac{r}{r+\|\boldsymbol{x}-x_0\|}\Big)^\gamma\Big\} d\omega(y).
    \end{align*}

    Note that if $d(y,x)\leqslant 20r$ and $d(x,x_0)\leqslant 5r,$ then $\omega(B(y,r+d(x,x_0)))\sim  \omega(B(x,r+d(x,x_0)))$. Thus, we obtain
    \begin{align*}|I_1|
    &\lesssim  \frac{1}{r^\beta}\frac{1}{V(x,x_0,r+d(x,x_0))}\int_{d(x,y)\leqslant 20r}\frac{1}{\omega(B(x,d(x,y)))}d(x,y)^\beta d\omega(y)
    \\&\lesssim  \frac{1}{V(x,x_0,r+d(x,x_0))}
    \\&\lesssim  \frac{1}{V(x,x_0,r+d(x,x_0))}\Big(\frac{r}{r+d(x,x_0)}\Big)^\gamma.
    \end{align*}

    Similar to the proof of \eqref{T2.11} in Lemma \ref{lem2.5}, we
    can get $|T(\xi)(x)|\lesssim 1$ and thus
    $$I_2\lesssim |f(x)|\lesssim  \frac{1}{V(x,x_0,r+d(x,x_0))}\Big(\frac{r}{r+d(x,x_0)}\Big)^\gamma.$$

    For the last term $I_3$, observing that $d(x,x_0)\leqslant 5r$ and the support of $\eta(y)$ is contained in $\{y\mid d(y,x_0)\geqslant 10r\},$ so $d(x,y)\geqslant 5r$ and $d(x,y)\sim d(y,x_0),$ and thus,
    \begin{align*}|I_3|
    &\lesssim  \int_{d(y,x_0)\geqslant 10 r\atop d(x,y)\geqslant 5r} \frac{1}{\omega(B(x,d(x,y)))}\frac{1}{V(y,x_0,r+d(y,x_0))}\Big(\frac{r}{r+\|y-x_0\|}\Big)^\gamma d\omega(y)
    \\&\lesssim  \frac{1}{\omega(B(x,r))} \int_{d(y,x_0)\geqslant 10 r} \frac{1}{\omega(B(x_0,d(y,x_0)))}\Big(\frac{r}{d(y,x_0)}\Big)^\gamma d\omega(y)
    \\&\lesssim  \frac{1}{\omega(B(x,r))}        \lesssim \frac{1}{V(x,x_0,r+d(x,x_0))}\Big(\frac{r}{r+d(x,x_0)}\Big)^\gamma.
    \end{align*}

    Case 2. $d(x,x_0)=R > 5r$.

    Set $1=I(y)+J(y)+L(y)$, where $I(y)=\theta\big(\frac{16d(y,x)}{R}\big), J(y)=\theta\big(\frac{16d(y,x_0)}{R}\big)$ and $f_1(y)=I(y)f(y),f_2(y)=J(y)f(y)$ and
    $f_3(y)=L(y)f(y).$

    Observing that, if $y$ is in the support of $f_1(y),$ then $d(y,x_0)\sim d(x,x_0)=R,$ and thus,
    \begin{align*}
    \textup{(i)}&\ |f_1(y)|  \lesssim |I(y)|\frac{1}{V(y,x_0,r+d(y,x_0))}\Big(\frac{r}{r+\|y-x_0\|}\Big)^\gamma\lesssim \frac{1}{V(x,x_0,R)}\Big(\frac{r}{R}\Big)^\gamma.
    \\
    \textup{(ii)}&\ \int_{\R^N} |f_1(y)|d\omega(y)  \lesssim  \int_{d(y,x_0)\geqslant \frac{7R}{8}}\frac{1}{V(y,x_0,d(y,x_0))}\Big(\frac{r}{d(y,x_0)}\Big)^\gamma d\omega(y)
    \lesssim \Big(\frac{r}{R}\Big)^\gamma.\\
    \textup{(iii)}&\ |f_1(y)-f_1(x)|\lesssim  \Big(\frac{\|y-x\|}{r}\Big)^\beta\frac{1}{V(x,x_0,d(x,x_0))}\Big(\frac{r}{R}\Big)^\gamma.
    \\
    \textup{(iv)}&\ \int_{\R^N} |f_3(y)|\omega(y)dy  \lesssim   \int_{d(y,x_0)\geqslant \frac{R}{16}}\frac{1}{V(y,x_0,r+d(y,x_0))}\Big(\frac{r}{d(y,x_0)}\Big)^\gamma d\omega(y)
    \lesssim \Big(\frac{r}{R}\Big)^\gamma.
    \end{align*}

    By the fact $\int_{\R^N} f(y)d\omega(y)=0$, we have
    \begin{align*}
    \textup{(v)}\ \Big|\int_{\R^N} f_2(y)d\omega(y)\Big|  =\Big|-\int_{\R^N} f_1(y)d\omega(y) -\int_{\R^N} f_3(y)d\omega(y) \Big|
    \lesssim \Big(\frac{r}{R}\Big)^\gamma.\ \ \ \ \ \ \ \ \ \ \ \ \ \ \ \ \ \
    \end{align*}

    We first estimate $Tf_1(x)$ as follows.

    Set $u(y)=\theta\Big(\frac{2d(y,x)}{R}\Big)$. Then $f_1(y)=u(y)f_1(y).$ By using Lemma \ref{lem2.5}, we have
    \begin{align*}Tf_1(x)
    &=  \langle K(x,y)u(y)f_1(y)\rangle
    \\ &=  \int_{\R^N} K(x,y)u(y)[f_1(y)-f_1(x)]d\omega(y)+f_1(x)\langle
    K(x,\cdot),u(\cdot)\rangle
    \\&=:  I+I\!I.
    \end{align*}

    Similar to the proof of \eqref{T2.11} in Lemma \ref{lem2.5}, we
    can get $|T(u)(x)|\lesssim 1$ and thus
    $$ |I\!I|\lesssim |f(x)|\lesssim \frac{1}{V(x,x_0,r+d(x,x_0))}\Big(\frac{r}{r+d(x,x_0)}\Big)^\gamma.$$

    For the term $I$, we write it in two parts.
    \begin{align*}I&=\int_{d(x,y)\leqslant r} K(x,y)u(y)[f_1(y)-f_1(x)]d\omega(y)+\int_{r<d(x,y)\leqslant R} K(x,y)u(y)[f_1(y)-f_1(x)]d\omega(y)
    \\&=:I_1+I_2.
    \end {align*}

    Applying the size condition on the kernel $K(x,y)$ and the property \textup{(iii)} above implies that
    \begin{align*}|I_1|
    &\lesssim  \int_{d(x,y)\leqslant r} \frac{1}{\omega(B(x,d(x,y)))}\Big(\frac{d(x,y)}{\|x-y\|}\Big)^\beta \Big(\frac{\|x-y\|}{r}\Big)^\beta\frac{1}{V(x,x_0,d(x,x_0))}\Big(\frac{r}{R}\Big)^\gamma  d\omega(y)
    \\&= \frac{1}{V(x,x_0,d(x,x_0))}\Big(\frac{r}{R}\Big)^\gamma\int_{d(x,y)\leqslant r} \frac{1}{\omega(B(x,d(x,y)))}\Big(\frac{d(x,y)}{r}\Big)^\beta d\omega(y)
    \\&\lesssim  \frac{1}{V(x,x_0,r+d(x,x_0))}\Big(\frac{r}{r+d(x,x_0)}\Big)^\gamma.
    \end{align*}

    Applying the size conditions for the kernel $K(x,y)$ and property \textup{(i)} above, we obtain that for $\delta =\gamma - \gamma',$
    \begin{align*}|I_2|
    &\lesssim  \int_{r<d(x,y)\leqslant \frac{R}{4}} \frac{1}{\omega(B(x,d(x,y)))}\Big(\frac{d(x,y)}{\|y-x\|}\Big)^\delta[|f_1(y)|+|f_1(x)|]d\omega(y)
    \\&\lesssim  \frac{1}{V(x,x_0,d(x,x_0))}\Big(\frac{r}{R}\Big)^\gamma \Big(\frac{1}{r}\Big)^\delta \int_{d(x,y)\leqslant \frac{R}{4}} \frac{1}{\omega(B(x,d(x,y)))}d(x,y)^\delta d\omega(y)
    \\&\lesssim  \Big(\frac{R}{r}\Big)^\delta\Big(\frac{r}{R}\Big)^\gamma\frac{1}{V(x,x_0,d(x,x_0))}
    \lesssim  \frac{1}{V(x,x_0,d(x,x_0))}\Big(\frac{r}{r+d(x,x_0)}\Big)^{\gamma'}.
    \end{align*}

    To estimate $Tf_2(x)$,  we decompose it in two parts.
    \begin{eqnarray*}Tf_2(x)
        =  \int_{\R^N} [K(x,y)-K(x,x_0)]f_2(y)d\omega(y)dy+K(x,x_0)\int_{\R^N} f_2(y)d\omega(y)
        =:  I\!I_1+I\!I_2.
    \end{eqnarray*}

    By the estimate in (v) above,
    \begin{eqnarray*}|I\!I_2|
        \lesssim  |K(x,x_0)|\Big(\frac{r}{R}\Big)^{\gamma}
        \lesssim   \frac{1}{\omega(B(x,d(x,x_0)))}\Big(\frac{r}{R}\Big)^{\gamma}
        \lesssim  \frac{1}{V(x,x_0,d(x,x_0))}\Big(\frac{r}{r+d(x,x_0)}\Big)^\gamma.
    \end{eqnarray*}

    For the term $I\!I_1$, we write it by
    \begin{align*}I\!I_1
    &=\bigg(  \int_{\|y-x_0\|\leqslant \frac{R}{4}} +\int_{d(y,x_0)\leqslant \frac{R}{4}\leqslant \|y-x_0\|}\bigg)[K(x,y)-K(x,x_0)]f_2(y)d\omega(y)
    \\&=: I\!I_{11}+I\!I_{12}.
    \end{align*}

    Applying the size condition for $f_2$ and the smoothness condition on the kernel $K(x,y)$ in \eqref{smooth y3} with $\|y-x_0\|\leqslant \frac{1}{2}d(x,x_0)$ for term $I\!I_{11}$ implies that
    \begin{align*}|I\!I_{11}|
    &\lesssim  \int_{d(y,x_0)\leqslant \frac{R}{8}} \frac{1}{\omega(B(x,d(x,x_0)))}\Big(\frac{\|y-x_0\|}{\|x-x_0\|}\Big)^{\gamma'}  \Big(\frac{r}{r+\|y-x_0\|}\Big)^\gamma\frac{1}{V(y,x_0,r+d(y,x_0))}d\omega(y)
    \\&\lesssim  \frac{1}{\omega(B(x,d(x,x_0)))}\Big(\frac{r}{R}\Big)^{\gamma'}\int_{d(y,x_0)\leqslant \frac{R}{8}} \Big(\frac{r}{r+ \|y-x_0\|}\Big)^{\gamma-\gamma'}\frac{1}{V(y,x_0,r+d(y,x_0))}d\omega(y)
    \\&\lesssim  \frac{1}{V(x,x_0,d(x,x_0))}\Big(\frac{r}{r+d(x,x_0)}\Big)^{\gamma'}.
    \end{align*}

    For the term $I\!I_{12},$ since $d(y,x_0)\leqslant \frac{R}{4}$ implies $d(x,y)\geqslant d(x,x_0)-d(y,x_0)\geqslant \frac{3}{4}d(x,x_0)$. Applying the size conditions for the kernel $K(x,y)$ and $K(x,x_0)$ yields
    \begin{align*}|I\!I_{12}|
    &\lesssim \int_{d(y,x_0)\leqslant \frac{R}{4}\leqslant \|y-x_0\|}\Big\{\frac{1}{\omega(B(x,d(x,y)))} +\frac{1}{\omega(B(x,d(x,x_0)))}\Big\}
    \\ & \times  \Big(\frac{r}{r+\|y-x_0\|}\Big)^\gamma\frac{1}{V(y,x_0,r+d(y,x_0))}d\omega(y)
    \\&\lesssim  \frac{1}{\omega(B(x,d(x,x_0)))}\Big(\frac{r}{R}\Big)^{\gamma'}\int_{R^N} \Big(\frac{r}{r+d(y,x_0)}\Big)^{\gamma-\gamma'}\frac{1}{V(y,x_0,r+d(y,x_0))}d\omega(y)
    \\&\lesssim  \frac{1}{V(x,x_0,d(x,x_0))}\Big(\frac{r}{r+d(x,x_0)}\Big)^{\gamma'}.
    \end{align*}

    Finally,
    \begin{align*}|Tf_3(y)|
    &\lesssim  \int_{ d(y,x)\geqslant \frac{R}{16}, \atop d(y,x_0)\geqslant \frac{R}{16}}\frac{1}{\omega(B(x,d(x,y)))}
    \Big(\frac{r}{r+\|y-x_0\|}\Big)^\gamma\frac{1}{V(y,x_0,d(y,x_0))}d\omega(y)
    \\&\lesssim   \frac{1}{\omega(B(x,d(x,x_0)))} \int_{  d(y,x_0)\geqslant \frac{R}{16}}
    \Big(\frac{r}{d(y,x_0)}\Big)^\gamma\frac{1}{V(y,x_0,d(y,x_0))}d\omega(y)
    \\&\lesssim  \frac{1}{\omega(B(x,d(x,x_0)))} \Big(\frac{r}{R}\Big)^\gamma
    \\&\lesssim  \frac{1}{V(x,x_0,d(x,x_0))}\Big(\frac{r}{r+d(x,x_0)}\Big)^{\gamma}.
    \end{align*}

    It remains to show the regularity of $T(f),$ that is the following estimate:

    \begin{align*}
    |Tf(x)-Tf(x')|&\lesssim
    \big(\frac{\|x-x'\|}{r}\big)^\beta \Big\{\frac{1}{V(\boldsymbol{x},x_0,r+d(\boldsymbol{x},x_0))}\Big(\frac{r}{r+d(\boldsymbol{x},x_0)}\Big)^{\gamma'}\\
    &\qquad\qquad  + \frac{1}{V(\boldsymbol{x'},x_0,r+d(\boldsymbol{x'},x_0))}\Big(\frac{r}{r+d(\boldsymbol{x'},x_0)}\Big)^{\gamma'}\Big\}.
    \end{align*}

    Observing that we only need to consider the case where $\|x-x'\|\leqslant \frac{1}{20}r.$ Indeed, if $\|x-x'\|\geqslant \frac{1}{20}r,$ by the size estimate of $T(f),$
    \begin{align*}
    |Tf(x)-Tf(x')|&\leqslant |Tf(x)|+|Tf(x')|\\
    &\lesssim \frac{1}{V(\boldsymbol{x},x_0,r+d(\boldsymbol{x},x_0))}\Big(\frac{r}{r+d(\boldsymbol{x},x_0)}\Big)^{\gamma'}\\
    &\qquad\qquad + \frac{1}{V(\boldsymbol{x'},x_0,r+d(\boldsymbol{x'},x_0))}\Big(\frac{r}{r+d(\boldsymbol{x'},x_0)}\Big)^{\gamma'},
    \end{align*}
    which gives the desired regularity estimate of $T(f).$

    Set $\|x-x'\|=\delta\leqslant \frac{1}{20}r.$ We will consider it in the following two cases: $d(x,x_0)=R\geqslant 10r$ and $d(x,x_0)< 10r.$

    Case (1): $d(x,x_0)=R\geqslant 10r.$
    Let $I(y)=\theta(\frac{8d(y,x)}{R}), J(y)=1-I(y).$ Denote $f_1(y)=I(y)f(y), f_2(y)=J(y)f(y).$ Write

    \begin{align*}
    Tf_1(x)&=            \int_{\R^N} K(x,y)u(y)[f_1(y)-f_1(x)]d\omega(y)\\
    &\qquad+ \int_{\R^N} K(x,y)v(y)f_1(y)d\omega(y)+\int_{\R^N} K(x,y)u(y)f_1(x)d\omega(y),
    \end{align*}
    where $u(y)=\theta(\frac{d(y,x)}{2\delta})$ and $v(y)=1- u(y).$

    Let $p(x)=\int_{\R^N} K(x,y)u(y)[f_1(y)-f_1(x)]d\omega(y)$ and $q(x)= \int_{\R^N} K(x,y)v(y)f_1(y)d\omega(y)+\int_{\R^N} K(x,y)u(y)f_1(x)d\omega(y)$.
    Then we have
    \begin{align*}
    |p(x)|&\lesssim \int_{d(x,y)\leqslant 4\delta}|K(x,y)|\cdot|f_1(y)-f_1(x)|d\omega(y)\\
    &\lesssim \int_{d(x,y)\leqslant 4\delta} \frac{1}{\omega(B(x,d(y,x)))}\Big(\frac{d(x,y)}{\|x-y\|}\Big)^\beta\Big(\frac{\|x-y\|}{r}\Big)^\beta \\ &
    \times\Big\{\frac{1}{V(\boldsymbol{y},x_0,r+d(\boldsymbol{y},x_0))}\Big(\frac{r}{r+d(\boldsymbol{y},x_0)}\Big)^\gamma
    + \frac{1}{V(\boldsymbol{x},x_0,r+d(\boldsymbol{x},x_0))}\Big(\frac{r}{r+d(\boldsymbol{x},x_0)}\Big)^\gamma\Big\}d\omega(y)\\
    &\lesssim\frac{1}{r^\beta} \frac{1}{V(x,x_0,r+d(x,x_0))}\Big(\frac{r}{r+d(x,x_0)}\Big)^\gamma\int_{d(x,y)\leqslant 4\delta} \frac{1}{\omega(B(x,d(y,x)))}\big(d(x,y)\big)^\beta d\omega(y) \\
    &\lesssim \Big(\frac{\delta}{r}\Big)^\beta\frac{1}{V(x,x_0,r+d(x,x_0))}\Big(\frac{r}{r+d(x,x_0)}\Big)^\gamma,
    \end{align*}
    since $d(y,x)\leqslant \frac{1}{4}R$ and $d(x,x_0)=R,$ so $d(y,x_0)\sim d(x,x_0).$

    If replacing $x$ by $x'$, we still have

    \begin{eqnarray*}
        |p(x')|\lesssim \Big(\frac{\delta}{r}\Big)^{\beta}\Big(\frac{r}{r+d(x,x_0)}\Big)^\gamma\frac{1}{V(x,x_0,r+d(x,x_0))}.
    \end{eqnarray*}
    Therefore,
    $$|p(x)-p(x')|\lesssim \Big(\frac{\delta}{r}\Big)^{\beta}\Big(\frac{r}{r+d(x,x_0)}\Big)^\gamma\frac{1}{V(x,x_0,r+d(x,x_0))}.$$
    Observing that by $T(1)=0,$ we can write

    \begin{align*}
    q(x)-q(x')&=\int_{d(x,y)\geqslant 2\delta } [K(x,y)-K(x',y)]v(y)[f_1(y)-f_1(x)]d\omega(y)
    \\&\qquad+[f_1(x)-f_1(x')]\int_{\R^N} K(x',y)u(y)d\omega(y)=: I+I\!I.
    \end{align*}
    Similar to the proof of \eqref{T2.11} in Lemma \ref{lem2.5}, we
    can get $|T(u)(x')|\lesssim 1$ and thus
    \begin{align*}
    I\!I&\lesssim |f_1(x)-f_1(x')|\\
    &\lesssim \Big(\frac{\delta}{r}\Big)^\beta\Big\{\frac{1}{V(\boldsymbol{x},x_0,r+d(\boldsymbol{x},x_0))}\Big(\frac{r}{r+d(\boldsymbol{x},x_0)}\Big)^\gamma\\
    &\qquad \qquad\qquad+\frac{1}{V(\boldsymbol{x'},x_0,r+d(\boldsymbol{x'},x_0))}\Big(\frac{r}{r+d(\boldsymbol{x'},x_0)}\Big)^\gamma\Big\}.
    \end{align*}
    For term I, applying the smoothness condition of $K(x,y)$ with $\|x-x'\|= \delta\leqslant \frac{1}{2}d(x,y)$ and the smoothness condition for $f_1$ implies that
    \begin{align*}
    |I|&\lesssim\int_{d(x,y)\geqslant 2\delta}\frac{1}{\omega(B(x,d(y,x)))}\Big(\frac{\|x-x'\|}{\|x-y\|}\Big)^\varepsilon
    \Big(\frac{\|y-x\|}{r}\Big)^\beta\frac{1}{V(x,x_0,d(x,x_0))}\Big(\frac{r}{R}\Big)^\gamma d\omega(y)\\
    &\lesssim \frac{\delta^\varepsilon}{r^\beta} \Big(\frac{r}{r+d(x,x_0)}\Big)^\gamma\frac{1}{V(x,x_0,r+d(x,x_0))}\int_{d(x,y)\geqslant 2\delta}\frac{1}{\omega(B(x,d(y,x)))}\frac{1}{(d(x,y))^{\varepsilon-\beta}}d\omega(y) \\
    &\lesssim \Big(\frac{\delta}{r}\Big)^\beta \frac{1}{V(x,x_0,r+d(x,x_0))} \Big(\frac{r}{r+d(x,x_0)}\Big)^\gamma,
    \end{align*}
    since $d(y,x_0)\sim d(x,x_0).$
    The estimates of $I$ and $I\!I$ gives the desired estimate for $Tf_1(x)-Tf_1(x').$

    To see the estimate for $Tf_2(x)-Tf_2(x'),$ note that if $f_2(y)\not=0,$ then $d(x,y)\geqslant \frac{1}{8}R\geqslant 2\|x-x'\|.$
    Therefore,
    \begin{align*}
    &|Tf_2(x)-Tf_2(x')|\\&\leqslant \int_{d(y,x)\geqslant \frac{3}{4}R\geqslant 2\delta}  |K(x,y)-K(x',y)|\cdot |f_2(y)| d\omega(y)\\
    &\lesssim \int_{d(y,x)\geqslant \frac{3}{4}R}   \frac{1}{\omega(B(x,d(y,x)))}\Big(\frac{\|x-x'\|}{\|x-y\|}\Big)^\varepsilon \frac{1}{V(y,x_0,r+d(y,x_0))}\Big(\frac{r}{r+d(y,x_0)}\Big)^\gamma d\omega(y)\\
    &\lesssim \Big(\frac{\delta}{R}\Big)^\varepsilon \frac{1}{\omega(B(x,d(x,x_0)))}
    \int_{\R^N} \frac{1}{V(y,x_0,r+d(y,x_0))}\Big(\frac{r}{r+d(y,x_0)}\Big)^\gamma d\omega(y)\\
    &\lesssim \Big(\frac{\delta}{r}\Big)^\varepsilon \Big(\frac{r}{r+d(x,x_0)}\Big)^\varepsilon \frac{1}{\omega(B(x,d(x,x_0)))}\lesssim \Big(\frac{\delta}{r}\Big)^\beta \frac{1}{V(x,x_0,r+d(x,x_0))}\Big(\frac{r}{r+d(x,x_0)}\Big)^\gamma.
    \end{align*}

    Cases 2: $d(x,x_0)< 10r.$ The proof of this case is easier. Indeed, set $1=\xi(y)+\eta(y)$, where $\xi(y)=\theta\Big(\frac{d(y,x)}{5\delta}\Big)$ and again write $Tf(x)=p(x)+q(x),$
    where $p(x)=\int_{\R^N} K(x,y)[f(y)-f(x)]\xi(y)d\omega(y)$ and
    $$q(x)=\int_{\R^N} K(x,y)f(y)\eta(y)d\omega(y)+f(x)\int_{\R^N} K(x,y)\xi(y)d\omega(y).$$
    Applying the size condition for $K(x,y)$ and the smoothness condition for $f$ implies that
    \begin{align*}
    |p(x)|&\lesssim \int_{d(x,y)\leqslant 10\delta} \frac{1}{\omega(B(x,d(x,y)))}\Big(\frac{d(x,y)}{\|x-y\|}\Big)^\beta \Big(\frac{\|x-y\|}{r}\Big)^\beta\\
    &\qquad\times\Big\{\frac{1}{V(\boldsymbol{y},x_0,r+d(\boldsymbol{y},x_0))}\Big(\frac{r}{r+\|\boldsymbol{y}-x_0\|}\Big)^\gamma\\
    &\qquad\qquad\qquad+\frac{1}{V(\boldsymbol{x},x_0,r+d(\boldsymbol{x},x_0)))}\Big(\frac{r}{r+\|\boldsymbol{x}-x_0\|}\Big)^\gamma\Big\}d\omega(y)\\
    &\lesssim \frac{1}{r^\beta}\frac{1}{(V(x,x_0, r))}
    \int_{d(x,y)\leqslant 10\delta}  \frac{1}{\omega(B(x,d(x,y)))}\big(d(x,y)\big)^\beta d\omega(y)\\
    &\lesssim \Big(\frac{\delta}{r}\Big)^\beta \frac{1}{V(x,x_0,
        r+d(x,x_0))}\Big(\frac{r}{r+d(x,x_0)}\Big)^\gamma.
    \end{align*}

    Repeating the same proof implies that
    $$
    |p(x')|\lesssim \Big(\frac{\delta}{r}\Big)^\beta \frac{1}{V(x,x_0,r+d(x,x_0))}\Big(\frac{r}{r+d(x,x_0)}\Big)^\gamma.
    $$
    Similarly, by $T(1)=0,$ we have
    \begin{align*}
    &q(x)-q(x')\\&= \int_{\R^N} [K(x,y)-K(x',y)]\eta(y)[f(y)-f(x)]d\omega(y) +
    [f(x)-f(x')]\int_{\R^N} K(x',y)\xi(y)d\omega(y)\\&:=I+I\!I.
    \end{align*}
    Observing that if $d(y,x)\geqslant 5\delta$, then
    $|K(x,y)-K(x',y)|\lesssim \Big(\frac{\delta}{\|x-y\|}\Big)^\varepsilon \frac{1}{\omega(B(x,d(y,x)))}$ and
    \begin{align*}
    |f(y)-f(x)|&\lesssim
    \Big(\frac{\|x-y\|}{r}\Big)^\beta
    \Big\{\frac{1}{V(\boldsymbol{y},x_0,r+d(\boldsymbol{y},x_0))}\Big(\frac{r}{r+\|\boldsymbol{y}-x_0\|}\Big)^\gamma\\
    &\qquad\qquad +\frac{1}{V(\boldsymbol{x},x_0,r+d(\boldsymbol{x},x_0)))}\Big(\frac{r}{r+\|\boldsymbol{x}-x_0\|}\Big)^\gamma\Big\}.
    \end{align*}
    Note that $r+d(x,x_0)\lesssim r+d(y,x_0),$ therefore
    \begin{align*}
    |I|&\lesssim \frac{\delta^{\varepsilon}}{r^\beta} \frac{1}{V(x,x_0,r+d(x,x_0)))} \int_{d(y,x)\geqslant 5\delta}\frac{1}{d(y,x)^{\varepsilon-\beta}}\frac{1}{\omega(B(x,d(y,x)))}d\omega(y)\\
    &\lesssim\Big(\frac{\delta}{r}\Big)^\beta \frac{1}{V(x,x_0,r+d(x,x_0)))} \Big(\frac{r}{r+d(x,x_0)}\Big)^\gamma.
    \end{align*}
    Similar to the proof of \eqref{T2.11} in Lemma \ref{lem2.5}, we
    can get $|T(\xi)(x')|\lesssim 1$ and thus
    \begin{align*}
    |I\!I|&\lesssim |f(x)-f(x')|\\
    &\lesssim \Big(\frac{\delta}{r}\Big)^\beta\Big\{ \frac{1}{V(\boldsymbol{x},x_0,r+d(\boldsymbol{x},x_0)))} \Big(\frac{r}{r+d(\boldsymbol{x},x_0)}\Big)^\gamma \\
    &\qquad\qquad+ \frac{1}{V(\boldsymbol{x'},x_0,r+d(\boldsymbol{x'},x_0)))} \Big(\frac{r}{r+d(\boldsymbol{x'},x_0)}\Big)^\gamma\Big\} .
    \end{align*}
    The fact that $\int_{\R^N} T(f)(x)d\omega(x)=0$ follows from $T^*(1)=0.$

    The proof of Theorem \ref{1.3} is complete.
\end{proof}

\subsection{{Proof of $T1$ Theorem}}
\ \\

To show Theorem \ref{1.2}, the $T1$ theorem, observe that the
necessary conditions of the $T1$ theorem follow from Theorem
\ref{th1.1}, namely $T(1), T^*(1)\in BMO_d(\R^N,\omega)$ and $T\in WBP$ by the
definition of the weak boundedness of property.

To show the sufficent conditions of Theorem \ref{1.2}, we need to
apply Coifman's approximation to the identity. For this purpose, we
first extend $T$ to a continuous linear operator from $\Lambda^s
\cap L^2(\R^N,\omega)$ into $(C^s_0)^\prime$ where $\Lambda^s(\Bbb
R^N)$ denotes the closure of $C^\eta_0(\Bbb R^N)$ with respect to
the norm $\|\cdot\|_s, 0<s<\eta$. To be precise, given $g\in C^s_0,
0<s<1,$ with the support contained in a ball $B(x_0, r),$ and set
$\theta\in C^s_0$ with $\theta(x)=1$ for $d(x,x_0)\leqslant 2r$ and
$\theta(x)=0$ for $d(x,x_0)\geqslant 4r.$ Given $f\in \Lambda^s \cap
L^2(\R^N,\omega),$ we write
$$\langle Tf, g \rangle =\langle T(\theta f), g\rangle
+\langle  T((1-\theta) f), g \rangle. $$
The first term makes sense
since $\theta f\in C^s_0.$ To see that the second term is also well defined, by the size condition of $K(x,y)$ and the fact $f\in L^2(\R^N,\omega)$ together with the doubling and the reverse doubling conditions of the measure $\omega,$ we first write

$$\langle T((1-\theta) f), g \rangle
= \int_{\Bbb R^N} g(x)\int_{\{y:d(x,y)>r\}} K(x,y)(1-\theta(y))f(y)d\omega(y)d\omega(x).
$$
By H\"older's inequality,

\begin{align*}
&\bigg|\int_{\{y:d(x,y)>r\}} K(x,y)(1-\theta(y))f(y)d\omega(y)\bigg|\\
&\lesssim \|f\|_2\Big( \int_{\{y:d(x,y)>r\}}|K(x,y)|^2d\omega(y)\Big)^{\frac12} \\
&\lesssim \|f\|_2 \Big(\sum\limits_{j=0}^\infty\int_{\{2^jr\leqslant d(x,y)\leqslant 2^{j+1}r\}} \frac1{\omega(B(x,d(x,y)))^2}d\omega(y)\Big)^{\frac12} \\
&\lesssim \|f\|_2 \Big(\sum\limits_{j=0}^\infty\int_{\{2^jr\leqslant d(x,y)\leqslant 2^{j+1}r\}} \frac1{\omega(B(x,2^jr))^2}d\omega(y)\Big)^{\frac12} \\
&\lesssim \|f\|_2\Big( \sum\limits_{j=0}^\infty
2^{-jN}\frac{1}{\omega(B(x,r))}\Big)^{\frac12}\\
& <\infty,
\end{align*}
where the last inequality follows from the fact that $\inf\limits_x
\omega(B(x,r))>0.$

This implies that $\langle  T((1-\theta) f), g \rangle $ is well defined. Moreover, this extension is independent of the choice of $\theta.$

We now describe the properties of Coifman's approximation to the identity acting on $\Lambda^s\cap L^2(\R^N,\omega).$ Let's begin with considering $(\Bbb R^N,\|\cdot\|, \omega)$ as space of homogeneous type in the sense of Coifman and Wiess. Note that the measure $\omega$ satisfies the doubling and the reverse doubling properties. Therefore, in this case, the Littlewood-Paley theory has already established in \cite{HMY}. We recall main results for $(\Bbb R^N, \|\cdot\|, \omega).$ Here and
throughout, $V_k(x)$ always denotes the measure
$\omega(B(x,r^{-k}))$ for $r>1, k\in \Bbb Z$ and $x\in \Bbb R^N$. We
also denote by $V(x,y)=\omega(B(x,\|x-y\|))$ for $x,y\in \Bbb R^N$.

Let $\theta: \mathbb R\mapsto [0,1]$ be a smooth function which is 1
for $\|x\|\leqslant r$ and vanishes for  $\|x\|\geqslant 2r$ with
some fixed $r>1.$  Applying the construction of Coifman's
approximation to the identity, we define
$$T_k(f)(x)=\int_{\Bbb R^N} \theta(r^k \|x-y\|)f(y)d\omega(y), \quad k\in \Bbb Z.$$
Then
$$T_k(1)(x)\leqslant \int_{\|x-y\|\leqslant 2r^{-k}}d\omega(y)\leqslant C\omega(B(x,r^{-k})).$$
Conversely,
$$T_k(1)(x)\geqslant \int_{\|x-y\|< r^{-k}}d\omega(y)=\omega(B(x,r^{-k})).$$
Hence, $T_k(1)(x)\sim \omega(B(x,r^{-k}))=V_k(x)$. It is easy to
check $V_k(x) {\sim} V_k(y)$ whenever $\|x-y\|\leqslant  r^{5-k}$.
Thus,
\begin{align*}
T_k\bigg(\frac1{T_k(1)}\bigg)(x)
&=\int_{\Bbb R^N} \theta(r^k \|x-y\|)\frac1{T_k(1)(y)}d\omega(y) \\
&\sim\int_{\Bbb R^N} \theta (r^k \|x-y\|) \frac 1{V_k(y)}d\omega(y) \\
&\sim \frac 1{V_k(x)}\int_{\Bbb R^N} \theta(r^k \|x-y\|)d\omega(y) \\
&= \frac 1{V_k(x)}T_k(1)(x)\sim 1.
\end{align*}
Let $M_k$ be the operator of multiplication by $M_k(x):=\frac1{T_k(1)(x)}$ and let $W_k$ be the operator of multiplication by $W_k(x):=\big[T_k\big(\frac1{T_k(1)}\big)(x)\big]^{-1}$.
Coifman's approximation to the identity is constructed by $S_k=M_kT_kW_kT_kM_k,$ where
%$\lim_{k\to \infty} S_k= \text{identity}$, $\lim_{k\to -\infty} S_k= 0$ and
the kernel of $S_k$ is
$$S_k(x,y)=\int_{\Bbb R^n} M_k(x)\theta(r^k\|x-z\|)W_k(z)\theta(r^k\|z-y\|)M_k(y)d\omega(z).$$
In \cite{HMY}, it was proved that kernels $S_k(x,y)$ defined on $\Bbb R^N\times \Bbb R^N$ satisfy the following properties.
\begin{enumerate}
    \item[(i)] $S_k(x,y)=S_k(y,x);$
    \item[(ii)] $S_k(x,y)=0$ if $\|x-y\|>r^{4-k}$\ \ and\ \ $\displaystyle |S_k(x,y)|\leqslant \frac C{V_k(x)+V_k(y)};$
    \item[(iii)] $\displaystyle |S_k(x,y)-S_k(x',y)|\leqslant C\frac{r^k\|x-x'\|}{V_k(x)+V_k(y)}$\qquad for $\|x-x'\|\leqslant r^{8-k};$\\[3pt]
    \item[(iv)] $\displaystyle |S_k(x,y)-S_k(x,y')|\leqslant C\frac{r^k\|y-y'\|}{V_k(x)+V_k(y)}$\qquad for $\|y-y'\|\leqslant r^{8-k};$
    \item[(v)] $\displaystyle \big|[S_k(x,y)-S_k(x',y)]-[S_k(x,y')-S_k(x',y')]\big|\leqslant C\frac{r^k\|x-x'\|r^k\|y-y'\|}{V_k(x)+V_k(y)}$
    \item[] for $\|x-x'\|\leqslant r^{8-k}$ and $\|y-y'\|\leqslant r^{8-k};$
    \item[(vi)] $\displaystyle \int_{\Bbb R^N} S_k(x,y)d\omega(x)=1\qquad \text{for all}\ y\in \Bbb R^N;$
    \item[(vii)] $\displaystyle \int_{\Bbb R^N} S_k(x,y)d\omega(y)=1\qquad \text{for all}\ x\in \Bbb R^N.$
\end{enumerate}

Coifman's decomposition of the identity on $L^2(\R^N, \omega)$ is
given as follows. Let ${D}_k:= {S}_k - {S}_{k-1}$. The identity
operator $I$ on $L^2(\R^N,\omega)$ can be written as
$$I=\sum\limits_{k=-\infty}^\infty {D}_k=\sum\limits_{k=-\infty}^\infty\sum\limits_{j=-\infty}^\infty {D}_k{D}_j={T}_{M} +{R}_{M},$$
where ${T}_{M}=\sum\limits_{\{j,k\in
    \Bbb Z:\, |k-j|\leqslant {M}\}} {D}_k{ D}_j=\sum\limits_{k\in \Bbb Z} D_kD_k^{M}$ with $D_k^M=\sum\limits_{\{j\in \Bbb Z:|j|\leqslant M\}} D_{k+j}$ and
${R}_{M}=\sum\limits_{\{j,k\in \Bbb Z:\,
    |k-j|> {M}\}}{D}_k{D}_j.$
It is known, see \cite{HMY}, that there exists a constant $C$ such that
\begin{equation}\label{aoe h}
|D_jD_k(x,y)|\leqslant Cr^{-|j-k|}\frac 1{V_{j \wedge k}(x)+V_{j
        \wedge k}(y)},
\end{equation}
where $j \wedge k=\min\{j,k\}.$

This estimate implies
$$\|D_jD_k\|_{L^2(\omega) \mapsto L^2(\omega)}\lesssim r^{-|j-k|}.$$
By the Cotlar-Stein Lemma we obtain
$$\|R_{M}(f)\|_{L^2(\omega)}\leqslant Cr^{-M}\|f\|_{L^2(\omega)}$$
and then for a fixed large $M, T_{M}^{-1},$ the inverse of $T_M,$ is bounded on $L^2(\R^N,\omega)$. This yields that $T_M$ converges to the identity in the $L^2$ norm and moreover,
$$I=T_M^{-1}T_M=\sum\limits_{k=-\infty}^\infty {T}_{M}^{-1}D_k^{M}D_k=T_MT_M^{-1}=\sum\limits_{k=-\infty}^\infty  D_k^{M}D_k{T}_{M}^{-1} \qquad\mbox{in}\ L^2(\R^N,\omega).$$

The following lemma describes the properties of operators ${T}_{M}$ acting on $\Lambda^s.$
\begin{lemma}\label{Lemma 3.6}
    Suppose $0<s<\12 .$ Then
    \begin{enumerate}
        \item[(i)] ${T}_{M}=\sum\limits_{k=-\infty}^\infty D_kD^{M}_k$ converges uniformly and in the norm of $\Lambda^s$,
        \item[(ii)] ${T}_{M}$ is bounded on $\Lambda^s$,
        \item[(iii)] $\|{T}_{M}-I\|_s\to 0$ as $M\to +\infty$.
    \end{enumerate}
\end{lemma}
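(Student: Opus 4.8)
The plan is to run a classical almost-orthogonality argument with the composed kernels $D_iD_j(x,y)$ and $D_iD_k^M(x,y)$ as building blocks, where $D_k^M=\sum_{|j|\le M}D_{k+j}$. The first step is to record, alongside the quoted size bound \eqref{aoe h}, the two companion estimates for $D_iD_j$: the \emph{cancellation} $\int_{\mathbb R^N}D_iD_j(x,y)\,d\omega(y)=\int_{\mathbb R^N}D_iD_j(x,y)\,d\omega(x)=0$, which is immediate from $\int S_k(x,\cdot)\,d\omega=\int S_k(\cdot,y)\,d\omega=1$ (properties (vi)--(vii)), since then $\int D_k(x,\cdot)\,d\omega=\int D_k(\cdot,y)\,d\omega=0$; and the \emph{regularity} $|D_iD_j(x,y)-D_iD_j(x',y)|\lesssim r^{-|i-j|}\,r^{i\wedge j}\|x-x'\|\,\bigl(V_{i\wedge j}(x)+V_{i\wedge j}(y)\bigr)^{-1}$ for $\|x-x'\|\le C r^{-(i\wedge j)}$, and symmetrically in $y$, obtained by composing the Lipschitz bounds (iii)--(v) for $S_k$; together with the fact that $D_iD_j(\cdot,\cdot)$ is supported, up to a fixed dilation constant, in $\{\|x-y\|\lesssim r^{-(i\wedge j)}\}$. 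The same three facts hold with $D_k^M$ in place of $D_j$, with $r^{-|i-j|}$ replaced by a constant $C_M$ for the estimates used in $T_M$, and kept as the genuine gap factor $r^{-|i-j|}$ (which never exceeds $r^{-M}$) in the pieces relevant to $R_M$.

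For (ii), given $g\in\Lambda^s$ and $x\neq x'$, set $\delta=\|x-x'\|$, choose $k_0$ with $r^{-k_0}\le\delta<r^{1-k_0}$, and split $T_Mg(x)-T_Mg(x')$ into $\sum_{k>k_0}$ and $\sum_{k\le k_0}$. In the small--scale range $k>k_0$ I would use cancellation to write $D_kD_k^Mg(x)=\int D_kD_k^M(x,y)[g(y)-g(x)]\,d\omega(y)$, so that $|D_kD_k^Mg(x)|\le C_M\|g\|_s\int_{\|x-y\|\lesssim r^{-k}}\|x-y\|^s\,|D_kD_k^M(x,y)|\,d\omega(y)\le C_M\|g\|_s\,r^{-ks}$; summing the geometric series gives $\sum_{k>k_0}\bigl(|D_kD_k^Mg(x)|+|D_kD_k^Mg(x')|\bigr)\le C_M\|g\|_s\,\delta^s$. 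In the large--scale range $k\le k_0$ (so $\delta\le r^{-k_0}\le r^{-k}$) I would use cancellation and then regularity, $D_kD_k^Mg(x)-D_kD_k^Mg(x')=\int\bigl[D_kD_k^M(x,y)-D_kD_k^M(x',y)\bigr][g(y)-g(x)]\,d\omega(y)$, which is $\le C_M\|g\|_s\,r^k\delta\,r^{-ks}=C_M\|g\|_s\,\delta\,r^{k(1-s)}$; summing over $k\le k_0$ — here is where the hypothesis $s<\tfrac12$, in particular $s<1$, enters, to make the series $\sum_{k\le k_0}r^{k(1-s)}\lesssim r^{k_0(1-s)}$ converge — gives $\le C_M\|g\|_s\,\delta\cdot\delta^{-(1-s)}=C_M\|g\|_s\,\delta^s$. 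Adding the two ranges yields $|T_Mg(x)-T_Mg(x')|\le C_M\|g\|_s\|x-x'\|^s$, which is (ii).

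For (i), on the dense subclass $g\in C_0^\eta(\mathbb R^N)$ with $s<\eta$ I would prove that $\sum_k D_kD_k^Mg$ converges absolutely in $L^\infty$ and in $\|\cdot\|_s$: for $k\ge0$ the estimates above, used with the sharper H\"older exponent $\eta$, give $\|D_kD_k^Mg\|_s\le C_M\|g\|_\eta\,r^{-k(\eta-s)}$, while for $k\le0$ the Cauchy--Schwarz inequality together with $\inf_z\omega(B(z,r^{-k}))\gtrsim r^{-kN}\to\infty$ gives $\|D_kD_k^Mg\|_{L^\infty}\le C_M\|g\|_2\,r^{kN/2}$ and, combined with the regularity bound, $\|D_kD_k^Mg\|_s\le C_M(\|g\|_2+\|g\|_\infty)\,r^{k\sigma}$ for some $\sigma>0$; both tails are summable. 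Since $D_k(1)=0$ we have $T_M(1)=0$, so $T_M$ descends to $\Lambda^s$, and the general case of (i) follows from the dense subclass together with the uniform bound on the partial sums $\sum_{|k|\le K}D_kD_k^M$ supplied by the argument of (ii). Finally, for (iii), write $I-T_M=R_M=\sum_{|l|>M}G_l$ with $G_l=\sum_k D_kD_{k+l}$, and rerun the argument of (ii) for each $G_l$ — carrying the factor $r^{-|l|}$ that now appears in both the size and the regularity estimates, and noting that, by the $x\leftrightarrow y$ symmetry of all kernel bounds, the case $l<0$ reduces to $l>0$ after re-indexing by the minimal scale — to obtain $\|G_lg\|_s\le C\,r^{-|l|}\|g\|_s$; summing over $|l|>M$ gives $\|I-T_M\|_s=\|R_M\|_s\le C\,r^{-M}\to0$.

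The main obstacle I expect is not any individual estimate but the bookkeeping: deriving and then correctly combining the three almost-orthogonality properties for the \emph{composed} kernels — especially the regularity bound, which is not among the quoted estimates and must be extracted from the Lipschitz smoothness (iii)--(v) of $S_k$, including the extraction of the gap decay $r^{-|i-j|}$ that is needed for (iii) — and tracking how the support radii interact with that decay so that every geometric series, in the scale $k$ and in the gap $l$, actually converges. A secondary subtlety is the large--scale ($k\to-\infty$) tail in (i), where one must leave the H\"older class and use $g\in L^2(\mathbb R^N,\omega)$ together with the growth $\omega(B(z,R))\gtrsim R^N$.
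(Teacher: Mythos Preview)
Your treatment of (i) and (ii) is essentially the paper's argument in different clothing and is correct. The gap is in (iii).

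The regularity estimate you claim, $|D_iD_j(x,y)-D_iD_j(x',y)|\lesssim r^{-|i-j|}\,r^{i\wedge j}\|x-x'\|\,V_{i\wedge j}^{-1}$, is \emph{not} derivable from properties (iii)--(v) in the asymmetric case $i>j$. Carry out the computation: with $\int D_i(x,z)\,d\omega(z)=0$ you write $D_iD_j(x,y)=\int D_i(x,z)[D_j(z,y)-D_j(x,y)]\,d\omega(z)$; subtracting the same expression at $x'$ and using that $\int D_i(x',z)\,d\omega(z)=0$ kills the cross term, leaving $\int[D_i(x,z)-D_i(x',z)][D_j(z,y)-D_j(x,y)]\,d\omega(z)$. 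The best you can do is $|D_i(x,z)-D_i(x',z)|\le Cr^i\|x-x'\|V_i^{-1}$ and $|D_j(z,y)-D_j(x,y)|\le Cr^j\|z-x\|V_j^{-1}\le Cr^{-(i-j)}V_j^{-1}$, which after integration yields $Cr^j\|x-x'\|V_j^{-1}$ --- the gap factor $r^{-(i-j)}$ has been \emph{consumed} to produce the smoothness in $x$ and is no longer available. The second-order estimate (v) does not help here because its range $\|z-y\|\le r^{8-i}$ is much smaller than the support $\|z-y\|\lesssim r^{-j}$ of $D_j(z,y)$. Your appeal to ``$x\leftrightarrow y$ symmetry'' does not rescue this: $\Lambda^s$ boundedness probes regularity of the kernel in its \emph{first} variable only, and passing to the adjoint does not preserve the $\Lambda^s\to\Lambda^s$ norm. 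Consequently your large-scale sum for $G_l$, $l<0$, carries no decay in $|l|$, and the bound $\|G_lg\|_s\le Cr^{-|l|}\|g\|_s$ fails.

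The paper avoids this by \emph{telescoping} the inner sum rather than treating each $G_l$: it writes $R_Mf=\sum_k D_k(I-S_{k+M})f+\sum_k D_kS_{k-M-1}f$. The first piece is handled by $\|(I-S_{k+M})f\|_\infty\lesssim r^{-(k+M)s}\|f\|_s$. For the second, with $H_k=D_kS_{k-M-1}$, the paper proves $\|H_kf\|_\infty\lesssim r^{-M}r^{-(k-M)s}\|f\|_s$ and $\|H_kf\|_{Lip}\lesssim r^kr^{-(k-M)s}\|f\|_s$ (note: no $r^{-M}$ in the Lipschitz bound, exactly the obstruction you hit), then \emph{interpolates} via $\|\cdot\|_\beta\le\|\cdot\|_\infty^{1-\beta}\|\cdot\|_{Lip}^\beta$ and sums in $k$. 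The resulting bound on $\|\sum_kH_kf\|_s$ carries a factor $r^{-M(1-2\beta)}$, and one needs $s<\beta<\tfrac12$ for this to vanish as $M\to\infty$. This is where the hypothesis $s<\tfrac12$ is genuinely used --- it is \emph{not}, as you write, merely ``in particular $s<1$''.
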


To prove Lemma \ref{Lemma 3.6}, we need the following estimates for
$D_k$ and $D_k^{M}$.

\begin{lemma}\label{Lemma 3.8}
    Let $0<s<1$. Then
    \begin{enumerate}
        \item[(i)] $\|D_kf\|_{L^\infty} \lesssim r^{-ks}\|f\|_s$,
        \item[(ii)] $\|D_kf\|_s \lesssim r^{ks}\|f\|_{L^\infty}$,
        \item[(iii)]  $\|D_kf\|_\beta\lesssim r^{k(\beta-s)}\|f\|_s$
        \qquad if\ \ $0<s\leqslant\beta< 1$,
        \item[(iv)] $\|D^{M}_kf\|_s\lesssim M\|f\|_s$.
    \end{enumerate}
\end{lemma}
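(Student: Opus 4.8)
The plan is to reduce everything to four properties of the difference kernel $D_k(x,y)=S_k(x,y)-S_{k-1}(x,y)$, read off from the listed properties of the $S_k$: (a) $D_k$ is supported in $\|x-y\|\le r^{5-k}$; (b) $|D_k(x,y)|\lesssim(V_k(x)+V_k(y))^{-1}$; (c) $|D_k(x,y)-D_k(x',y)|\lesssim r^k\|x-x'\|(V_k(x)+V_k(y))^{-1}$ whenever $\|x-x'\|\le r^{8-k}$; (d) $\int_{\R^N}D_k(x,y)\,d\omega(x)=\int_{\R^N}D_k(x,y)\,d\omega(y)=0$. Here (a) is clear since $S_{k-1}$ is supported in $\|x-y\|\le r^{5-k}$; (b) and (c) come from adding the corresponding bounds for $S_k$ and $S_{k-1}$ and using $V_{k-1}\ge V_k$ and $r^{k-1}<r^k$; and (d) is (vi)--(vii) for the indices $k$ and $k-1$, subtracted. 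From (a), (b) and the doubling property of $\omega$ one also records the uniform estimate $\int_{\R^N}|D_k(x,y)|\,d\omega(y)\lesssim1$ (and likewise in $x$), which is used throughout.

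For (i) I would use cancellation in $y$ to write $D_kf(x)=\int_{\R^N}D_k(x,y)[f(y)-f(x)]\,d\omega(y)$; on the support $\|x-y\|\le r^{5-k}$ one has $|f(y)-f(x)|\le\|f\|_s\|x-y\|^s\lesssim r^{-ks}\|f\|_s$, and combining with $\int_{\R^N}|D_k(x,y)|\,d\omega(y)\lesssim1$ gives $\|D_kf\|_{L^\infty}\lesssim r^{-ks}\|f\|_s$. For (ii) I would split on whether $\|x-x'\|\le r^{8-k}$. If so, (c) and (a) give $|D_kf(x)-D_kf(x')|\lesssim r^k\|x-x'\|\,\|f\|_{L^\infty}=(r^k\|x-x'\|)^{1-s}(r^k\|x-x'\|)^s\|f\|_{L^\infty}\lesssim r^{ks}\|x-x'\|^s\|f\|_{L^\infty}$, since $(r^k\|x-x'\|)^{1-s}\le r^{8(1-s)}$ is bounded. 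If $\|x-x'\|>r^{8-k}$, then $\|x-x'\|^s>r^{(8-k)s}\gtrsim r^{-ks}$ and $|D_kf(x)-D_kf(x')|\le2\|D_kf\|_{L^\infty}\lesssim\|f\|_{L^\infty}$ by the uniform bound, so dividing by $\|x-x'\|^s$ again gives $\lesssim r^{ks}\|f\|_{L^\infty}$.

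Part (iii) interpolates between these arguments. For $\|x-x'\|\le r^{8-k}$ I would use (d) to write $D_kf(x)-D_kf(x')=\int_{\R^N}[D_k(x,y)-D_k(x',y)][f(y)-f(x)]\,d\omega(y)$; the integrand vanishes unless $\|x-y\|\le r^{5-k}$ or $\|x'-y\|\le r^{5-k}$, hence unless $\|x-y\|\lesssim r^{-k}$, so there $|f(y)-f(x)|\lesssim r^{-ks}\|f\|_s$, and (c) together with $\int_{\|x-y\|\lesssim r^{-k}}(V_k(x))^{-1}\,d\omega(y)\lesssim1$ yields $|D_kf(x)-D_kf(x')|\lesssim r^{k(1-s)}\|x-x'\|\,\|f\|_s$; writing $\|x-x'\|=\|x-x'\|^{1-\beta}\|x-x'\|^\beta$ and using $\|x-x'\|^{1-\beta}\lesssim r^{-k(1-\beta)}$ turns this into $r^{k(\beta-s)}\|x-x'\|^\beta\|f\|_s$. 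For $\|x-x'\|>r^{8-k}$, part (i) gives $|D_kf(x)-D_kf(x')|\le2\|D_kf\|_{L^\infty}\lesssim r^{-ks}\|f\|_s$, and dividing by $\|x-x'\|^\beta\gtrsim r^{-k\beta}$ gives $\lesssim r^{k(\beta-s)}\|f\|_s$. Finally (iv) is immediate from (iii) with $\beta=s$ (which gives $\|D_mf\|_s\lesssim\|f\|_s$ for every $m$): $\|D^{M}_kf\|_s\le\sum_{|j|\le M}\|D_{k+j}f\|_s\lesssim(2M+1)\|f\|_s\lesssim M\|f\|_s$.

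The step that needs real care, rather than being routine, is the passage from the $S_k$-estimates to the $D_k$-estimates in the first paragraph — especially checking that the size and smoothness constants of $D_k$ are uniform in $k$ and that the normalization $\int_{\R^N}|D_k(x,y)|\,d\omega(y)\lesssim1$ holds — and, in (iii), the observation that the effective domain of integration for $D_k(x,y)-D_k(x',y)$ is a ball of radius $\lesssim r^{-k}$ about $x$, which is precisely what makes the Hölder gain $\|x-y\|^s\lesssim r^{-ks}$ and the estimate $\int(V_k(x))^{-1}\,d\omega(y)\lesssim1$ available at the same time. Once these are in place, each of (i)--(iv) is a direct estimate organized around the case split $\|x-x'\|\le r^{8-k}$ versus $\|x-x'\|>r^{8-k}$.
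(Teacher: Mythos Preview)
Your proof is correct and follows essentially the same approach as the paper's: cancellation of $D_k$ for (i), kernel smoothness for (ii), the combination of both for (iii), and summation for (iv). The only cosmetic differences are that the paper uses the thresholds $r^{4-k}$ and $r^{6-k}$ rather than your $r^{8-k}$, and in (ii) it writes the interpolated bound $|D_k(x,z)-D_k(y,z)|\lesssim (r^k\|x-y\|)^s(V_k(x)+V_k(z))^{-1}$ directly rather than doing your explicit case split; both arrive at the same estimate.
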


\begin{proof}
    For (i), the cancellations of $D_k$ gives
    $$D_kf(x)=
    \int_{\Bbb R^N}D_k(x,y)[f(y)-f(x)]d\omega(y).$$
    Since $D_k(x,y)=0$ for $\|x-y\|\geqslant r^{4-k}$,
    the size condition of $D_k$ and the smoothness condition of $f$ yield
    \begin{align*}
    |D_kf(x)|&\lesssim \|f\|_s\int_{\|x-y\|\leqslant r^{4-k}}\frac1{V_k(x)+V_k(y)}\|x-y\|^sd\omega(y) \\
    &\lesssim r^{-ks}\|f\|_s.
    \end{align*}
    For (ii), the smoothness condition of $D_k$ gives
    \begin{align*}
    |D_kf(x)-D_kf(y)|
    &=\bigg|\int_{\Bbb R^N} (D_k(x,z)-D_k(y,z))f(z)d\omega(z)\bigg|\\
    &\leqslant \|f\|_{L^\infty}  \bigg(\int_{\|x-z\|\leqslant r^{4-k}}+\int_{\|y-z\|\leqslant r^{4-k}}\bigg)\frac{(r^k\|x-y\|)^s}{V_k(x)+V_k(z)}d\omega(z),
    \end{align*}
    which implies
    $$\|D_kf\|_s \lesssim r^{ks}\|f\|_{\infty}.$$

    To estimate (iii),  if  $\|x-y\|\leqslant r^{6-k},$ using
    the the cancellations of $D_k$ and the smoothness condition of $f,$ we get
    \begin{align*}
    |D_kf(x)-D_kf(y)|
    &=\bigg|\int_{\Bbb R^N} [D_k(x,z)-D_k(y,z)]f(z)d\omega(z)\bigg|\\
    &=\bigg|\int_{\Bbb R^N} [D_k(x,z)-D_k(y,z)][f(z)-f(x)]d\omega(z)\bigg|\\
    &\lesssim  \bigg(\int_{\|x-z\|\leqslant r^{4-k}}+\int_{\|y-z\|\leqslant r^{4-k}}\bigg)\frac{(r^k\|x-y\|)^\beta}{V_k(x)+V_k(z)}\|x-z\|^s\|f\|_sd\omega(z)\\
    &\lesssim \|x-y\|^\beta r^{k(\beta-s)}\|f\|_s.
    \end{align*}

    When $\|x-y\|> r^{6-k}$, (i) gives
    $$
    |D_kf(x)-D_kf(y)|\lesssim r^{-ks}\|f\|_s
    \lesssim \|x-y\|^\beta r^{k(\beta-s)}\|f\|_s.
    $$
    (iii) follows from these estimates.
    The estimate of (iv) follows from (iii) with $\beta = s.$
\end{proof}

We now show Lemma \ref{Lemma 3.6}.

\begin{proof}[Proof of Lemma \ref{Lemma 3.6}]
    We first show that $T_M(f)(x)=\sum\limits_{k=-\infty}^\infty D_kD^{M}_k(f)(x)$ is well defined on $\Lambda_s(\Bbb R^N).$ To this end, let $f\in C^\eta_0$ with $\eta>s$ and set $G_kf(x):=D_kD^{M}_kf(x)$. Observe that if $f\in C^\eta_0$ then $f\in L^\infty$ and hence, $\|D^M_k(f)\|_1\leqslant CM\|f\|_\infty.$ By (iv), $\|D^M_k(f)\|_\eta\lesssim M\|f\|_\eta.$  Therefore,
    \begin{align*}
    |G_k(f)(x)|&=|\int_{\R^N} D_k(x,y)D^M_k(f)(y)d\omega(y)|\leqslant C\frac{1}{V_k(x)}\|f\|_\infty\\
    &\lesssim r^k\frac{1}{V_0(x)}\|f\|_\infty\lesssim r^k\|f\|_\infty
    \end{align*}
    since $\inf\limits_x V_0(x)\geqslant C>0.$
    And
    \begin{align*}
    |G_k(f)(x)|&=\Big|\int_{\R^N} D_k(x,y)D^M_K(f)(y)d\omega(y)\Big|=\Big|\int_{\R^N} D_k(x,y)[D^M_K(f)(y)-D^M_K(f)(x)]d\omega(y)\Big|\\
    &\lesssim \|f\|_\eta\int_{\R^N} |D_k(x,y)|\|x-y\|^\eta d\omega(y)\lesssim r^{-k\eta}\|f\|_\eta.
    \end{align*}
    These two estimates imply that if $f\in C^\eta_0$ then the series $\sum\limits_{k=-\infty}^\infty D_kD^{M}_k(f)(x)$ converges uniformly. Moreover, for given
    $x,y\in \Bbb R^N$, choose $k_0\in \Bbb Z$ such that
    $r^{-k_0}\leqslant \|x-y\|\leqslant r^{-k_0+1}$. Then
    Lemma \ref{Lemma 3.8} implies that
    \begin{equation}\label{eq 2.5}
    \begin{aligned}
    \bigg|\sum\limits_{k=-\infty}^\infty [G_kf(x)-G_kf(y)]\bigg|
    &\leqslant \sum\limits_{k\geqslant k_0} |G_kf(x)-G_kf(y)|
    +\sum\limits_{k< k_0}|G_kf(x)-G_kf(y)| \\
    &\lesssim  \sum\limits_{k\geqslant k_0}2\|G_kf\|_{L^\infty}
    +\sum\limits_{k< k_0}\|x-y\|^\beta r^{k(\beta-s)}\|f\|_s \\
    &\lesssim r^{-k_0s}\|f\|_s+r^{k_0(\beta-s)}\|x-y\|^\beta\|f\|_s\\
    &\lesssim \|x-y\|^s\|f\|_s.
    \end{aligned}
    \end{equation}
    Hence if $f\in C^\eta_0$ with $\eta>s$,
    then the series $\sum\limits_kD_kD^{M}_kf$ converges in $\Lambda^s$ norm. Observe that $C^\eta_0$ with $\eta > s$ is dense in $\Lambda^s.$ This implies that $T_M$ extends to $\Lambda^s.$ Indeed, if $f\in \Lambda^s,$ then there exists a sequence $f_n\in C^\eta_0, \eta>s,$ such that  $\|f_n-f\|_s$ tends to zero as $n$ tends to $\infty.$ Let $T_M(f)(x)=\lim\limits_{n\to \infty}T_M(f_n)(x).$ Then $T_M$ is bounded on $\Lambda^s$ and moreover, $\|T_M(f)\|_{s}\lesssim \|f\|_s$ for $f\in \Lambda^s.$

    To show $\|{T}_{M}-I\|_s\to 0$ as $M\to +\infty,$ it is sufficient to prove $\|{R}_{M}\|_{s,s}\to 0$ as $M\to +\infty$. To this end, we rewrite
    \begin{align*}
    {R}_{M}f
    &=\sum\limits_{k=-\infty}^\infty\sum\limits_{\{j\in \Bbb Z:\, |k-j|> {M}\}}{D}_k{D}_j
    =\sum\limits_{\{\ell \in \Bbb Z:\,|\ell|> M\}}\sum\limits_{k=-\infty}^\infty D_kD_{k+\ell} f \\
    &=\sum\limits_{k=-\infty}^\infty D_k(I-S_{k+M})f
    +\sum\limits_{k=-\infty}^\infty D_kS_{k-M-1}f.
    \end{align*}
    Since $\int_{\Bbb R^N} S_k(x,y)d\omega(y)=1$ for $k\in \Bbb Z$, we have
    \begin{align*}
    |(I-S_{k+M})f(x)|
    &=|f(x)-S_{k+M}f(x)|\\
    &=\bigg|\int_{\Bbb R^N} S_{k+M}(x,y)[f(x)-f(y)]d\omega(y)\bigg|\\
    &\leqslant Cr^{-(M+k)s}\|f\|_s
    \end{align*}
    and hence $$\|(I-S_{k+M})f\|_{L^\infty}\leqslant Cr^{-(M+k)s}\|f\|_s.$$
    The above estimate together with Lemma \ref{Lemma 3.8} and applying the same proof for \eqref{eq 2.5} imply that
    $\big\|\sum\limits_{k=-\infty}^\infty D_k(I-S_{k+M})f\big\|_s\leqslant r^{-Ms}\|f\|_s,$ which gives $\big\|\sum\limits_{k=-\infty}^\infty D_k(I-S_{k+M})\big\|_{s,s}\to 0$ as $M\to +\infty$.

    To estimate $\sum\limits_{k=-\infty}^\infty D_kS_{k-M-1}f,$ the second term of $R_Mf,$    let $H_kf=D_kS_{k-M-1}f$ and denote $H_k(x,y)$ by the kernel of $H_k$.  Then $\int_{\Bbb R^N} H_k(x,y)d\omega(y)=D_kS_{k-M-1}(1)=D_k(1)=0$
    and $H_k(x,y)=0$ if $\|x-y\|\geqslant r^{6-(k-M)}$.
    By the cancellation of $D_k$ and the smothness of $S_{k-M-1}$,
    \begin{align*}
    |H_k(x,y)|
    &=\bigg|\int_{\Bbb R^N}
    D_k(x,z)\big[S_{k-M-1}(z,y)-S_{k-M-1}(x,y)\big]d\omega(z)\bigg|\\
    &\leqslant C\int_{|x-z|\leqslant r^{2-k}}(V_k(x))^{-1}
    \frac{r^{k-M-1}\|x-z\|}{V_{k-M-1}(x)}d\omega(z)\\
    &\leqslant Cr^{-M}(V_{k-M-1}(x))^{-1}.
    \end{align*}
    Thus, for $f\in \Lambda^s,$
    \begin{align*}
    |H_kf(x)|
    &=\bigg|\int_{\Bbb R^N}
    H_k(x,y)[f(y)-f(x)]d\omega(y)\bigg|\\
    &\leqslant C\int_{|x-y|\leqslant r^{6-(k-M)}}r^{-M}(V_{k-M-1}(x))^{-1}\|x-y\|^s\|f\|_sd\omega(y)\\
    &\leqslant Cr^{-M}r^{-(k-M)s}\|f\|_s.
    \end{align*}
    This implies that
    \begin{equation}\label{eq 2.6}
    \|H_kf\|_{L^\infty}\lesssim r^{-M}r^{-(k-M)s}\|f\|_s.
    \end{equation}
    If $\|x-x'\|\leqslant r^{6-(k-M)}$, then
    \begin{equation}\label{eq 2.7}
    \begin{aligned}
    &|H_k(x,y)-H_k(x',y)|\\
    &= \bigg|\int_{\Bbb R^N} \big[D_k(x,z)-D_k(x',z)\big]S_{k-M-1}(z,y)d\omega(z)\bigg| \\
    &\leqslant C \int_{\{\|x-z\|\leqslant r^{2-k}\ \text{or}\ \|x'-z\|\leqslant r^{2-k}\}}\frac{r^k\|x-x'\|}{V_k(x)+V_k(z)}(V_{k-M-1}(y))^{-1}d\omega(z) \\
    &\leqslant Cr^k\|x-x'\|(V_{k-M-1}(y))^{-1}.
    \end{aligned}
    \end{equation}
    For $\|x-y\|\leqslant r^{6-(k-M)}$, applying \eqref{eq 2.7} yields
    \begin{align*}
    &|H_kf(x)-H_kf(y)|\\
    &=\bigg|\int_{\Bbb R^N}[H_k(x,z)-H_k(y,z)]f(z)d\omega(z) \bigg|\\
    &=\bigg|\int_{\Bbb R^N}[H_k(x,z)-H_k(y,z)][f(z)-f(x)]d\omega(z) \bigg|\\
    &\lesssim \int_{\{\|x-z\|\leqslant r^{6-(k-M)}\ \text{or}\ \|y-z\|\leqslant r^{6-(k-M)}\}}r^k\|x-y\|(V_{k-M-1}(y))^{-1}\|x-z\|^s\|f\|_s d\omega(z)\\
    &\lesssim r^kr^{-(k-M)s}\|x-y\|\|f\|_s.
    \end{align*}
    For $\|x-y\|> r^{6-(k-M)}$, the estimate \eqref{eq 2.6} implies
    $$
    |H_kf(x)-H_kf(y)|\lesssim r^{-M}r^{-(k-M)s}\|f\|_s
    \lesssim r^kr^{-(k-M)s}\|x-y\|\|f\|_s.
    $$
    These estimates imply that $H_k(f)(x)$ is a Lipschitz function with the Lipschitz norm bounded by
    \begin{equation}\label{eq 2.8}
    \|H_kf\|_{Lip}  \lesssim r^kr^{-(k-M)s}\|f\|_s.
    \end{equation}
    Using the fact that $\|f\|_\beta\leqslant \|f\|_{\infty}^{1-\beta}\|f\|_{Lip}^\beta, 0<\beta<1,$ the estimates \eqref{eq 2.6} and \eqref{eq 2.8} yield
    \begin{equation}\label{eq 2.9}
    \|H_kf\|_\beta \lesssim r^{-M(1-2\beta)}r^{-(k-M)(s-\beta)}\|f\|_s.
    \end{equation}
    Given
    $x,y\in \Bbb R^N$, choose $k_1\in \Bbb Z$ such that
    $r^{-k_1}\leqslant \|x-y\|\leqslant r^{-k_1+1}$.
    The estimates in \eqref{eq 2.6} and \eqref{eq 2.9} imply that for $s<\beta,$
    \begin{align*}
    &\bigg|\sum\limits_{k=-\infty}^\infty [H_kf(x)-H_kf(y)]\bigg|\\
    &\leqslant \sum\limits_{\{k:k\geqslant k_1\}} |H_kf(x)-H_kf(y)|
    +\sum\limits_{\{k:k< k_1\}}|H_kf(x)-H_kf(y)| \\
    &\lesssim  \sum\limits_{\{k:k\geqslant k_1\}}2\|H_kf\|_{L^\infty}
    +\sum\limits_{\{k:k< k_1\}}\|x-y\|^\beta \|H_kf\|_\beta \\
    &\lesssim  \sum\limits_{\{k:k\geqslant k_1\}} r^{-M}r^{-(k-M)s}\|f\|_s
    + \sum\limits_{\{k:k< k_1\}}\|x-y\|^\beta r^{-M(1-2\beta)}r^{-(k-M)(s-\beta)}\|f\|_s \\
    &\lesssim r^{-k_1s}r^{-M+Ms}\|f\|_s+ r^{-M(1-2\beta)}r^{M(s-\beta)}r^{k_1(\beta-s)}\|x-y\|^\beta\|f\|_s\\
    &\lesssim  r^{-M(1-2\beta)}\Big(r^{M(s-2\beta)}+r^{M(s-\beta)}\Big)\|x-y\|^s\|f\|_s\\
    &\lesssim r^{-M(1-2\beta)}\|x-y\|^s\|f\|_s.
    \end{align*}
    Therefore, we have
    $$\Big\|\sum\limits_{k=-\infty}^\infty H_k f\Big\|_s\lesssim r^{-M(1-2\beta)}\|f\|_s\qquad \text{for}\quad s<\beta<1.$$ If $s<\12 $, we can choose $\beta$
    so that $r^{-M(1-2\beta)}\to 0$ as $M\to +\infty.$ The proof of Lemma \ref{Lemma 3.6} is finished.
\end{proof}
%%%%%%%good2021.10.21
We are now ready to give the proof of sufficient conditions of the
$T1$ theorem under the assumptions that $T(1)=T^*(1)=0.$ Notice that
${T}_{M}$ converges strongly on $L^2(\R^N,\omega)$ since, by the
almost orthogonal estimates and the Cotlar-Stein Lemma,
$$\sup\limits_{L_1,L_2}\Big\|\sum\limits_{k=L_1}^{L_2}
D_kD^{M}_k\Big\|_{L^2(\R^N,\omega)\mapsto
L^2(\R^N,\omega)}<+\infty.$$ Thus, by Lemma \ref{Lemma 3.6},
${T}_{M}$ converges strongly on $\Lambda^s\cap L^2$.

It is clear that $\Lambda^s\cap L^2(\R^N,\omega)$ is dense in $L^2$.
To prove the sufficient condition of  Theorem \ref{1.2}, it suffices
to show that
$$\big|\langle g_0, Tf_0 \rangle\big|\leqslant C\|g_0\|_{L^2(\R^N,\omega)}\|f_0\|_{L^2(\R^N,\omega)}$$
for any $g_0, f_0\in \Lambda^s\cap L^2(\R^N,\omega)$ with compact
supports. For given $f_0\in \Lambda^s\cap L^2(\R^N,\omega)$ with
compact support, by Lemma\ref{Lemma 3.6}, set
$f_1={T}_{M}^{-1}f_0\in \Lambda^s\cap L^2(\R^N,\omega) $ and let
$$U_{L_1,L_2}
=\sum\limits_{k=L_1}^{L_2}D_kD^{M}_k.$$ By Lemma \ref{Lemma 3.6},
$\lim\limits_{\overset{L_1\to -\infty}{L_2\to +\infty}}
U_{L_1,L_2}f_1=f_0$ in $\Lambda^s\cap L^2(\R^N,\omega)$. Observe
that operator $T$ extends to a continuous linear operator from
$\Lambda^s \cap L^2(\R^N,\omega)$ into $(C^s_0)^\prime.$ Hence, for
each $g_0\in \Lambda^s\cap L^2(\R^N,\omega)$ with compact support,
$$\langle g_0, Tf_0 \rangle =\lim\limits_{\overset{L_1\to -\infty}{L_2\to +\infty}}
\langle g_0, TU_{L_1,L_2}f_1 \rangle.$$ Similarly, let
$g_1={T}_{M}^{-1}g_0$. Then $g_1\in \Lambda^s\cap L^2(\R^N,\omega)$
and $\lim\limits_{\overset{L'_1\to -\infty}{L'_2\to +\infty}}
U_{L'_1,L'_2}g_1=g_0$ in $\Lambda^s\cap L^2(\R^N,\omega)$. Thus,
$$\langle g_0, Tf_0 \rangle =\lim\limits_{\overset{L_1\to -\infty}{L_2\to +\infty}}
\lim\limits_{\overset{L'_1\to -\infty}{L'_2\to +\infty}}
\langle U_{L'_1,L'_2}g_1, TU_{L_1,L_2}f_1 \rangle.$$
Observe that
\begin{align*}
\langle U_{L'_1,L'_2}g_1, TU_{L_1,L_2}f_1 \rangle
&\quad=\sum\limits_{k=L_1}^{L_2}\sum\limits_{k'=L'_1}^{L'_2}\Big\langle
D^{M}_{k'}g_1, D^*_{k'} T D_{k}D^{M}_{k}f_1\Big\rangle.
\end{align*}
The following almost orthogonal estimate is crucial.

\begin{lemma}\label{Lem aoe}
    Let $T$ be a Dunkl-Calder\'on-Zygmund singular integral satisfying $T(1)=T^*(1)=0$ and $T\in WBP$. Then
    \begin{align*}
    &\bigg|\int_{\Bbb R^N}\int_{\Bbb R^N} D_k(x,u)K(u,v)D_j(v,y)
    d\omega(u)d\omega(v)\bigg|\\
    &\quad \lesssim r^{-|k-j|\varepsilon'}
    \frac1{V(x,y, r^{(-j)\vee (-k)}+d(x,y))}\Big(\frac{r^{(-j)\vee (-k)}}{r^{(-j)\vee (-k)}+d(x,y)}\Big)^\gamma
    \end{align*}
    where $\gamma, \varepsilon'\in (0,\varepsilon)$ and $\varepsilon$ is the regularity exponent of the kernel of $T$ given in
    \eqref{x smooth of C-Z-S-I-O} and \eqref{y smooth of C-Z-S-I-O}, $a\vee b=max\{a,b\}.$
\end{lemma}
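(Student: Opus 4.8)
The plan is to deduce the estimate from the mapping property of $T$ on smooth molecules (Theorem~\ref{1.3}) together with a routine \emph{``$D_k$-against-a-coarser-molecule''} calculation. By the symmetry $D_\ell(x,u)=D_\ell(u,x)$ (property (i)) and the fact that the adjoint $T^\ast$ is again a Dunkl--Calder\'on--Zygmund singular integral operator with $T^\ast(1)=(T^\ast)^\ast(1)=0$ and $T^\ast\in WBP$, it suffices to treat the case $k\geqslant j$; the case $j>k$ follows by interchanging $(x,k)\leftrightarrow(y,j)$ and $T\leftrightarrow T^\ast$ and using $V(x,y,\cdot)=V(y,x,\cdot)$, $d(x,y)=d(y,x)$. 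So assume $k\geqslant j$, write $\delta_\ell=r^{-\ell}$, and note $r^{(-j)\vee(-k)}=\delta_j\geqslant\delta_k$.

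The first step is to observe that, for fixed $y$ and any $0<\beta\leqslant1$, $\gamma''>0$, the function $v\mapsto D_j(v,y)$ is, up to a constant uniform in $j,y$, a smooth molecule in $\mathbb M(\beta,\gamma'',\delta_j,y)$: by (ii)--(iv) it has size $\lesssim(V_j(v)+V_j(y))^{-1}$, Lipschitz bound $|D_j(v,y)-D_j(v',y)|\lesssim r^{j}\|v-v'\|(V_j(v)+V_j(y))^{-1}$, support in $\|v-y\|\leqslant r^{5}\delta_j$, and $\int D_j(v,y)\,d\omega(v)=0$ by (vi); the compact support makes the decay factor $(\delta_j/(\delta_j+\|v-y\|))^{\gamma''}$ harmless, and since $V(v,y,\delta_j+d(v,y))\geqslant\max\{V_j(v),V_j(y)\}$ it also absorbs the volume factor, while passing from $r^{j}\|v-v'\|$ to $(\|v-v'\|/\delta_j)^{\beta}$ is free as $\beta\leqslant1$. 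Fix once and for all $\beta<\varepsilon$ and $\gamma<\gamma''<\varepsilon$. Applying Theorem~\ref{1.3} (to $T$ or to $T^\ast$, as the case may be — both satisfy its hypotheses) to $D_j(\cdot,y)$ contracted against the second variable of $K$, the function
$$
g_y(u):=\int_{\R^N}K(u,v)\,D_j(v,y)\,d\omega(v)
$$
is a weak smooth molecule in $\widetilde{\mathbb M}(\beta,\gamma,\delta_j,y)$ with $\|g_y\|_{\widetilde{\mathbb M}(\beta,\gamma,\delta_j,y)}\lesssim1$, uniformly in $j,y$. In particular the iterated integral in the lemma converges absolutely and, by Fubini and $D_k(x,u)=D_k(u,x)$, equals $\int_{\R^N}D_k(x,u)\,g_y(u)\,d\omega(u)$, which I denote $D_k(g_y)(x)$.

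It remains to prove the sub-estimate: if $g\in\widetilde{\mathbb M}(\beta,\gamma,\delta_j,y)$ and $k\geqslant j$, then
$$
|D_k(g)(x)|\lesssim r^{-(k-j)\beta}\,\frac1{V(x,y,\delta_j+d(x,y))}\Big(\frac{\delta_j}{\delta_j+d(x,y)}\Big)^{\gamma}.
$$
Using $\int D_k(x,u)\,d\omega(u)=0$ (property (vii)) write $D_k(g)(x)=\int D_k(x,u)[g(u)-g(x)]\,d\omega(u)$; since $D_k(x,\cdot)$ is supported in $\|x-u\|\leqslant r^{5}\delta_k\leqslant r^{5}\delta_j$, the regularity bound \eqref{wsm1.21} supplies the factor $(\|x-u\|/\delta_j)^{\beta}\lesssim(\delta_k/\delta_j)^{\beta}=r^{-(k-j)\beta}$ plus two size terms, one evaluated at $u$ and one at $x$. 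For $u$ in the support of $D_k(x,\cdot)$ one has $|d(u,y)-d(x,y)|\leqslant\|x-u\|\lesssim\delta_j$, hence $\delta_j+d(u,y)\sim\delta_j+d(x,y)$ and $V(u,y,\delta_j+d(u,y))\sim V(x,y,\delta_j+d(x,y))$ by doubling; since also $V_k(u)\sim V_k(x)$ on this support and $\int_{\|x-u\|\leqslant r^{5}\delta_k}(V_k(x)+V_k(u))^{-1}\,d\omega(u)\lesssim1$ by doubling, both size terms yield the asserted bound. Taking $\varepsilon'=\beta$ finishes $k\geqslant j$, and $j>k$ is identical after the interchange described above. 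The main obstacle is not a single hard estimate but the bookkeeping around Theorem~\ref{1.3}: one must check that the compactly supported kernels $D_j(\cdot,y)$ and $D_k(x,\cdot)$ are uniformly (in scale and centre) normalized smooth molecules, and, crucially, start from a molecule exponent $\gamma''$ strictly larger than the target $\gamma$ so that the weak molecule produced by Theorem~\ref{1.3} still carries the decay exponent $\gamma$ appearing on the right-hand side — the compact support of the $D_\ell$'s is exactly what makes this loss in exponent costless. The remaining points (justifying Fubini once $g_y$ is a genuine function with weak-molecule bounds, and the elementary volume integrals) are routine.
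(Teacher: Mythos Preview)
Your proof is correct and follows the same overall strategy as the paper: apply Theorem~\ref{1.3} to the coarse-scale molecule so that $T$ (or $T^\ast$) turns it into a weak smooth molecule, and then pair the result against the fine-scale $D$-kernel. The only real difference is in the final pairing step: the paper states and proves a general almost-orthogonality lemma (Lemma~\ref{aoe}, estimate~\eqref{aoe1}) for a weak smooth molecule against a smooth molecule and then simply invokes it, whereas you exploit the compact support of $D_k(x,\cdot)$ to carry out the integration directly via the cancellation $\int D_k(x,u)\,d\omega(u)=0$ and the weak-molecule regularity~\eqref{wsm1.21}. Your route is a genuine shortcut in this specific situation (the compact support collapses the integral to a single ball, so no dyadic decomposition in $d(u,y)$ is needed), while the paper's Lemma~\ref{aoe} is a reusable tool that is also needed elsewhere (e.g.\ in Section~3 for the non-compactly-supported $q_t$ kernels). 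One bookkeeping remark: in your symmetry reduction for $j>k$ you should apply $T^\ast$ to $D_k(x,\cdot)$ (the coarse-scale molecule centered at $x$) and then pair against $D_j(\cdot,y)$ using the cancellation $\int D_j(v,y)\,d\omega(v)=0$; this is exactly what the paper does in its primary case $k\leqslant j$, and your direct argument goes through verbatim with $x$ and $y$ interchanged.
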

Assuming the Lemma \ref{Lem aoe} for the moment, then
$$\|D^*_kT D_{k'}\|_{L^2(\omega) \mapsto L^2(\omega)}\lesssim 2^{-|k-k'|}.$$ Applying the Cotlar-Stein lemma yields
\begin{align*}
|\langle U_{L_1,L_2}g_1, TU_{L'_1,L'_2}f_1 \rangle|\lesssim
\|f_1\|_{L^2(\R^N,\omega)}\|g_1\|_{L^2(\R^N,\omega)}\lesssim
\|f_0\|_{L^2(\R^N,\omega)}\|g_0\|_{L^2(\R^N,\omega)}
\end{align*}
for all $L_1, L_2, L'_1$ and $L'_2.$
Hence,
$$\big|\langle g_0, Tf_0 \rangle\big|\leqslant C\|g_0\|_{L^2(\R^N,\omega)}\|f_0\|_{L^2(\R^N,\omega)}.$$
The proof of Theorem \ref{1.2} with the assumptions $T(1)=T^*(1)=0$ is complete.

To show the Lemma \ref{Lem aoe}, we need the following lemma, which will be used to establish the discrete weak-type Calder\'on reproducing formula and the boundedness of the Dunkl-Calder\'on-Zygmund operators on the Dunkl-Hardy spaces.

\begin{lemma}\label{aoe}
    Let $x,y\in \R^N$ and $\varepsilon_0,t,s>0$ with $t\geqslant s.$  Suppose that $f_t(x,\cdot)$ is a weak smooth molecule function in $\widetilde{\mathbb M}(\varepsilon_0,\varepsilon_0, t, x)$ and
    $g_s(\cdot,y)$ is a  smooth molecule function  in $\mathbb M(\varepsilon_0,\varepsilon_0, s, y)$. Then for any $0<\varepsilon_1,\varepsilon_2<\varepsilon_0$,
    there exists $C>0$ depending on $\varepsilon_0,\varepsilon_1,\varepsilon_2$, such that for all $t\geqslant s>0,$
    \begin{equation}\label{aoe1}
    \int_{\Bbb R^N}f_t(x,u)g_s(u,y)d\omega(u)\leqslant C \bigg(\frac{s}{t}\bigg)^{\varepsilon_1} \frac1{V(x,y,t+d(x,y))}\Big(\frac{t}{t+d(x,y)}\Big)^{\varepsilon_2}.
    \end{equation}
    If $f_t(x,\cdot)$ and $g_s(\cdot,y)$ both are smooth molecule functions in $\mathbb M(\varepsilon_0,\varepsilon_0, t, x)$ and $\mathbb M(\varepsilon_0,\varepsilon_0, s, y),$ respectively,
    then for any $0<\varepsilon_1,\varepsilon_2<\varepsilon_0$, there exists $C>0$  depending on $\varepsilon_0,\varepsilon_1,\varepsilon_2$, such that for all $t, s>0,$
    \begin{align}\label{aoe2}
    &\int_{\Bbb R^N}f_t(x,u)g_s(u,y)d\omega(u)\\
    &\leqslant C \bigg(\frac{s}{t}\wedge \frac{t}{s}\bigg)^{\varepsilon_1} \frac1{V(x,y,(t\vee s)+d(x,y))}\Big(\frac{t\vee s}{(t\vee s)+\|x-y\|}\Big)^{\varepsilon_2},\nonumber
    \end{align}
    where $a\wedge b=min\{a,b\}$ and $a\vee b=max\{a,b\}.$
\end{lemma}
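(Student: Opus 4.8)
The plan is to prove \eqref{aoe1} and \eqref{aoe2} by the standard almost-orthogonality argument --- pit the cancellation of the molecule at the finer scale against the smoothness of the molecule at the coarser scale --- carried out in the Dunkl geometry, the $u$-integrations being performed via Lemma \ref{lem1} together with the doubling and reverse doubling \eqref{rd}. I treat \eqref{aoe1} first. Since $s\le t$, the factor $g_s(\cdot,y)$ is finer, and its vanishing integral \eqref{f1} gives
\[
\int_{\R^N}f_t(x,u)g_s(u,y)\,d\omega(u)=\int_{\R^N}\bigl[f_t(x,u)-f_t(x,y)\bigr]g_s(u,y)\,d\omega(u).
\]
I split into the non-separated regime $d(x,y)\le C_0t$ and the separated regime $R:=d(x,y)>C_0t$, with $C_0$ a large fixed constant. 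Using \eqref{rd} and $\omega(B(x,\rho))\sim\omega(B(y,\rho))$ for $d(x,y)\lesssim\rho$, the right-hand side of \eqref{aoe1} is $\sim(s/t)^{\varepsilon_1}\omega(B(x,t))^{-1}$ in the first regime and $\sim(s/t)^{\varepsilon_1}(t/R)^{\varepsilon_2}\omega(B(x,R))^{-1}$ in the second. I will use repeatedly the pointwise bound $(\|u-y\|/t)^{\varepsilon_0}|g_s(u,y)|\lesssim(s/t)^{\varepsilon_0}V(u,y,s+d(u,y))^{-1}$ (from \eqref{sm1.17} for $g_s$ and $s+\|u-y\|\ge\|u-y\|$), the tail bound $\int_{\{\|u-y\|>\rho\}}|g_s(u,y)|\,d\omega(u)\lesssim(s/(s+\rho))^{\varepsilon_1}$ for every $\rho>0$ (split $\bigl(\tfrac{s}{s+\|u-y\|}\bigr)^{\varepsilon_0}=\bigl(\tfrac{s}{s+\|u-y\|}\bigr)^{\varepsilon_1}\bigl(\tfrac{s}{s+\|u-y\|}\bigr)^{\varepsilon_0-\varepsilon_1}$, bound the first factor by $(s/(s+\rho))^{\varepsilon_1}$, use $\|u-y\|\ge d(u,y)$ and apply Lemma \ref{lem1} to the rest), and $\int_{\R^N}|f_t(x,u)|\,d\omega(u)\lesssim1$ (Lemma \ref{lem1} applied to \eqref{wsm1.20}).

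In the non-separated regime I split the $u$-integral at $\|u-y\|\le t/2$ and $\|u-y\|>t/2$. On $\{\|u-y\|\le t/2\}$ one has $d(u,x),d(y,x)\lesssim t$, so \eqref{wsm1.21} gives $|f_t(x,u)-f_t(x,y)|\lesssim(\|u-y\|/t)^{\varepsilon_0}\omega(B(x,t))^{-1}$; combining with the pointwise bound and $\int_{\R^N}V(u,y,s+d(u,y))^{-1}\,d\omega(u)\lesssim1$ yields the contribution $(s/t)^{\varepsilon_0}\omega(B(x,t))^{-1}$. On $\{\|u-y\|>t/2\}$ I use $|f_t(x,u)-f_t(x,y)|\le|f_t(x,u)|+|f_t(x,y)|$: the $|f_t(x,y)|$-term is $\lesssim\omega(B(x,t))^{-1}\int_{\{\|u-y\|>t/2\}}|g_s|\,d\omega(u)\lesssim(s/t)^{\varepsilon_1}\omega(B(x,t))^{-1}$ by the tail bound, while the $|f_t(x,u)|$-term is handled identically on $\{d(u,y)\le2C_0t\}$ (there $d(u,x)\lesssim t$, so $|f_t(x,u)|\lesssim\omega(B(x,t))^{-1}$) and by a direct estimate via Lemma \ref{lem1} on $\{d(u,y)>2C_0t\}$ (there $d(u,x)\ge d(u,y)/2$, so $|f_t(x,u)|\lesssim\omega(B(x,t))^{-1}(t/d(u,y))^{\varepsilon_0}$ and $\int\!\bigl(\tfrac{t}{d(u,y)}\bigr)^{\varepsilon_0}|g_s(u,y)|\,d\omega(u)\lesssim(s/t)^{\varepsilon_0}$). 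Altogether the non-separated regime contributes $\lesssim(s/t)^{\varepsilon_1}\omega(B(x,t))^{-1}$.

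In the separated regime I decompose the $u$-integral over the essentially disjoint sets $\mathcal U_x=\{d(u,x)\le R/2\}$, $\mathcal U_y=\{d(u,y)\le R/2\}$ and $\mathcal U_\infty$, their complement. On $\mathcal U_x$, $\|u-y\|\ge d(u,y)\ge R/2$, so $\bigl(\tfrac{s}{s+\|u-y\|}\bigr)^{\varepsilon_0}\lesssim(s/t)^{\varepsilon_1}(t/R)^{\varepsilon_2}$ (since $s/\|u-y\|\le2(s/t)(t/R)$ and $s/t,\,t/R\le1$), hence $|g_s(u,y)|\lesssim(s/t)^{\varepsilon_1}(t/R)^{\varepsilon_2}\omega(B(x,R))^{-1}$ there, and $\int_{\mathcal U_x}(|f_t(x,u)|+|f_t(x,y)|)\,d\omega(u)\lesssim1$ finishes this piece (cancellation is not even needed here). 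On $\mathcal U_y$, $d(u,x),d(y,x)\sim R$, so \eqref{wsm1.21} gives $|f_t(x,u)-f_t(x,y)|\lesssim(\|u-y\|/t)^{\varepsilon_0}(t/R)^{\varepsilon_0}\omega(B(x,R))^{-1}$, and the pointwise bound for $g_s$ with $\int V(u,y,s+d(u,y))^{-1}\,d\omega(u)\lesssim1$ gives the factor $(s/t)^{\varepsilon_0}(t/R)^{\varepsilon_0}$. On $\mathcal U_\infty$, $d(u,x)>R/2$ gives $|f_t(x,u)|\lesssim\omega(B(x,R))^{-1}(t/R)^{\varepsilon_2}$ uniformly (and $|f_t(x,y)|$ is even smaller), while $\int_{\{d(u,y)>R/2\}}|g_s|\,d\omega(u)\lesssim(s/R)^{\varepsilon_1}\le(s/t)^{\varepsilon_1}$ by the tail bound; collecting the three pieces proves \eqref{aoe1}. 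The point demanding care is that the gain $(s/t)^{\varepsilon_1}$ must appear in \emph{every} region: where $g_s$ has not decayed by itself (namely $\{\|u-y\|\lesssim t\}$ and $\mathcal U_y$) it can be produced only by combining the cancellation of $g_s$ with the smoothness exponent $\varepsilon_0$ of $f_t$, so $\varepsilon_0$ is spent there, and the strict inequalities $\varepsilon_1,\varepsilon_2<\varepsilon_0$ are precisely what leave a positive exponent for the convergence of the remaining tail integrals (through Lemma \ref{lem1}) after $(s/t)^{\varepsilon_1}$ and $(t/R)^{\varepsilon_2}$ have been extracted. This is the only real obstacle; the rest is bookkeeping.

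For \eqref{aoe2}, if $t\ge s$ the same argument applies, the only change being that $g_s$ is now a strong molecule whose tail \eqref{sm1.17} is measured in the Euclidean distance $\|\cdot\|$; following this factor through (e.g. $\|u-y\|\ge\|x-y\|-\|u-x\|$ on the near-$x$ set, with the case division now made against $(t\vee s)+d(x,y)$) replaces $d(x,y)$ by $\|x-y\|$ in the output tail. If $s>t$, one interchanges the roles $(t,x)\leftrightarrow(s,y)$ and uses the cancellation of the now-finer $f_t$, which is legitimate since $f_t$ is also a strong molecule; the resulting bound, carrying $\|x-y\|$ in place of $d(x,y)$ inside $V$, dominates the one asserted in \eqref{aoe2} because $V(x,y,s+\|x-y\|)\ge V(x,y,s+d(x,y))$.
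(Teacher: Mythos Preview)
Your strategy is the same as the paper's---cancel $g_s$ against the smoothness of $f_t$ and control the remaining integrals with Lemma~\ref{lem1}---but your write-up contains a recurring false claim that must be repaired. Twice you invoke
\[
\int_{\R^N} V(u,y,s+d(u,y))^{-1}\,d\omega(u)\lesssim 1,
\]
once on $\{\|u-y\|\le t/2\}$ in the non-separated regime and once on $\mathcal U_y$ in the separated regime. This integral is \emph{not} bounded: without a decay factor $\bigl(\tfrac{s}{s+d(u,y)}\bigr)^{\eta}$ the annular sum contributes a constant per dyadic shell, so over $\{d(u,y)\le\rho\}$ you pick up $\log(\rho/s)$ and over $\R^N$ it diverges outright. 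The failure traces to your ``pointwise bound'' $(\|u-y\|/t)^{\varepsilon_0}|g_s(u,y)|\lesssim (s/t)^{\varepsilon_0}V(u,y,s+d(u,y))^{-1}$, which is correct but spends \emph{all} of $g_s$'s tail exponent on the almost-orthogonality gain, leaving nothing for integrability.

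The fix is easy and is exactly how the paper proceeds: on $\{\|u-y\|\le t\}$ use $\|u-y\|/t\le 1$ to downgrade the exponent, writing
\[
\Big(\frac{\|u-y\|}{t}\Big)^{\varepsilon_0}\Big(\frac{s}{s+\|u-y\|}\Big)^{\varepsilon_0}
\le \Big(\frac{s}{t}\Big)^{\varepsilon}\Big(\frac{s}{s+\|u-y\|}\Big)^{\varepsilon_0-\varepsilon}
\]
with $\varepsilon=\max(\varepsilon_1,\varepsilon_2)<\varepsilon_0$; the surviving factor $(s/(s+\|u-y\|))^{\varepsilon_0-\varepsilon}\le (s/(s+d(u,y)))^{\varepsilon_0-\varepsilon}$ then makes the $u$-integral converge by Lemma~\ref{lem1}. (Alternatively, absorb the logarithmic loss into the slack $\varepsilon_0-\varepsilon_1$, but say so.) The paper also streamlines the spatial localization by proving once and for all the convolution estimate Lemma~\ref{lemtan2},
\[
\int_{\R^N}\frac{1}{V(x,z,t+d(x,z))}\Big(\frac{t}{t+d(x,z)}\Big)^{\alpha}\frac{1}{V(z,y,s+d(z,y))}\Big(\frac{s}{s+d(z,y)}\Big)^{\beta}d\omega(z)\lesssim\frac{1}{V(x,y,(t\vee s)+d(x,y))},
\]
which replaces your geographic decomposition $\mathcal U_x\cup\mathcal U_y\cup\mathcal U_\infty$; either organization works once the exponent bookkeeping is corrected.
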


Before proving the above lemma, we first give the following lemma.
\begin{lemma}\label{lemtan2}
    For any $\varepsilon_1,\varepsilon_2,t,s>0$ , Let
    $$T=\int_{\Bbb R^N}
    \frac1{V(x,z,t+d(x,z))}\Big(\frac{t}{t+d(x,z)}\Big)^{\varepsilon_1}\frac1{V(z,y,s+d(z,y))}\Big(\frac{s}{s+d(z,y)}\Big)^{\varepsilon_2}d\omega(z),$$
    then there exists a constant $C$ depending on $\varepsilon_1,\varepsilon_2$ such that,
    $$
    T \leqslant \frac{C}{V(x,y,(t\vee s)+d(x,y))}.
    $$
\end{lemma}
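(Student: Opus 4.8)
The plan is to exploit a simple geometric observation: after reducing (by the evident symmetry of the statement) to the case $t\geqslant s$, for \emph{every} $z\in\R^N$ at least one of the two radii $t+d(x,z)$ and $s+d(z,y)$ is $\gtrsim t+d(x,y)$. This lets me pull one of the two kernel factors out of the integral at its natural size $\sim 1/\omega(B(x,t+d(x,y)))$ and then integrate the remaining factor by Lemma~\ref{lem1}.

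First I would observe that the integrand and the right-hand side are invariant under the simultaneous exchange $(x,t,\varepsilon_1)\leftrightarrow(y,s,\varepsilon_2)$ (since $V(x,z,\cdot)=V(z,x,\cdot)$, $d(x,z)=d(z,x)$, and $V(x,y,(t\vee s)+d(x,y))$ is symmetric in $x,y$ and in $t,s$), so it suffices to treat $t\geqslant s$. Set $D=d(x,y)$ and write $g_1(z)=\dfrac1{V(x,z,t+d(x,z))}\big(\tfrac{t}{t+d(x,z)}\big)^{\varepsilon_1}$, $g_2(z)=\dfrac1{V(z,y,s+d(z,y))}\big(\tfrac{s}{s+d(z,y)}\big)^{\varepsilon_2}$, so $T=\int_{\R^N}g_1(z)g_2(z)\,d\omega(z)$. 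Since $V(x,z,r)\geqslant\omega(B(z,r))$ and $V(z,y,r)\geqslant\omega(B(z,r))$, Lemma~\ref{lem1} yields $\int_{\R^N}g_1\,d\omega\leqslant C$ and $\int_{\R^N}g_2\,d\omega\leqslant C$.

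The key step is the claim that for all $z\in\R^N$,
$$\max\{t+d(x,z),\ s+d(z,y)\}\geqslant \tfrac14(t+D).$$
Indeed, putting $a=d(x,z)$, $b=d(z,y)$, the triangle inequality for $d$ gives $a+b\geqslant D$. If $a\geqslant D/2$ then $t+a\geqslant t+D/2\geqslant\frac12(t+D)$. If $a<D/2$ then $b>D/2$, so $s+b>D/2$, while $t+a\geqslant t$; hence the maximum is at least $\max\{t,D/2\}\geqslant\frac{t}{2}+\frac{D}{4}\geqslant\frac14(t+D)$, where the hypothesis $t\geqslant s$ is exactly what allows $t$ (rather than $s$) on the right. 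Consequently $\R^N=A\cup B$, with $A=\{z:t+d(x,z)\geqslant\frac14(t+D)\}$ and $B=\{z:s+d(z,y)\geqslant\frac14(t+D)\}$.

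Finally I would estimate $\int_A$ and $\int_B$ separately. On $A$ the doubling property gives $\omega(B(x,t+d(x,z)))\geqslant c\,\omega(B(x,t+D))$, so $g_1(z)\leqslant \frac1{\omega(B(x,t+d(x,z)))}\leqslant \frac{C}{\omega(B(x,t+D))}$, whence $\int_A g_1g_2\,d\omega\leqslant \frac{C}{\omega(B(x,t+D))}\int_{\R^N}g_2\,d\omega\leqslant \frac{C}{\omega(B(x,t+D))}$. On $B$ the doubling property gives $\omega(B(y,s+d(z,y)))\geqslant c\,\omega(B(y,t+D))$, and since $d(x,y)=D\leqslant t+D$ one has $\omega(B(y,t+D))\sim\omega(B(x,t+D))$; thus $g_2(z)\leqslant \frac{C}{\omega(B(x,t+D))}$ and $\int_B g_1g_2\,d\omega\leqslant \frac{C}{\omega(B(x,t+D))}\int_{\R^N}g_1\,d\omega\leqslant \frac{C}{\omega(B(x,t+D))}$. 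Adding these and using once more $d(x,y)\leqslant t+D$ to get $V(x,y,(t\vee s)+d(x,y))\sim\omega(B(x,t+D))$ gives the claim. The only point requiring care is the geometric claim in the third paragraph, which is where $t\geqslant s$ (equivalently, the $t\vee s$ on the right-hand side) is genuinely used; everything else is doubling of $\omega$ together with Lemma~\ref{lem1}.
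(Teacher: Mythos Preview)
Your proof is correct and rests on the same mechanism as the paper's: reduce to $t\geqslant s$, use the triangle inequality for $d$ to guarantee that for every $z$ at least one of the two volume factors can be bounded pointwise by $C/\omega(B(x,t+d(x,y)))$ via doubling, and then integrate the remaining factor with Lemma~\ref{lem1}. The paper organizes this by first splitting into $d(x,y)\leqslant t$ versus $d(x,y)>t$ and, in the latter case, further decomposing $\R^N$ into $\{d(x,z)\geqslant\tfrac12 d(x,y)\}$ and $\{d(y,z)\geqslant\tfrac12 d(x,y)\}$; your single geometric claim $\max\{t+d(x,z),\,s+d(z,y)\}\geqslant\tfrac14(t+d(x,y))$ absorbs both cases at once and makes the role of the hypothesis $t\geqslant s$ more transparent. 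This is a modest streamlining, not a substantively different argument.
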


\begin{proof}
    Without loss of generality, we can assume $t\geqslant s$. We just
    need to show that
    $$
    T \leqslant \frac{C}{V(x,y,t+d(x,y))}.
    $$

    Case 1: $d(x,y)\leqslant t$,
    \begin{align*}
    T  &\lesssim \int_{\Bbb R^N} \frac1{V(x,z,t)}\frac1{V(z,y,s+d(z,y))}\Big(\frac{s}{s+d(z,y)}\Big)^{\varepsilon_2}d\omega(z)\\
    & \lesssim \frac1{\omega(B(x,t))}\int_{\Bbb R^N} \frac1{V(z,y,s+d(z,y))}\Big(\frac{s}{s+d(z,y)}\Big)^{\varepsilon_2}d\omega(z).
    \end{align*}

    By the condition $d(x,y)\leqslant t$ and Lemma \ref{lem1}, we have
    \begin{align*}
    T \lesssim  \frac1{V(x,y,t+d(x,y))} .
    \end{align*}

    Case 2: $d(x,y)\geqslant t$,
    since $d(x,z)+d(y,z)\geqslant d(x,y)$, we have
    \begin{align*}
    T&\leqslant  \int_{d(x,z) \geqslant \12 d(x,y)} \frac1{V(x,z,t+d(x,z))}\Big(\frac{t}{t+d(x,z)}\Big)^{\varepsilon_1}\frac1{V(z,y,s+d(z,y))}\Big(\frac{s}{s+d(z,y)}\Big)^{\varepsilon_2}d\omega(z)\\
    &+  \int_{d(y,z) \geqslant \12 d(x,y)} \frac1{V(x,z,t+d(x,z))}\Big(\frac{t}{t+d(x,z)}\Big)^{\varepsilon_1}\frac1{V(z,y,s+d(z,y))}\Big(\frac{s}{s+d(z,y)}\Big)^{\varepsilon_2}d\omega(z)\\
    &=: T_1+T_2.
    \end{align*}

    For term $T_1$, we have
    \begin{align*}
    T_1&\lesssim \int_{d(x,z) \geqslant \12 d(x,y)} \frac1{\omega(B(x,t+d(x,y)))}\frac1{V(z,y,s+d(z,y))}\Big(\frac{s}{s+d(z,y)}\Big)^{\varepsilon_2}d\omega(z) \\
    &\lesssim \frac1{V(x,y,t+d(x,y))}\int_{\Bbb R^N} \frac1{V(z,y,s+d(z,y))}\Big(\frac{s}{s+d(z,y)}\Big)^{\varepsilon_2}d\omega(z).
    \end{align*}

    By Lemma \ref{lem1}, we have
    \begin{align*}
    T_1 \lesssim   \frac1{V(x,y,t+d(x,y))}.
    \end{align*}

    For term $T_2$,
    \begin{align*}
    T_2&\lesssim \int_{d(y,z) \geqslant \12 d(x,y)}  \frac1{V(x,z,t+d(x,z))}\Big(\frac{t}{t+d(x,z)}\Big)^{\varepsilon_1}\frac1{\omega (B(y,d(x,y)))}d\omega(z) \\
    &=  \frac1{\omega (B(y,d(x,y)))}\int_{\R^N}  \frac1{V(x,z,t+d(x,z))}\Big(\frac{t}{t+d(x,z)}\Big)^{\varepsilon_1}d\omega(z).
    \end{align*}

    By the condition $d(x,y)\geqslant t$ and Lemma \ref{lem1}, we have
    \begin{align*}
    T_2 \lesssim  \frac1{V(x,y,d(x,y))} \lesssim
    \frac1{V(x,y,t+d(x,y))}.
    \end{align*}
    This complete the proof of the Lemma \ref{lemtan2}.
\end{proof}

Now we prove the Lemma \ref{aoe}.

\begin{proof}
    We begin with the estimate \eqref{aoe1}. Let $\varepsilon=\max\{\varepsilon_1,\varepsilon_2\},$ then we just need to show that
    $$
    \int_{\Bbb R^N}f_t(x,u)g_s(u,y)d\omega(u)\leqslant C \bigg(\frac{s}{t}\bigg)^{\varepsilon}
    \frac1{V(x,y,t+d(x,y))}\Big(\frac{t}{t+d(x,y)}\Big)^{\varepsilon},
    \quad \text{for }t\geqslant s.
    $$
    we write
    \begin{align*}
    &S=\int_{\Bbb R^N}f_t(x,u)g_s(u,y)d\omega(u)=\int_{\Bbb R^N}\Big(f_t(x,u)-f_t(x,y)\Big)g_s(u,y)d\omega(u).
    \end{align*}

    Note that
    \begin{align*}
    |S| &\leqslant \int_{\|u-y\|\leqslant t}  |f_t(x,u)-f_t(x,y)| \cdot |g_s(u,y)|d\omega(u) \\
    &\quad+\int_{\|u-y\|> t}  \big(|f_t(x,u)|+|f_t(x,y)|\big) \cdot |g_s(u,y)|d\omega(u)\\
    &=: I+I\!I,
    \end{align*}

    where
    \begin{align*}
    I &\lesssim \int_{\|u-y\|\leqslant t} \Big(\frac{\|u-y\|}t\Big)^{\varepsilon_0} \bigg(
    \frac1{V(x,\boldsymbol{u},t+d(x,\boldsymbol{u}))}\Big(\frac{t}{t+d(x,\boldsymbol{u})}\Big)^{\varepsilon_0}
    \\
    &\qquad + \frac1{V(x,\boldsymbol{y},t+d(x,\boldsymbol{y}))}\Big(\frac{t}{t+d(x,\boldsymbol{y})}\Big)^{\varepsilon_0}\bigg) \times \frac1{V(u,y,s+d(u,y))}\Big(\frac{s}{s+\|u-y\|}\Big)^{\varepsilon_0}d\omega(u)
    \end{align*}
    and
    \begin{align*}I\!I &\lesssim \int_{\|u-y\|> t} \bigg(
    \frac1{V(x,\boldsymbol{u},t+d(x,\boldsymbol{u}))}\Big(\frac{t}{t+d(x,\boldsymbol{u})}\Big)^{\varepsilon_0}
    + \frac1{V(x,\boldsymbol{y},t+d(x,\boldsymbol{y}))}\Big(\frac{t}{t+d(x,\boldsymbol{y})}\Big)^{\varepsilon_0}\bigg)\\
    &\qquad \times
    \frac1{V(u,y,s+d(u,y))}\Big(\frac{s}{s+\|u-y\|}\Big)^{\varepsilon_0}d\omega(u).
    \end{align*}

    For term I, since $\|u-y\|\leqslant t,$ we have
    \begin{align*}
    \Big(\frac{\|u-y\|}t\Big)^{\varepsilon_0}
    \Big(\frac{s}{s+\|u-y\|}\Big)^{\varepsilon_0}&\leqslant \bigg(\frac{\|u-y\|}t\bigg)^{\varepsilon}
    \bigg(\frac{s}{\|u-y\|}\bigg)^{\varepsilon}
    \bigg(\frac{s}{s+\|u-y\|}\bigg)^{\varepsilon_0-\varepsilon}
    \\&=\bigg(\frac{s}t\bigg)^{\varepsilon}
    \bigg(\frac{s}{s+\|u-y\|}\bigg)^{\varepsilon_0-\varepsilon}.
    \end{align*}

    Thus
    \begin{align*}
    I &\lesssim \bigg(\frac{s}t\bigg)^{\varepsilon}
    \int_{\|u-y\|\leqslant t}
    \bigg( \frac1{V(x,\boldsymbol{u},t+d(x,\boldsymbol{u}))}\Big(\frac{t}{t+d(x,\boldsymbol{u})}\Big)^{\varepsilon_0}
    + \frac1{V(x,\boldsymbol{y},t+d(x,\boldsymbol{y}))}\Big(\frac{t}{t+d(x,\boldsymbol{y})}\Big)^{\varepsilon_0}\bigg)\\
    &\qquad \times \frac1{V(u,y,s+d(u,y))} \bigg(\frac{s}{s+\|u-y\|}\bigg)^{\varepsilon_0-\varepsilon}d\omega(u).
    \end{align*}
    Let
    $$
    I_1= \int_{\|u-y\|\leqslant t}
    \frac1{V(x,u,t+d(x,u))}\Big(\frac{t}{t+d(x,u)}\Big)^{\varepsilon_0}
    \frac1{V(u,y,s+d(u,y))}\bigg(\frac{s}{s+\|u-y\|}\bigg)^{\varepsilon_0-\varepsilon}d\omega(u)$$
    and
    $$
    I_2=\int_{\|u-y\|\leqslant t}
    \frac1{V(x,y,t+d(x,y))}\Big(\frac{t}{t+d(x,y)}\Big)^{\varepsilon_0}\frac1{V(u,y,s+d(u,y))}\bigg(\frac{s}{s+\|u-y\|}\bigg)^{\varepsilon_0-\varepsilon}d\omega(u),
    $$
    then $I\lesssim \big(\frac{s}t\big)^{\varepsilon} \cdot(I_1+I_2).$ Note that
    \begin{align*}
    I_2&\leqslant \frac1{V(x,y,t+d(x,y))}\Big(\frac{t}{t+d(x,y)}\Big)^{\varepsilon_0}\int_{\Bbb
        R^N}
    \frac1{V(u,y,s+d(u,y))} \bigg(\frac{s}{s+d(u,y)}\bigg)^{\varepsilon_0-\varepsilon} d\omega(u)\\
    &\lesssim
    \frac1{V(x,y,t+d(x,y))}\bigg(\frac{t}{t+d(x,y)}\bigg)^{\varepsilon},
    \end{align*}
    where we apply   Lemma \ref{lem1} in the last inequality above.

    Note that $$
    I_1\leqslant \int_{d(u,y)\leqslant t}
    \frac1{V(x,u,t+d(x,u))}\Big(\frac{t}{t+d(x,u)}\Big)^{\varepsilon_0}
    \frac1{V(u,y,s+d(u,y))}\bigg(\frac{s}{s+d(u,y)}\bigg)^{\varepsilon_0-\varepsilon}d\omega(u).$$
    We will discuss it in the following two
    cases: $d(x,y) \leqslant 2t$ and $d(x,y)> 2t.$

    Case 1: $d(x,y) \leqslant 2t,$ applying the Lemma \ref{lemtan2}, we have
    \begin{align*}
    I_1  \lesssim \frac{1}{V(x,y,t+d(x,y))}\lesssim
    \frac1{V(x,y,t+d(x,y))}\bigg(\frac{t}{t+d(x,y)}\bigg)^{\varepsilon}.
    \end{align*}

    Case 2:$d(x,y)> 2t,$ by $d(u,y)\leqslant t<\frac{1}{2}d(x,y)$ and
    $d(x,u)+d(y,u)\geqslant d(x,y),$ we obtain $d(x,u)\geqslant
    \frac{1}{2}d(x,y).$ And hence,
    \begin{align*}
    I_1 &\leqslant \int_{d(u,y)\leqslant t}
    \frac1{V(x,u,t+d(x,u))}\bigg(\frac{t}{t+d(x,u)}\bigg)^{\varepsilon} \bigg(\frac{t}{t+d(x,u)}\bigg)^{\varepsilon_0-\varepsilon} \\
    &\qquad \times
    \frac1{V(u,y,s+d(u,y))}\bigg(\frac{s}{s+d(u,y)}\bigg)^{\varepsilon_0-\varepsilon}d\omega(u)\\
    &\lesssim \int_{\Bbb R^N}
    \frac1{V(x,u,t+d(x,u))}\bigg(\frac{t}{t+d(x,y)}\bigg)^{\varepsilon} \bigg(\frac{t}{t+d(x,u)}\bigg)^{\varepsilon_0-\varepsilon} \\
    &\qquad \times
    \frac1{V(u,y,s+d(u,y))}\bigg(\frac{s}{s+d(u,y)}\bigg)^{\varepsilon_0-\varepsilon}d\omega(u)
    \\ &\lesssim
    \frac1{V(x,y,t+d(x,y))}\bigg(\frac{t}{t+d(x,y)}\bigg)^{\varepsilon},
    \end{align*}
    where we apply the  Lemma \ref{lemtan2} in the last inequality above.

    Therefore
    $$
    I\lesssim \big(\frac{s}t\big)^{\varepsilon} \cdot(I_1+I_2)\lesssim \big(\frac{s}t\big)^{\varepsilon}
    \frac1{V(x,y,t+d(x,y))}\Big(\frac{t}{t+d(x,y)}\Big)^{\varepsilon}.
    $$

    For term $I\!I$, Let
    \begin{align*}I\!I_1 = \int_{\|u-y\|> t}
    \frac1{V(x,u,t+d(x,u))}\Big(\frac{t}{t+d(x,u)}\Big)^{\varepsilon_0}
    \frac1{V(u,y,s+d(u,y))}\Big(\frac{s}{s+\|u-y\|}\Big)^{\varepsilon_0}d\omega(u)
    \end{align*}

    and
    \begin{align*}I\!I_2 = \int_{\|u-y\|> t}  \frac1{V(x,y,t+d(x,y))}\Big(\frac{t}{t+d(x,y)}\Big)^{\varepsilon_0}\frac1{V(u,y,s+d(u,y))}\Big(\frac{s}{s+\|u-y\|}\Big)^{\varepsilon_0}d\omega(u)
    \end{align*}

    Note that
    \begin{align*}I\!I_2 &= \frac1{V(x,y,t+d(x,y))}\Big(\frac{t}{t+d(x,y)}\Big)^{\varepsilon_0} \int_{\|u-y\|> t}  \frac1{V(u,y,s+d(u,y))}\\
    &\qquad \times
    \bigg(\frac{s}{s+\|u-y\|}\bigg)^{\varepsilon_0-\varepsilon}\bigg(\frac{s}{s+\|u-y\|}\bigg)^{\varepsilon}d\omega(u)\\
    &\lesssim \frac1{V(x,y,t+d(x,y))}\bigg(\frac{t}{t+d(x,y)}\bigg)^{\varepsilon} \int_{\Bbb R^N}  \frac1{V(u,y,s+d(u,y))}\\
    &\qquad \times
    \bigg(\frac{s}{s+d(u,y)}\bigg)^{\varepsilon_0-\varepsilon}\bigg(\frac{s}{t}\bigg)^{\varepsilon}d\omega(u)\\
    &\lesssim \big(\frac{s}t\big)^{\varepsilon}
    \frac1{V(x,y,t+d(x,y))}\Big(\frac{t}{t+d(x,y)}\Big)^{\varepsilon},
    \end{align*}
    where we apply the  Lemma \ref{lem1} in the last inequality above.

    For term $I\!I_1,$ since  $d(x,u)+d(y,u)\geqslant d(x,y),$ we have
    \begin{align*}I\!I_1 &\leqslant \int_{d(x,u)\geqslant \frac{1}{2}d(x,y)\atop\|u-y\|> t}
    \frac1{V(x,u,t+d(x,u))}\Big(\frac{t}{t+d(x,u)}\Big)^{\varepsilon_0}
    \frac1{V(u,y,s+d(u,y))}\Big(\frac{s}{s+\|u-y\|}\Big)^{\varepsilon_0}d\omega(u)\\
    &\ \ \ +\int_{d(y,u)\geqslant \frac{1}{2}  d(x,y) \atop\|u-y\|> t}
    \frac1{V(x,u,t+d(x,u))}\Big(\frac{t}{t+d(x,u)}\Big)^{\varepsilon_0}
    \frac1{V(u,y,s+d(u,y))}\Big(\frac{s}{s+\|u-y\|}\Big)^{\varepsilon_0}d\omega(u)\\
    &=: I\!I_{11}+I\!I_{12}.
    \end{align*}
    Note that
    \begin{align*}I\!I_{11} &\leqslant \int_{\Bbb R^N}
    \frac1{V(x,u,t+d(x,u))}\bigg(\frac{t}{t+d(x,y)}\bigg)^\varepsilon
    \bigg(\frac{t}{t+d(x,u)}\bigg)^{\varepsilon_0-\varepsilon}\\
    &\qquad \times \frac1{V(u,y,s+d(u,y))}\bigg(\frac{s}{t}\bigg)^\varepsilon
    \bigg(\frac{s}{s+d(u,y)}\bigg)^{\varepsilon_0-\varepsilon}d\omega(u)\\
    &\lesssim \big(\frac{s}t\big)^{\varepsilon}
    \frac1{V(x,y,t+d(x,y))}\bigg(\frac{t}{t+d(x,y)}\bigg)^{\varepsilon},
    \end{align*}
    where we apply  Lemma \ref{lemtan2} in the last inequality above.
    Moreover,
    \begin{align*}I\!I_{12} &= \int_{d(y,u)\geqslant \frac{1}{2}  d(x,y)\atop\|u-y\|> t}
    \frac1{V(x,u,t+d(x,u))}\Big(\frac{t}{t+d(x,u)}\Big)^{\varepsilon_0}\\
    &\qquad \times
    \frac1{V(u,y,s+d(u,y))}\bigg(\frac{s}{s+\|u-y\|}\bigg)^\varepsilon\bigg(\frac{s}{s+\|u-y\|}\bigg)^{\varepsilon_0-\varepsilon}d\omega(u).
    \end{align*}
    Since $d(y,u) \geqslant \frac{1}{2}  d(x,y)$ and $\|u-y\|> t,$ we have
    $\|u-y\|\geqslant \frac{1}{2}(t+\|u-y\|) \geqslant \frac{1}{2}(t+d(u,y)) \geqslant \frac{1}{4}(t+d(x,y)).$

    Therefore $$ \bigg(\frac{s}{s+\|u-y\|}\bigg)^\varepsilon\lesssim
    \bigg(\frac{s}{t+d(x,y)}\bigg)^\varepsilon=\bigg(\frac{s}{t}\bigg)^\varepsilon
    \bigg(\frac{t}{t+d(x,y)}\bigg)^\varepsilon,
    $$
    which implies
    \begin{align*}I\!I_{12} &\lesssim \bigg(\frac{s}{t}\bigg)^\varepsilon
    \bigg(\frac{t}{t+d(x,y)}\bigg)^\varepsilon \int_{\Bbb R^N}
    \frac1{V(x,u,t+d(x,u))}\Big(\frac{t}{t+d(x,u)}\Big)^{\varepsilon_0}\\
    &\qquad \times
    \frac1{V(u,y,s+d(u,y))}\bigg(\frac{s}{s+d(u,y)}\bigg)^{\varepsilon_0-\varepsilon}d\omega(u)\\
    &\lesssim \big(\frac{s}t\big)^{\varepsilon}
    \frac1{V(x,y,t+d(x,y))}\bigg(\frac{t}{t+d(x,y)}\bigg)^{\varepsilon},
    \end{align*}
    where we apply the  Lemma \ref{lemtan2} in the last inequality
    above. This completes the proof of the estimate \eqref{aoe1}. The proof of the estimate \eqref{aoe2} is almost the same. To be precise, replacing $d(x,u)$ by $\|x-u\|$ for all fractions $\frac{t}{t+d(x,u)}$ and $d(x,y)$ by $\|x-y\|$ for all fractions $\frac{t}{t+d(x,y)},$ respectively, yields the proof of the estimate \eqref{aoe2}. We leave the details to the reader.
\end{proof}
We point out that the estimate \eqref{aoe1} of Lemma \ref{aoe} will
be used for the proof of Lemma \ref{Lem aoe} and while the estimate
\eqref{aoe2} will be needed for establishing the weak-type discrete
Calder\'on reproducing formula in next {\bf Section}.

We return to the proof of Lemma \ref{Lem aoe}, that is, we show that
if $T$ is a Dunkl-Calder\'on-Zygmund singular integral satisfying
$T(1)=T^*(1)=0$ and $T\in WBP,$ then
\begin{align*}
&\bigg|\int_{\Bbb R^N}\int_{\Bbb R^N} D_k(x,u)K(u,v)D_j(v,y)
d\omega(u)d\omega(v)\bigg|\\
&\lesssim  r^{-|k-j|\varepsilon'} \frac1{V(x,y, r^{-j\vee
        -k}+d(x,y))}\Big(\frac{r^{-j\vee -k}}{r^{-j\vee
        -k}+d(x,y)}\Big)^\gamma,
\end{align*}
where $\gamma, \varepsilon'\in (0,\varepsilon)$ and $\varepsilon$ is the regularity exponent of the kernel of $T.$

To this end, we may assume $k\leqslant j.$ Observe that
$D_k(x,\cdot)$ is a smooth molecule in $\mathbb M(1,1, t, x)$ with
$t=r^{-k}, x\in \R^N$ and $D_j(\cdot,y)$ is a smooth molecule in
$\mathbb M(1,1, s, y)$ with $s=r^{-j}, y\in \R^N$  Set
$\widetilde{D}_k(x,v)=\int_{\Bbb R^N} D_k(x,u)K(u,v)d\omega(u).$ By
Theorem \ref{1.3}, for any $0<\varepsilon_0<1,$
$\widetilde{D}_k(x,\cdot)$ is a weak smooth molecule in
$\widetilde{\mathbb M}(\varepsilon_0,\varepsilon_0, t, x)$ with
$t=r^{-k}.$ Note that when $k\leqslant j$,then $t\geqslant s.$
Applying the estimate \eqref{aoe1} in the Lemma \ref{aoe} yields
\begin{align*}
&\bigg|\int_{\Bbb R^N}\int_{\Bbb R^N} D_k(x,u)K(u,v)D_j(v,y)
d\omega(u)d\omega(v)\bigg|\\
&\lesssim r^{(k-j)\varepsilon'} \frac1{V(x,y,
    r^{-k}+d(x,y))}\Big(\frac{r^{-k}}{r^{-k}+d(x,y)}\Big)^\gamma,
\end{align*}
where $\varepsilon', \gamma <\varepsilon_0.$

Similarly, if $j\leqslant k,$ then $\int_{\Bbb R^N} K(u,v)D_j(v,y)
d\omega(v)$ is a weak smooth molecule and repeating the same proof
gives the desired estimate.

Finally, to finish the proof of the Theorem \ref{1.2}, it remains to
consider the general case: $T(1)\in BMO(\omega)(\R^N,\omega)$ and
$T^*(1)\in BMO(\R^N,\omega).$ To handle this case, we recall the
paraproduct operators on space of homogeneous type. We begin with
the following definition of the test functions in space of
homogeneous type $(\Bbb R^N,\|\cdot\|,\omega):$

\begin{definition}\label{test} A function $f(x)$ defined on $\R^N$ is said to be a test function if there exits a constant $C$ such that for $0<\beta\leqslant 1, \gamma>0, r>0$ and $x_0\in \R^N,$
    \begin{enumerate}
        \item[(i)] $\displaystyle f(x)\leqslant \frac C{V(x, r+\|x-x_0\|)}\Big(\frac{r}{r+\|x-x_0\|}\Big)^\gamma;$\\[4pt]
        \item[(ii)] $\displaystyle |f(x)-f(x')|\leqslant C\Big(\frac{\|x-x'\|}{r+\|x-x_0\|}\Big)^\beta\frac{1}{V(x,r+\|x-x_0\|)}\Big(\frac{r}{r+\|x-x_0\|}\Big)^\gamma, \\ \quad \text{for}\quad \|x-x'\|\leqslant \frac{1}{2}(r+\|x-x_0\|);$
        \item[(iii)] $\displaystyle \int_{\R^N} f(x)d\omega(x)=0.$
    \end{enumerate}
    We denote such a test function by $f\in \mathcal M(\beta,\gamma,r,x_0)$ and $\|f\|_{\mathcal M(\beta,\gamma,r,x_0)},$ the norm in $\mathcal M(\beta,\gamma,r,x_0),$ is defined by the smallest $C$ satisfying the above conditions (i) and (ii).
\end{definition}

Applying Coifman's decomposition for the identity operator and the Calder\'on-Zygmund operator theory, the discrete Calder\'on reproducing formula in space of homogeneous type is given by the following

\begin{theorem}\label{dCRh}
    Let $\{S_k\}_{k\in \Bbb Z}$ be a Coifman's approximations to the identity and set
    ${D}_k := {S}_k - {S}_{k-1}$. Then there exists a family of operators
    $\{\widetilde{D}_k\}_{k\in \Bbb Z}$ such that for any fixed $x_{Q}\in Q$ with $k\in \Bbb Z$ and $Q$ are "$r-$dyadic cubes" with the side length $r^{-M-k},$
    $$f(x)=\sum\limits_{k=-\infty}^\infty \sum\limits_{Q\in Q^k}\omega(Q){\widetilde D}_k(x,x_{Q})D_k(f)(x_{Q}),$$
    where the series converge in $L^p(\omega)$, $1<p<\infty$, $\mathcal M(\beta,\gamma,r,x_0),$ and in $(\mathcal M(\beta,\gamma,r,x_0))^\prime,$ the dual of in $\mathcal M(\beta,\gamma,r,x_0),$ and moreover, the kernels of the operators $\widetilde{D}_k$
    satisfy the the following conditions:
    \begin{enumerate}
        \item[(i)] $\displaystyle|\widetilde{D}_k(x,y)|\leqslant C\frac1{V_k(x)+V_k(y)+V(x,y)}\frac{r^{-k}}{r^{-k}+\|x-y\|};$
        \item[(ii)] $\displaystyle|\widetilde{D}_k(x,y)-\widetilde{D}_k(x',y)|\leqslant C\frac{\|x-x'\|}{r^{-k}+\|x-x'\|}\frac1{V_k(x)+V_k(y)+V(x,y)}\frac{r^{-k}}{r^{-k}+\|x-y\|}$,
        \item[]for $\|x-x'\|\leqslant (r^{-k}+\|x-y\|)/2;$
        \item[(iii)] $\displaystyle \int_{\Bbb R^N} \widetilde{D}_k(x,y)d\omega(x)=0\qquad \text{for all}\ y\in \Bbb R^N;$
        \item[(iv)] $\displaystyle \int_{\Bbb R^N} \widetilde{D}_k(x,y)d\omega(y)=0\qquad \text{for all}\ x\in \Bbb R^N.$
    \end{enumerate}
    Similarly, there exists a family of linear operators
    $\{\widetilde{\widetilde{D}}_k\}_{k\in \Bbb Z}$ such that for any fixed $x_{Q}\in Q,$
    $$f(x)=\sum\limits_{k=-\infty}^\infty \sum\limits_{Q\in Q^k}\omega(Q)D_k(x,x_{Q})\widetilde{\widetilde{D}}_k(f)(x_{Q}),$$
    where the kernels of the operators $\widetilde{\widetilde{D}}_k$
    satisfy the above conditions {\rm (i), (iii), (iv)} and {\rm (ii)} with $x$ and $y$ interchanged,
\end{theorem}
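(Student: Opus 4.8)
The plan is to obtain the discrete formula from the continuous Calder\'on reproducing formula attached to Coifman's decomposition of the identity, and then to discretize and invert the discretization error. First I would take $I=T_M+R_M$ with $T_M=\sum_k D_kD_k^M$, $D_k^M=\sum_{|j|\leqslant M}D_{k+j}$, and verify that $R_M$ is a Calder\'on--Zygmund singular integral operator on $(\R^N,\|\cdot\|,\omega)$ (in the sense of Definition \ref{SIO}) with $R_M(1)=R_M^*(1)=0$ and $R_M\in\mathrm{WBP}$: its $L^2(\omega)$--boundedness and the bound $\|R_M\|_{L^2\to L^2}\lesssim r^{-M}$ follow from the almost-orthogonality estimate \eqref{aoe h} and the Cotlar--Stein lemma, while its boundedness on the test-function space $\mathcal M(\beta,\gamma,r,x_0)$, with operator norm tending to $0$ as $M\to\infty$, follows from the Euclidean-metric analogue of Theorem \ref{1.3} (available since $\omega$ is doubling and reverse doubling, cf. \cite{HMY}) together with Lemma \ref{aoe}. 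Hence, for $M$ large, $T_M^{-1}=(I-R_M)^{-1}$ exists and is bounded on $L^p(\omega)$ for $1<p<\infty$, on $\mathcal M(\beta,\gamma,r,x_0)$ and on its dual, and one gets $f=\sum_k T_M^{-1}D_k^M D_k(f)$ with convergence in each of these spaces. Note that since $D_k^M(1)=0$ and $\int D_k^M(x,y)\,d\omega(x)=\int D_k^M(x,y)\,d\omega(y)=0$ (by properties (vi) and (vii) of $S_k$), the operators $T_M$, $R_M$, and $T_M^{-1}$ all preserve the cancellation property and, in particular, map mean-zero functions to mean-zero functions.

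Next I would let $E_k(x,y)$ denote the kernel of the composition $T_M^{-1}D_k^M$, so that $E_k(\cdot,y)=T_M^{-1}\bigl(D_k^M(\cdot,y)\bigr)$; since $D_k^M(\cdot,y)$ is a uniform multiple of a test function at scale $r^{-k}$ centered at $y$ and $T_M^{-1}$ is a Calder\'on--Zygmund operator with the additional second-order smoothness that is bounded on $\mathcal M(\beta,\gamma,r,x_0)$, the estimates of type (i)--(ii) hold for $E_k(\cdot,y)$ at scale $r^{-k}$ uniformly in $k$, and $\int E_k(x,y)\,d\omega(x)=0$. I would then discretize: replace $\int E_k(x,y)D_k(f)(y)\,d\omega(y)$ by the sum $\sum_{Q\in Q^k}\omega(Q)E_k(x,x_Q)D_k(f)(x_Q)$ over the $r$-dyadic cubes of side length $r^{-M-k}$, and bound the level-$k$ error $\sum_{Q\in Q^k}\int_Q\bigl[E_k(x,y)D_k(f)(y)-E_k(x,x_Q)D_k(f)(x_Q)\bigr]\,d\omega(y)$ using the H\"older continuity of $E_k(x,\cdot)$ and of $D_k(f)$ at scale $r^{-k}$; since $\operatorname{diam}Q\sim r^{-M-k}$ is much smaller than $r^{-k}$, summation in $Q$ and $k$ shows that the resulting error operator $R$ satisfies $\|R\|\lesssim r^{-M\eta}$ on $L^2(\omega)$, on $\mathcal M(\beta,\gamma,r,x_0)$ and on its dual. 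Writing $\mathcal D_M(f)(x)=\sum_k\sum_{Q\in Q^k}\omega(Q)E_k(x,x_Q)D_k(f)(x_Q)$, we then have $\mathcal D_M=I-R$, so for $M$ large $\mathcal D_M^{-1}$ exists and is bounded on all three spaces, and it preserves mean-zero functions because $R$ does.

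Finally I would set $\widetilde D_k(x,y):=\bigl[\mathcal D_M^{-1}\bigl(E_k(\cdot,y)\bigr)\bigr](x)=\bigl[\mathcal D_M^{-1}T_M^{-1}\bigl(D_k^M(\cdot,y)\bigr)\bigr](x)$. Applying $\mathcal D_M^{-1}$ to the identity $\mathcal D_M(f)=\sum_k\sum_{Q}\omega(Q)E_k(\cdot,x_Q)D_k(f)(x_Q)$ and using the continuity of $\mathcal D_M^{-1}$ together with the convergence of the series yields $f(x)=\sum_k\sum_{Q\in Q^k}\omega(Q)\widetilde D_k(x,x_Q)D_k(f)(x_Q)$ with convergence in $L^p(\omega)$, $1<p<\infty$, in $\mathcal M(\beta,\gamma,r,x_0)$ and in its dual. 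The kernel bounds (i)--(ii) follow from the uniform (in $k$) test-function estimates for $\widetilde D_k(\cdot,x_Q)=\mathcal D_M^{-1}T_M^{-1}\bigl(D_k^M(\cdot,x_Q)\bigr)$, a consequence of the boundedness on $\mathcal M(\beta,\gamma,r,x_0)$ and of the second-order smoothness of the kernel of $\mathcal D_M^{-1}T_M^{-1}$; (iii), i.e. $\int\widetilde D_k(x,y)\,d\omega(x)=0$, holds because $\mathcal D_M^{-1}T_M^{-1}$ maps mean-zero functions to mean-zero functions; and (iv), i.e. $\int\widetilde D_k(x,y)\,d\omega(y)=0$, holds because for each fixed $z$ one has $\int D_k^M(z,y)\,d\omega(y)=0$, a cancellation in $y$ that is untouched by $\mathcal D_M^{-1}T_M^{-1}$ acting in $z$. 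The companion family $\widetilde{\widetilde D}_k$ is obtained symmetrically from the second continuous formula $f=\sum_k D_kD_k^MT_M^{-1}(f)$ by discretizing the integration against $D_k(x,\cdot)$, with the two cancellations of $\widetilde{\widetilde D}_k$ coming from $(D_k^M)^*(1)=0$ and $D_k^M(1)=0$ respectively. The main obstacle is the second paragraph: one must show that $\mathcal D_M$, and the compositions involving $T_M^{-1}$ and $\mathcal D_M^{-1}$, are Calder\'on--Zygmund operators with the extra smoothness required to invoke boundedness on the test-function space; this is exactly where the almost-orthogonal estimates of Lemmas \ref{aoe} and \ref{Lem aoe}, Meyer's commutation lemma, and (the Euclidean analogue of) Theorem \ref{1.3} do the work, and where one checks $\mathcal D_M(1)=\mathcal D_M^*(1)=0$ and the weak boundedness property.
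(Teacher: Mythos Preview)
The paper does not give its own proof of Theorem \ref{dCRh}; it is quoted as a known result on spaces of homogeneous type and the reader is referred to \cite{HMY} (see the sentence ``See \cite{HMY} for all these results and the details of the proofs'' just after Theorem \ref{para}). Your proposal follows precisely the approach that the paper sketches in the introduction (cf.\ \eqref{1.7}--\eqref{crf3}) and that is carried out in \cite{HS,H2,HMY}: invert $T_M=I-R_M$ on $L^p$, on $\mathcal M(\beta,\gamma,r,x_0)$ and on its dual via Calder\'on--Zygmund theory, discretize, and absorb the discretization error by a second Neumann series; the definition $\widetilde D_k=\mathcal D_M^{-1}T_M^{-1}D_k^M$ and the verification of (i)--(iv) are exactly as in those references.
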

The papraproduct operator is defined by
\begin{definition} Supporse that $\{S_k\}, \{D_k\}$ and $\{\widetilde{\widetilde{D}}_k\}$ are same as defined above. The paraproduct operator of $f\in \mathcal M(\beta,\gamma,r,x_0)^\prime$ is defined by
    $$\Pi_b(f)(x)=\sum\limits_{k=-\infty}^\infty \sum\limits_{Q\in Q^k}\omega(Q)D_k(x,x_{Q})\widetilde{\widetilde{D}}_k(b)(x_{Q})S_k(f)(x_Q),$$
    where $b\in BMO_d(\mathbb R^N, \omega).$
\end{definition}
We need the following result on space of homogeneous type:
\begin{theorem}\label{para} The paraproduct operator is the Calder\'on-Zygmund operator. Moreover, $\Pi_b(1)=b$ in the topology $(H^1_d, BMO_d), (\Pi_b)^*(1)=0$ and there exists a constant $C$ such that for $1<p<\infty,$
    $$ \|\Pi_b(f)\|_p\leqslant C\|b\|_{BMO_d}\|f\|_p.$$
\end{theorem}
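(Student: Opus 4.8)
The plan is to prove the statement in four steps: (1) compute the distribution kernel $K_b(x,y)$ of $\Pi_b$ and show it satisfies the size and regularity estimates of a classical Calder\'on--Zygmund kernel on $(\R^N,\|\cdot\|,\omega)$, hence, by \eqref{size DCZ}--\eqref{sm of DCZ}, also the Dunkl--Calder\'on--Zygmund kernel conditions of Definition \ref{sio}; (2) show that $\Pi_b$ extends boundedly to $L^2(\R^N,\omega)$ with $\|\Pi_b\|_{L^2\to L^2}\lesssim\|b\|_{BMO_d}$; (3) combine (1) and (2) with Theorem \ref{th1.1} to get $\|\Pi_b f\|_p\lesssim\|b\|_{BMO_d}\|f\|_p$ for $1<p<\infty$ and to conclude that $\Pi_b$ is a Calder\'on--Zygmund operator; (4) verify $\Pi_b(1)=b$ and $(\Pi_b)^*(1)=0$. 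Throughout, the basic input is the pointwise bound $|\widetilde{\widetilde{D}}_k(b)(x_Q)|\lesssim\|b\|_{BMO_d}$, which follows from the cancellation (iv) of $\widetilde{\widetilde{D}}_k$ in Theorem \ref{dCRh}, the decay of its kernel in the second variable, and John--Nirenberg (replace $b$ by $b-b_B$ on a ball $B$ of radius $\sim r^{-k}$ centered at $x_Q$; the resulting logarithmic growth is absorbed by the kernel decay).

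For step (1), pairing the series defining $\Pi_b$ against $f,g\in C_0^\eta$ with disjoint supports identifies the kernel as $K_b(x,y)=\sum_k\sum_{Q\in Q^k}\omega(Q)D_k(x,x_Q)\widetilde{\widetilde{D}}_k(b)(x_Q)S_k(x_Q,y)$. For each fixed $k$ the inner sum $\sum_{Q\in Q^k}\omega(Q)|D_k(x,x_Q)|\,|S_k(x_Q,y)|$ is a Riemann sum for $\int_{\R^N}|D_k(x,z)|\,|S_k(z,y)|\,d\omega(z)$; since $D_k(x,\cdot)$ and $S_k(\cdot,y)$ are supported in $\{\|x-z\|\le r^{4-k}\}$, $\{\|z-y\|\le r^{4-k}\}$ and bounded by $1/V_k$, this integral is $\lesssim 1/\omega(B(x,r^{-k}))$ and vanishes unless $\|x-y\|\lesssim r^{-k}$. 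Summing over $k$ with $r^{-k}\gtrsim\|x-y\|$ and using the doubling and reverse-doubling of $\omega$ gives $|K_b(x,y)|\lesssim\|b\|_{BMO_d}/\omega(B(x,\|x-y\|))$. The regularity estimates in $x$ and in $y$ follow by the same Riemann-sum bookkeeping after replacing $D_k(x,x_Q)$ by $D_k(x,x_Q)-D_k(x',x_Q)$ (using the first-variable smoothness of $D_k$ inherited from property (iii) of the $S_k$'s) resp. $S_k(x_Q,y)$ by $S_k(x_Q,y)-S_k(x_Q,y')$ (using property (iv) of the $S_k$'s); this produces the gains $(\|x-x'\|/\|x-y\|)^{\varepsilon}$ and $(\|y-y'\|/\|x-y\|)^{\varepsilon}$ for any $\varepsilon\in(0,1)$.

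For step (2), dualize: using $D_k^*=D_k$ and $\langle D_k(\cdot,x_Q),g\rangle=D_k(g)(x_Q)$ one has $\langle\Pi_b f,g\rangle=\sum_k\sum_{Q}\omega(Q)\widetilde{\widetilde{D}}_k(b)(x_Q)\,S_k(f)(x_Q)\,D_k(g)(x_Q)$. Since for each $k$ each $x$ lies in exactly one cube $Q(x,k)\in Q^k$, Cauchy--Schwarz in $k$ followed by Hölder in $x$ bounds $|\langle\Pi_b f,g\rangle|$ by
\[\Big\|\Big(\sum_k\big|\widetilde{\widetilde{D}}_k(b)(x_{Q(\cdot,k)})\big|^2\big|S_k(f)(x_{Q(\cdot,k)})\big|^2\Big)^{1/2}\Big\|_{L^2(\omega)}\ \Big\|\Big(\sum_k\big|D_k(g)(x_{Q(\cdot,k)})\big|^2\Big)^{1/2}\Big\|_{L^2(\omega)}.\]
The second factor is $\lesssim\|g\|_{L^2(\omega)}$ by the $L^2$ discrete Littlewood--Paley estimate for $\{D_k\}$, which is part of the Littlewood--Paley theory on the space of homogeneous type $(\R^N,\|\cdot\|,\omega)$ (available since $\omega$ is doubling and reverse-doubling; cf. \cite{HMY} and Theorem \ref{dCRh}). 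The first factor is $\lesssim\|b\|_{BMO_d}\|f\|_{L^2(\omega)}$ by the $L^2$ Carleson embedding: the coefficients $\{|\widetilde{\widetilde{D}}_k(b)(x_Q)|^2\omega(Q)\}$ form a Carleson measure with norm $\lesssim\|b\|_{BMO_d}^2$ (split $b$, over a fixed dilate $\widehat P$ of a cube $P$, into a constant part killed by the cancellation of $\widetilde{\widetilde{D}}_k$, a local part $(b-b_{\widehat P})\chi_{\widehat P}$ controlled by $L^2$-boundedness of the $\widetilde{\widetilde{D}}_k$-square function together with John--Nirenberg, and a far part controlled by the kernel decay plus cancellation), while $\sup_k|S_k(f)(x_Q)|\lesssim Mf(x)$ for $x\in Q$; the tent-space form of the Carleson embedding on $(\R^N,\|\cdot\|,\omega)$ then closes the estimate. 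Taking $\sup_{\|g\|_2\le1}$ gives $\|\Pi_b f\|_{L^2(\omega)}\lesssim\|b\|_{BMO_d}\|f\|_{L^2(\omega)}$. By step (1), $K_b$ also satisfies the kernel conditions of Definition \ref{sio}, so $\Pi_b$ is a Dunkl--Calder\'on--Zygmund operator; hence by Theorem \ref{th1.1} it is bounded on $L^p(\R^N,\omega)$ for $1<p<\infty$, and since both the kernel constant and the $L^2$ norm are $\lesssim\|b\|_{BMO_d}$, tracking constants yields $\|\Pi_b f\|_p\le C\|b\|_{BMO_d}\|f\|_p$.

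For step (4), since $\int_{\R^N}S_k(x,y)\,d\omega(y)=1$ we have $S_k(1)=1$, so $\Pi_b(1)(x)=\sum_k\sum_{Q\in Q^k}\omega(Q)D_k(x,x_Q)\widetilde{\widetilde{D}}_k(b)(x_Q)$, which is exactly the right-hand side of the second discrete Calder\'on reproducing formula of Theorem \ref{dCRh} applied to $f=b$; hence $\Pi_b(1)=b$, with convergence understood in the weak-$*$ topology of $BMO_d=(H_d^1)^\prime$ (tested against $H_d^1$-atoms, the series being controlled by the kernel estimates of step (1)). For $(\Pi_b)^*(1)$, test against $h\in C_{0,0}^\eta$: $\langle(\Pi_b)^*(1),h\rangle=\int_{\R^N}\Pi_b h\,d\omega=\sum_k\sum_{Q\in Q^k}\omega(Q)\widetilde{\widetilde{D}}_k(b)(x_Q)S_k(h)(x_Q)\int_{\R^N}D_k(x,x_Q)\,d\omega(x)=0$, since $\int_{\R^N}D_k(\cdot,x_Q)\,d\omega=0$; thus $(\Pi_b)^*(1)=0$. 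The step I expect to be the real obstacle is the Carleson-measure estimate and the ensuing $L^2$ Carleson embedding in step (2): carrying it out rigorously requires care with the interplay between the Euclidean support of the kernels $\widetilde{\widetilde{D}}_k,S_k$, the ``$r$-dyadic'' cube family $\{Q^k\}$, and the geometry of $\omega$; everything else reduces to the telescoping and Riemann-sum bookkeeping already used repeatedly in Sections 2 and 3.
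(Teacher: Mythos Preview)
Your proposal is correct and is essentially the standard argument. Note, however, that the paper does not actually prove this theorem: immediately after stating it, the paper writes ``See \cite{HMY} for all these results and the details of the proofs,'' treating the paraproduct result as a known fact on spaces of homogeneous type $(\R^N,\|\cdot\|,\omega)$ (the Dunkl metric plays no role here since the kernels $D_k$, $S_k$, $\widetilde{\widetilde{D}}_k$ are all supported and smooth with respect to the Euclidean metric). Your four-step outline---kernel estimates via Riemann-sum bookkeeping, $L^2$ boundedness via the Carleson measure property of $\{|\widetilde{\widetilde{D}}_k(b)(x_Q)|^2\omega(Q)\}$ together with $|S_kf|\lesssim Mf$, $L^p$ boundedness via the Calder\'on--Zygmund machinery, and the identities $\Pi_b(1)=b$, $(\Pi_b)^*(1)=0$ from $S_k(1)=1$, the reproducing formula of Theorem \ref{dCRh}, and $\int D_k(\cdot,x_Q)\,d\omega=0$---is precisely the argument one finds in \cite{HMY} (and earlier in the David--Journ\'e--Semmes line). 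Your self-identified ``obstacle'' in step (2), the Carleson-measure/embedding estimate, is indeed the substantive part, but it is a well-known computation on doubling spaces and needs no Dunkl-specific input.
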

See \cite{HMY} for all these results and the details of the proofs.

Observe that the classical Calder\'on-Zygmund operator on space of homogeneous type is also the Dunkl-Calder\'on-Zygmund operator. Suppose now that both $T(1)$ and $T^*(1)$ belong to $BMO_d(\mathbb
R^N, \omega).$ Set ${\widetilde T}=T -\Pi_{T(1)}-(\Pi_{T^*(1)})^*.$
Then ${\widetilde T}$ is a Dunkl-Calder\'on-Zygmund singular
integral operator. Moreover, ${\widetilde T}(1)=({\widetilde
T})^*(1)=0$ and ${\widetilde T}\in WBP.$ Therefore, ${\widetilde T}$
is bounded on $L^2(\R^N, \omega)$ and hence, $T$ is also bounded on
$L^2(\R^N, \omega)$.

The proof of Theorem \ref{1.2} is completed.

\section{{Weak-Type Discrete Calder\'on Reproducing Formula and   Littlewood-Paley Theory on $L^p, 1<p<\infty$}}

In this section, we will apply the Dunkl-Calder\'on-Zygmund operator Theory, namely the Cotlar-Stein Lemma and the $L^p$ boundedness of the Dunkl-Calder\'on-Zygmund operators to establish the weak-type discrete Calder\'on reproducing formula and   Littlewood-Paley theory on $L^p, 1<p<\infty.$

\subsection{{Weak-Type Discrete Calder\'on Reproducing Formula}}
\ \\

We begin with the following Calder\'on reproducing formula provided in \cite{ADH}.

\begin{theorem}\label{CRF}
    For $f\in L^2(\R^N,\omega)$,
    \begin{eqnarray}\label{CRF1}f(x)=\int_0^\infty \psi_{t}\ast q_{t}\ast f(x)\frac{dt}{t} \end{eqnarray}
    where $q_t=t\partial_{t}p_t$ with the Poisson kernel $p_t, q_{t}\ast  f(x)=\int_{\mathbb{R}^{N}}q_{t}(x,y)f(y)d\omega(y)$  and $\psi_{t}\ast  f(x)=\int_{\mathbb{R}^{N}}\psi_{t}(x,y)f(y)d\omega(y)$ with $\psi(x)$
    being a radial Schwartz function supported in the unit ball $B(0,1).$
\end{theorem}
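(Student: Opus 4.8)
The plan is to prove this identity on the Dunkl transform side, using the Plancherel theorem for the Dunkl transform together with the explicit multipliers attached to $p_t$, $q_t$ and $\psi_t$. Recall that $\widehat{\triangle f}(\xi)=-\|\xi\|^2\widehat f(\xi)$, so the subordination formula defining $P_t$ gives $\widehat{p_t}(\xi)=e^{-t\|\xi\|}$ and hence $\widehat{q_t}(\xi)=t\,\partial_t e^{-t\|\xi\|}=-t\|\xi\|e^{-t\|\xi\|}$, while $\widehat{\psi_t}(\xi)=\widehat\psi(t\xi)$. Since $\psi$ is radial, so is $\widehat\psi$; writing $\widehat\psi(\xi)=\Psi(\|\xi\|)$, the one condition on $\psi$ I shall use is the normalization
$$-\int_0^\infty \Psi(s)\,e^{-s}\,ds=1$$
(together with $\int_{\R^N}\psi\,d\omega=0$, which forces $\Psi(0)=0$); the support restriction $\supp\psi\subseteq B(0,1)$ is irrelevant for this formula and is only needed later for the discrete version.

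First I would record that for fixed $t>0$ and $f\in L^2(\R^N,\omega)$ one has $q_t\ast f\in L^2(\R^N,\omega)$ with Dunkl transform $-t\|\xi\|e^{-t\|\xi\|}\widehat f(\xi)$, and then $\psi_t\ast(q_t\ast f)\in L^2(\R^N,\omega)$ with Dunkl transform $\widehat\psi(t\xi)\,\widehat{q_t}(\xi)\,\widehat f(\xi)$, using that the Dunkl transform converts a Dunkl convolution of an $L^1$ function with an $L^2$ function into pointwise multiplication. For $0<a<b<\infty$ put $g_{a,b}(x)=\int_a^b \psi_t\ast q_t\ast f(x)\,\frac{dt}{t}$, an $L^2$-valued integral; by Fubini its Dunkl transform is $\widehat{g_{a,b}}(\xi)=\widehat f(\xi)\,m_{a,b}(\xi)$, where the change of variable $s=t\|\xi\|$ gives
$$m_{a,b}(\xi)=-\int_a^b \|\xi\|e^{-t\|\xi\|}\widehat\psi(t\xi)\,dt=-\int_{a\|\xi\|}^{b\|\xi\|}\Psi(s)\,e^{-s}\,ds.$$

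Finally I would pass to the limit $a\to0^+$, $b\to\infty$. Boundedness of $\widehat\psi$ and integrability of $e^{-s}$ give $|m_{a,b}(\xi)|\le\int_0^\infty|\Psi(s)|e^{-s}\,ds=:C_\psi<\infty$ uniformly, while $m_{a,b}(\xi)\to-\int_0^\infty\Psi(s)e^{-s}\,ds=1$ for each $\xi\neq0$; since $|\widehat f(\xi)m_{a,b}(\xi)-\widehat f(\xi)|^2\le(1+C_\psi)^2|\widehat f(\xi)|^2\in L^1(d\omega)$, dominated convergence plus Plancherel yield $\|g_{a,b}-f\|_{L^2}=\|\widehat g_{a,b}-\widehat f\|_{L^2}\to0$, so the integral converges in $L^2(\R^N,\omega)$ to a function with Dunkl transform $\widehat f$, i.e. to $f$. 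I expect the only delicate points to be the justification of the convolution--transform identities and of the Fubini interchange for the $L^2$-valued integral over $[a,b]$ --- equivalently, strong measurability and integrability of $t\mapsto\psi_t\ast q_t\ast f$ --- which follow from continuity of $t\mapsto q_t\ast f$ in $L^2$ and the uniform bound $\|\psi_t\|_{L^1}\lesssim 1$; the rest is the elementary one-variable calculation above.
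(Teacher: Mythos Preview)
Your argument is correct and is the natural approach to this Calder\'on reproducing formula: pass to the Dunkl transform side, where the Poisson semigroup has the explicit multiplier $e^{-t\|\xi\|}$, compute the resulting one-variable integral, and conclude by Plancherel and dominated convergence. You also correctly isolate the normalization $-\int_0^\infty\Psi(s)e^{-s}\,ds=1$ on $\psi$ that is needed for the identity to hold (the paper's statement suppresses this, but it is implicit in the construction of $\psi$ in \cite{ADH}).

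Note, however, that the paper does not give its own proof of this theorem: it is quoted from \cite{ADH} (see the sentence immediately preceding the theorem, ``We begin with the following Calder\'on reproducing formula provided in \cite{ADH}''). So there is nothing to compare against in this paper; your proof is essentially what one finds in the cited source, and the technical points you flag (strong measurability of $t\mapsto\psi_t\ast q_t\ast f$, the convolution--transform identity for $L^1\ast L^2$) are exactly the ones that need checking and are routine in this setting.
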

    We remark that in \cite{ADH}, the authors established the estimates for $p_t(x,y),$ the Poisson kernel, as follows:
    \begin{equation}
    |\partial^m_t\partial^\alpha_x\partial^\beta_y p_t(x,y)|
    \lesssim t^{-m-|\alpha|-|\beta|}p_t(x,y).
    \end{equation}
    In \cite{DH1}, the authors improved the estimates for $p_t(x,y)$ by
    $$|p_t(x,y)|\lesssim  \frac1{V(x,y,t+d(x,y))}\frac{t}{t+\|x-y\|}$$
    and hence,
    \begin{equation}\label{heat estimate}
    |\partial^m_t\partial^\alpha_x\partial^\beta_y p_t(x,y)|
    \lesssim t^{-m-|\alpha|-|\beta|}\frac1{V(x,y,t+d(x,y))}\frac{t}{t+\|x-y\|}.
    \end{equation}
    These estimates indicade that $q_{t}(x,y)$ for all $x,y\in \mathbb{R}^{N}$ and $t>0,$ satisfy the following conditions:
    \begin{eqnarray*}
        &\textup{(i)}& |q_t(x,y)|\leqslant  {1\over V(x,y,t+d(x,y))}\frac{t}{t+\|x-y\|},\\
        &\textup{(ii)}& |q_t(x,y)-q_t(x',y)|\\
        &&\leqslant {\|x-x'\|\over t} \Big({1\over V(\boldsymbol{x},y,t+d(\boldsymbol{x},y))}\frac{t}{t+\|\boldsymbol{x}-y\|}+{1\over V(\boldsymbol{x'},y,t+d(\boldsymbol{x'},y))}\frac{t}{t+\|\boldsymbol{x'}-y\|}\Big),\\
        &\textup{(iii)}& |q_t(x,y)-q_t(x,y')|\\
        &&\leqslant {\|y-y'\|\over t}\Big({1\over V(x,\boldsymbol{y},t+d(x,\boldsymbol{y}))}\frac{t}{t+\|x-\boldsymbol{y}\|}+{1\over V(x,\boldsymbol{y'},t+d(x,\boldsymbol{y'}))}\frac{t}{t+\|x-\boldsymbol{y'}\|}\Big),\\
        &\textup{(vi)}& \int_{\mathbb{R}^{N}}q_t(x,y) d\omega(y)
        = \int_{\mathbb{R}^{N}}q_t(x,y) d\omega(x)=0.
    \end{eqnarray*}
    And $\psi_{t}(x,y)$ for all $x,y\in \mathbb{R}^{N}$, $t>0$ satisfy the similar conditions as $q_t(x,y)$ but $\psi_{t}(x,y)$ is supported in $\{(x,y): d(x,y)\leqslant t\}.$

It is east to check that $q_t(x,y)$ are smooth molecules. Indeed, $q_t(\cdot,y)\in \mathbb M(1, 1, t, y)$ for any fixed $y$ and
$q_t(x,\cdot)\in \mathbb M(1, 1, t, x)$ for any fixed $x,$ and similarly for $\psi_t(x,y).$

Now we show Theorem \ref{1.5} with $p=2.$ The main tools are the almost orthogonal estimates and the Cotlar-Stein Lemme.

\begin{proof}[\bf Proof of Theorem \ref{1.5} with $p=2$] Let $1<r\leq
r_0,$ where $r_0$ will be chosen later, and $t_j=r^{-j}$ and $\psi_j=\psi_{r^{-j}}$ and $q_j=q_{r^{-j}}.$ For given $f\in L^2(\R^N,\omega),$ we decompose $f$ as follows.
\begin{align*}
f(x)&=\int_0^\infty \psi_t\ast q_t\ast f(x)\frac{dt}{t}\\
&=-\sum\limits_{j=-\infty}^\infty\int_{r^{-j}}^{r^{-j+1}}
\psi_{j}\ast q_{j}\ast f(x)\frac{dt}{t}+\sum\limits_{j=-\infty}^\infty\int_{r^{-j}}^{r^{-j+1}} \Big[\psi_{j}\ast q_{j}\ast f(x)-\psi_t\ast q_t\ast f(x)\Big]\frac{dt}{t}\\
&=-\ln r\sum\limits_{j=-\infty}^\infty \psi_{j}\ast q_{j}\ast
f(x)+R_1(f),
\end{align*}
where
$R_1f(x)=\sum\limits_{j=-\infty}^\infty\int_{r^{-j}}^{r^{-j+1}}
\Big[\psi_t\ast q_t\ast f(x)- \psi_{j}\ast q_{j}\ast
f(x)\Big]\frac{dt}{t}.$ Further, we decompose
\begin{align*}
&-\ln r\sum\limits_{j=-\infty}^\infty
\psi_{j}\ast q_{j}\ast f(x) \\
&=-\ln r\sum\limits_{j=-\infty}^\infty\sum\limits_{Q\in Q^j}
\int_{Q}
\psi_{j}(x,y)q_{j}\ast f(y)d\omega(y) \\
&=-\ln r\sum\limits_{j=-\infty}^\infty\sum\limits_{Q\in Q^j}w(Q)
\psi_{j}(x,x_{Q})q_{j}\ast
f(x_{Q}) \\
&\quad +\ln r\sum\limits_{j=-\infty}^\infty\sum\limits_{Q\in
Q^j}\int_{Q} \Big[\psi_{j}(x,x_{Q})q_{j}\ast
f(x_{Q})-\psi_{t}(x,y)q_{j}\ast
f(y)\Big]d\omega(y)\\
&=T_{M}(f)(x)+R_{M}{f}(x),
\end{align*}
where
$$T_M(f)(x)=-\ln r\sum\limits_{j=-\infty}^\infty\sum\limits_{Q\in Q^j}w(Q) \psi_{j}(x,x_{Q})q_{j}\ast
f(x_{Q})$$
and
$$R_M(f)(x)=\ln r\sum\limits_{j=-\infty}^\infty\sum\limits_{Q\in Q^j}\int_{Q}
\Big[\psi_{j}(x,x_{Q})q_{j}\ast
f(x_{Q})-\psi_{t}(x,y)q_{j}\ast
f(y)\Big]d\omega(y),$$
$M$ is any fixed integer and  $Q^j$ are all ``$r-$dyadic cubes"   with the side length $r^{-M-j}$, and $x_{Q}$ is any fixed point in $Q$.

The identity operator on $L^2(\R^N,\omega)$ can be written by the
following
$$I=T_{M}+R_1 + R_{M}.$$
We claim that there exist $r_0$ for any $1<r\leqslant r_0$ and $M$
such that
$$\|R_1(f)\|_2\leqslant C(r-1)\|f\|_2$$
and
$$\|R_M(f)\|_2\leqslant r^{-M}\|f\|_2.$$
Assuming the claim for the moment, if we choose $r_0$ to be close to
1 and $M$ to be sufficiently large, then $\|R_1+R_M\|_{2,2}<1.$
Observing that $T_M=I-R_1-R_{M},$ therefore, $T_M$ is bounded on
$L^2{(\R^N,\omega)}$ and moreover, $(T_M)^{-1},$ the inverse of
$T_M,$ is also bounded on $L^2{(\R^N,\omega)}$ since
$(T_M)^{-1}=(I-R_1-R_{M})^{-1}.$ For each $f\in L^2{(\R^N,\omega)}$
setting $h=(T_M)^{-1}f$ then $h\in L^2(\R^N,\omega)$ with
$\|f\|_2\sim \|h\|_2.$ We obtain the weak-type discrete Calder\'on
reproducing formula
$$f(x)=T_M(T_M)^{-1}f(x)=-\ln r\sum\limits_{j=-\infty}^\infty\sum\limits_{Q\in Q^j}w(Q)
\psi_{j}(x,x_{Q})q_{j}\ast h(x_{Q}).$$ To see that the above series
converges in $L^2(\R^N,\omega),$ we need the Littlewood-Paley
estimates on $L^2(\R^N,\omega),$ namely, $\|S(f)\|_2\leqslant
C\|f\|_2,$ where $S(f)$ is the square function of $f.$ See details
in next {\bf Subsection}. Indeed,
\begin{align*}
&\|\sum\limits_{|j|\geqslant n}^\infty\sum\limits_{Q\in Q^j}w(Q)
\psi_{Q}(\cdot,x_{Q})q_{Q}\ast h(x_{Q})\|^2_2\\
&\leqslant C\|\Big(\sum\limits_{|j|\geqslant n}\sum\limits_{Q\in
Q^j}|q_{Q}\ast h(x_{Q})|^2\chi_Q(x)\Big)^{\frac{1}{2}}\|_2^2,
\end{align*}
where the last term tends to zero as $n$ tends to $\infty.$ See more details in next {\bf Subsection}.

We now return to the proof of the claim. The proof for $R_1$ follows from the Cotlar-Stein Lemma. To this end, we have
$$R_1f(x)=-\sum\limits_{j=-\infty}^\infty\int_{r^{-j}}^{r^{-j+1}} \Big[\psi_t\ast q_t\ast f(x)-
\psi_{j}\ast q_{j}\ast
f(x)\Big]\frac{dt}{t}=-\sum\limits_{j=-\infty}^\infty\int_{r^{-j}}^{r^{-j+1}}S_j(f)(x)\frac{dt}{t},$$
where $S_jf(x)=S_j\ast f(x)=\int_{\R^N} S_j(x,y)f(y)d\omega(y)$ with
$S_j(x,y)=\psi_{t}\ast q_{ t}(x,y)-\psi_{j}\ast q_{j}(x,y).$

We first show that $S_j(\cdot,y)\in \mathbb M(1, 1, r^{-j}, y)$ and the proof for $S_j(x,\cdot)\in \mathbb M(1, 1, r^{-j}, x)$ is similar.
Note that
$$S_j(x,y)=\int_{\R^N}\Big\{\big(\psi_{t}(x,z)-\psi_{j}(x,z)\big)q_{t}(z,y)+
\psi_{j}(x,z)\big(q_{t}(z,y)-q_{j}(z,y)\big)\Big\}d\omega(z).$$ To
estimate the size condition of $S_j(x,y)$, applying the estimates in
\eqref{heat estimate} with $\partial _tq_t(x,y)=\partial_t
p_t(x,y)+t\partial^2 _tp_t(x,y)$ implies that for $r^{-j}\leqslant
t\leqslant r^{-j+1},$
\begin{equation}\label{d1}
\begin{aligned}
|q_{t}(z,y)-q_{j}(z,y)|
&\leqslant C\frac{(r^{-j+1}-r^{-j})} {{r^{-j}}}\frac1{V(z,y,r^{-j}+d(z,y))}\frac{r^{-j}}{r^{-j}+\|z-y\|}\\
&\leqslant
C(r-1)\frac1{V(z,y,r^{-j}+d(z,y))}\frac{r^{-j}}{r^{-j}+\|z-y\|}.
\end{aligned}
\end{equation}
Similarly,
\begin{equation}\label{d2}
|\psi_{t}(x,z)-\psi_{j}(x,z)| \leqslant
C(r-1)\frac1{V(x,z,r^{-j}+d(x,z))}\frac{r^{-j}}{r^{-j}+\|x-z\|}.
\end{equation}
Therefore, Lemma \eqref{lemtan2} gives
\begin{equation}\label{M-size}
|S_j(x,y)| \leqslant
C(r-1)\frac1{V(x,y,r^{-j}+d(x,y))}\frac{r^{-j}}{r^{-j}+\|x-y\|}.
\end{equation}
To see the smooth condition of $S_j(\cdot, y)$, we write
\begin{align*}
&S_j(x,y)-S_j(x',y)\\
&\qquad =\big(\psi_{t}-\psi_{j}\big)q_{t}(x,y)+
\psi_{j}\big(q_{t}-q_{j}\big)(x,y)-\big(\psi_{t}-\psi_{j}\big)q_{t}(x',y)-
\psi_{j}\big(q_{t}-q_{j}\big)(x',y)\\
&\qquad=\int_{\R^N} \Big[\big(\psi_{t}(x,z)-\psi_{t}(x',z)\big) -\big(\psi_{j}(x,z)-\psi_{j}(x',z)\big)\Big]q_{t}(z,y)d\omega(z)\\
&\qquad\qquad+
\int_{\R^N}\Big[\psi_{j}(x,z)-\psi_j(x',z)\Big]\big[q_{t}(z,y)-q_{j}(z,y)\big]d\omega(z)
\end{align*}
To estimate the first term for $r^{-j}\leqslant t\leqslant
r^{-j+1},$ applying Lemma \eqref{heat estimate} with $m=1,
|\alpha|=1, |\beta|=0$ yields
\begin{align*}
&|\big(\psi_{t}(x,z)-\psi_{t}(x',z)\big) -\big(\psi_{j}(x,z)-\psi_{j}(x',z)\big)|\\
&\leqslant C\frac{(r^{-j+1}-r^{-j})}{ {r^{-j}}}\frac{\|x-x'\|}{r^{-j}} \\
&\qquad\times
\Big[\frac{1}{V(\boldsymbol{x},z,r^{-j}+d(\boldsymbol{x},z))}\frac{r^{-j}}{r^{-j}+\|\boldsymbol{x}-z\|}
+\frac{1}{V(\boldsymbol{x'},z,r^{-j}+d(\boldsymbol{x'},z))}\frac{r^{-j}}{r^{-j}+\|\boldsymbol{x'}-z\|}\Big].
\end{align*}
 By Lemma \eqref{lemtan2}, for $r^{-j}\leqslant t\leqslant r^{-j+1}$ we have
\begin{align*}
&\bigg|\int_{\R^N} \Big[\big(\psi_{t}(x,z)-\psi_{t}(x',z)\big) -\big(\psi_{j}(x,z)-\psi_{j}(x',z)\big)\Big]q_{t}(z,y)d\omega(z)\bigg|\\
&\leqslant C\frac{(r^{-j+1}-r^{-j})}{ {r^{-j}}}\frac{\|x-x'\|}{r^{-j}}\Big[\frac{1}{V(\boldsymbol{x},y,r^{-j}+d(\boldsymbol{x},y))}\frac{r^{-j}}{r^{-j}+\|\boldsymbol{x}-y\|} \\
&\hskip6cm+\frac{1}{V(\boldsymbol{x'},y,r^{-j}+d(\boldsymbol{x'},y))}\frac{r^{-j}}{r^{-j}+\|\boldsymbol{x'}-y\|}\Big]\\
&\leqslant C(r-1)\frac{\|x-x'\|}{r^{-j}}\Big[\frac{1}{V(\boldsymbol{x},y,r^{-j}+d(\boldsymbol{x},y))}\frac{r^{-j}}{r^{-j}+\|\boldsymbol{x}-y\|}\\
&\hskip6cm+\frac{1}{V(\boldsymbol{x'},y,r^{-j}+d(\boldsymbol{x'},y))}\frac{r^{-j}}{r^{-j}+\|\boldsymbol{x'}-y\|}\Big].
\end{align*}
To estimate the second term, applying the smoothness condition on $\psi_j$ and the estimate in \eqref{d1} and then apply the Lemma \eqref{lemtan2} yield
\begin{align*}
&\bigg|\int_{\R^N}\Big[\psi_{j}(x,z)-\psi_j(x',z)\Big]\big[q_{t}(z,y)-q_{j}(z,y)\big]d\omega(z)\bigg|\\
&\leqslant C\frac{(r^{-j+1}-r^{-j})}{ {r^{-j}}}\frac{\|x-x'\|}{r^{-j}}\Big[\frac{1}{V(\boldsymbol{x},y,r^{-j}+d(\boldsymbol{x},y))}\frac{r^{-j}}{r^{-j}+\|\boldsymbol{x}-y\|} \\
&\hskip6cm+\frac{1}{V(\boldsymbol{x'},y,r^{-j}+d(\boldsymbol{x'},y))}\frac{r^{-j}}{r^{-j}+\|\boldsymbol{x'}-y\|}\Big]\\
&\leqslant C(r-1)\frac{\|x-x'\|}{r^{-j}}\Big[\frac{1}{V(\boldsymbol{x},y,r^{-j}+d(\boldsymbol{x},y))}\frac{r^{-j}}{r^{-j}+\|\boldsymbol{x}-y\|} \\
&\hskip6cm+\frac{1}{V(\boldsymbol{x'},y,r^{-j}+d(\boldsymbol{x'},y))}\frac{r^{-j}}{r^{-j}+\|\boldsymbol{x'}-y\|}\Big].
\end{align*}
We obtain
$$|S_j(x,y)|
\leqslant
C(r-1)\frac1{V(x,y,r^{-j}+d(x,y))}\frac{r^{-j}}{r^{-j}+\|x-y\|}$$
and
\begin{align*}
|S_j(x,y)-S_j(x',y)| &\leqslant C(r-1)\frac{\|x-x'\|}{r^{-j}}
      \Big[\frac{1}{V(\boldsymbol{x},y,r^{-j}+d(\boldsymbol{x},y))}\frac{r^{-j}}{r^{-j}+\|\boldsymbol{x}-y\|} \\
&\qquad
+\frac{1}{V(\boldsymbol{x'},y,r^{-j}+d(\boldsymbol{x'},y))}\frac{r^{-j}}{r^{-j}+\|\boldsymbol{x'}-y\|}\Big],
\end{align*}
which together with the fact $\int_{\R^N}S_j(x,y)d\omega(x)=0$
implies that for any fixed $y, S_j(x,y))$ is a smooth molecule in
$\mathbb M(1, 1, r^{-j}, y).$ Moreover,
$$\|S_j(\cdot,y)\|_{\mathbb M(1, 1, r^{-j}, y)}\leqslant C(r-1).$$
Similarly, for any fixed $x$, $S_j(x,\cdot)\in \mathbb M(1, 1, r^{-j}, x)$
and
$$\|S_j(x,\cdot)\|_{\mathbb M(1, 1, r^{-j}, x)}\leqslant C(r-1).$$
Applying the same argument, we also obtain that $S^*_j(\cdot,y)\in \mathbb M(1, 1, r^{-j}, y)$ for any fixed $y$ and $S^*_j(x,\cdot)\in \mathbb M(1, 1, r^{-j}, x)$ for any fixed $x$.

By \eqref{aoe2} in the Lemma \eqref{aoe}, there exists $\varepsilon<1$ such that

$$|S_jS^*_k(x,y)|\leqslant C(r-1)^2r^{-|j-k|\varepsilon}\frac1{V(x,y,r^{-j\vee -k}+d(x,y))}\bigg(\frac{r^{-j\vee -k}}{r^{-j\vee -k}+d(x,y)}\bigg)^{\varepsilon}.$$
Let $T_j(x,y)=\int_{r^{-j}}^{r^{-j+1}}S_j(x,y)\frac{dt}{t}.$ Then
\begin{align*}
|T_jT^*_k(x,y)|
&\leqslant \int_{r^{-j}}^{r^{-j+1}}\int_{r^{-k}}^{r^{-k+1}}|S_jS^*_k(x,y)|\frac{dt}{t}\frac{ds}{s} \\
&\leqslant C(r-1)^2r^{-|j-k|\varepsilon}\frac1{V(x,y,r^{-j\vee
-k}+d(x,y))}\bigg(\frac{r^{-j\vee -k}}{r^{-j\vee
-k}+d(x,y)}\bigg)^{\varepsilon}
\end{align*}
Let $f\in L^2(\R^N,\omega)$. Then
\begin{align*}
\|T_jT^*_kf\|_{L^2(\R^N,\omega)}^2 &=\int_{\mathbb R^N}
\bigg|\int_{\mathbb
R^N}T_jT^*_k(x,y)f(y)d\omega(y)\bigg|^2d\omega(x)
\end{align*}
By the definition, $d(x,y)=\min\limits_{\sigma\in
G}\|\sigma(x)-y\|$,
\begin{align*}
|T_jT^*_k(x,y)| &\leqslant  \sum\limits_{\sigma\in
G}C(r-1)^2r^{-|j-k|\varepsilon}\frac1{\omega\big(B(y,r^{-j\vee
-k}+\|\sigma(x)-y\|)\big)}
\bigg(\frac{r^{-j\vee -k}}{r^{-j\vee -k}+\|\sigma(x)-y\|}\bigg)^{\varepsilon}\\
&\sim \sum\limits_{\sigma\in
G}C(r-1)^2r^{-|j-k|\varepsilon}\frac1{\big(B(\sigma(x),r^{-j\vee
-k}+\|\sigma(x)-y\|)\big)} \bigg(\frac{r^{-j\vee -k}}{r^{-j\vee
-k}+\|\sigma(x)-y\|}\bigg)^{\varepsilon}.
\end{align*}
Since $G$ is finite,
\begin{align*}
\|T_jT^*_kf\|_{L^2(\R^N,\omega)}^2 &\lesssim \sum\limits_{\sigma\in
G}(r-1)^4r^{-2|j-k|\varepsilon}\int_{\mathbb R^N}
\big(Mf(\sigma(x))\big)^2d\omega(x)\\
&= \sum\limits_{\sigma\in
G}(r-1)^4r^{-2|j-k|\varepsilon}\int_{\mathbb R^N}
\big(Mf(x)\big)^2d\omega(x)\\
&\lesssim (r-1)^4r^{-2|j-k|\varepsilon}\|f\|_{L^2(\R^N,\omega)}^2,
\end{align*}
where $M$ denotes the Hardy-Littlewood maximal operator on $(\Bbb
R^N, \|\cdot\|,\omega)$. Hence
$$\|T_jT^*_k\|_{L^2(\R^N,\omega)\mapsto L^2(\R^N,\omega)}\leqslant C(r-1)^2r^{-|j-k|\varepsilon}. $$
By the Cotlar-Stein's lemma, for $1<r\leqslant r_0,$
$$\left\|\sum\limits_{j=-\infty}^\infty\int_{r^{-j}}^{r^{-j+1}} \Big[\psi_tq_tf(x)-
\psi_{r^{-j}}q_{t_j}f(x)\Big]\frac{dt}{t}\right\|_{L^2(\R^N,\omega)}\le
C(r-1)^2.$$ This implies that $\|R_1f\|_{2,2}\leqslant C(r-1).$

It remains to show the claim for $R_M.$ To do this, we write
\begin{align*}
R_Mf(x)&= -\ln r\sum\limits_{j=-\infty}^\infty\sum\limits_{Q\in
Q^j}\int_{Q}
\big(\psi_{Q}(x,y)-\psi_{Q}(x,x_{Q})\big)q_{Q}f(y)d\omega(y) \\
&\quad +\ln r\sum\limits_{j=-\infty}^\infty\sum\limits_{Q\in
Q^j}\int_{Q}
\psi_{Q}(x,x_{Q})\big(q_{Q}f(x_Q)-q_{Q}f(y)\big)d\omega(y)\\
&=R^1_Mf(x)+R^2_Mf(x),
\end{align*}
where $Q^j$ are all cubes with the side length $r^{-M-j}.$

We estimate $\|R^2_M(f)\|_2$ only since the proof for
$\|R^1_M(f)\|_2$ is similar.

Denote $R^2_M(f)(x)=\int_{\R^N} E_{j}(x,z)f(z)d\omega(z),$ where
$$E_{j}(x,z)=
\sum\limits_{Q\in
Q^j}\int_{Q}\psi_{Q}(x,x_{Q})\big(q_{Q}(x_{Q},z)-q_{Q}(y,z))\big)d\omega(y).$$
We show that $E_j(x,z)$ are smooth molecules. Precisely,
$E_j(\cdot,z)\in \mathbb M(1, 1, r^{-j}, z)$ and $E_j(x,\cdot)\in
\mathbb M(1, 1, r^{-j}, x)$. Moreover, $\|E_j(x, \cdot)\|_{\mathbb
M(1, 1, r^{-j}, {x})}\leqslant C r^{-M}$ and similarly for
$E_j(\cdot,x).$ We show $E_j(x,\cdot)\in \mathbb M(1, 1, r^{-j}, x)$
only. It is clear that $\int_{\mathbb R^N} E_j(x,z)d\omega(z)=0.$
Observe that if $y\in Q\in Q^j,$
\begin{align*}
&|q_{Q}(x_{Q},z)-q_{Q}(y,z)|\\
&\leqslant  C\frac{\|y-x_Q\|}{r^{-j}}
\Big[\frac{1}{V(\boldsymbol{x_Q},z,r^{-j}+d(\boldsymbol{x_Q},z))}\frac{r^{-j}}{r^{-j}+\|\boldsymbol{x_Q}-z\|}+
    \frac{1}{V(\boldsymbol{y},z,r^{-j}+d(\boldsymbol{y},z))}\frac{r^{-j}}{r^{-j}+\|\boldsymbol{y}-z\|} \Big]\\
&\leqslant
C\frac{r^{-j-M}}{r^{-j}}\frac{1}{V(y,z,r^{-j}+d(y,z))}\frac{r^{-j}}{r^{-j}+\|y-z\|},
\end{align*}
since $r^{-j}+d(y,z)\sim r^{-j}+d(x_Q,z)$ and $r^{-j}+\|y-z\|\sim r^{-j}+\|x_Q-z\|$ for $y\in Q\in Q^j.$ Note that $\psi_{Q}(x,x_{Q})\sim \psi_{Q}(x,y),$ thus,
\begin{align*}
|E_{j}(x,z)|
&\leqslant C\sum\limits_{Q\in Q^j}\int_{Q}|\psi_{Q}(x,x_Q)\big(q_{Q}(x_{Q},z)-q_{Q}(y,z)\big)|d\omega(y)\\
&\leqslant C r^{-M}\int_{\R^N}|\psi_{r^{-j}}(x,y)|\frac{1}{V(y,z,r^{-j}+d(y,z))}\frac{r^{-j}}{r^{-j}+\|y-z\|}d\omega(y)\\
&\leqslant Cr^{-M}
\frac1{V(x,z,r^{-j}+d(x,z))}\frac{r^{-j}}{r^{-j}+\|x-z\|}.
\end{align*}
Now we verify the smooth condition of $E_j(x,\cdot).$ To this end,we write
\begin{align*}
&E_{j}(x,z)-E_{j}(x,z')\\
&=\sum\limits_{Q\in
Q^j}\int_{Q}\psi_{Q}(x,x_{Q})\Big[\big(q_{Q}(y,z)-q_{Q}(x_{Q},z)\big)
-\big(q_{Q}(y,z')-q_{Q}(x_{Q},z')\Big]d\omega(y).
\end{align*}
Applying the estimate in \eqref{heat estimate} with $|\alpha|=|\beta|=1$ yields

\begin{align*}
&|[q_Q(y,z)-q_Q(x_Q,z)]-[q_Q(y,z')-q_Q(x_Q,z')]|\\
&\lesssim \frac{\|y-x_Q}{r^{-j}}\frac{\|z-z'\|}{r^{-j}}\Bigg\{
{\frac{1}{V(\boldsymbol{y},z,r^{-j}+d(\boldsymbol{y},z))}}{\frac{r^{-j}}{r^{-j}+\|\boldsymbol{y}-z\|}}\\
&\qquad
+\frac1{V(\boldsymbol{x_Q},z,r^{-j}+d(\boldsymbol{x_Q},z))}\frac{r^{-j}}{r^{-j}+\|\boldsymbol{x_Q}-z\|}+
\frac1{V(\boldsymbol{y},z',r^{-j}+d(\boldsymbol{y},z'))}\frac{r^{-j}}{r^{-j}+\|\boldsymbol{y}-z'\|}\\
&\qquad
+\frac1{V(\boldsymbol{x_Q},z',r^{-j}+d(\boldsymbol{x_Q},z'))}\frac{r^{-j}}{r^{-j}+\|\boldsymbol{x_Q}-z'\|}\Bigg\}.
\end{align*}
Observe that if $y\in Q\in Q^j,$ then $(y,z,r^{-j}+d(y,z))\sim (x_Q,z,r^{-j}+d(x_Q,z))$ and $(y,z',r^{-j}+d(y,z'))\sim (x_Q,z',r^{-j}+d(x_Q,z')),$ and $\psi_{Q}(x,x_Q)\sim \psi_{Q}(x,y)$ thus,
\begin{align*}
&|E_{j}(x,z)-E_j(x,z')|\\
&\leqslant C r^{-M}\frac{\|z-z'\|}{r^{-j}}
\sum\limits_{Q\in Q^j}\int_{Q}|\psi_{Q}(x,y)|\Big(\frac{1}{V(y,\boldsymbol{z},r^{-j}+d(y,\boldsymbol{z}))}\frac{r^{-j}}{r^{-j}+\|y-\boldsymbol{z}\|}\\
&\hskip7cm +\frac{1}{V(y,\boldsymbol{z'},r^{-j}+d(y,\boldsymbol{z'}))}\frac{r^{-j}}{r^{-j}+\|y-\boldsymbol{z'}\|}\Big)d\omega(y)\\
&\leqslant Cr^{-M}\frac{\|z-z'\|}{r^{-j}}\int_{\R^N}|\psi_{r^{-j}}(x,y)|\Big(\frac{1}{V(y,\boldsymbol{z},r^{-j}+d(y,\boldsymbol{z}))}\frac{r^{-j}}{r^{-j}+\|y-\boldsymbol{z}\|}\\
&\hskip7cm+\frac{1}{V(y,\boldsymbol{z'},r^{-j}+d(y,\boldsymbol{z'}))}\frac{r^{-j}}{r^{-j}+\|y-\boldsymbol{z'}\|}\Big)d\omega(y)\\
&\leqslant Cr^{-M}\frac{\|z-z'\|}{r^{-j}}\Big(\frac{1}{V(x,\boldsymbol{z},r^{-j}+d(x,\boldsymbol{z}))}\frac{r^{-j}}{r^{-j}+\|x-\boldsymbol{z}\|}\\
&\hskip7cm
+\frac{1}{V(x,\boldsymbol{z'},r^{-j}+d(x,\boldsymbol{z'}))}\frac{r^{-j}}{r^{-j}+\|x-\boldsymbol{z'}\|}\Big),
\end{align*}
which implies that $E_j(x,\cdot)\in \mathbb M(1, 1, r^{-j}, x)$ with
$\|E_j(x, \cdot)\|_{\mathbb M(1, 1, r^{-j}, {x})}\leqslant C
r^{-M}.$

The same estimates hold for $E^*_{j},$ the adjoint operator of $E_{j}.$
By \eqref{aoe2} in the Lemma \ref{aoe},
$$ |E_{j}E^*_{k}(x,y)|
\lesssim r^{-2M}r^{-|j-k|\varepsilon}\frac1{V(x,y,r^{-j\vee -k}+d(x,y))}\bigg(\frac{r^{-j\vee -k}}{r^{-j\vee -k}+\|x-y\|}\bigg)^{\varepsilon}.$$
Similar to the proof for $R_1$, applying Cotlar-Stein's Lemma gets $\|R^2_{M}f\|_{L^2} \lesssim r^{-2M}\|f\|_{L^2}$. The claim is proved and hence, the proof of Theorem \ref{1.5} with $p=2$ is complete.
\end{proof}

\begin{proof}[\bf Proof of Theorem \ref{1.5} with $1<p<\infty$]

The main toll of the proof of the Theorem \ref{1.5} with $1<p<\infty$ is the
Dunkl-Calder\'on-Zygmund singular integral operator theory. Namely,
we will show that $R_1$ and $R_M$ are the Dunkl-Calder\'on-Zygmund
operators. To this end, observe that these two operators have been
proved to be bounded on $L^2(\R^N, \omega)$ with the operator norms
less than $C(r-1)$ and $Cr^{-M}$ for $1<r\leqslant r_0,$
respectively. It suffices to show that the kernels of $R_1$ and
$R_M$ are the Dunkl-Calder\'on-Zygmund singular integral operator
kernels.

We first verify the kernel of $R_1.$ We recall
$$R_1f(x)=-\sum\limits_{j=-\infty}^\infty\int_{r^{-j}}^{r^{-j+1}} \Big[\psi_tq_tf(x)-
\psi_{j}q_{j}f(x)\Big]\frac{dt}{t}=-\sum\limits_{j=-\infty}^\infty\int_{r^{-j}}^{r^{-j+1}}S_j(f)(x)\frac{dt}{t},$$
where $S_j(f)(x)=\int_{\R^N} S_j(x,y)f(y)d\omega(y)$ with
$S_j(x,y)=\psi_{t}q_{ t}(x,y)-\psi_{j}q_{j}(x,y).$ $R_1(x,y),$ the
kernel  of $R_1,$ can be written as
$$R_1(x,y)=-\sum\limits_{j=-\infty}^\infty\int_{r^{-j}}^{r^{-j+1}} S_j(x,y)\frac{dt}{t}.$$
Observe that if $r^{-j}\leqslant t\leqslant r^{-j+1},$
\begin{align*}
|S_j(x,y)|&\leqslant C(r-1)\frac1{V(x,y,r^{-j}+d(x,y))}\Big(\frac{r^{-j}}{r^{-j}+\|x-y\|}\Big)^{\varepsilon}\\
&\leqslant
C(r-1)\frac1{V(x,y,t+d(x,y))}\Big(\frac{t}{t+\|x-y\|}\Big)^{\varepsilon}
\end{align*}
for any $0<\varepsilon<1.$
Hence,
\begin{align*}
|R_1(x,y)|&\lesssim \sum\limits_{j=-\infty}^\infty\int_{r^{-j}}^{r^{-j+1}} |S_j(x,y)|\frac{dt}{t} \\
&\lesssim (r-1)\sum\limits_{j=-\infty}^\infty\int_{r^{-j}}^{r^{-j+1}} \frac1{V(x,y,t+d(x,y))}\Big(\frac{t}{t+\|x-y\|}\Big)^{\varepsilon}\frac{dt}{t}\\
&\lesssim (r-1)\int_{0}^\infty
\frac1{V(x,y,t+d(x,y))}\Big(\frac{t}{t+\|x-y\|}\Big)^{\varepsilon}\frac{dt}{t}.
\end{align*}
Applying the same proof as given in the Proposition \ref{pr300}, implies that for any fixed $0<\varepsilon<1,$
$$|R_1(x,y)|\lesssim (r-1) \Big(\frac{d(x,y)}{\|x-y\|}\Big)^\varepsilon\frac1{\omega(B(x,d(x,y)))}.$$
To verify the regularity conditions for $R_1,$ we apply the
following estimate for $r^{-j}\leqslant t\leqslant r^{-j+1},$
\begin{align*}
& |S_j(x,y)-S_j(x',y)| \\
&\lesssim (r-1){\|x-x'\|\over r^{-j}} \\
&\quad\times \Big({1\over V(\boldsymbol{x},y,r^{-j}+d(\boldsymbol{x},y))}\Big(\frac{r^{-j}}{r^{-j}+\|\boldsymbol{x}-y\|}\Big)^\varepsilon+{1\over V(\boldsymbol{x'},y,r^{-j}+d(\boldsymbol{x'},y))}\Big(\frac{r^{-j}}{r^{-j}+\|\boldsymbol{x'}-y\|}\Big)^\varepsilon\Big)\\
&\lesssim (r-1){\|x-x'\|\over {t}} \\
&\quad\times \Big({1\over
V(\boldsymbol{x},y,t+d(\boldsymbol{x},y))}\Big(\frac{t}{t+\|\boldsymbol{x}-y\|}\Big)^\varepsilon+{1\over
V(\boldsymbol{x'},y,+d(\boldsymbol{x'},y))}\Big(\frac{t}{t+\|\boldsymbol{x'}-y\|}\Big)^\varepsilon\Big).
\end{align*}
Therefore, for any $0<\varepsilon<1$ and $\|y-y'\|\leqslant
d(x,y)/2,$
\begin{align*}
|R_1(x,y)-R_1(x',y)|&\leqslant C(r-1)\int_{0}^{\infty}{\|x-x'\|\over
t}
\Big({1\over V(\boldsymbol{x},y,t+d(\boldsymbol{x},y))}\Big(\frac{t}{t+\|\boldsymbol{x}-y\|}\Big)^\varepsilon \\
&\hskip3cm+{1\over V(\boldsymbol{x'},y,+d(\boldsymbol{x'},y))}\Big(\frac{t}{t+\|\boldsymbol{x'}-y\|}\Big)^\varepsilon\Big)\frac{dt}{t}\\
&\lesssim
C(r-1)\Big(\frac{\|x-x'\|}{\|x-y\|}\Big)^\varepsilon\frac1{\omega(B(x,d(x,y)))}\qquad
\mbox{for } \|x-x'\|\leqslant d(x,y)/2.
\end{align*}
Similarly, for any $0<\varepsilon<1$ and $\|y-y'\|\leqslant
d(x,y)/2,$
$$|R_1(x,y)-R_1(x,y')|\lesssim (r-1) \Big(\frac{\|y-y'\|}{\|x-y\|}\Big)^\varepsilon\frac1{\omega(B(x,d(x,y)))}.$$
Now we verify that $R_M$ is a Dunkl-Calder\'on-Zygmund operator. We recall
\begin{align*}
R_Mf(x)&= -\ln r\sum\limits_{j=-\infty}^\infty\sum\limits_{Q\in
Q^j}\int_{Q}
\big(\psi_{Q}(x,y)-\psi_{Q}(x,x_{Q})\big)q_{Q}f(y)d\omega(y)  \\
&\quad- \ln r\sum\limits_{j=-\infty}^\infty\sum\limits_{Q\in
Q^j}\int_{Q}
\psi_{Q}(x,x_Q)\big(q_{Q}f(x_Q)-q_{Q}f(y)\big)d\omega(y)\\
&=R^1_Mf(x)+R^2_Mf(x).
\end{align*}

Denote $R^1_M(f)(x)=-\ln r\sum\limits_{j=-\infty}^\infty\int_{\R^N}
E_{j}(x,z)f(z)d\omega(z),$ where
$$E_{j}(x,z)=\sum\limits_{Q\in Q^j}\int_{Q}\big(\psi_{Q}(x,y)-\psi_{Q}(x,x_{Q})\big)q_{Q}(y,z)d\omega(y).$$
Applying the same proof for $0<\varepsilon<1$,
$$
|E_j(x,z)|\lesssim r^{-M}\frac1{V(x,z,r^{-j}+d(x,z))}\Big(\frac{r^{-j}}{r^{-j}+\|x-z\|}\Big)^\varepsilon.
$$
Note that $R^1_M(x,z)=-\ln r\sum\limits_{j=-\infty}^\infty
E_j(x,z)$. Similar to the estimate for $R_1,$ we obtain that for any
$0<\varepsilon<1,$
\begin{center}
    \begin{equation}\label{R1m1}
    |R^1_M(x,z)|\lesssim r^{-M}\Big(\frac{d(x,z)}{\|x-z\|}\Big)^\varepsilon \frac1{\omega(B(x,d(x,y)))};
    \end{equation}
    \begin{equation}\label{R1m2}
    |R^1_M(x,z)-R^1_M(x,z')|\lesssim  r^{-M}\Big(\frac{\|z-z'\|}{\|x-z\|}\Big)^\varepsilon\frac{1}{\omega(B(x,d(x,z)))}
    \end{equation}
    for $\|z-z'\|\leqslant  d(x,z)/2;$
    \begin{equation}\label{R1m3}
    |R^1_M(x',z)-R^1_M(x,z)|\lesssim  r^{-M}\Big(\frac{\|x-x'\|}{\|x-z\|}\Big)^\varepsilon\frac1{\omega(B(x,d(x,z)))}
    \end{equation}
    for $\|x-x'\|\leqslant d(x,z)/2.$
\end{center}
Observe that
$$R^2_M(x,z)=\ln r\sum\limits_{j=-\infty}^\infty\sum\limits_{Q\in Q^j}\int_{Q}\psi_{Q}(x,x_{Q})\big(q_{Q}(y,x_Q)-q_Q(z,y)\big)d\omega(y).$$
Similarly, the kernel $R^2_M(x,z)$ also satisfies the conditions \eqref{R1m1}-\eqref{R1m3}.

Suppose that $T$ is the Dunkl-Calder\'on-Zygmund operator. We denote
$\|T\|_{dcz}=\|T\|_{2,2}+\|K\|_{dcz}$ by the
Dunkl-Calder\'on-Zygmund operator norm, where $\|K\|_{dcz}$ the
minimum of the constants in \eqref{size3}, \eqref{smooth y3} and
\eqref{smooth x3}. The $L^2(\R^N,\omega)$ boundedness and all size
and smoothness conditions for $R_1$ and $R_M$ obtained above imply
that $R_1$ and $R_M$ are the Dunkl-Calder\'on-Zygmund operator with
the operator norm $\|R_1\|_{dcz}\lesssim (r-1)$ and
$\|R_M\|_{dcz}\lesssim r^{-M},$ where $1<r\leqslant r_0.$

Applying Theorem \ref{th1.1}, $R_1$ and $R_M$ are bounded on $L^p(\R^N,\omega), 1<p<\infty,$ with $\|R_1\|_{p,p}\lesssim (r-1)$ and $\|R_M\|_{p,p}\lesssim r^{-M}.$ Therefore, if take $r_0$ to be close to 1 and $M$ is large enough, then $T_M=I-R_1-R_M$ is invertible and the inverse of $T_M$ is bounded on $L^p(\R^N,\omega)$ for all $1<p<\infty.$ For $f\in L^2(\R^N,\omega)\cap L^p(\R^N,\omega), 1<p<\infty,$ set $h=(T_M)^{-1}f.$ Then $\|h\|_p\sim\|f\|_{p}.$ Moreover,
$$f(x)=T_M(T_M)^{-1}f(x)=-\ln r\sum\limits_{j=-\infty}^\infty\sum\limits_{Q\in Q^j}w(Q) \psi_{j}(x,x_{Q})q_{j}
h(x_{Q}).$$

It remains to show that the above series converges in the
$L^p(\R^N,\omega), 1<p<\infty,$ norm. The proof will follow from
Theorem \ref{1.6}, the Littlewood-Paley estimates on
$L^p(\R^N,\omega)$ for $1<p<\infty.$ See the details in next {\bf
Subsection}.
\end{proof}

\subsection{{Littlewood-Paley Theory on $L^p, 1<p<\infty$}}
\ \\

We now give the proof of Theorem \ref{1.6}.

\begin{proof}[\bf Proof of Theorem \ref{1.6} with $p=2$]

We first show
$$\|S(f)\|_2^2=\sum\limits_{j=-\infty}^\infty\sum\limits_{Q\in Q^j}\omega(Q)|q_{Q}f(x_{Q})|^2\leqslant C\|f\|_2^2.$$
Te proof follows from a duality argument together with the almost orthogonal estimates. Indeed,
\begin{align*}
&\sup \Big\{\Big( \sum\limits_{j=-\infty}^\infty\sum\limits_{Q\in Q^j}\omega(Q)|q_{Q}f(x_{Q})|^2\Big)^{\frac{1}{2}}: f\in L^2, \|f\|_2\leqslant 1\Big \}\\
&=\sup \Big\{ \sum\limits_{j=-\infty}^\infty\sum\limits_{Q\in Q^j}\omega(Q)| q_{Q}f(x_{Q})\cdot g_Q|: f\in L^2, \|f\|_2\leqslant 1, \sum\limits_{j=-\infty}^\infty\sum\limits_{Q\in Q^j}\omega(Q)|g_Q|^2\leqslant 1 \Big\}\\
&=\sup \Big\{ \|\sum\limits_{j=-\infty}^\infty\sum\limits_{Q\in
Q^j}\omega(Q)g_Q q_{Q}(x_{Q},\cdot)\|_2:
\sum\limits_{j=-\infty}^\infty\sum\limits_{Q\in
Q^j}\omega(Q)|g_Q|^2\leqslant 1 \Big \}.
\end{align*}
Observe that
\begin{align*}
\|\sum\limits_{j=-\infty}^\infty\sum\limits_{Q\in
Q^j}\omega(Q)g_Qq_{Q}(x_{Q},\cdot)\|^2_2
&=\Big\langle\sum\limits_{j=-\infty}^\infty\sum\limits_{Q\in Q^j}\omega(Q)g_Qq_{Q}(x_{Q},\cdot), \sum\limits_{j'=-\infty}^\infty\sum\limits_{Q'\in Q^{j'}}\omega(Q')g_{Q'}q_{Q'}(x_{Q'},\cdot)\Big\rangle \\
&=\sum\limits_{j=-\infty}^\infty\sum\limits_{Q\in
Q^j}\sum\limits_{j'=-\infty}^\infty\sum\limits_{Q'\in
Q^{j'}}\omega(Q)\omega(Q')\langle q_{Q'}(x_{Q'},\cdot),
q_{Q}(x_Q,\cdot)\rangle g_Qg_{Q'}
\end{align*}
and for $Q\in Q^j$ and $Q'\in Q^{j'},$ applying the almost orthogonal estimates implies
\begin{align*}
&|\langle q_{Q'}(x_{Q'},\cdot), q_{Q}(x_Q,\cdot)\rangle|\\
&\lesssim r^{-|j-j'|\varepsilon}\frac{1}{\omega(B(x_Q, r^{-j-M}\vee
r^{-j'-M}+d(x_Q,x_{Q'})))}\Big(\frac{r^{-j-M}\vee
r^{-j'-M}}{r^{-j-M}\vee r^{-j'-M}+\|x_Q-x_{Q'}\|}\Big)^\varepsilon.
\end{align*}
The above estimate yields
$$\sum\limits_{j=-\infty}^\infty\sum\limits_{Q\in Q^j}\omega(Q)|\langle q_{Q'}(x_{Q'},\cdot), q_{Q}(x_Q,\cdot)\rangle|\leqslant C.$$
Therefore,
$$\sum\limits_{j=-\infty}^\infty\sum\limits_{Q\in Q^j}\sum\limits_{j'=-\infty}^\infty\sum\limits_{Q'\in Q^{j'}}\omega(Q)\omega(Q')\langle q_{Q'}(x_{Q'},\cdot), q_{Q}(x_Q,\cdot)\rangle g_Qg_{Q'}$$
$$\leqslant C \Big(\sum\limits_{j=-\infty}^\infty\sum\limits_{Q\in Q^j}\omega(Q)|g_Q|^2\Big)^{\frac{1}{2}}\Big(\sum\limits_{j'=-\infty}^\infty\sum\limits_{Q'\in Q^{j'}}\omega(Q')|g_{Q'}|^2\Big)^{\frac{1}{2}}.$$
We have proved the estimate $\|S(f)\|_2\leqslant C\|f\|_2,$ which
together with the duality argument implies the estimate $\|f\|_2\leq
C\|S(f)\|_2.$ We leave the detail to the reader.
\end{proof}

\begin{proof}[\bf Proof of Theorem \ref{1.6} with $1<p<\infty$]
The idea of the proof is to consider $S(f)$ as a
vector-valued Dunkl-Calder\'on-Zygmund operator. To this end, we recall
\begin{align*}
\mathcal Sf(x)&=\Big(\sum\limits_{Q}|q_{Q}f(x_{Q})|^2\chi_Q(x)\Big)^{\frac{1}{2}}\\
&=\Big(\sum\limits_{Q}|\int_{\R^N}
\chi_Q(x)q_{Q}(x_Q,y)f(y)d\omega(y)|^2\Big)^{\frac{1}{2}}.
\end{align*}
This leads to introduce the $H-$ valued function $\{f_Q(x)\},$ where $Q$ are all $r$-dyadic cubes in $\R^N$ and the norm of $\{f_Q(x)\}$ is defined by
$$\|f_Q(x)\|_{H}:= \Big(\sum\limits_{Q}|f_Q(x)|^2\Big)^{\frac{1}{2}}.$$
Supporse that $T$ is an $L^2-$ bounded $H-$ valued operator defined
by $$T(f)(x)=\bigg\{\int_{\R^N} K_Q(x,y)(f)(y)d\omega(y) \bigg\},$$
where $K(x,y)=\{K_Q(x,y)\}$ satisfies the following condition: for
some $0<\varepsilon\leqslant 1,$
\begin{center}
    \begin{equation}\label{smooth y4}
    \|K(x,y)-K(x,y')\|_{H}\lesssim \Big(\frac{\|y-y'\|}{\|x-y\|}\Big)^\varepsilon\frac{1}{\omega(B(x,d(x,y)))} \qquad {\rm for}\ \|y-y'\|\leqslant  d(x,y)/2;
    \end{equation}
\end{center}
Then $T$ is bounded from $L^p(\R^N,\omega)$ to $L^p(H),$ that is, there exists a constant $C$ such that for $1<p<\infty,$
$$\|Tf\|_{L^p(H)}\leqslant C\|f\|_p.$$
The proof of this argument is same as in the proof of Theorem \ref{th1.1}. We leave the details of the proof to the reader.

Now we define the $H-$ valued operator $\mathcal
Sf(x)=\{K_Q(f)(x)\},$ where
$$K_Q(f)(x)=q_Q(f)(x_Q)\chi_Q(x)=\int_{\R^N}
\chi_Q(x)q_Q(x_Q,y)f(y)d\omega(y).$$

Observe that $\|\mathcal S(f)\|_{L^2(H)}=\|S(f)\|_2$ and hence
$$\|\mathcal S(f)\|_{L^2(H)}=\|S(f)\|_2\leqslant C \|f\|_2.$$
To see that $\mathcal S(f)$ is bounded from $L^p(\R^N,\omega)$ to $L^p(H), 1<p<\infty,$ it remains to verify that the kernel of $\mathcal S$ satisfies the condition \eqref{smooth y4}. To this end, we write
$K(x,y),$ the kernel of $\mathcal S,$ as $K(x,y)=\{K_Q(x,y)\}=\{\chi_Q(x)q_Q(x_Q,y)\}.$
It follows that
$$\|K(x,y)-K(x,y')\|_{H}=\|\{\chi_Q(x)[q_Q(x_Q,y)-q_Q(x_Q,y')]\}\|_{H}.$$
Therefore,
$$\|K(x,y)-K(x,y')\|_{H}=\Big(\sum\limits_{Q}\chi_Q(x)|q_Q(x_Q,y)-q_Q(x_Q,y'|^2) \Big)^{\frac{1}{2}}.$$
By the size condition on $q_Q$ with $x\in Q$ and $Q\in Q^j,$ we have
\begin{align*}
|q_Q(x_Q,y)-q_Q(x_Q,y')|&\lesssim  \frac{1}{V(x_Q,\boldsymbol{y},r^{-j}+d(x_Q,\boldsymbol{y}))}\frac{r^{-j}}{r^{-j}+\|x_Q-\boldsymbol{y}\|}\\
&\qquad
+\frac{1}{V(x_Q,\boldsymbol{y'},r^{-j}+d(x_Q,\boldsymbol{y'}))}\frac{r^{-j}}{r^{-j}+\|x_Q-\boldsymbol{y'}\|}.
\end{align*}
and by the smoothness condition on $q_Q$,

\begin{align*}
|q_Q(x_Q,y)-q_Q(x_Q,y')|&\lesssim \frac{\|y-y'\|}{r^{-j}} \Big(\frac{1}{V(x_Q,\boldsymbol{y},r^{-j}+d(x_Q,\boldsymbol{y}))}\frac{r^{-j}}{r^{-j}+\|x_Q-\boldsymbol{y}\|}\\
&\qquad+\frac{1}{V(x_Q,\boldsymbol{y'},r^{-j}+d(x_Q,\boldsymbol{y'}))}\frac{r^{-j}}{r^{-j}+\|x_Q-\boldsymbol{y'}\|}\Big).
\end{align*}

Observe that when $d(y,y')\leqslant \|y-y'\|\leq
\frac{1}{2}d(x,y)\leqslant \frac{1}{2}\|x-y\|$ then $d(x,y)\sim
d(x,y')$ and $\|x-y\|\sim \|x-y'\|.$ Moreover, if $x\in Q$ and $Q\in
Q^j$ then $V(x_Q, y, r^{-j}+d(x_Q,y))\sim V(x, y, r^{-j}+d(x,y))\sim
V(x_Q, y', r^{-j}+d(x_Q,y'))\sim V(x, y', r^{-j}+d(x,y'))$ and
$\|x_Q-y\|\sim \|x-y\|\sim \|x_Q-y'\|\sim \|x-y'\|.$ This yields
$$|q_Q(x_Q,y)-q_Q(x_Q,y')\lesssim \frac{1}{V(x,y,r^{-j}+d(x,y))}\frac{r^{-j}}{r^{-j}+\|x-y\|}$$
and
$$|q_Q(x_Q,y)-q_Q(x_Q,y')\lesssim \frac{\|y-y'\|}{r^{-j}} \frac{1}{V(x,y,r^{-j}+d(x,y))}\frac{r^{-j}}{r^{-j}+\|x-y\|}.$$
Thus, if $\|y-y'\|\leqslant \frac{1}{2}d(x,y),$ we consider three
cases: (i) $\|y-y'\|\geqslant r^{-j};$ (ii) $\|y-y'\|\leqslant
r^{-j}\leqslant \|x-y\|;$ (iii) $r^{-j}\geqslant \|x-y\|.$ For the
first case, we have
\begin{align*}
&\sum\limits_{j:\rm (i)}\sum\limits_{Q\in Q^j}\chi_Q(x)|q_Q(x_Q,y)-q_Q(x_Q,y')|^2\\
&\qquad \lesssim
\sum\limits_{j:\rm (i)}\sum\limits_{Q\in Q^j}\chi_Q(x) \Big( \frac{1}{V(x,y,r^{-j}+d(x,y))}\frac{r^{-j}}{r^{-j}+\|x-y\|}\Big)^2\\
&\qquad \lesssim
\Big(\frac{\|y-y'\|}{\|x-y\|}\frac{1}{\omega(B(x,d(x,y)))}\Big)^2.
\end{align*}
For the second case, it follows
\begin{align*}
&\sum\limits_{j:\rm (ii)}\sum\limits_{Q\in Q^j}\chi_Q(x)|q_Q(x_Q,y)-q_Q(x_Q,y')|^2\\
&\qquad \lesssim
\sum\limits_{j:\rm (ii)}\sum\limits_{Q\in Q^j}\chi_Q(x) \Big(\frac{\|y-y'\|}{r^{-j}} \frac{1}{V(x,y,r^{-j}+d(x,y))}\frac{r^{-j}}{r^{-j}+\|x-y\|}\Big)^2 \\
&\qquad \lesssim \Big(\frac{\|y-y'\|}{\|x-y\|}\ln\frac{\|x-y\|}{\|y-y'\|}\frac{1}{\omega(B(x,d(x,y)))}\Big)^2 \\
&\qquad \lesssim
\Big(\Big(\frac{\|y-y'\|}{\|x-y\|}\Big)^\varepsilon\frac{1}{\omega(B(x,d(x,y)))}\Big)^2.
\end{align*}
The last case gives
\begin{align*}
&\sum\limits_{j:\rm (iii)}\sum\limits_{Q\in Q^j}\chi_Q(x)|q_Q(x_Q,y)-q_Q(x_Q,y')|^2 \\
&\qquad \lesssim
\sum\limits_{j:\rm (iii)}\sum\limits_{Q\in Q^j}\chi_Q(x) \Big(\frac{\|y-y'\|}{r^{-j-M}} \frac{1}{V(x,y,r^{-j}+d(x,y))}\frac{r^{-j}}{r^{-j}+\|x-y\|}\Big)^2 \\
&\qquad \lesssim
\Big(\frac{\|y-y'\|}{\|x-y\|}\frac{1}{\omega(B(x,d(x,y)))}\Big)^2.
\end{align*}
These estimates imply that if $\|y-y'\|\leqslant \frac{1}{2}d(x,y),$
$$\|K(x,y)-K(x,y')\|_{H}\lesssim \Big(\frac{\|y-y'\|}{\|x-y\|}\Big)^\varepsilon\frac{1}{\omega(B(x,d(x,y)))}.$$
Again, the estimate $\|S(f)\|_p\leqslant C\|f\|_p, 1<p<\infty,$
together with the duality argument implies the estimate $\|f\|_p\leq
C\|S(f)\|_p, 1<p<\infty$.

The proof of Theorem \ref{1.6} is
complete.
\end{proof}
We now return to the proof on the convergence in Theorem \ref{1.5}, that is, for $f\in L^2(\R^N,\omega)\cap L^p(\R^N,\omega), 1<p<\infty,$ there exists a function $h$ with $\|h\|_p\sim \|f\|_p$ such that
\begin{align*}
f(x)=\sum\limits_{j=-\infty}^\infty\sum\limits_{Q\in Q^j}\omega(Q)
\psi_{Q}(x,x_{Q})q_{Q} h(x_{Q}),
\end{align*}
where the series converges in $L^2(\R^N,\omega)\cap L^p(\R^N,\omega).$

It is sufficient to show that
$$\|\sum\limits_{|j|>n}\sum\limits_{Q\in Q^j}\omega(Q)\psi_{Q}(\cdot,x_{Q})q_{Q}h(x_{Q})\|_p\rightarrow 0$$
as $n\rightarrow \infty.$

By Theorem \ref{1.6}, we only need to show
$$\|S\Big(\sum\limits_{|j|>n}\sum\limits_{Q\in Q^j}\omega(Q)\psi_{Q}(\cdot,x_{Q})q_{Q}h(x_{Q})\Big)\|_p\rightarrow 0$$
as $n\rightarrow \infty.$
Observe that
\begin{align*}
&S\Big(\sum\limits_{|j|>n}\sum\limits_{Q\in Q^j}\omega(Q)\psi_{Q}(\cdot,x_{Q})q_{Q}h(x_{Q})\Big)(x)\\
&\qquad=
\Big(\sum\limits_{Q'}\Big|q_{Q'}\Big(\sum\limits_{|j|>n}\sum\limits_{Q\in
Q^j}\omega(Q)\psi_{Q}(\cdot,x_{Q})q_{Q}h(x_{Q})\Big)(x_{Q'})\Big|^2\chi_{Q'}(x)\Big)^{\frac{1}{2}}.
\end{align*}
Note that
$$q_{Q'}\Big(\sum\limits_{|j|>n}\sum\limits_{Q\in Q^j}\omega(Q)\psi_{Q}(\cdot,x_{Q})q_{Q}h(x_{Q})\Big)(x_{Q'})=\sum\limits_{|j|>n}\sum\limits_{Q\in Q^j}\omega(Q)q_{Q'}\psi_{Q}(x_{Q'},x_{Q})q_{Q}h(x_{Q}).$$
Applying the almost orthogonal estimate \ref{aoe2} in the Lemma \ref{aoe} on $\big(q_{Q'}\psi_Q\big)(x_{Q'},x_Q)$,
we obtain that for any $\varepsilon\in (0,1)$ there exists a constant $C>0$ such that
$$|\big(q_{Q'}\psi_Q\big)(x_{Q'},x_Q)|
\lesssim r^{-|j-j'|\varepsilon} \frac1{V(x_{Q^{'}},x_{Q},r^{-j\vee
-j'}+d(x_{Q^{'}},x_{Q}))} \bigg(\frac{r^{-j \vee -j'}}{r^{-j \vee
-j'} +d(x_{Q^{'}},x_{Q})}\bigg)^{\varepsilon}.$$ By the definition,
$d(x,y)=\min\limits_{\sigma\in G}\|\sigma(x)-y\|$, for $x\in Q'$ we
get
\begin{align*}
&|q_{Q'}\psi_Q(x_{Q'},x_Q)|\chi_{Q'}(x) \\
&\lesssim r^{-|j-j'|\varepsilon} \frac1{\omega(B(x_{Q},r^{-j\vee -j'}+d(x,x_{Q})))}
\bigg(\frac{r^{-j \vee -j'}}{r^{-j \vee -j'} +d(x,x_{Q})}\bigg)^{\varepsilon}\\
&\lesssim \sum\limits_{\sigma\in G} r^{-|j-j'|\varepsilon}
\frac1{\omega(B(x_{Q},r^{-j\vee -j'}+\|\sigma(x)-x_{Q}\|))}
\bigg(\frac{r^{-j \vee -j'}}{r^{-j \vee -j'} +\|\sigma(x)-x_{Q}\|}\bigg)^{\varepsilon}\chi_{Q'}(x)\\
&\lesssim \sum\limits_{\sigma\in G} r^{-|j-j'|\varepsilon}
\frac1{\omega(B(\sigma(x),r^{-j\vee -j'}+\|\sigma(x)-x_{Q}\|))}
\bigg(\frac{r^{-j \vee -j'}}{r^{-j \vee -j'}
+\|\sigma(x)-x_{Q}\|}\bigg)^{\varepsilon}\chi_{Q'}(x)
\end{align*}
for $x\in Q'$.
Let $M$ denote the Hardy-Littlewood maximal operator on $\Bbb R^N$.  Following a discrete version of the Hardy-Littlewood maximal function estimate, see more detail of the proof of Lemma \ref{ex2} in next {\bf Subsection 4.1}, yields
\begin{equation}\label{H-L max}
\begin{aligned}
&\sum\limits_{Q\in Q^j}w(Q)\frac1{\omega(B(\sigma(x), r^{-j\vee -j'}+\|\sigma(x)-x_{Q}\|))}\\
&\qquad \times \bigg(\frac {r^{-j\vee -j'}}{r^{-j\vee -j'}+\|\sigma(x)-x_{Q}\|}\bigg)^{\varepsilon}|q_{Q}h(x_{Q})|\\
&\leqslant Cr^{|-j'-(-j'\vee -j)|{\bf N}(1-\frac 1\theta)}
\bigg\{M\Big(\sum\limits_{Q\in
Q^j}|q_{Q}h(x_{Q})|^\theta\chi_{Q}\Big)(\sigma(x))\bigg\}^{1/\theta},
\end{aligned}
\end{equation}
where $\theta$ satisfies $\frac{\bf N}{{\bf
N}+\varepsilon}<\theta<1.$ Hence,
\begin{align*}
&\sum\limits_{Q\in Q^j}\omega(Q)q_{Q'}\psi_{Q}(x_{Q'},x_{Q})q_{Q}h(x_{Q})\chi_{Q'}(x) \\
&\lesssim \sum\limits_{\sigma\in
G}r^{-|j-j'|\varepsilon}r^{|-j'-(-j'\vee -j)|{\bf N}(1-\frac
1\theta)} \bigg\{M\Big(\sum\limits_{Q\in
Q^j}|q_{Q}h(x_{Q})|^\theta\chi_{Q}\Big)(\sigma(x))\bigg\}^{1/\theta}\chi_{Q'}(x).
\end{align*}
It is clear that for $\frac{\bf N}{{\bf N}+\varepsilon}<\theta<1,$
$$\sup_{j'}\sum\limits_{j\in \Bbb Z} r^{-|j-j'|\varepsilon}r^{|-j'-(-j'\vee -j)|{\bf N}(1-\frac 1\theta)}<\infty.$$
By H\"older's inequality, we have
\begin{align*}
&\Big|\sum\limits_{|j|>n}\sum\limits_{Q\in Q^j}\omega(Q)
q_{Q'}\psi_{Q}(x_{Q'},x_{Q})q_{Q}h(x_{Q})\Big|^2\chi_{Q'}(x) \\
&\lesssim \sum\limits_{\sigma\in G}\sum\limits_{|j|>n}
r^{-|j-j'|\varepsilon}r^{|-j'-(-j'\vee -j)|{\bf N}(1-\frac 1\theta)}
\bigg\{M\Big(\sum\limits_{Q^j}|q_{Q^j}h(x_{Q^j})|^\theta\chi_{Q^j}\Big)(\sigma(x))\bigg\}^{2/\theta}\chi_{Q'}(x).
\end{align*}
This implies
\begin{align*}
&S\Big(\sum\limits_{|j|>n}\sum\limits_{Q\in Q^j}\omega(Q)\psi_{Q}(x,x_{Q})q_{Q}h(x_{Q})\Big)\\
&\lesssim \bigg\{\sum\limits_{\sigma\in G}\sum\limits_{|j|>n}
\bigg\{M\Big(\sum\limits_{Q\in
Q^j}|q_{Q}h(x_{Q})|^\theta\chi_{Q}\Big)(\sigma(x))\bigg\}^{2/\theta}\bigg\}^{1/2},
\end{align*}
where the estimate is used: for $\frac{\bf N}{{\bf
N}+\varepsilon}<\theta<1,$
$$\sup_j\sum\limits_{j'\in \Bbb Z} r^{-|j-j'|\varepsilon}r^{|-j'-(-j'\vee -j)|{\bf N}(1-\frac 1\theta)}<\infty.$$
The Fefferman-Stein vector valued maximal function inequality with $\theta<1<p<\infty$ yields
$$\Big\|S\Big(\sum\limits_{|j|>n}\sum\limits_{Q\in Q^j}\omega(Q)\psi_{Q}(\cdot,x_{Q})q_{Q}h(x_{Q})\Big)\Big\|_p\lesssim \sum\limits_{\sigma\in G}
\Big\|\Big(\sum\limits_{|j|>n}\sum\limits_{Q\in
Q^j}|q_{Q}h(x_{Q})|^2\chi_Q(\sigma(x))\Big)^{\frac{1}{2}}\Big\|_p.$$
Since $G$ is finite group and
$$\int_{\Bbb R^N}  f(\sigma(x))d\omega(x)=\int_{\Bbb R^N} f(x)d\omega(x),$$
we get
\begin{align*}
&\Big\|S\Big(\sum\limits_{|j|>n}\sum\limits_{Q\in Q^j}\omega(Q)\psi_{Q}(\cdot,x_{Q})q_{Q}h(x_{Q})\Big)\Big\|_p \\
&\lesssim \Big\|\Big(\sum\limits_{|j|>n}\sum\limits_{Q\in
Q^j}|q_{Q}h(x_{Q})|^2\chi_Q(x)\Big)^{\frac{1}{2}}\Big\|_p,
\end{align*}
where the last term tends to zero as $n\rightarrow \infty$ since
$h\in L^p(\R^N,\omega), 1<p<\infty,$ and hence, $\|S(h)\|_p\leqslant
C$ by Theorem \ref{1.6}.

\section{{Littlewood-Paley Theory and Dunkl-Hardy Space}}

In this section, we establish the Littlewood-Paley Theory for
$L^p(\mathbb R^N, \omega), p\leqslant 1$ and develop the Dunkl-Hardy
Space.

\subsection{{Littlewood-Paley square function $S(f)$ in $L^p,\,  p\leqslant 1$}}
\ \\

We begin with the proof of Theorem \ref{1.7}.

\begin{proof}[{\bf The proof of Theorem \ref{1.7}}]

    As mentioned before,
    the Dunkl-Calder\'on-Zygmund operator theory plays a crucial role.
    Let's recall the proof of Theorem \ref{1.6}. First, we decompose the
    identity operator on $L^2(\R^N,\omega)$ by $I= T_M + R_1 + R_M$ and
    then applying the Dunkl-Calder\'on-Zygmund operator theory, namely,
    Theorem \ref{th1.1}, to estimate $R_1$ and $R_M$ on
    $L^p(\R^N,\omega).$ We obtained that the $L^p, 1<p<\infty,$ norm of $R_1+
    R_M$ is less than 1. It turns out that $T_M$ is
    invertibal and $(T_M)^{-1},$ the inverse of $T_M,$ is bounded on
    $L^2(\R^N,\omega)\cap L^p(\R^N,\omega).$ Applying the same strategy, to show
    Theorem \ref{1.7}, we need to estimate $R_1$ and $R_M$ on $L^2(\R^N,\omega)\cap
    H_d^p(\R^N,\omega)$ and show that the norms of $R_1$ and $R_M$ on
    $L^2(\R^N,\omega)\cap H_d^p(\R^N,\omega)$ are less than 1. To this
    end, we recall the Littlewood-Paley theory and Hardy space on space
    of homogeneous type $(\R^N, \| \cdot\|, \omega)$ in the sense of Coifman and Weiss. The discrete
    Calder\'on reproducing formula in Theorem \ref{dCRh} leads the
    following discrete square function on space of homogeneous type
    $(\Bbb R^N, \|\cdot\|, \omega)$:
    \begin{definition}\label{scw}
        Suppose that $f\in (\mathcal M(\beta,\gamma,r,x_0))^\prime.$ $S_{cw}(f),$ the Littlewood-Paley square function of $f$ for space of homogeneous type $(\Bbb R^N, \|\cdot\|, \omega),$ is defined by
        $$S_{cw}(f)(x)=\bigg\{\sum\limits_{k=-\infty}^\infty\sum\limits_{Q\in Q^k}
        |D_kf(x_{Q})|^2\chi_{Q}(x)\bigg\}^{1/2},$$
        where $D_k, Q^k$ are the same as given by Theorem \ref{dCRh}.
    \end{definition}
    See \cite{HMY} for more details.

    The strategy for estimating $R_1$ and $R_M$ on $L^2(\R^N,\omega)$ is to establish the following estimates:
    $$\|S(R_1(f)\|_p\leqslant C \|S_{cw}(R_1(f)\|_p)\leqslant C \|R_1\|_{dcz} \|S_{cw}(f)\|_p\leqslant C\|R_1\|_{dcz}\|S(f)\|_p $$
    and the similar estimates hold for $R_M.$

    In Theorem \ref{1.6}, $R_1$ has been proved to be the Dunkl-Calder\'on-Zygmund operator with $\|R_1\|_{dcz}\leqslant C (r-1)$ for some $1<r\leqslant r_0$ with $r_0$ is closed 1.
    The similar estimates hold for $R_M$ with $\|R_M\|_{dcz}\leqslant C r^{-M}$ for $1<r<r_0.$  Now we show the above  estimates for $R_1$ by the following steps.

    ${\textbf{Step 1}: \|S(R_1(f)\|_p\leqslant C \|S_{cw}(R_1(f))\|_p}$

    Indeed, we only need to show that for each $f\in L^2(\R^N,\omega),$
    $$\|S(f)\|_p\leqslant C \|S_{cw}(f)\|_p$$
    since $R_1$ is bounded on $L^2(\R^N,\omega).$

    To this end, by the discrete Calder\'on reproducing formula of $f\in L^2(\R^N,\omega)$ on space of homogeneous type given in Theorem \ref{dCRh}, we have
    $$f(x)=\sum\limits_{k\in \Bbb Z}\sum\limits_{Q\in Q^k}\omega(Q)D_k(x,x_{Q})\widetilde{\widetilde{D}}_k(f)(x_{Q})$$
    and hence,
    \begin{align*}
    S(f)(x)
    &=\Big(\sum\limits_{k'\in \Bbb Z}\sum\limits_{Q'\in Q^{k'}}|q_{Q'}\Big(\sum\limits_{k\in \Bbb Z}\sum\limits_{Q\in Q^k}\omega(Q)D_k(\cdot,x_{Q})(x_{Q'})\widetilde{\widetilde{D}}_k(f)(x_{Q})\Big)(x_{Q'})|^2\chi_{Q'}(x)\Big)^{\frac{1}{2}}\\
    &=\Big(\sum\limits_{k'\in \Bbb Z}\sum\limits_{Q'\in Q^{k'}}|\Big(\sum\limits_{k\in \Bbb Z}\sum\limits_{Q\in Q^k}\omega(Q)q_{Q'}D_k(x_{Q'},x_{Q})\widetilde{\widetilde{D}}_k(f)(x_{Q})\Big)|^2\chi_{Q'}(x)\Big)^{\frac{1}{2}}.
    \end{align*}
    To estimate $q_{Q'}D_k(x_{Q^{'}},x_{Q})$, we need the following
    \begin{lemma}\label{exchange}
        Let $k$ and $k'$ belong to $\mathbb Z.$ Suppose that $S_{k,k'}(x,y)$ satisfies the following condition:
        $$|S_{k,k'}(x,y)|\leqslant Cr^{-|k-k'|\varepsilon} \frac1{V(x,y,r^{-k\vee -k'}+d(x,y))}
        \bigg(\frac{r^{-k \vee -k'}}{r^{-k \vee -k'} +d(x,y)}\bigg)^{\varepsilon},$$
        for $0<\varepsilon\leqslant 1.$ Then for $\frac {\bf N}{{\bf N}+\varepsilon }<p\leqslant 1,$
        \begin{align*}
        &\Big\|\bigg(\sum\limits_{k'\in \mathbb Z}\sum\limits_{Q'\in Q^{k'}}
        \Big|\sum\limits_{k\in \mathbb Z}\sum\limits_{Q\in Q^k} \omega(Q)S_{k,k'}(x_{Q'},x_Q)\lambda_Q\Big|^2\chi_{Q'}\bigg)^{\frac{1}{2}}\Big\|_p \\
        &\qquad\lesssim \Big\| \bigg(\sum\limits_{k\in \mathbb Z}\sum\limits_{Q\in Q^k} |\lambda_Q|^2\chi_{Q}\bigg)^{\frac{1}{2}}\Big\|_p.
        \end{align*}
    \end{lemma}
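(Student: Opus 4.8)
The plan is to run the standard almost-orthogonality plus vector-valued maximal function argument, exactly parallel to the convergence proof of Theorem~\ref{1.5}. Fix $x\in\mathbb R^N$. At each level $k'\in\mathbb Z$ there is a unique $Q'\in Q^{k'}$ with $x\in Q'$, so the outer sum over $Q'$ collapses and it suffices to control, for every fixed $k'$,
$$A_{k'}(x):=\Big|\sum_{k\in\mathbb Z}\sum_{Q\in Q^k}\omega(Q)\,S_{k,k'}(x_{Q'},x_Q)\lambda_Q\Big|.$$
Writing $d(x_{Q'},x_Q)=\min_{\sigma\in G}\|\sigma(x_{Q'})-x_Q\|$, using $\operatorname{diam}Q'\lesssim r^{-M-k'}\leqslant r^{(-k)\vee(-k')}$ together with the doubling property of $\omega$ to replace $x_{Q'}$ by the points $\sigma(x)$, one obtains, for $x\in Q'$,
$$|S_{k,k'}(x_{Q'},x_Q)|\chi_{Q'}(x)\lesssim r^{-|k-k'|\varepsilon}\sum_{\sigma\in G}\frac{1}{\omega\big(B(\sigma(x),R_{k,k'}+\|\sigma(x)-x_Q\|)\big)}\Big(\frac{R_{k,k'}}{R_{k,k'}+\|\sigma(x)-x_Q\|}\Big)^{\varepsilon}\chi_{Q'}(x),$$
where $R_{k,k'}:=r^{(-k)\vee(-k')}$; this is the same reduction already used for \eqref{H-L max}.

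Since $p>\mathbf N/(\mathbf N+\varepsilon)$, fix $\theta$ with $\mathbf N/(\mathbf N+\varepsilon)<\theta<p\leqslant 1$. Applying the discrete Hardy--Littlewood maximal estimate (the estimate \eqref{H-L max}; see also Lemma~\ref{ex2}) level by level gives
$$\sum_{Q\in Q^k}\omega(Q)\frac{1}{\omega\big(B(\sigma(x),R_{k,k'}+\|\sigma(x)-x_Q\|)\big)}\Big(\frac{R_{k,k'}}{R_{k,k'}+\|\sigma(x)-x_Q\|}\Big)^{\varepsilon}|\lambda_Q|\lesssim r^{|(-k')-((-k')\vee(-k))|\mathbf N(1-\frac1\theta)}\Big\{M\Big(\sum_{Q\in Q^k}|\lambda_Q|^\theta\chi_Q\Big)(\sigma(x))\Big\}^{1/\theta},$$
with $M$ the Hardy--Littlewood maximal operator on $(\mathbb R^N,\|\cdot\|,\omega)$. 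Hence $A_{k'}(x)\lesssim\sum_{\sigma\in G}\sum_k c_{k,k'}\,B_k^{\sigma}(x)$, where $c_{k,k'}:=r^{-|k-k'|\varepsilon}r^{|(-k')-((-k')\vee(-k))|\mathbf N(1-\frac1\theta)}$ and $B_k^{\sigma}(x):=\big\{M(\sum_{Q\in Q^k}|\lambda_Q|^\theta\chi_Q)(\sigma(x))\big\}^{1/\theta}$. For $\theta$ in this range one has $\mathbf N(1-\frac1\theta)-\varepsilon<0$, so both $\sup_{k'}\sum_k c_{k,k'}$ and $\sup_k\sum_{k'}c_{k,k'}$ are finite; by Cauchy--Schwarz $A_{k'}(x)^2\lesssim\sum_{\sigma}\sum_k c_{k,k'}(B_k^{\sigma}(x))^2$, whence $\sum_{k'}A_{k'}(x)^2\lesssim\sum_{\sigma\in G}\sum_k(B_k^{\sigma}(x))^2$. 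Taking square roots,
$$\Big(\sum_{k'}\sum_{Q'\in Q^{k'}}\Big|\sum_k\sum_{Q\in Q^k}\omega(Q)S_{k,k'}(x_{Q'},x_Q)\lambda_Q\Big|^2\chi_{Q'}(x)\Big)^{1/2}\lesssim\sum_{\sigma\in G}\Big(\sum_k\big\{M(g_k)(\sigma(x))\big\}^{2/\theta}\Big)^{1/2},\qquad g_k:=\sum_{Q\in Q^k}|\lambda_Q|^\theta\chi_Q.$$

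It remains to take $L^p$ norms. Since $\int f(\sigma(x))\,d\omega(x)=\int f(x)\,d\omega(x)$ and $G$ is finite, the $L^p$ norm of the right-hand side is $\lesssim\big\|(\sum_k(Mg_k)^{2/\theta})^{1/2}\big\|_p=\big\|(\sum_k(Mg_k)^{2/\theta})^{\theta/2}\big\|_{p/\theta}^{1/\theta}$. Because $p/\theta>1$ and $2/\theta>1$, the Fefferman--Stein vector-valued maximal inequality bounds this by $\big\|(\sum_k g_k^{2/\theta})^{\theta/2}\big\|_{p/\theta}^{1/\theta}=\big\|(\sum_k g_k^{2/\theta})^{1/2}\big\|_p$; and since the cubes in each $Q^k$ are pairwise disjoint, $g_k^{2/\theta}=\sum_{Q\in Q^k}|\lambda_Q|^2\chi_Q$, yielding $\big\|(\sum_k\sum_{Q\in Q^k}|\lambda_Q|^2\chi_Q)^{1/2}\big\|_p$, as desired. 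The main technical obstacle is the discrete Hardy--Littlewood maximal estimate with its precise exponent $r^{|(-k')-((-k')\vee(-k))|\mathbf N(1-1/\theta)}$ and the resulting need to choose $\theta$ in the window $(\mathbf N/(\mathbf N+\varepsilon),p)$ --- nonempty precisely because $p>\mathbf N/(\mathbf N+\varepsilon)$ --- so that the $(k,k')$-series converge and Fefferman--Stein applies; the passage from $d(\cdot,\cdot)$ to sums over the orbit of $G$ is routine given the doubling of $\omega$.
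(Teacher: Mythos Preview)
Your proof is correct and follows essentially the same route as the paper: pass from the Dunkl metric $d$ to a finite sum over $\sigma\in G$, invoke the discrete Hardy--Littlewood maximal bound with a parameter $\theta\in(\mathbf N/(\mathbf N+\varepsilon),p)$, run a Schur/Cauchy--Schwarz argument on the almost-orthogonality coefficients $c_{k,k'}$, and finish with Fefferman--Stein together with the $G$-invariance of $d\omega$. One small caveat: the exponent in the maximal estimate should be $[-k-((-k)\vee(-k'))]\,\mathbf N(1-1/\theta)$ (the mismatch is between the \emph{inner} cube scale $r^{-k}$ and the kernel scale $r^{(-k)\vee(-k')}$), which is the form the paper actually derives in its proof of the lemma; your quoted version from \eqref{H-L max} has the roles of $k$ and $k'$ swapped, but this is harmless since your condition on $\theta$ still makes both $\sup_{k'}\sum_k c_{k,k'}$ and $\sup_k\sum_{k'}c_{k,k'}$ finite with the correct exponent.
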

    \begin{proof}
        Observing that $\omega(B(x,r^{-k\vee -k'}+d(x,x_{Q^k})))\sim \omega(B(y,r^{-k\vee -k'}+d(y,x_{Q^k})))$ for $x,y\in Q^{k'},$ hence,
        \begin{align*}
        &|S_{k,k'}(x_{Q'},x_Q)|\chi_{Q'}(x) \\
        &\lesssim Cr^{-|k-k'|\varepsilon} \frac1{V(x_Q,x_{Q'},r^{-k\vee -k'}+d(x_{Q^{'}},x_{Q}))}
        \bigg(\frac{r^{-k \vee -k'}}{r^{-k \vee -k'} +d(x_{Q^{'}},x_{Q})}\bigg)^{\varepsilon} \\
        &\lesssim Cr^{-|k-k'|\varepsilon} \frac1{\omega(B(x_{Q},r^{-k\vee -k'}+d(x,x_{Q}))}
        \bigg(\frac{r^{-k \vee -k'}}
        {r^{-k \vee -k'} +d(x,x_{Q})}\bigg)^{\varepsilon}.
        \end{align*}
        Applying $d(x,y)=\min\limits_{\sigma\in G}\|\sigma(x)-y\|$ gives
        \begin{align*}
        &|S_{k,k'}(x_{Q'},x_Q)|\chi_{Q'}(x) \\
        &\lesssim \sum\limits_{\sigma\in G} r^{-|k-k'|\varepsilon} \frac1{\omega(B(x_Q,r^{-k\vee -k'}+\|\sigma(x)-x_{Q}\|))}
        \bigg(\frac{r^{-k \vee -k'}}{r^{-k \vee -k'} +\|\sigma(x)-x_{Q}\|}\bigg)^{\varepsilon}\\
        &\lesssim \sum\limits_{\sigma\in G} r^{-|k-k'|\varepsilon} \frac1{\omega(B(\sigma(x),r^{-k\vee -k'}+\|\sigma(x)-x_{Q}\|))}
        \bigg(\frac{r^{-k \vee -k'}}{r^{-k \vee -k'} +\|\sigma(x)-x_{Q}\|}\bigg)^{\varepsilon}.
        \end{align*}
        Let $\theta$ satisfy that $\frac{\bf N}{{\bf N}+\varepsilon}<\theta<p\leqslant 1.$
        Then
        \begin{align*}
        &\sum\limits_{Q\in Q^k}w(Q)\frac1{\omega(B(\sigma(x),r^{-k\vee -k'}+\|\sigma(x)-x_{Q}\|))}
        \bigg(\frac {r^{-k\vee -k'}}{r^{-k\vee -k'}+\|\sigma(x)-x_{Q}\|}\bigg)^{\varepsilon}|a_Q(x_{Q})|\\
        &\leqslant \bigg\{\sum\limits_{Q\in Q^k}w(Q)^\theta \frac1{\omega(B(\sigma(x),r^{-k\vee -k'}+\|\sigma(x)-x_{Q}\|))
            )^\theta}
        \bigg(\frac {r^{-k\vee -k'}}{r^{-k\vee -k'}+\|\sigma(x)-x_{Q}\|}\bigg)^{\theta\varepsilon}|\lambda_Q|^\theta\bigg\}^{1\over\theta}.
        \end{align*}
        Denote by $c_Q$ the center point of $Q$. Let $A_0=\{Q\in Q^k : \|c_Q-\sigma(x)\|\leqslant r^{-k\vee -k'}\}$
        and $A_\ell=\{Q\in Q^k : r^{\ell-1+(-k\vee -k')}<\|c_Q-\sigma(x)\|\leqslant r^{\ell+(-k\vee -k')}\}$ for $\ell\in \mathbb N$.
        We use \eqref{rd1.1} to obtain that for $Q\in Q^k,$
        $$\omega(Q)\chi_{Q}(z)\sim \omega(B(z,
        r^{-k}))\chi_{Q}(z)\sim
        \omega(B(\sigma(z), r^{-k}))\chi_{Q}(z)\quad\mbox{for }\sigma\in G$$
        and
        $$\omega(B(x_{Q},r^{-k\vee -k'}))\lesssim
        r^{[k+(-k\vee -k')]{\bf N}}\omega(B(x_{Q},r^{-k})).$$
        Hence,
        \begin{align*}
        &\sum\limits_{Q\in Q^k} w(Q)^\theta \frac1{\omega(B(\sigma(x),r^{-k\vee -k'}+\|\sigma(x)-x_{Q}\|)))^\theta}
        \bigg(\frac {r^{-k\vee -k'}}{r^{-k\vee -k'}+\|\sigma(x)-x_{Q}\|}\bigg)^{\theta\varepsilon}|
        \lambda_Q|^\theta\\
        &=\sum\limits_{\ell=0}^\infty\sum\limits_{Q\in A_\ell}w(Q)^\theta \frac1{\omega(B(\sigma(x),r^{-k\vee -k'}+\|\sigma(x)-x_{Q}\|))
            )^\theta}
        \bigg(\frac {r^{-k\vee -k'}}{r^{-k\vee -k'}+\|\sigma(x)-x_{Q}\|}\bigg)^{\theta\varepsilon}|
        \lambda_Q|^\theta\\
        &\lesssim r^{[-k-(-k\vee -k')]{\bf N}(\theta-1)]}\sum\limits_{\ell=0}^\infty \Big(\frac {\omega(B(\sigma(x),r^{-k\vee -k'})))}{\omega(B(\sigma(x),r^{\ell+(-k\vee -k')})))}
        \Big)^{\theta-1} \frac 1{r^{\theta\varepsilon\ell}}\\
        &\qquad\qquad\times \frac1{\omega(B(\sigma(x),r^{\ell+(-k\vee -k')})))}\int_{\|\sigma(x)-z\|\leqslant 2r^{\ell+(-k\vee -k')}}
        \sum\limits_{Q\in A_\ell}|\lambda_Q|^\theta\chi_{Q}(z)d\omega(z)\\
        &\lesssim r^{[-k-(-k\vee -k')]{\bf N}(\theta-1)]}\sum\limits_{\ell=0}^\infty \frac1{r^{\ell[\theta\varepsilon+{\bf N}(\theta-1)]}} M\Big(\sum\limits_{Q\in Q^k}|\lambda_Q|^\theta\chi_{Q} \Big)(\sigma(x))\\
        &\lesssim  r^{[-k-(-k\vee -k')]{\bf N}(\theta-1)]}M\Big(\sum\limits_{Q\in Q^k}|\lambda_Q|^\theta\chi_{Q} \Big)(\sigma(x)),
        \end{align*}
        where $M$ denote the Hardy-Littlewood maximal operator on $(\Bbb R^N, \|\cdot\|,\omega)$.  Therefore,
        \begin{align*}
        &\sum\limits_{Q\in Q^k}w(Q)\frac1{\omega(B(\sigma(x),r^{-k\vee -k'}+\|\sigma(x)-x_{Q}\|))}
        \bigg(\frac {r^{-k\vee -k'}}{r^{-k\vee -k'}+\|\sigma(x)-x_{Q}\|}\bigg)^{\varepsilon}|a_Q(x_{Q})|\\
        &\lesssim r^{[-k-(-k\vee -k')]{\bf N}(1-1/\theta)]}
        \bigg\{M\Big(\sum\limits_{Q\in Q^k}|\lambda_Q|^\theta\chi_{Q}\Big)(\sigma(x))\bigg\}^{1/\theta}
        \end{align*}
        and
        \begin{align*}
        &\sum\limits_{Q\in Q^k}\omega(Q)S_{k,k'}(x_{Q'},x_{Q})\lambda_Q)\chi_{Q'}(x) \\
        &\lesssim \sum\limits_{\sigma\in G}r^{[-k-(-k'\vee -k)]{\bf N}(1-\frac 1\theta)}
        \bigg\{M\Big(\sum\limits_{Q\in Q^k}|\lambda_Q|^\theta\chi_{Q}\Big)(\sigma(x))\bigg\}^{1/\theta}\chi_{Q'}(x).
        \end{align*}
        It is clear that for $\frac{\bf N}{{\bf N}+\varepsilon}<\theta<p\leqslant 1,$
        $$\sup_{k'}\sum\limits_{k\in \Bbb Z} r^{-|k-k'|\varepsilon}r^{[-k-(-k'\vee -k)]{\bf N}(1-\frac 1\theta)}<\infty.$$
        By H\"older's inequality, we have
        \begin{align*}
        &\Big|\sum\limits_{k\in\mathbb Z}\sum\limits_{Q\in Q^k}\omega(Q)
        S_{k,k'}(x_{Q'},x_{Q})\lambda_Q\Big|^2\chi_{Q'}(x) \\
        &\lesssim \sum\limits_{\sigma\in G}\sum\limits_{k\in \mathbb Z} r^{-|k-k'|\varepsilon}r^{[-k-(-k'\vee -k)]{\bf N}(1-\frac 1\theta)}
        \bigg\{M\Big(\sum\limits_{Q^k}|\lambda_{Q^k}|^\theta\chi_{Q^k}\Big)(\sigma(x))\bigg\}^{2/\theta}\chi_{Q'}(x).
        \end{align*}
        This implies
        \begin{align*}
        &\sum\limits_{k'\in \mathbb Z}\sum\limits_{Q'\in Q^{k'}}
        \Big|\sum\limits_{k\in \mathbb Z}\sum\limits_{Q\in Q^k} \omega(Q)S_{k,k'}(x_{Q'},x_Q)\lambda_Q\Big|^2\chi_{Q'} \\
        &\qquad\lesssim \bigg\{\sum\limits_{\sigma\in G}\sum\limits_{k\in \mathbb Z}
        \bigg\{M\Big(\sum\limits_{Q\in Q^k}|\lambda_Q|^\theta\chi_{Q}\Big)(\sigma(x))\bigg\}^{2/\theta}\bigg\}^{1/2},
        \end{align*}
        where the estimate is used: for $\frac{\bf N}{{\bf N}+\varepsilon}<\theta<p\leqslant 1,$
        $$\sup_k\sum\limits_{k'\in \Bbb Z} r^{-|k-k'|\varepsilon}r^{[-k-(-k'\vee -k)]{\bf N}(1-\frac 1\theta)}<\infty.$$
        The Fefferman-Stein vector valued maximal function inequality with $\theta<p\leqslant 1$ yields
        \begin{align*}
        &\Big\|\sum\limits_{k'\in \mathbb Z}\sum\limits_{Q'\in Q^{k'}}
        \Big|\sum\limits_{k\in \mathbb Z}\sum\limits_{Q\in Q^k} \omega(Q)S_{k,k'}(x_{Q'},x_Q)\lambda_Q\Big|^2\chi_{Q'}\Big\|_p \\
        &\qquad\lesssim \sum\limits_{\sigma\in G}
        \Big\|\Big(\sum\limits_{k\in \mathbb Z}\sum\limits_{Q\in Q^k}|\lambda_Q|^2\chi_Q(\sigma(x))\Big)^{\frac{1}{2}}\Big\|_p.
        \end{align*}
        Since $G$ is finite group and
        $$\int_{\Bbb R^N}  f(\sigma(x))d\omega(x)=\int_{\Bbb R^N} f(x)d\omega(x),$$
        we have
        \begin{align*}
        &\Big\|\sum\limits_{k'\in \mathbb Z}\sum\limits_{Q'\in Q^{k'}}
        \Big|\sum\limits_{k\in \mathbb Z}\sum\limits_{Q\in Q^k} \omega(Q)S_{k,k'}(x_{Q'},x_Q)\lambda_Q\Big|^2\chi_{Q'}\Big\|_p \\
        &\qquad \lesssim
        \Big\|\Big(\sum\limits_{k\in \mathbb Z}\sum\limits_{Q\in Q^k}|\lambda_Q|^2\chi_Q(x)\Big)^{\frac{1}{2}}\Big\|_p.
        \end{align*}
        The proof of the Lemma \ref{exchange} is completed.
    \end{proof}
    \begin{rem}\label{ex2}
        Using the similar proof of Lemma \ref{exchange},  we also obtain
        \begin{align*}
        &\Big\|\bigg(\sum\limits_{k'\in \mathbb Z}\sum\limits_{Q'\in Q^{k'}}
        \Big|\sum\limits_{|k|>j}\sum\limits_{Q\in Q^k} \omega(Q)S_{k,k'}(x_{Q'},x_Q)\lambda_Q\Big|^2\chi_{Q'}\bigg)^{\frac{1}{2}}\Big\|_p \\
        &\qquad\lesssim \Big\| \bigg(\sum\limits_{|k|>j}\sum\limits_{Q\in Q^k} |\lambda_Q|^2\chi_{Q}\bigg)^{\frac{1}{2}}\Big\|_p
        \qquad\mbox{for $j\in \mathbb N$.}
        \end{align*}
    \end{rem}

    We now return to the proof of {\bf Step 1}. Applying the almsot orthogonal estimate given in the Lemma \ref{aoe} for any $\varepsilon\in (0,1)$ yields
    \begin{align*}
    &|q_{Q'}D_k(x_{Q^{'}},x_{Q})|\chi_{Q'}(x) \\
    &\lesssim r^{-|k-k'|\varepsilon} \frac1{V(x_{Q^{'}},x_{Q},r^{-k\vee -k'}+\|x_{Q^{'}}-x_{Q}\|)}
    \bigg(\frac{r^{-k \vee -k'}}{r^{-k \vee -k'} +\|x_{Q^{'}}-x_{Q}\|}\bigg)^{\varepsilon}\chi_{Q'}(x).
    \end{align*}
    The Lemma \ref{exchange} gives
    \begin{align*}
    \|S(f)(x)\|_p
    &=\|\Big(\sum\limits_{k'\in \Bbb Z}\sum\limits_{Q'\in Q^{k'}}|\Big(\sum\limits_{k\in \Bbb Z}\sum\limits_{Q\in Q^k}\omega(Q)q_{Q'}D_k(x_{Q'},x_{Q})\widetilde{\widetilde{D}}_k(f)(x_{Q})\Big)|^2\chi_{Q'}(x)\Big)^{\frac{1}{2}}\|_p\\
    &\lesssim \bigg\|\bigg\{\sum\limits_{k\in \Bbb Z}\sum\limits_{Q\in Q^k}|\widetilde{\widetilde{D}}_k f(x_{Q})|^2\chi_{Q}\bigg\}^{\frac12}\bigg\|_p\\
    &\lesssim \|S_{cw}(f)\|_p.
    \end{align*}

    ${\textbf{Step 2}: \|S_{cw}(R_1(f)\|_p\leqslant C \|R_1\|_{dcz} \|S_{cw}(f)\|_p}$

    To show the above inequality, we write
    $$\|S_{cw}(R_1f)(x)\|_p=\bigg\|\bigg\{\sum\limits_{k=-\infty}^\infty\sum\limits_{Q\in Q^k}
    |D_kR_1(f)(x_{Q})|^2\chi_{Q}(x)\bigg\}^{1/2}\bigg\|_p.$$
    The $L^2$ boundedness of $R_1$ together with the discrete Calder\'on reproducing formula of $f\in L^2(\R^N,\omega)$ on space of homogeneous type given in Theorem \ref{dCRh} yields
    $$\|S_{cw}(R_1(f)\|_p=
    \bigg\|\bigg\{\sum\limits_{k\in \Bbb Z}\sum\limits_{Q\in Q^k}
    |\sum\limits_{k'\in \Bbb Z}\sum\limits_{Q'\in Q^{k'}}\omega(Q')D_kR_1D_{k'}(x_Q,x_{Q'})\widetilde{\widetilde{D}}_{k'}(f)(x_{Q'})|^2\chi_{Q'}(x)\bigg\}^{1/2}\bigg\|_p.$$
    Applying the almost orthogonal estimate given in Lemma \ref{Lem aoe} to $D_kR_1D_{k'}(x_Q,x_{Q'}),$ we obtain
    that for any $\varepsilon\in (0,1),$
    \begin{align*}
    &|D_kR_1D_{k'}(x_Q,x_{Q'})|\chi_{Q'}(x) \\
    &\lesssim \|R_1\|_{dcz} r^{-|k-k'|\varepsilon} \frac1{V(x_{Q^{'}},x_{Q},r^{-k\vee -k'}+d(x_{Q^{'}},x_{Q}))}
    \bigg(\frac{r^{-k \vee -k'}}{r^{-k \vee -k'} +d(x_{Q^{'}},x_{Q})}\bigg)^{\varepsilon}\chi_{Q'}(x)\\
    &\lesssim \|R_1\|_{dcz}\sum\limits_{\sigma\in G}r^{-|k-k'|\varepsilon} \frac1{V(x_{Q^{'}},x_{Q},r^{-k\vee -k'}+\|\sigma(x_{Q^{'}})-x_{Q})\|}
    \bigg(\frac{r^{-k \vee -k'}}{r^{-k \vee -k'} +\|\sigma(x_{Q^{'}})-x_{Q}\|}\bigg)^{\varepsilon}\chi_{Q'}(x)\\
    &\lesssim \|R_1\|_{dcz}\sum\limits_{\sigma\in G}r^{-|k-k'|\varepsilon} \frac1{\omega(B(x_{Q},r^{-k\vee -k'}+\|\sigma(x)-x_{Q})\|)}
    \bigg(\frac{r^{-k \vee -k'}}{r^{-k \vee -k'} +\|\sigma(x)-x_{Q}\|}\bigg)^{\varepsilon}\chi_{Q'}(x)\\
    &\lesssim \|R_1\|_{dcz}\sum\limits_{\sigma\in G}r^{-|k-k'|\varepsilon} \frac1{\omega(B(\sigma(x),r^{-k\vee -k'}+\|\sigma(x)-x_{Q})\|)}
    \bigg(\frac{r^{-k \vee -k'}}{r^{-k \vee -k'} +\|\sigma(x)-x_{Q}\|}\bigg)^{\varepsilon}\chi_{Q'}(x),
    \end{align*}
    where we use the fact that if $x\in Q'$ then $r^{-k\vee -k'}+\|\sigma(x_{Q^{'}})-x_{Q})\|\sim r^{-k\vee -k'}+\|\sigma(x)-x_{Q})\|.$

    Appying the Lemma \ref{ex2} implies
    \begin{align*}
    &\sum\limits_{Q'\in Q^k}w(Q')\frac1{\omega(B(\sigma(x), r^{-k\vee -k'}+\|\sigma(x)-x_{Q}\|)}\\
    &\qquad \times \bigg(\frac {r^{-k\vee -k'}}{r^{-k\vee -k'}+\|\sigma(x)-x_{Q}\|}\bigg)^{\varepsilon}|\widetilde{\widetilde{D}}_k(f)(x_{Q})|\\
    &\leqslant Cr^{[-k'-(-k'\vee -k)]{\bf N}(1-\frac 1\theta)}
    \bigg\{M\Big(\sum\limits_{Q\in Q^k}|\widetilde{\widetilde{D}}_k(f)(x_{Q})|^\theta\chi_{Q}\Big)(\sigma(x))\bigg\}^{1/\theta},
    \end{align*}
    where $\theta$ satisfies
    $\frac{\bf N}{{\bf N}+\varepsilon}<\theta<p\leqslant 1.$
    By H\"older's inequality, we have
    \begin{align*}
    &\Big|\sum\limits_{k\in \Bbb Z}\sum\limits_{Q\in Q^k}\omega(Q)D_kR_1D_{k'}(x_Q,x_{Q'})\widetilde{\widetilde{D}}_k(f)(x_{Q})\Big|^2\chi_{Q^{'}}(x)\\
    &\lesssim \|R_1\|_{dcz}\sum\limits_{\sigma\in G}\sum\limits_{k\in \mathbb Z} r^{-|k-k'|\varepsilon}r^{[-k'-(-k'\vee -k)]{\bf N}(1-\frac 1\theta)}
    \bigg\{M\Big(\sum\limits_{Q\in Q^k}|\widetilde{\widetilde{D}}_k(f)(x_{Q})|^\theta\chi_{Q}\Big)(\sigma(x))\bigg\}^{2/\theta}\chi_{Q^{'}}(x),
    \end{align*}
    which implies
    $$S_{cw}(R_1(f)(x)\lesssim \|R_1\|_{dcz}\bigg\{\sum\limits_{\sigma\in G}\sum\limits_{k\in \Bbb Z}
    \bigg\{M\Big(\sum\limits_{Q\in Q^k}|\widetilde{\widetilde{D}}_k(f)(x_{Q})|^\theta\chi_{Q}\Big)(\sigma(x))\bigg\}^{2/\theta}\bigg\}^{1/2}.$$
    The Fefferman-Stein vector valued maximal function inequality with $\theta<p\leqslant 1$ yields
    \begin{align*}
    \|S_{cw}R_1(f)\|_p
    &\lesssim \|R_1\|_{dcz}\bigg\|\bigg\{\sum\limits_{k\in \Bbb Z}\sum\limits_{Q\in Q^k}|\widetilde{\widetilde{D}}_k f(x_{Q})|^2\chi_{Q}(\sigma(x))\bigg\}^{\frac12}\bigg\|_{L^p(\omega)}\\
    &\lesssim \|R_1\|_{dcz}\bigg\|\bigg\{\sum\limits_{k\in \Bbb Z}\sum\limits_{Q\in Q^k}|\widetilde{\widetilde{D}}_k f(x_{Q})|^2\chi_{Q}(x)\bigg\}^{\frac12}\bigg\|_{L^p(\omega)}\\
    &\lesssim \|R_1\|_{dcz}\|S_{cw}(f)\|_p.
    \end{align*}
    Applying the similar proof, we still have $\|S_{cw}R_M(f)\|_p\lesssim \|R_M\|_{dcz}\|S_{cw}(f)\|_p.$

    We remark that the Dunkl-Calder\'on-Zygmund operator theory plays a crucial role for the proof of this estimate. More precisely, we obtain the sharp range for $p: \frac{\bf N}{{\bf N}+\varepsilon}<\theta<p\leqslant 1,$ which is same as in the classical case.

    ${\textbf{Step 3}}: \|S_{cw}(f)\|_p\leqslant C\|S(f)\|_p$
    To show this estimate, the key point is to write
    \begin{align*}
    f(x)&=-\ln r\sum\limits_{j=-\infty}^\infty\sum\limits_{Q}w(Q)
    \psi_{j}(x,x_{Q})q_{j}f(x_{Q})
    +R_1(f)(x)+R_M(x)\\
    &=T_M(f)(x)+R_1(f)(x)+R_M(x).
    \end{align*}
    By the estimates in ${\textbf{Step 2}}$ for $\frac{\bf N}{{\bf N}+1}<p\leqslant 1,$ we have
    $$\|S_{cw}R_1(f)\|_p\leqslant C \|R_1\|_{dcz} \|S_{cw}(f)\|_p\leqslant \frac{1}{4} \|S_{cw}(f)\|_p $$
    and
    $$\|S_{cw}R_M(f)\|_p\leqslant C \|R_M\|_{dcz} \|S_{cw}(f)\|_p\leqslant \frac{1}{4}\|S_{cw}(f)\|_p.$$

    These estimates imply
    $$\|S_{cw}(f)\|^p_p\leqslant \|S_{cw}\big(T_M(f)(x)+R_1(f)(x)+R_M(x)\big)\|^p_p
    \leqslant \|S_{cw}(T_Mf)\|^p_p +\frac{1}{2}\|S_{cw}(f)\|^p_p$$
    and, hence,
    $$\|S_{cw}(f)\|_p\leqslant C_p \|S_{cw}(T_Mf)\|_p.$$
    We claim $\|S_{cw}(T_Mf)\|_p\leqslant C \|S(f)\|_p.$ Indeed, observing that
    $$T_M(f)(x)= -\ln r\sum\limits_{j=-\infty}^\infty\sum\limits_{Q\in Q^j}w(Q) \psi_{Q}(x,x_{Q})q_{Q}f(x_{Q}),$$
    and
    \begin{align*}
    &\bigg|D_kT_M(f)(x)\bigg|=\bigg|D_k\bigg(-\ln
    r\sum\limits_{j=-\infty}^\infty\sum\limits_{Q\in Q^j}w(Q)       \psi_{Q}(\cdot,x_{Q})q_{Q}f(x_{Q})\bigg)(x)\bigg|\\
    &\lesssim\sum\limits_{j=-\infty}^\infty\sum\limits_{Q\in Q^j}w(Q)|D_k \psi_{Q}(x,x_{Q})| |q_{Q}f(x_{Q})|.
    \end{align*}
    Following the same proof as in {\bf Step 1} and applying the estimate \eqref{aoe2} in the Lemma \ref{aoe}, there exists a constant $C>0$ such that for $0<\varepsilon<1$ and $x_{Q}\in Q, Q\in Q^j, Q'\in Q^k,$
    \begin{align*}
    &|D_k \psi_{Q}(x,x_{Q})|\chi_{Q'}(x) \\
    &\lesssim r^{-|j-k|\varepsilon} \frac1{V(x,x_{Q},r^{-j\vee -k}+d(x,x_{Q}))}
    \bigg(\frac{r^{-j \vee -k}}{r^{-j \vee -k} +\|x-x_{Q}\|}\bigg)^{\varepsilon}\chi_{Q'}(x).
    \end{align*}
    We obtain
    \begin{align*}
    &|D_kT_{M}f(x)|^2\chi_{Q'}(x) \\
    &\lesssim \sum\limits_{\sigma\in G}\sum\limits_{j=-\infty}^\infty
    r^{-|j-k|\varepsilon}r^{[-k-(-k\vee -j)]{\bf N}(1-\frac 1\theta)}
    \bigg\{M\Big(\sum\limits_{Q_j}|q_{j}
    f(x_{Q_j})|^\theta\chi_{Q}\Big)(x)\bigg\}^{2/\theta}\chi_{Q'}(x),
    \end{align*}
    which the fact that $\frac{\bf N}{{\bf N}+\varepsilon}<\theta<p\leqslant 1$ together with the Fefferman-Stein vector-valued maximal inequality implies
    \begin{align*}
    \|S_{cw}(T_Mf)\|_{L^p(\omega)}&=\bigg\|\bigg\{\sum\limits_{k\in \Bbb Z}\sum\limits_{Q'\in Q^k}|D_kT_{M}f(x_{Q'})|^2\chi_{Q'}(x)\bigg\}^{\frac12}\bigg\|_{L^p}\\
    &\lesssim \bigg\|\bigg\{\sum\limits_{j\in \Bbb Z}\sum\limits_{Q\in Q^j}|q_Q f(x_{Q})|^2\chi_{Q}(x)\bigg\}^{\frac12}\bigg\|_{L^p}\\
    &\lesssim \|S(f)\|_p.
    \end{align*}
    The proof of {\bf Step 3 } is complete.

    Observing that $f(x)=T_M(f)(x)+R_1(f)(x) +R_M(f)(x)$ and applying the above estimates, namely, $\|S(R_1+R_M)(f)\|_p\leqslant C\big(\|R_1\|_{dcz}+\|R_M\|_{dcz}\big)\|S(f)\|_p,$ imply that
    $$\|S\big(I-T_M\big)(f)\|_p=\|S\big(R_1(f)(x) +R_M(f)\big)\|_p\leqslant \frac{1}{2}\|S(f)\|_p.$$
    Therefore, $\|S\big((T_M)^{-1}f\big)\|_p\leqslant C\|Sf\|_p.$ Set $h=(T_M)^{-1}f,$ we obtain
    $$f(x)=T_Mh(x)=-\ln r\sum\limits_{j=-\infty}^\infty\sum\limits_{Q\in Q^j}w(Q)\psi_{Q}(x,x_{Q})q_{Q}
    h(x_{Q}),$$
    where $\|f\|_2\sim \|h\|_2$ and $\|f\|_{H_d^p}\sim \|h\|_{H_d^p},$ for $\frac{\bf N}{{\bf N}+1}<p\leqslant 1.$

    It remains to show that the above series converges in $L^2(\R^N,\omega)\cap H_d^p(\R^N,\omega).$ To this end, we only need to prove
    $$\bigg\|S\big(\sum\limits_{|j|>n}\sum\limits_{Q\in Q^j}w(Q)
    \psi_{Q}(x,x_{Q})q_{Q}h(x_{Q})\big)\bigg\|_p\rightarrow 0$$
    as $n\rightarrow \infty.$ Indeed, repeating the same proof as {\bf Step 1},
    \begin{equation}\label{rem}
    \bigg\|S\big(\sum\limits_{|j|>n}\sum\limits_{Q\in Q^j}w(Q)
    \psi_{Q}(x,x_{Q})q_{Q}h(x_{Q})\big)\bigg\|_p
    \lesssim \bigg\|\bigg\{\sum\limits_{|j|>n}\sum\limits_{Q\in Q^j}|q_Qh(x_Q)|^2\chi_{Q}(x)\bigg\}^{\frac12}\bigg\|_p,
    \end{equation}
    where by the fact $\|S(h)\|_p\leqslant C\|f\|_p,$ the last term tends to 0 as $n\rightarrow \infty.$

    The proof of Theorem \ref{1.7} is complete.
\end{proof}

\subsection{{Dunkl-Hardy space $H_d^p, \frac{\bf N}{\bf N +1}<p\leqslant 1$}}
\ \\

As mentioned, the departure for introducing the Hardy space $H_d^p(\R^N,\omega),
\frac{N}{N+1}<p\leqslant 1,$ in the Dunkl setting is the Proposition \ref{pr1}.
\begin{proof}[{\bf The proof of Proposition \ref{pr1}}]
    Applying the weak-type discrete Calder\'on-type reproducing formula given in Theorem \ref{1.7} for $f\in L^2(\R^N,\omega)\cap H_d^p(\R^N,\omega)$, we write
    $$f=-\ln r\sum\limits_{j=-\infty}^\infty\sum\limits_{Q\in Q^j}w(Q)
    \psi_{Q}(x,x_{Q})q_{Q}h(x_{Q}),$$
    where $\|h\|_2\sim \|f\|_2$ and $\|S(h)\|_p\sim \|S(f)\|_p.$

    Set $\Omega_{\ell}=\big\{x\in \mathbb{R}^{N}: S(h)(x)>2^{\ell}\big\}$ and
    $$B_{\ell}=\Big\{ Q: Q \ \ \text{are r-dyadic cubes}, \ \  \omega\big(Q\cap \Omega_{\ell}\big)>\frac{1}{2}\omega(Q) \ \ \text{and} \ \ \omega\big(Q\cap \Omega_{\ell+1}\big)\leqslant\frac{1}{2}\omega(Q)\Big\}.$$

    Denote $B^*_\ell:=\{ Q^*_\ell\}$ by the maximal $r-$dyadic cubes in $B_\ell$ for $l\in \mathbb{Z}$.
    We claim that
    \begin{eqnarray*}f=
        -\ln r \sum\limits_{\ell\in \mathbb{Z}}
        \sum\limits_{Q^*_\ell\in B^*_\ell}
        \sum\limits_{Q\in B^*_\ell \atop Q\in B_\ell}\omega(Q)\psi_{Q}(x,x_{Q})q_{Q}h(x_{Q}).
    \end{eqnarray*}

    In order to prove the above claim, we only need to show that if the dyadic cube $Q \not\in B_\ell$ for all $\ell \in \mathbb Z$, then
    \begin{eqnarray*}
        \omega(Q)\psi_{Q}(x,x_{Q})q_{Q}h(x_{Q})=0.
    \end{eqnarray*}
    Observe that by the stopping time argument, each dyadic cube $Q$ can be in one and only one in $B_\ell$, that is, if $Q$ belongs to both $B_\ell$ and $B_{\ell'},$ then $\ell=\ell'.$ We now can assume that $\omega(Q)\not=0.$ Otherwise, the above equality
    holds obviously. Note that   $\omega(\Omega_{\ell}) < 2^{-2\ell}
    \|S_L(f)\|^2_{L^2(X)}\to 0$ as $\ell \to +\infty.$ As a consequence, if $Q \not\in
    B_\ell$ for all $\ell \in \mathbb Z$, then $\omega(Q\cap
    \Omega_{\ell})\leqslant {1\over 2}\omega(Q)$, for all $\ell \in
    \mathbb Z$ since, otherwise, there exists an $\ell_0 \in \mathbb Z$, such that $\omega(Q\cap
    \Omega_{\ell_0})> {1\over 2}\omega(Q)$. However $\omega(Q\cap\Omega_{\ell}
    ) \to 0$ as $\ell \to +\infty$ and $\{\omega(Q\cap\Omega_\ell)\}_\ell$
    is a decreasing sequence. This implies that there must be one critical
    index $\ell_1$ such that $\omega(Q\cap \Omega_{\ell_1})>{1\over
        2}\omega(Q)$ and $\omega(Q\cap \Omega_{\ell_1+1})\leqslant {1\over 2}\omega(Q),$
    that is, $ Q  \in B_{\ell_1}.$ This is contradict to the fact that $Q $ is not in
    $B_\ell$ for all $\ell \in \mathbb Z.$

    The fact $\omega(Q\cap
    \Omega_{\ell})\leqslant {1\over 2}\omega(Q)$ for all $\ell \in \mathbb Z$ implies that
    $\mu(Q\cap \Omega_{\ell}^c)\geqslant {1\over 2}\mu(Q)$ for all $\ell \in
    \mathbb Z$. Now set $K=\{x\in X: S(h)(x) =0 \}$. Note that $\cap_{\ell
        \in \mathbb Z}\Omega_{\ell}^c=\cap_{\ell \in \mathbb Z}\{x\in X:\ S(h)(x)\leqslant
    2^\ell\}=K$. As a consequence,
    \begin{eqnarray*}
        \omega(Q\cap
        K)=\lim\limits_{\ell \to -\infty}\omega(Q\cap\Omega^c_{\ell})\geqslant
        {1\over 2}\omega(Q)>0.
    \end{eqnarray*}

    Since  for all $x\in K$,
    $0=S(h)(x)=\Big(\sum\limits_{k\in \Bbb Z}\sum\limits_{Q\in Q^k}|q_Qh(x_{Q})|^2\chi_{Q}(x)\Big)^{\frac{1}{2}},$ we have
    $q_Qh(x_{Q})=0$ and hence, the claim follows.

    We get
    \begin{eqnarray*}\langle f,g\rangle&=&
        -\ln r \sum\limits_{\ell\in \mathbb{Z}}\sum\limits_{Q^*_\ell\in B^*_\ell}
        \sum\limits_{Q\in B^*_\ell \atop Q\in B_\ell}w(Q)q_Qh(x_Q)\psi_Qg(x_Q).
    \end{eqnarray*}

    Applying first the Cauchy-Schwartz inequality and then the $p, 0<p\leqslant 1,$ inequality in the last summation implies that
    \begin{align*}|\langle f,g\rangle |&\lesssim \sum\limits_{\ell\in \mathbb{Z}}
    \sum\limits_{Q^*_\ell\in B^*_\ell}\Big(\sum\limits_{Q\in B^*_\ell \atop Q\in B_\ell}w(Q)|q_Qh(x_Q)|^2\Big)^{\frac{1}{2}}\Big(\sum\limits_{Q\in B^*_\ell \atop Q\in B_\ell}w(Q)|\psi_Qg(x_Q)|^2\Big)^{\frac{1}{2}}
    \\&\lesssim
    \Bigg\{\sum\limits_{\ell\in \mathbb{Z}}
    \sum\limits_{Q^*_\ell\in B^*_\ell}\Big(\sum\limits_{Q\in B^*_\ell \atop Q\in B_\ell}w(Q)|q_Qh(x_Q)|^2\Big)^{\frac{p}{2}}\Big(\sum\limits_{Q\in B^*_\ell \atop Q\in B_\ell}w(Q)
    |\psi_Qg(x_Q)|^2\Big)^{\frac{p}{2}}\Bigg\}^{\frac{1}{p}}.
    \end{align*}

    Observing that the last summation above is dominated by
    $$\Big(\sum\limits_{Q\in B^*_\ell \atop Q\in B_\ell}w(Q)|\psi_Qg(x_Q)|^2\Big)^{\frac{p}{2}}\leqslant
    \omega\big(Q_l^*\big)^{1-\frac{p}{2}}\|g \|_{CMO_d^p}^p,$$
    which implies that
    \begin{eqnarray*}|\langle f,g\rangle |\lesssim \|g \|_{CMO_d^p}\Bigg\{\sum\limits_{\ell\in \mathbb{Z}}
        \sum\limits_{Q^*_\ell\in B^*_\ell}\omega\big(Q_l^*\big)^{1-\frac{p}{2}}\Big(\sum\limits_{Q\in B^*_\ell \atop Q\in B_\ell}w(Q)|q_Qh(x_Q)|^2\Big)^{\frac{p}{2}}
        \Bigg\}^{\frac{1}{p}}.\end{eqnarray*}

    By the H\"older inequality, we have
    $$|\langle f,g\rangle|\lesssim \|g \|_{CMO_d^p}\Bigg\{\sum\limits_{\ell\in \mathbb{Z}}
    \Big[\sum\limits_{Q^*_\ell\in B^*_\ell}\omega\big(Q_l^*\big)\Big]^{1-\frac{p}{2}}\Big(\sum\limits_{Q\in B_\ell }w(Q)|q_Qh(x_Q)|^2\Big)^{\frac{p}{2}}
    \Bigg\}^{\frac{1}{p}}.$$

    To  estimate the last term above, let ${{\widetilde \Omega}_\ell }=\{x\in \mathbb{R}^N: M(\chi_{\Omega_\ell})(x)>\frac{1}{2}\},$ where $M$ is the Hardy-Littlewood maximal function on $\mathbb{R}^N$ with the measure $d\omega$ and $\chi_{{\Omega}_\ell}(x)$ is the indicate
    function of $\Omega_\ell$. It is not difficult to see that if $Q \in B_\ell$ then $Q\subseteq {\widetilde \Omega}_\ell.$ Since all $Q_{l}^*$ are disjoint, thus,
    $$\Big[\sum\limits_{Q^*_\ell\in B^*_\ell}\omega\big(Q_\ell^*\big)\Big]^{1-\frac{p}{2}}\leqslant \omega({\widetilde \Omega}_\ell)^{1-\frac{p}{2}}\leqslant C\omega({ \Omega}_\ell)^{1-\frac{p}{2}},$$
    where the first inequality follows from the facts that
    $\bigcup\limits_{Q^*_\ell\in
        B^*_\ell}Q^*_\ell \subseteq \widetilde{\Omega}_\ell$ and
    $\sum\limits_{Q^*_\ell\in B^*_\ell}\omega(Q^*_\ell) \leq
    \omega(\widetilde{\Omega}_\ell)$ and, by the $L^2$-boundedness
    of the Hardy--Littlewood maximal function, the last inequality follows from  the estimate $\omega(\widetilde{\Omega}_\ell)\leqslant
    C\omega({\Omega}_\ell).$

    We claim that
    %there is a constant $C$ such that
    \begin{eqnarray}\label{6}\sum\limits_{Q\in B_l}w(Q)|q_Qh(x_Q)|^2\leqslant C  2^{2\ell}\omega(\Omega_\ell).\end{eqnarray}

    Assuming this claim \eqref {6} for the moment, we get
    \begin{align*}\label{5}|\langle f,g\rangle|&\leqslant C \|g \|_{CMO_d^p}\Big(\sum\limits_{\ell=-\infty}^\infty\omega(\Omega_\ell)^{1-\frac{p}{2}} 2^{p\ell}\omega(\Omega_\ell)^{\frac{p}{2}}\Big)^{\frac{1}{p}}\nonumber
    \\&\leqslant C \|g \|_{CMO_d^p}\Big(\sum\limits_{\ell=-\infty}^\infty 2^{p\ell}\omega(\Omega_\ell)\Big)^{\frac{1}{p}}\\&\leqslant C\|S(h)\|_{p} \|g \|_{CMO_d^p}\\
    &\leqslant C\|S(f)\|_{p} \|g \|_{CMO_d^p}.
    \end{align*}

    It remains to show the claim \eqref{6}. To this end, we begin with the following estimate
    $$\int_{{\widetilde \Omega}_\ell\backslash\Omega_{\ell+1}} S(h)^2(x) d\omega(x)\leqslant C2^{2\ell}\omega\big({\widetilde \Omega}_\ell \big) \leqslant C 2^{2\ell}\omega\big(\Omega_\ell \big).$$
    Note that
    \begin{eqnarray*}\int_{{\widetilde \Omega}_\ell\backslash\Omega_{\ell+1}} S(h)^2(x) d\omega(x)\geq
        \sum\limits_{Q\in B_\ell}|q_Qh(x_Q)|^2w\big(\big({\widetilde \Omega}_\ell/\Omega_{\ell+1}\big)\cap Q\big).
    \end{eqnarray*}

    Since for each $Q \in B_\ell$, $Q\subseteq \widetilde{\Omega}_{\ell}$ and $ \Omega_{\ell+1} \subset
    \Omega_{\ell},$ hence,
    $$w\big(\big({\widetilde \Omega}_\ell/\Omega_{\ell+1}\big)\cap Q\big)= w(Q)-w(\Omega_{\ell+1}\cap Q) \geqslant \frac{1}{2}w(Q).$$

    Therefore,
    \begin{eqnarray*}\int_{{\widetilde \Omega}_\ell/\Omega_{\ell+1}} |S(h)(x)|^2 d\omega(x)\geqslant  C\sum\limits_{Q\in B_l}w(Q)|q_Qh(x_Q)|^2,\end{eqnarray*}
    which implies the claim \eqref{6}. The proof of Proposition  \ref{pr1} is concluded.
\end{proof}

The Proposition \ref{pr1} indicates that if
$\{f_n\}_{n=1}^\infty$ is a sequence in $L^2(\R^N,\omega)$ with
$\|S(f_n-f_m)\|_p\rightarrow 0$ as $n, m\rightarrow \infty,$ then
for each $g\in L^2(\R^N,\omega)$ with $\|g\|_{CMO_d^p}<\infty,
\lim\limits_{n,m\rightarrow \infty}\langle f_n-f_m,g\rangle =0.$
Therefore, there exists $f,$ as a distribution on $L^2(\R^N,\omega)\cap CMO_d^p$,
such that for each $g\in L^2(\R^N,\omega)$ with $\|g\|_{CMO_d^p}<\infty,$
$$\langle f,g\rangle=\lim\limits_{n \rightarrow \infty}\langle f_n, g\rangle.$$

Before introducing the Dunkl-Hardy space, we need the following
\begin{lemma}\label{test}
    Let $\frac{\bf N}{{\bf N}+1}<p\le1.$ Then $q_{j}(\cdot,y)$ is in $L^2(\R^N,\omega)\cap CMO_d^p$ for any fixed $j$ and $y\in \mathbb R^N.$ Moreover,
    $$\sup_P \frac1{\omega(P)^{\frac2p-1}}\sum\limits_{Q\subseteq P} \omega(Q) |\psi_Q(q_{j}(\cdot,y))(x_Q)|^2\leqslant C,$$
    where both $P$ and $Q$ are $r-$dyadic cubes on $\mathbb R^N$ and the constant $C$ which depends on $j$ but is independent of $y$.
\end{lemma}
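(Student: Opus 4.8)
The goal is to show that the function $x\mapsto q_j(x,y)$, for fixed $j\in\mathbb Z$ and fixed $y\in\mathbb R^N$, lies in $L^2(\R^N,\omega)\cap CMO_d^p$ for $\frac{\bf N}{{\bf N}+1}<p\le1$, with $CMO_d^p$-norm bounded uniformly in $y$. First I would dispose of the $L^2$ membership: by the size estimate (i) for $q_t(x,y)$ with $t=r^{-j}$, namely $|q_j(x,y)|\lesssim \frac1{V(x,y,r^{-j}+d(x,y))}\frac{r^{-j}}{r^{-j}+\|x-y\|}$, and the reverse-doubling/doubling properties of $\omega$, one checks $\int_{\R^N}|q_j(x,y)|^2 d\omega(x)<\infty$ exactly as in Lemma \ref{lem1} — the worst decay is like $\omega(B(y,d(x,y)))^{-2}$, which is integrable in $d\omega(x)$ after summing over dyadic annuli. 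So the real content is the $CMO_d^p$ estimate.

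For the Carleson-type sum, fix a dyadic cube $P$; let $\ell(P)=r^{-M-k_P}$ be its side length, so $P$ is at scale $k_P$. I would split the sum $\sum_{Q\subseteq P}\omega(Q)|\psi_Q(q_j(\cdot,y))(x_Q)|^2$ according to the scale $k$ of $Q$, i.e. $Q\in Q^k$ with $k\ge k_P$. The key ingredient is the almost-orthogonality estimate \eqref{aoe2} of Lemma \ref{aoe}: since both $\psi_k(\cdot,\cdot)$ and $q_j(\cdot,y)$ are smooth molecules — $\psi_k(x_Q,\cdot)\in \mathbb M(1,1,r^{-k},x_Q)$ and $q_j(\cdot,y)\in\mathbb M(1,1,r^{-j},y)$ — the inner product $\psi_Q(q_j(\cdot,y))(x_Q)=\langle \psi_k(x_Q,\cdot),q_j(\cdot,y)\rangle$ obeys
\begin{equation*}
|\psi_Q(q_j(\cdot,y))(x_Q)|\lesssim r^{-|k-j|\varepsilon_1}\frac1{V(x_Q,y,(r^{-k}\vee r^{-j})+d(x_Q,y))}\Big(\frac{r^{-k}\vee r^{-j}}{(r^{-k}\vee r^{-j})+\|x_Q-y\|}\Big)^{\varepsilon_2}
\end{equation*}
for any $0<\varepsilon_1,\varepsilon_2<1$. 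I would then, for each fixed $k\ge k_P$, estimate $\sum_{Q\in Q^k,\,Q\subseteq P}\omega(Q)|\psi_Q(q_j(\cdot,y))(x_Q)|^2$ by converting the sum over cubes into an integral over $P$ using $\omega(Q)\chi_Q(z)\sim\omega(B(z,r^{-k}))\chi_Q(z)$, as is done repeatedly in Lemma \ref{exchange}; the resulting integral $\int_P \big(\text{RHS above}\big)^2 d\omega(z)$ is controlled by a constant depending on $j$ and the decay factor $r^{-2|k-j|\varepsilon_1}$ times either $\omega(P)$ or a power of $\omega(B(y,\cdot))^{-1}$, whichever the geometry dictates.

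Summing over $k\ge k_P$ then gives a geometric series in $r^{-2|k-j|\varepsilon_1}$, hence convergence, and the bound is $\lesssim \omega(P)$ uniformly in $y$; dividing by $\omega(P)^{2/p-1}$ and using $\omega(P)^{2/p-1}\ge c>0$ when $\omega(P)\ge$ a fixed constant (here I'd use $\inf_x\omega(B(x,1))\ge c>0$ together with doubling to handle small cubes, bounding $\omega(P)^{2-2/p}$ since $2/p-1>1$) yields the claimed uniform bound. The main obstacle I anticipate is bookkeeping the two competing regimes in the geometry — when $P$ is far from the orbit of $y$ versus when $y\in P$ or nearby — since in the far regime the extra decay $(\,\cdot/\|x_Q-y\|)^{\varepsilon_2}$ must be exploited to beat $\omega(P)^{2/p-1}$ for small $p$, and one must verify that the exponent $\varepsilon_2$, which can be taken arbitrarily close to $1$, suffices for all $p>\frac{\bf N}{{\bf N}+1}$; this is exactly the same arithmetic ($\frac{\bf N}{{\bf N}+\varepsilon}<\theta<p\le1$) that appears in the proof of Theorem \ref{1.7}, so I'd reuse that computation. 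The dependence of $C$ on $j$ (through factors $r^{\pm j}$) is harmless since $j$ is fixed, and crucially no factor depending on $y$ survives.
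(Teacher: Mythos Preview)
Your proposal uses the right key tool --- the almost-orthogonality estimate \eqref{aoe2} from Lemma~\ref{aoe} --- but you overcomplicate the geometry and misplace where the restriction $p>\frac{\mathbf N}{\mathbf N+1}$ actually enters. The paper's proof never splits into ``$P$ far from $\mathcal O(y)$'' versus ``$y$ near $P$''; instead it immediately discards \emph{all} spatial decay by the crude bound
\[
|\psi_Q(q_j(\cdot,y))(x_Q)|\ \lesssim\ r^{-|k-j|\varepsilon}\frac{1}{\omega(B(x_Q,r^{-k\vee -j}))}\ \leqslant\ r^{-|k-j|\varepsilon}\frac{1}{\omega(B(x_Q,r^{-j}))},
\]
and then splits only according to whether the scale $k_0$ of $P$ satisfies $k_0\geqslant j$ or $k_0\leqslant j$. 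The factor $\omega(B(x_Q,r^{-j}))^{-1}$ is harmless since $\inf_x\omega(B(x,r^{-j}))\geqslant c_j>0$, which is exactly the origin of the $j$-dependence of $C$.

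The step you leave vague --- ``handle small cubes, bounding $\omega(P)^{2-2/p}$'' --- is in fact the entire content of the estimate, and this is where the restriction on $p$ lives, not in any far-regime spatial decay. For small $P$ (scale $k_0\geqslant j$), doubling gives $\omega(P)^{2-2/p}\lesssim r^{(k_0-j)\mathbf N(2/p-2)}\omega(B(x_P,r^{-j}))^{2-2/p}$, while the sum over $k\geqslant k_0$ contributes a factor $r^{-2(k_0-j)\varepsilon}$; these compete, and the product is bounded precisely when $\mathbf N(2/p-2)-2\varepsilon\leqslant 0$, i.e.\ $p\geqslant \frac{\mathbf N}{\mathbf N+\varepsilon}$ (with $\varepsilon<1$ arbitrary, hence the range $p>\frac{\mathbf N}{\mathbf N+1}$). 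Your plan to extract this from the $\|x_Q-y\|^{-\varepsilon_2}$ decay in the far regime would work for cubes far from $\mathcal O(y)$, but does nothing for small cubes sitting on top of $y$, where that decay is unavailable --- and those are exactly the cubes for which $\omega(P)^{2-2/p}$ is large. So the near/far split is both unnecessary and, as you've outlined it, insufficient.
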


\begin{proof}
    Fix a dyadic cube $P$ with the side length $r^{-M-k_0}$ and the center $x_P$.
    For $Q\in Q^k$, the almost orthogonal estimate \eqref{aoe2} in the Lemma \eqref{Lem aoe} implies
    \begin{equation}\label{cmo-aoe}
    \begin{aligned}
    &|\psi_Q(q_{j}(\cdot,y))(x)|=\bigg|\int_{\R^N}\psi_k(x,z)q_j(z,y)d\omega(z)\bigg|\\
    &\qquad\leqslant Cr^{-|k-j|\varepsilon}
    \frac 1{V(x,y,r^{-k\vee -j}+d(x,y))}\bigg(\frac{r^{-k\vee -j}}{r^{-k\vee -j}+\|x-y\|}\bigg)^\varepsilon\\
    &\qquad \leqslant Cr^{-|k-j|\varepsilon}\frac 1{\omega(B(x,r^{-k\vee -j}))}\\
    &\qquad \leqslant Cr^{-|k-j|\varepsilon}\frac 1{\omega(B(x,r^{-j}))}.
    \end{aligned}
    \end{equation}
    For $k_0\geqslant j$, we use \eqref{cmo-aoe} to get
    \begin{align*}
    &\frac1{\omega(P)^{\frac2p-1}}\sum\limits_{Q\subseteq P} \omega(Q) |\psi_Q(q_{j}(\cdot,y))(x_Q)|^2\\
    &\qquad=\frac1{\omega(P)^{\frac2p-1}}\sum\limits_{k=k_0}^\infty\sum\limits_{\{Q\in Q^k : Q\subseteq P \}} \omega(Q) |\psi_Q(q_{j}(\cdot,y))(x_{Q})|^2\\
    &\qquad\lesssim \frac1{\omega(P)^{\frac2p-1}}\sum\limits_{k=k_0}^\infty\sum\limits_{\{Q\in Q^k : Q\subseteq P \}} \omega(Q)\frac 1{\omega(B(x_Q,r^{-j}))^2} r^{-2|k-j|\varepsilon}\\
    &\qquad\lesssim \sup_{x\in P}\frac 1{\omega(B(x,r^{-j}))^2}\cdot \frac1{\omega(P)^{\frac2p-1}}\sum\limits_{k=k_0}^\infty\sum\limits_{\{Q\in Q^k : Q\subseteq P \}} \omega(Q)r^{-2|k-j|\varepsilon}\\
    &\qquad\lesssim \sup_{x\in P}\frac 1{\omega(B(x,r^{-j}))^2} \cdot \frac1{\omega(P)^{\frac2p-2}}\sum\limits_{k=k_0}^\infty r^{-2|k-j|\varepsilon}.
    \end{align*}
    If $k_0\geqslant j$, then doubling property of the measure $\omega$ implies,
    \begin{equation*}
    \omega(B(x_P,r^{-j}))\lesssim r^{(-j+k_0){\bf N}}\omega(B(x_P,r^{-k_0}))\lesssim r^{(-j+k_0){\bf N}}\omega(B(x_P,r^{-k_0}))\omega(P) .
    \end{equation*}
    Thus,
    \begin{align*}
    &\frac1{\omega(P)^{\frac2p-1}}\sum\limits_{Q\subseteq P} \omega(Q) |\psi_Q(q_{j}(\cdot,y))(x_Q)|^2\\
    &\lesssim \sup_{x\in P}\frac 1{\omega(B(x,r^{-j}))^2}\cdot \frac1{\omega(B(x_P,r^{-j}))^{\frac2p-2}}\sum\limits_{k=k_0}^\infty r^{-2|k-j|\varepsilon}r^{(-j+k_0){\bf N}(\frac{2}{p}-2)}\\
    &\lesssim \sup_{x\in P}\frac 1{\omega(B(x,r^{-j}))^{\frac2p}}\sum\limits_{k=k_0}^\infty r^{-2|k-j|\varepsilon}r^{(-j+k_0){\bf N}(\frac{2}{p}-2)}.
    \end{align*}
    Since $\frac {\bf N}{{\bf N}+\varepsilon }<p$, we have $N(\frac{2}{p}-2)-2\varepsilon\le0$ and then $\sum\limits_{k=k_0}^\infty r^{(-j+k_0){\bf N}(\frac{2}{p}-2)-2(k_0-j)\varepsilon}\lesssim 1$.
    The above inequality yields
    $$\sup_P\frac1{\omega(P)^{\frac2p-1}}\sum\limits_{Q\subseteq P} \omega(Q) |\psi_Q(q_{j}(\cdot,y))(x_Q)|^2\leqslant C_j\sup_{x\in P}\frac 1{\omega(B(x,r^{-j}))^{\frac2p}}.$$
    For $k_0\leqslant j$, applying \eqref{cmo-aoe} gets
    \begin{align*}
    &\frac1{\omega(P)^{\frac2p-1}}\sum\limits_{Q\subseteq P} \omega(Q) |\psi_Q(q_{j}(\cdot,y))(x_Q)|^2\\
    &\qquad\lesssim \frac1{\omega(P)^{\frac2p-1}}\sum\limits_{k=k_0}^\infty\sum\limits_{\{Q\in Q^k : Q\subseteq P \}} \omega(Q)\frac 1{\omega(B(x_Q,r^{-j}))^2} r^{-2|k-j|\varepsilon}\\
    &\qquad\lesssim \sup_{x\in P}\frac 1{\omega(B(x,r^{-j}))^2} \frac1{\omega(P)^{\frac2p-1}}\sum\limits_{k=k_0}^\infty\sum\limits_{\{Q\in Q^k : Q\subseteq P \}} \omega(Q)r^{-2|k-j|\varepsilon}\\
    &\qquad\lesssim \sup_{x\in P}\frac 1{\omega(B(x,r^{-j}))^2}\frac1{\omega(P)^{\frac2p-1}}\sum\limits_{k=k_0}^\infty\omega(P) r^{-2|k-j|\varepsilon}\\
    &\qquad\lesssim \sup_{x\in P}\frac 1{\omega(B(x,r^{-j}))^2}\frac1{\omega(P)^{\frac2p-2}}\sum\limits_{k=k_0}^\infty r^{-2|k-j|\varepsilon}.
    \end{align*}
    Since
    $\omega(B(x_Q,r^{-j}))\leqslant \omega(B(x_Q,r^{-k_0}))\sim \omega(P)$,  we have
    \begin{align*}
    \frac1{\omega(P)^{\frac2p-1}}\sum\limits_{Q\subseteq P} \mu(Q) |\psi_Q(q_{j}(\cdot,y))(x_Q)|^2
    &\lesssim  \sup_{x\in P}\frac 1{\omega(B(x,r^{-j}))^{\frac2p}}\sum\limits_{k=k_0}^{\infty } r^{-2|k-j|\varepsilon}\\
    &\leqslant C_j \sup_{x\in P}\frac 1{\omega(B(x,r^{-j}))^{\frac2p}}.
    \end{align*}
    Taking the supremum over all dyadic cubes $P$ and using
    $$\inf\limits_{x\in \R^N} \omega(B(x,1))\geqslant C,$$ we obtain
    $$\sup_P \frac1{\omega(P)^{\frac2p-1}}\sum\limits_{Q\subseteq P} \omega(Q)  |\psi_Q(q_{j}(\cdot,y))(x_Q)|^2\leqslant C_j.$$
    and the proof is complete.
\end{proof}
We denote $L^2(\R^N,\omega)\cap CMO_d^p$ by the subspace of all $f\in L^2(\R^N,\omega)$ with the norm $\|f\|_{CMO_d^p}<\infty. $ Based on the above Lemma \ref{test}, if $f\in (L^2(\R^N,\omega)\cap CMO_d^p))^\prime,$ then $q_j(f)(x)$ is well defined since for each fixed $x, q_j(x,y)\in L^2(\R^N,\omega)\cap CMO_d^p.$

The following result describes an important property for such a distribution $f.$ More precisely, we establish the following weak-type discrete Calder\'on reproducing formula in the distribution sense:
\begin{proposition}\label{CRFdis1}
    Suppose that $\{f_n\}_{n=1}^\infty$ is a sequence in $L^2(\R^N,\omega)$ with $\|S(f_n-f_m)\|_p\rightarrow 0$ as $n, m\rightarrow \infty.$ Then there exists $f,$ as a distribution in $(L^2(\R^N,\omega)\cap CMO_d^p)^\prime $, such that (i)  $\|S(f)\|_p=\lim\limits_{n \rightarrow \infty}\|S(f_n)\|_p<\infty;$ (ii) there exists a distribution $h\in (L^2(\R^N,\omega)\cap CMO_d^p)^\prime$ with $\|f\|_2\sim \|h\|_2, \|S(f)\|_{p}\sim \|S(h)\|_{p},$ such that for each $g\in L^2(\R^N,\omega)\cap CMO_d^p, f$ has the following weak-type discrete Calder\'on reproducing formula in the distribution sense:
    \begin{align*}
    \langle f,g\rangle&:=\langle -\ln r\sum\limits_{j=-\infty}^\infty\sum\limits_{Q\in Q^j}w(Q)\psi_Q(\cdot,x_{Q})q_{Q}h(x_{Q}), g\rangle\\
    &=-\ln r\sum\limits_{j=-\infty}^\infty\sum\limits_{Q\in Q^j}w(Q)\psi_Qg(x_{Q})q_{Q}h(x_{Q}),
    \end{align*}
where the last serise converges absolutely.
\end{proposition}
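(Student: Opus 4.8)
The plan is to build $f$ from $\{f_n\}$ by duality, to make sense of its square function through the test functions of Lemma~\ref{test}, and then to transport the $L^2$ reproducing formula of Theorem~\ref{1.7} to the distributional level by a limiting argument whose only nontrivial input is the stopping-time estimate that underlies Proposition~\ref{pr1}. Fix $g\in L^2(\R^N,\omega)\cap CMO_d^p$. By Proposition~\ref{pr1}, $|\langle f_n-f_m,g\rangle|\lesssim \|S(f_n-f_m)\|_p\,\|g\|_{CMO_d^p}\to 0$, so $\langle f,g\rangle:=\lim_n\langle f_n,g\rangle$ exists and defines $f\in(L^2\cap CMO_d^p)'$ with $|\langle f,g\rangle|\lesssim(\lim_n\|S(f_n)\|_p)\|g\|_{CMO_d^p}$. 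Since $q_j(x,\cdot)\in L^2\cap CMO_d^p$ by Lemma~\ref{test}, the numbers $q_Qf(x_Q):=\langle f,q_Q(x_Q,\cdot)\rangle=\lim_n q_Qf_n(x_Q)$ are well defined, and we set $S(f)(x):=\{\sum_j\sum_{Q\in Q^j}|q_Qf(x_Q)|^2\chi_Q(x)\}^{1/2}$.

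To prove (i), pass to a subsequence along which $S(f_{n_k})\to G$ in $L^p$ and a.e.\ and $\sum_k\|S(f_{n_{k+1}}-f_{n_k})\|_p^p<\infty$. For a.e.\ $x$ the vectors $(q_Qf_{n_k}(x_Q))_{Q\ni x}$ form a Cauchy sequence in $\ell^2$, because the $\ell^2$-norm of two successive differences equals $S(f_{n_{k+1}}-f_{n_k})(x)$ and $\sum_k S(f_{n_{k+1}}-f_{n_k})(x)<\infty$ a.e.\ (here $p\le1$ is used); the coordinatewise limit forces the $\ell^2$-limit to be $(q_Qf(x_Q))_{Q\ni x}$, so $S(f_{n_k})(x)\to S(f)(x)$ a.e. Comparing with $S(f_{n_k})\to G$ a.e.\ gives $S(f)=G$ a.e., hence $\|S(f)\|_p=\|G\|_p=\lim_n\|S(f_n)\|_p<\infty$. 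For the auxiliary distribution $h$, put $h_n:=(T_M)^{-1}f_n\in L^2$; by Theorem~\ref{1.7} (more precisely, by the boundedness of $(T_M)^{-1}$ on $L^2\cap H_d^p$ proved there) $\|h_n\|_2\sim\|f_n\|_2$ and $\|S(h_n)\|_p\sim\|S(f_n)\|_p$, and applying this to $h_n-h_m$ shows $\{h_n\}$ is again $\|S(\cdot)\|_p$-Cauchy. Running the argument just given on $\{h_n\}$ produces $h\in(L^2\cap CMO_d^p)'$ with $q_Qh(x_Q)=\lim_n q_Qh_n(x_Q)$, with $\|S(h)\|_p=\lim_n\|S(h_n)\|_p\sim\|S(f)\|_p$, and with $\|f\|_2\sim\|h\|_2$ inherited from $\|f_n\|_2\sim\|h_n\|_2$. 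Since $q_Qh(x_Q)-q_Qh_n(x_Q)=\lim_m(q_Qh_m(x_Q)-q_Qh_n(x_Q))$, the Fatou step above also yields $\|S(h-h_n)\|_p\le\liminf_m\|S(h_m-h_n)\|_p\to0$ as $n\to\infty$.

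The decisive estimate for the reproducing formula is that, for \emph{any} $F\in(L^2\cap CMO_d^p)'$ with $\|S(F)\|_p<\infty$,
\begin{equation*}
\sum_{j=-\infty}^\infty\sum_{Q\in Q^j}\omega(Q)\,|\psi_Qg(x_Q)|\,|q_QF(x_Q)|\le C\,\|S(F)\|_p\,\|g\|_{CMO_d^p}.
\end{equation*}
This is proved exactly as in Proposition~\ref{pr1}: form $\Omega_\ell=\{S(F)>2^\ell\}$, the stopping cubes $B_\ell$ and their maximal elements, observe that a cube lying in no $B_\ell$ meets $\{S(F)=0\}$ and hence carries $q_QF(x_Q)=0$, and then combine Cauchy--Schwarz in $Q$, the subadditivity of $t\mapsto t^p$ in $\ell$, the definition of $\|g\|_{CMO_d^p}$ in Definition~\ref{norm-CMO} to control the inner sums of $\psi_Qg(x_Q)$, and the Chebyshev bound $\omega(\Omega_\ell)\le2^{-p\ell}\|S(F)\|_p^p$; membership of $F$ in $L^2$ is never used. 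In particular $-\ln r\sum_{j,Q}\omega(Q)\psi_Qg(x_Q)q_Qh(x_Q)$ converges absolutely. Pairing the $L^2$ identity $f_n=T_Mh_n$ of Theorem~\ref{1.7} with $g$ gives $\langle f_n,g\rangle=-\ln r\sum_{j,Q}\omega(Q)\psi_Qg(x_Q)q_Qh_n(x_Q)$; subtracting the analogous series for $h$ and applying the displayed estimate with $F=h_n-h$ shows the difference is $\lesssim\|S(h_n-h)\|_p\|g\|_{CMO_d^p}\to0$, while $\langle f_n,g\rangle\to\langle f,g\rangle$, so $\langle f,g\rangle=-\ln r\sum_{j,Q}\omega(Q)\psi_Qg(x_Q)q_Qh(x_Q)$, as claimed. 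I expect the two delicate points to be the limit interchange in (i) — coordinatewise versus $\ell^2$-convergence, where the a.e.\ summability supplied by $p\le1$ is essential — and the verification that the stopping-time argument of Proposition~\ref{pr1} goes through verbatim once $F$ is merely a distribution with finite $\|S(F)\|_p$.
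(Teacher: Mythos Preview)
Your proof is correct and follows essentially the same route as the paper: construct $f$ by duality via Proposition~\ref{pr1}, build $h$ from $h_n=(T_M)^{-1}f_n$ using Theorem~\ref{1.7}, observe that the stopping-time estimate behind Proposition~\ref{pr1} only needs $\|S(F)\|_p<\infty$ (with the Chebyshev bound $\omega(\Omega_\ell)\le 2^{-p\ell}\|S(F)\|_p^p$ replacing the $L^2$ version), and pass to the limit in the paired reproducing formula using $\|S(h-h_n)\|_p\to0$.

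The only substantive difference is in (i). The paper handles it in one line via Fatou: since $q_Q(f-f_n)(x_Q)=\lim_m q_Q(f_m-f_n)(x_Q)$, one has $\|S(f-f_n)\|_p\le\liminf_m\|S(f_m-f_n)\|_p\to0$, whence $\|S(f)\|_p=\lim_n\|S(f_n)\|_p$. Your subsequence argument with a.e.\ $\ell^2$-Cauchy vectors is more explicit and correctly uses that $\sum_k a_k^p<\infty$ with $0<p\le1$ and $a_k\ge0$ forces $a_k<1$ eventually, hence $a_k\le a_k^p$ and $\sum_k a_k<\infty$; this justifies the step you flagged as delicate. Either argument works, and the rest of your proof matches the paper's.
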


%%%%%%%good6.28
\begin{proof}
    By  the Proposition \ref{pr1}, there exists $f\in (L^2(\R^N,\omega)\cap CMO_d^p)^\prime$ such that for each $g\in L^2(\R^N,\omega)\cap CMO_d^p,$
    $$\langle f,g\rangle=\lim\limits_{n \rightarrow \infty} \langle f_n, g\rangle.$$
    Observing that $\|S(f-f_n)\|_p=\|S(\lim\limits_{m \rightarrow \infty}(f_m-f_n))\|_p \leqslant \liminf\limits_{m \rightarrow \infty}
    \|S(f_m-f_n)\|_p,$ and hence, $\|S(f-f_n)\|_p\rightarrow 0$ as $n\rightarrow \infty.$ This implies that $\|S(f)\|_p=\lim\limits_{n \rightarrow \infty}\|S(f_n)\|_p<\infty.$ Applying Theorem \ref{1.7}, for each $f_n$ there exists an $h_n$ such that $\|f_n\|_2\sim \|h_n\|_2$ and $\|f_n\|_{H_d^p}\sim \|h_n\|_{H_d^p}.$ Thus, by the Proposition \ref{pr1}, there exists $h\in (L^2(\R^N,\omega)\cap CMO_d^p)^\prime$ such that for each $g\in L^2(\R^N,\omega)\cap CMO_d^p,$
    $$\langle h,g\rangle=\lim\limits_{n \rightarrow \infty} \langle h_n, g\rangle.$$
    Therefore, $\|S(h_n-h_m)\|_p\rightarrow 0$ and $\|S(h)\|_p=\lim\limits_{n \rightarrow \infty}\|S(h_n)\|_p\sim \lim\limits_{n \rightarrow \infty} \|S(f_n)\|_p\sim \|S(f)\|_p.$

    To show that $f$ has a weak-type discrete Calder\'on reproducing formula in the distribution sense, for each $g\in L^2(\R^N,\omega)\cap CMO_d^p,$ applying the proof of the Proposition \ref{pr1},
    $$\bigg|\sum\limits_{j=-\infty}^\infty\sum\limits_{Q\in Q^j}w(Q)\psi_Qg(x_{Q})q_{Q}h(x_{Q})\bigg|\leqslant C\|f\|_{H_d^p}\|g\|_{CMO_d^p},$$
    which implies that the series $\sum\limits_{j=-\infty}^\infty\sum\limits_{Q\in Q^j}w(Q)\psi_Q(x,x_{Q})q_{Q}h(x_{Q})$ is a distribution in $(L^2(\R^N,\omega)\cap CMO_d^p)^\prime.$ Moreover, by the weak-type discrete Calder\'on reproducing formula of $f_n$ in Theorem \ref{1.7}, for each $g\in L^2(\R^N,\omega)\cap CMO_d^p,$
    \begin{eqnarray*}
        \langle f,g\rangle=\lim\limits_{n \rightarrow \infty} \langle f_n, g\rangle
        =\lim\limits_{n \rightarrow \infty}\Big\langle -\ln r\sum\limits_{j=-\infty}^\infty\sum\limits_{Q\in Q^j}w(Q)\psi_Q(x,x_{Q})q_{Q}h_n(x_{Q}),
        g\Big\rangle,
    \end{eqnarray*}
    where $\|f_n\|_2\sim \|h_n\|_2$ and $\|S(f_n)\|_p\sim \|S(h_n)\|_p.$

    Observe that, by the same proof of Proposition \ref{pr1},

    $$\Big|\Big\langle -\ln r\sum\limits_{j=-\infty}^\infty\sum\limits_{Q\in Q^j}w(Q)\psi_Q(x,x_{Q})q_{Q}(h-h_n)(x_{Q}), g\Big\rangle\Big|\leqslant C\|S(h_n-h)\|_p\|g\|_{CMO_d^p},$$
    where the last term above tends to zero as $n\rightarrow \infty$ and hence,

    $$\langle f,g\rangle=\lim\limits_{n \rightarrow \infty}\langle f_n,g\rangle=\Big\langle -\ln r\sum\limits_{j=-\infty}^\infty\sum\limits_{Q\in Q^j}w(Q)\psi_Q(x,x_{Q})q_{Q}
    h(x_{Q}), g\Big\rangle.$$

    The proof of Proposition \ref{CRFdis1} is complete.
\end{proof}
The Proposition \ref{CRFdis1} indicates that one can consider $L^2(\R^N,\omega)\cap CMO_d^p,$  the subspace of all $f\in L^2(\R^N,\omega)$ with the norm $\|f\|_{CMO_d^p}<\infty,$ as the test function space and  $(L^2(\R^N,\omega)\cap CMO_d^p))^\prime,$ as the distribution space. The Dunkl-Hardy space is defined by Definition \ref{hp}.
We remark that in the Definition \ref{hp}, the series
$\sum\limits_{j=-\infty}^\infty\sum\limits_{Q\in
Q^j}w(Q)\lambda_{Q}\psi_Q(x,x_{Q})$ with
$\|\{\sum\limits_{j=-\infty}^\infty\sum\limits_{Q\in
Q^j}|\lambda_{Q}|^2\chi_{Q}\}^{\frac{1}{2}}\|_p<\infty$ defines a
distribution in $(L^2(\R^N,\omega)\cap CMO_d^p)^\prime.$ Indeed,
applying the proof of Proposition \ref{pr1}, for each $g\in
L^2(\R^N,\omega)\cap CMO_d^p,$
$$\Big|\sum\limits_{j=-\infty}^\infty\sum\limits_{Q\in Q^j}w(Q)\lambda_{Q}\psi_Qg(x_{Q})\Big|\leqslant C\Big\|\Big\{\sum\limits_{j=-\infty}^\infty\sum\limits_{Q\in Q^j}|\lambda_{Q}|^2\chi_{Q}\Big\}^{\frac{1}{2}}\Big\|_p \|g\|_{CMP_d^p}.$$

We now show Theorem \ref{1.8}.

\begin{proof}[\bf {The proof of Theorem \ref{1.8}}]
    Suppose $f\in H_d^p(\R^N,\omega).$ Then $f\in (L^2(\R^N,\omega)\cap CMO_d^p)^\prime$ and $f$ has a wavelet-type decomposition $f(x)=\sum\limits_{j=-\infty}^\infty\sum\limits_{Q\in Q^j}w(Q)\lambda_{Q}\psi_Q(x,x_{Q})$ in $(L^2(\R^N,\omega)\cap CMO_d^p)^\prime$ with $ \Big\|\{\sum\limits_{j}\sum\limits_{Q\in Q^j}|\lambda_{Q}|^2\chi_{Q}\}^{\frac{1}{2}}\Big\|_p<\infty.$  Set
    $$f_n(x)=\sum\limits_{|j|\leqslant n}\sum\limits_{\substack{Q\in Q_j\\ Q \subseteq B(0, n)} }w(Q)\lambda_{Q}\psi_Q(x,x_{Q}).$$
    Then $f_n\in L^2(\R^N,\omega)\cap H_d^p(\R^N,\omega)$ and $f_n$ converges  to $f$ in $(L^2(\R^N,\omega)\cap CMO_d^p)^\prime$ as $n$ tends to $\infty.$ To see that $f\in \overline{L^2(\R^N,\omega)\cap H^p(\R^N, \omega)},$ by Proposition \ref{CRFdis1}, it suffices to show that  $\|f_n-f_m\|_{H_d^p}\rightarrow 0$ as $n,m\rightarrow \infty.$ Indeed, if let $E_n=\{(j, Q): |j|\leqslant n, Q\in Q^j\subseteq B(0,n)\}$ and $E^c_{n,m}=E_n/E_m$ with $n\geqslant m,$
    \begin{align*}
    \|f_n-f_m\|_{H_d^p}&= \Big\|\Big(\sum\limits_{k\in \Bbb Z}\sum\limits_{Q'\in Q^k}|q_{Q'}(f_n-f_m)(x_{Q'})|^2\chi_{Q'}(x)\Big)^{\frac{1}{2}}\Big\|_p\\
    &\leqslant \Big\|\Big(\sum\limits_{k\in \Bbb Z}\sum\limits_{Q'\in Q^k}|q_{Q'}\big(\sum\limits_{E^c_{n,m}}w(Q)\lambda_{Q}\psi_Q(\cdot,x_{Q})  \big)(x_{Q'})|^2\chi_{Q'}(x)\Big)^{\frac{1}{2}}\Big\|_p\\
    &\leqslant C\Big\|\{\sum\limits_{E^c_{n,m}}|\lambda_{Q}|^2\chi_{Q}\}^{\frac{1}{2}}\Big\|_p\rightarrow 0,
    \end{align*}
    as $ n, m$ tend to $\infty,$ where the last inequality follows from the same proof of {\bf Step 1} in the Theorem \ref{1.7} and hence, $f\in \overline{L^2(\R^N,\omega)\cap H^p(\R^N, \omega)}.$

    Conversely, if $f\in \overline{L^2(\R^N,\omega)\cap H^p(\R^N, \omega)}$ by Proposition \ref{CRFdis1}, then there exists $h\in (L^2(\R^N,\omega)\cap CMO^p)^\prime$ with $\|S(h)\|_p\sim \|S(f)\|_p$ such that for each $g\in L^2(\R^N,\omega)\cap CMO_d^p,$
    $$\langle f,g\rangle=\bigg\langle -\ln r\sum\limits_{j=-\infty}^\infty\sum\limits_{Q\in Q^j}w(Q)\psi_Q(x,x_{Q})q_{Q}h(x_{Q}), g\bigg\rangle.$$
    Set $\lambda_Q=-\ln r q_{Q}h(x_{Q})$ with $Q\in Q^j.$ We obtain a wevelet-type decomposition of $f$ in $(L^2(\R^N,\omega)\cap CMO_d^p)^\prime$ in the distribution sense:
    $$f=\sum\limits_{j=-\infty}^\infty\sum\limits_{Q\in Q^j}w(Q)\lambda_Q\psi_Q(x,x_{Q})$$
    and hence, $f\in H_d^p(\R^N,\omega).$ Moreover
    $$\|f\|_{H_d^p}=\inf \Big\{\Big\|\{\sum\limits_{j=-\infty}^\infty\sum\limits_{Q\in Q^j}|\lambda_{Q}|^2\chi_{Q}\}^{\frac{1}{2}}\Big\|_p\Big\}\leqslant C\|S(h)\|_p\leqslant C\|S(f)\|_p.$$

    The proof of Theorem \ref{1.8} is complete.
\end{proof}

To describe the relationship between the Dunkl-Hardy space $H_d^p(\R^N,\omega)$ and the Hardy space $H_{cw}^p(\Bbb R^N, \|\cdot\|, \omega)$ on space of homogeneous type in the sense of Coifman and Weiss, we recall $H^p_{cw}$ as follows.
\begin{definition}\label{hpcw}
    $H^p_{cw}(\Bbb R^N, \|\cdot\|, \omega)$ is the collection of all distributions $f\in (\mathcal M(\beta,\gamma,r,x_0))^\prime$ such that
    $\|f\|_{H^p_{cw}}=\|S_{cw}(f)\|_p<\infty,$ where the square function $S_{cw}(f)$ is defined in the Definition \ref{scw}.
\end{definition}
The relationship between the Dunkl-Hardy space $H_d^p(\R^N,\omega)$ and the Hardy space $H_{cw}^p(\Bbb R^N, \|\cdot\|, \omega)$ is given by the following

\begin{theorem}\label{tm3}
    Suppose $\frac{\bf N}{{\bf N}+1}<p\leqslant 1.$ The Hardy space $H_d^p(\R^N, \omega)$ is equivalent to the Hardy space $H^p_{cw}(\mathbb R^N, \|\cdot\|, \omega)$ in the sense that if $f\in H_d^p(\R^N, \omega)$ then $f\in H^p_{cw}$ and there exists a constant $C$ such that $\|f\|_{H^p_{cw}}\leqslant C\|f\|_{H_d^p}.$
    Conversely, if $f\in H^p_{cw}(\mathbb R^N, \|\cdot\|, \omega)$ then $f$ can extend to a distribution $ \widetilde{f}$ on $(L^2(\R^N,\omega)\cap CMO_d^p)^\prime$ such that $\langle\widetilde{f}, g\rangle=\langle f,g \rangle$ for all $g\in \mathcal M(\beta,\gamma,r,x_0))^\prime$ and $\widetilde{f}\in H_d^p(\R^N, \omega),$ Moreover, $\|\widetilde{f}\|_{H_d^p}\leqslant C\|f\|_{H^p_{cw}}.$
\end{theorem}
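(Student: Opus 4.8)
The plan is to prove the two halves of the equivalence $H_d^p(\R^N,\omega)=H^p_{cw}(\R^N,\|\cdot\|,\omega)$ separately, using the two discrete Calder\'on reproducing formulae at our disposal: the Dunkl weak-type formula (Theorem \ref{1.7}, extended to distributions by Proposition \ref{CRFdis1}) and the homogeneous-type formula in Theorem \ref{dCRh}. The technical engine in both directions is Lemma \ref{exchange} together with the Fefferman--Stein vector-valued maximal inequality, exactly as in \textbf{Step 1} and \textbf{Step 3} of the proof of Theorem \ref{1.7}. The key almost-orthogonal input is the estimate \eqref{aoe2} in Lemma \ref{aoe}, which controls $|q_{Q'}D_k(x_{Q'},x_Q)|$ and $|D_{k}\psi_Q(x,x_Q)|$ by the standard decay kernel $r^{-|j-k|\varepsilon}\,V(x,x_Q,\cdot)^{-1}(\cdots)^\varepsilon$, and which allows us to pass between the square function $S$ (built from $q_Q=q_{r^j}$) and the square function $S_{cw}$ (built from the approximation-to-the-identity differences $D_k$).

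First I would handle the inclusion $H_d^p\hookrightarrow H^p_{cw}$. Given $f\in H_d^p(\R^N,\omega)$, by Theorem \ref{1.8} and Corollary \ref{cor1} it is enough to prove $\|S_{cw}(f)\|_p\leqslant C\|S(f)\|_p$ for $f\in L^2(\R^N,\omega)\cap H_d^p$, since then the inequality passes to the closure: if $f_n\to f$ in $(L^2\cap CMO_d^p)'$ with $\|S(f_n-f_m)\|_p\to0$, the distributions $f_n$ are Cauchy in $H^p_{cw}$ and their limit (which agrees with $f$ as a distribution on $\mathcal M(\beta,\gamma,r,x_0)$, the test space being contained in $L^2\cap CMO_d^p$ by Lemma \ref{test}) satisfies the bound. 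For $f\in L^2\cap H_d^p$ I expand $f=-\ln r\sum_j\sum_{Q\in Q^j}\omega(Q)\psi_Q(\cdot,x_Q)q_Qh(x_Q)$ via Theorem \ref{1.7}, write $D_kf(x_{Q'})=-\ln r\sum_j\sum_Q\omega(Q)D_k\psi_Q(x_{Q'},x_Q)q_Qh(x_Q)$, apply \eqref{aoe2} to $D_k\psi_Q$ and invoke Lemma \ref{exchange} to get $\|S_{cw}(f)\|_p\lesssim\|\{\sum_j\sum_Q|q_Qh(x_Q)|^2\chi_Q\}^{1/2}\|_p=\|S(h)\|_p\sim\|S(f)\|_p$. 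Combined with $\|S(f)\|_p\sim\|f\|_{H_d^p}$ (Definition \ref{hpn} and Theorem \ref{1.8}), this gives $\|f\|_{H^p_{cw}}\leqslant C\|f\|_{H_d^p}$.

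For the converse, let $f\in H^p_{cw}(\R^N,\|\cdot\|,\omega)$, so $f\in(\mathcal M(\beta,\gamma,r,x_0))'$ with $\|S_{cw}(f)\|_p<\infty$. I would first truncate: set $f_n(x)=\sum_{|k|\leqslant n}\sum_{Q\in Q^k,\,Q\subseteq B(0,n)}\omega(Q)D_k(x,x_Q)\widetilde{\widetilde D}_k(f)(x_Q)$ using the second formula in Theorem \ref{dCRh}; each $f_n\in L^2(\R^N,\omega)$, and by \textbf{Step 1} of Theorem \ref{1.7} combined with Lemma \ref{exchange} (now in the direction $S\lesssim S_{cw}$) one gets $\|S(f_n-f_m)\|_p\lesssim\|\{\sum_{E_{n,m}^c}|\widetilde{\widetilde D}_k f(x_Q)|^2\chi_Q\}^{1/2}\|_p\to0$ as $n,m\to\infty$, since $\|\{\sum_k\sum_Q|\widetilde{\widetilde D}_kf(x_Q)|^2\chi_Q\}^{1/2}\|_p=\|S_{cw}(f)\|_p<\infty$. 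Hence $\{f_n\}$ is Cauchy in the Dunkl $H_d^p$-norm; by Proposition \ref{CRFdis1} it has a limit $\widetilde f\in(L^2\cap CMO_d^p)'$ lying in $\overline{L^2\cap H_d^p}=H_d^p(\R^N,\omega)$ (Theorem \ref{1.8}), with $\|\widetilde f\|_{H_d^p}\leqslant\liminf_n\|S(f_n)\|_p\lesssim\|S_{cw}(f)\|_p=\|f\|_{H^p_{cw}}$. It remains to check $\langle\widetilde f,g\rangle=\langle f,g\rangle$ for $g\in\mathcal M(\beta,\gamma,r,x_0)$: this follows because $f_n\to f$ in $(\mathcal M(\beta,\gamma,r,x_0))'$ (standard convergence of the discrete reproducing formula in that space, Theorem \ref{dCRh}) while simultaneously $f_n\to\widetilde f$ in $(L^2\cap CMO_d^p)'$, and $\mathcal M(\beta,\gamma,r,x_0)\subseteq L^2\cap CMO_d^p$ by Lemma \ref{test}, so the two limits are consistent on the smaller test space.

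\textbf{Main obstacle.} The delicate point is not the almost-orthogonal bookkeeping, which is mechanical given Lemma \ref{aoe} and Lemma \ref{exchange}, but rather the consistency of the two notions of distribution: $H^p_{cw}$ lives in $(\mathcal M(\beta,\gamma,r,x_0))'$ while $H_d^p$ lives in $(L^2\cap CMO_d^p)'$. One must verify that the truncations $f_n$ converge in \emph{both} dual topologies to objects that restrict to the same functional on the common (dense-enough) subspace $\mathcal M(\beta,\gamma,r,x_0)$ --- this is where Lemma \ref{test}, identifying $q_j(\cdot,y)$ and more generally the test functions as elements of $L^2\cap CMO_d^p$, is essential --- and that the reproducing-formula series for $f_n$ in Theorem \ref{dCRh} does converge in $(\mathcal M(\beta,\gamma,r,x_0))'$ to $f$ itself, not merely to some representative. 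Once this identification is pinned down, the norm inequalities on both sides are immediate from the square-function comparisons established above.
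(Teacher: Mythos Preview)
Your proposal is correct and follows essentially the same approach as the paper: both directions rest on the square-function equivalence $\|S(f)\|_p\sim\|S_{cw}(f)\|_p$ for $f\in L^2$ (already established in Steps~1 and~3 of the proof of Theorem~\ref{1.7} via Lemma~\ref{aoe} and Lemma~\ref{exchange}), followed by density and the dual-space inclusion $(\mathcal M(\beta,\gamma,r,x_0))\subseteq L^2\cap CMO_d^p$ coming from Lemma~\ref{test}. The paper's write-up is more economical in that it simply cites the $\|S\|_p\sim\|S_{cw}\|_p$ equivalence from Theorem~\ref{1.7} rather than re-deriving it, and for the converse direction invokes an abstract approximating sequence $f_n\in L^2$ (from the known density of $L^2\cap H^p_{cw}$ in $H^p_{cw}$) instead of your explicit truncations of the reproducing formula; one small slip in your version is the identification $\big\|\{\sum_k\sum_Q|\widetilde{\widetilde D}_kf(x_Q)|^2\chi_Q\}^{1/2}\big\|_p=\|S_{cw}(f)\|_p$, which should be an equivalence rather than an equality since $S_{cw}$ is built from $D_k$, but this is standard in the homogeneous-type theory and does not affect the argument.
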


\begin{proof}
    The proof of Theorem \ref{tm3} is based on Theorems \ref{1.7} and \ref{1.8}. Indeed, for $\frac{\bf N}{{\bf N}+1}<p\leqslant 1$ and $f\in L^2(\R^N,\omega),$ by Theorem \ref{1.7}, $\|S(f)\|_p\sim \|S_{cw}(f)\|_p.$ Therefore $\overline{ L^2(\R^N,\omega)\cap H_d^p(\R^N, \omega)}=\overline{ L^2(\R^N,\omega)\cap H^p_{cw}(\R^N, \|\cdot\|, \omega)}$ with the equivalent norms. Given $f\in H_d^p(\R^N, \omega),$ by Theorem \ref{1.8}, there exists a sequence $\{f_n\}_{n=1}^\infty$ such that for each $f_n\in L^2(\R^N,\omega)\cap H_d^p(\R^N, \omega)$ and $f_n$ converges to $f$ in $(L^2(\R^N,\omega)\cap CMO_d^p)^\prime.$ Moreover, $\|S(f)\|_p =\lim\limits_{n\rightarrow \infty}=\|S(f_n)\|_{p}.$ It is well known that by a classical result, $L^2(\R^N,\omega)\cap H^p_{cw}(\R^N, \|\cdot\|, \omega)$ is dense in $H^p_{cw}(\R^N, \|\cdot\|, \omega)$ and by Lemma \ref{test},  $(L^2(\R^N,\omega)\cap CMO_d^p)^\prime\subseteq (\mathcal M(\beta,\gamma,r,x_0))^\prime.$ Hence, $f_n$ is also converges to $f$ in $(\mathcal M(\beta,\gamma,r,x_0))^\prime$ and $\|S_{cw}(f)\|_p =\lim\limits_{n\rightarrow \infty}\|S_{cw}(f_n)\|_{p}.$ This implies that
     $$\|f\|_{H^p_{cw}}\leqslant C\|f\|_{H_d^p}.$$
    Suppose $f\in H^p_{cw}(\R^N, \|\cdot\|, \omega).$ there exists a sequence $\{f_n\}\in L^2(\R^N, \omega)$ such that $f_n$ converges $f$ in $(\mathcal M(\beta,\gamma,r,x_0))^\prime$ and $\|f\|_{H^p_{cw}}=\lim\limits_{n\rightarrow \infty}|f_n\|_{H_{cw}^p}.$ By the proof of Theorem \ref{1.7}, $\|S(f_n-f_m)\|_p\sim \|S_{cw}(f_n-f_m)\|_p $ and hence, $\|f_n-f_m\|_{H_d^p}=\|S(f_n-f_m)\|_p$ tends to zero as $n, m$ tends to $\infty.$
    Therefore, by Proposition \ref{CRFdis1}, $f_n$ tends to $\widetilde{f}$ in $(L^2(\R^N,\omega)\cap CMO_d^p)^\prime.$ It is clear that $ \widetilde{f}=f$ in $(\mathcal M(\beta,\gamma,r,x_0))^\prime.$ Moreover, by Theorem \ref{1.8},
    $$\|\widetilde{f}\|_{H_d^p}=\|S(\widetilde{f})\|_p=\lim\limits_{n\rightarrow \infty}\|S(f_n)\|_p\leqslant C\lim\limits_{n\rightarrow \infty}\|f_n\|_{H^p_{cw}}=C\|f\|_{H^p_{cw}}.$$

The proof of Theorem \ref{tm3} is complete.
\end{proof}

The proof of the Theorem \ref {atom} follows from the Theorem \ref{tm3}.
This atomic decomposition of $H_d^p(\R^N,\omega)$ is crucial for providing the boundedness of the Dunkl-Calder\'on-Zygmund operators from $H_d^p(\R^N,\omega)$ to $L^p(\R^N,\omega).$

\section{{Boundedness of Dunkl-Calder\'on-Zygmund Operators on $H_d^p$ }}

\subsection{{Boundedness of Dunkl-Calder\'on-Zygmund Operator from $H_d^p$  to $L^p$}}
\ \\

It is well known that the atomic decomposition is the main tool to
prove the boundedness of the classical Calder\'on-Zygmund singular
integral operators from $H^p(\R^N)$ to $L^p(\R^N)$ for $0<p\leqslant
1.$ Note that the proof of the theorem \ref{1.11} for $p=1$ was
given in the proof of Theorem \ref{th1.1}. We prove the Theorem
\ref{1.11} for $\frac{\bf N}{\bf N+\varepsilon}<p< 1$ only.

\begin{proof}[{\bf The proof of Theorem \ref{1.11}}]
    It is well known that the atomic decomposition of the Hardy spaces is a key method to show the boundedness of operators from $H_d^p(\R^N,\omega)$ to $L^p(\R^N,\omega).$ Observe that $L^2(\R^N,\omega)\cap H_d^p(\R^N,\omega)$ is dense in $H_d^p(\R^N,\omega).$ Moreoevr, if $f\in L^2(\R^N,\omega)\cap H_d^p(\R^N,\omega),$ by the Theorem \ref{atom}, then $f$ has an atomic decomposition which converges in both $L^2(\R^N,\omega)$ and $H_d^p(\R^N,\omega),$ respectively. Therefore, it suffices to show that if $a(x)$ is an $(p,2)$ atom of $H_d^p(\R^N,\omega),$ then $\|T(a)\|_p\leqslant C,$ where the constant $C$ is independent of $a.$ To this end, let $\supp a(x)\subseteq Q$ and $B=\{x: d(x,x_Q)\leqslant 4{\sqrt N}\l(Q) \},$ where $x_Q$ is the center of $Q$ and $\l(Q)$ is the side length of $Q.$ Write
    $$\int_{\R^N} |T(a)(x)|^pd\omega(x)=\int_B |T(a)(x)|^pd\omega(x)+\int_{B^c} |T(a)(x)|^pd\omega(x).$$
    The H\"older inequality, the $L^2$ boundedness of $T,$ and the size condition of $a$ imply that
    $$\int_B |T(a)(x)|^pd\omega(x)\leqslant C \omega(B)^{1-\frac{p}{2}}\|a\|^p_2\leqslant C \omega(Q)^{1-\frac{p}{2}} \omega(Q)^{p(\frac{1}{2}-\frac{1}{p})}\leqslant C,$$
    where the fact $\omega(B)\sim \omega(Q)$ is used.

    If $x\in B^c$ and $y\in Q,$ then $\|y-x_Q\|\leqslant \frac{1}{2}d(x,x_Q).$ By the cancellation condition of $a$ and the smoothness condition of the kernel $K(x,y),$
      \begin{align*}
    |T(a)(x)|&=\bigg|\int_Q K(x,y)a(y)d\omega(y)\bigg|=\bigg|\int_Q [K(x,y)-k(x,x_Q)]a(y)d\omega(y)\bigg|\\
                &\leqslant C\Big(\frac{\l(Q)}{\|x-x_Q\|}\Big)^\varepsilon \frac{1}{\omega(x, d(x,x_Q))}\|a\|_1.
     \end{align*}
    Therefore,
    $$\int_{B^c} |T(a)(x)|^pd\omega(x)\leqslant C \int_{B^c}\Big(\frac{\l(Q)}{\|x-x_Q\|}\Big)^{\varepsilon p} \Big(\frac{1}{\omega(x, d(x,x_Q))}\Big)^p\|a\|^p_1 d\omega(x).$$
    Applying the doubling on the measure $\omega$ and size condition on $a(x)$ yields
     \begin{align*}
&\int_{B^c}\Big(\frac{\l(Q)}{\|x-x_Q\|}\Big)^{\varepsilon p} \Big(\frac{1}{\omega(x, d(x,x_Q))}\Big)^p\|a\|^p_1 d\omega(x)\\
&\qquad = \sum\limits_{j\geqslant 1} \int_{2^j\l(Q)\leqslant d(x, x_Q)<2^{j+1}\l(Q)} \Big(\frac{\l(Q)}{d(x,x_Q)}\Big)^{\varepsilon p} \Big(\frac{1}{\omega(x, d(x,x_Q))}\Big)^p\|a\|^p_1 d\omega(x)\\
 &\qquad \leqslant C \sum\limits_{j\geqslant 1} 2^{-j\varepsilon p} \|a\|^p_1 \big(\omega(x_Q, 2^{j}\l(Q))\big)^{1-p}\\
  &\qquad \leqslant C \sum\limits_{j\geqslant 1} 2^{-j\varepsilon p} 2^{j{\bf N}(1-p)}\big(\omega(x_Q, \l(Q))\big)^{1-p}\big(\omega(Q)^{\frac 12}\|a\|_2\big)^p \\
    &\qquad \leqslant C \sum\limits_{j\geqslant 1} 2^{j(\bf N - (\varepsilon p +\bf N p))}\\
    &\qquad\leqslant C,
    \end{align*}
since $\frac{\bf N}{\bf N +\varepsilon}<p.$

    This implies $\|T(a)\|_p\leqslant C,$ where the constant $C$ is independent of $a(x).$
\end{proof}
We prove Theorem \ref{moleculeinHp}.

\begin{proof}[{\bf The proof of Theorem \ref{moleculeinHp}}]
    Set $\sigma=\|m\|_{L^2}^{-\alpha}$ with $\alpha={2p\over (2-p) \bf N}$.
    Observe that $B(x_0,2^{i+1}\sigma) \rightarrow \R^N$ as $i$ tends to $\infty.$ Thus, there exists an integer $i_0$ such that
    \begin{eqnarray}\label{molecule i0}
    \mu(B(x_0,2^{i_0+1}\sigma))>\sigma^{\bf N} \textup{\ \ \   and\ \ \   }
    \mu(B(x_0,2^{i_0}\sigma))\leqslant \sigma^{\bf N}.
    \end{eqnarray}

    Set $\chi_0=B(x_0, 2^{i_0}\sigma)$ and $ \chi_i=B(x_0,2^i2^{i_0}\sigma)\backslash B(x_0,2^{i-1}2^{i_0}\sigma)=\{x\in {\R^N}:  2^{i-1} 2^{i_0}\sigma \leqslant \|x-x_0\| < 2^i 2^{i_0}\sigma\}$ for $i\geqslant 1.$
    Let $\chi_i(x)$ be the characteristic function of $\chi_i$ for $i\geqslant 0.$
    We claim that there exists an integer $j_1> 1$ such that
    $\omega\Big(\bigcup\limits_{\ell=1}^{j_1} \chi_\ell \Big) > \omega(\chi_0)$
    and $\omega\Big(\bigcup\limits_{\ell=1}^{j_1-1} \chi_\ell \Big) \leqslant \mu(\chi_0).$  Indeed, suppose that such $j_1$ does not exist. Then for every integer $j>1$, we would have $\omega\Big(\bigcup\limits_{\ell=1}^{j} \chi_\ell \Big) \leqslant \omega(\chi_0)$. This implies that for any $j>1,
    \omega(B(x_0, 2^j2^{i_0}\sigma)\leqslant \omega(B(x_0,2^{i_0}\sigma)),$ which is impossible since $\omega\Big(B(x_0, 2^j2^{i_0}\sigma)\Big)\rightarrow +\infty$ as $j\rightarrow +\infty.$

    Applying the same stopping time argument yields that there exists a sequence $\{j_k\}_k$ such that $j_{k+1}>j_{k}+1,$
    $$  \omega\Big(\bigcup\limits_{\ell=j_{k}+1}^{j_{k+1}} \chi_\ell \Big) > \omega (B(x_0,2^{j_k}2^{i_0}\sigma))
    $$
    and
    $$
    \omega\Big(\bigcup\limits_{\ell=j_{k}+1}^{j_{k+1}-1} \chi_\ell \Big) \leqslant \omega(B(x_0,2^{j_k}2^{i_0}\sigma)). $$

    Observe that
    \begin{align}\label{molecule e1}
    \omega(B(x_0,2^{j_{k+1}}2^{i_0}\sigma))= \omega (B(x_0,2^{j_k+1}2^{i_0}\sigma)) = \omega\Big(\bigcup\limits_{\ell=j_{k}+1}^{j_{k+1}} \chi_\ell \Big)\geqslant 2\omega(B(x_0,2^{j_k}2^{i_0}\sigma))
    \end{align}
    for each integer $k\geq0$. Here we set $j_0=0$.

    Applying (\ref{molecule e1}) and the induction together with the doubling condition of the measure $\omega$  yields
    \begin{equation}\label{molecule e3}
    \begin{aligned}
    \omega(B(x_0,2^{j_k}2^{i_0} \sigma))&\geqslant  2^k \omega(B(x_0,2^{i_0} \sigma))\\
    &\geqslant  {C_\omega^{-1}} 2^{-\bf N} 2^k \omega(B(x_0,2^{i_0+1} \sigma))
    \geqslant {C_\omega^{-1}}  2^{-\bf N} 2^k  \sigma^{\bf N}.
    \end{aligned}
    \end{equation}
    We point out that for each integer $k\geq1$, if $j_k=j_{k-1}+1$, then we directly obtain that  $\omega( B(x_0, 2^{j_k}2^{i_0}\sigma ) ) \leqslant C_\omega 2^{\bf N} \omega( B(x_0, 2^{j_{k-1}}2^{i_0}\sigma ) )$ from the doubling property of the measure $\omega$. While if $j_k>j_{k-1}+1$, then
    \begin{eqnarray*}
        \omega( B(x_0, 2^{j_k}2^{i_0}\sigma ) ) &\leqslant& C_\omega 2^{\bf N} \omega( B(x_0, 2^{j_k-1}2^{i_0}\sigma ) ) \nonumber\\
        &=& C_\omega 2^{\bf N}\omega( B(x_0, 2^{j_k-1}2^{i_0}\sigma )\backslash B(x_0, 2^{j_{k-1}}2^{i_0}\sigma ) )+C_\omega 2^{\bf N}\omega( B(x_0, 2^{j_{k-1}}2^{i_0}\sigma ) ).
    \end{eqnarray*}
    Note that
    \begin{eqnarray*}
        \omega( B(x_0, 2^{j_k-1}2^{i_0}\sigma )\backslash B(x_0, 2^{j_{k-1}}2^{i_0}\sigma ) )= {\omega}\Big(\bigcup\limits_{\ell= j_{k-1}+1}^{j_k-1}\chi_\ell\Big)
        \leqslant {\omega}( B(x_0, 2^{j_{k-1}}2^{i_0}\sigma ) ),
    \end{eqnarray*}
    which, together with the above estimate for the case $j_k=j_{k-1}+1$, yields
    \begin{eqnarray}
    \omega( B(x_0, 2^{j_k}2^{i_0}\sigma ) )
    &\leqslant& C_\omega 2^{{\bf N}+1} \omega( B(x_0, 2^{j_{k-1}}2^{i_0}\sigma ) )\label{molecule e4}
    \end{eqnarray}
    for each integer $k\geq1$.

    We also point out that, by (\ref{molecule e4}), we obtain
    \begin{eqnarray*}
        {\omega}( B(x_0, 2^{j_{k+1}} 2^{i_0}\sigma ) )
        &\leqslant& 2^{\bf N +1}C_\omega \omega( B(x_0, 2^{j_{k}}2^{i_0}\sigma )),
    \end{eqnarray*}
    which together with the following estimates
    \begin{eqnarray*}
        \omega( B(x_0, 2^{j_{k}}2^{i_0}\sigma ))
        &\leqslant&  \omega( B(x_0, 2^{j_{k-1}}2^{i_0}\sigma ))+\omega\Big(\bigcup\limits_{\ell= j_{k-1}+1 }^{j_k}\chi_{\ell}\Big)\\
        &\leqslant&  2\omega\Big(\bigcup\limits_{\ell= j_{k-1}+1 }^{j_k}\chi_{\ell}\Big),
    \end{eqnarray*}
    gives
    \begin{eqnarray}\label{molecule e7}
    \omega( B(x_0, 2^{j_{k+1}} 2^{i_0}\sigma ) )
    &\leqslant& 4C_\omega 2^{\bf N}\omega\Big(\bigcup\limits_{\ell= j_{k-1}+1 }^{j_k}\chi_{\ell}\Big).
    \end{eqnarray}

    We now set
    $$\widetilde{\chi}_0(x):=\chi_0(x),\hskip1cm
    \widetilde{\chi}_{j_k}(x):=\sum\limits_{\ell=j_{k-1}+1}^{j_{k}} \chi_\ell(x)$$
    for integer $k\geq1$, and
    \begin{eqnarray}\label{molecule e2}
    m_k(x):= m(x)\widetilde{\chi}_{j_k}(x)- {1\over \int_{\R^N} \widetilde{\chi}_{j_k}(z)d\omega(z)} \int_{\widetilde{\chi}_{j_k}}m(y)d\omega(y) \widetilde{\chi}_{j_k}(x)
    \end{eqnarray}
    for each integer $k\geq0$.

    Decompose $m$ by
    $$m(x):=\sum\limits_{k=0}^{\infty}m_k(x)+\sum\limits_{k=0}^\infty \overline{m}_k \widetilde{\widetilde{\chi}}_{j_k}(x),  $$
    where $\overline{m}_k=\int_{\R^N} m(x)\widetilde{\chi}_{j_k}(x)d\omega(x)$ and $\widetilde{\widetilde{\chi}}_{j_k}(x)= {\widetilde{\chi}_{j_k}(x) \over \int_{\R^N} \widetilde{\chi}_{j_k}(y)d\omega(y)}$.

    We show that $\sum\limits_{k=0}^{\infty}m_k(x)$ gives an atomic decomposition with $m_k$ are $(p,2)$ atoms due to multiplication of certain constant. Observe  that $m_k$ is supported in
    $\widetilde{\chi}_{j_k}=\bigcup\limits_{\ell=j_{k-1}+1}^{j_{k}} \chi_\ell$, and  $\int_{\R^N} m_k(x)d\omega(x)=0 $. It remains to estimate the $L^2$ norm of $m_k$. First, we have
    \begin{eqnarray*}
        \|m_0\|_{L^2}&\leqslant& \Big( \int_{{\chi}_0} |m(x)|^2 d\omega(x) \Big)^{1/2}+\Big( \int_{{\chi}_0} \Big|{1\over \int_{\R^N} {\chi}_0(z)d\omega(z)} \int_{\chi_0}m(y)d\omega(y) \widetilde{\chi}_0(x)\Big|^2 d\omega(x) \Big)^{1/2}\\
        &\leqslant& 2\Big( \int_{{\chi}_0} |m(x)|^2 d\omega(x) \Big)^{1/2}\\
        &\leqslant& 2\|m\|_{L^2}\\
        &=& 2\sigma^{-{1\over \alpha}}\\
        &\leqslant& 2 \omega( B(x_0,2^{i_0}\sigma) )^{-{1\over \alpha{\bf N}}}\\
        &=& 2\omega({\chi}_0)^{{1\over2}-{1\over p}},
    \end{eqnarray*}
    where in the last inequality we use the fact in (\ref{molecule i0}) with $\alpha={2p\over (2-p) \bf N}$.

    Thus, $ \frac{1}{2}m_0(x) $ is an $(p,2)$ atom. Similarly, for each $k\geq1$,
    \begin{eqnarray*}
        \|m_k\|_{L^2}
        &\leqslant& 2\Big( \int_{\widetilde{\chi}_{j_k}} |m(x)|^2 d\omega(x) \Big)^{1/2}\\
        &=& 2\Big( \int_{\widetilde{\chi}_{j_k}} |m(x)|^2 \omega(B(x_0,\|x-x_0\|))^{1+{2\varepsilon-2\eta\over{\bf N}}} \omega({B(x_0,\|x-x_0\|)})^{-(1+{2\varepsilon-2\eta\over{\bf N}})}  d\omega(x) \Big)^{1/2}\\
        &\leqslant& 2 \omega(B(x_0,2^{j_{k-1}}2^{i_0}\sigma)^{-{1\over2}-{\varepsilon-\eta\over{\bf N}}}  \Big( \int_{\widetilde{\chi}_{j_k}} |m(x)|^2  \omega({B(x_0,\|x-x_0\|})^{1+{2\varepsilon-2\eta\over{\bf N}}} d\omega(x) \Big)^{1/2}\\
        &\leqslant& 2  \omega(B(x_0,2^{j_{k-1}}2^{i_0}\sigma))^{-{1\over2}-{\varepsilon-\eta\over
                {\bf N}}}   \|m\|_{L^2}^{-({{\bf N}+2\varepsilon-2\eta\over {\bf N}}{p\over 2-p}-1)}\\
        &=&    2  \omega(B(x_0,2^{j_{k-1}}2^{i_0}\sigma))^{-{1\over2}-{\varepsilon-\eta\over
                {\bf N}}}  \sigma^{{1\over \alpha} ({{\bf N}+2\varepsilon-2\eta\over {\bf N}}{p\over 2-p}-1) }.
    \end{eqnarray*}
    Applying the estimates given in (\ref{molecule e3}) yields
    \begin{eqnarray*}
        \sigma^{{1\over \alpha} ({{\bf N}+2\varepsilon-2\eta\over {\bf N}}{p\over 2-p}-1)}
        \leq(C_\omega 2^{\bf N}2^{-k}\omega(B(x_0,2^{j_{k}}2^{i_0}\sigma))^{{{1\over {{\bf N}\alpha}}} ({{\bf N}+2\varepsilon-2\eta\over {\bf N}}{p\over 2-p}-1)},
    \end{eqnarray*}
    which, together with the estimates given in (\ref{molecule e4}), namely that $$\omega(B(x_0,2^{j_{k-1}}2^{i_0}\sigma))^{-{1\over2}-{\varepsilon-\eta\over
            {\bf N}}}\leqslant (2^{{\bf N} +1})^{({1\over2}+{\varepsilon-\eta\over{\bf N}})}\omega(B(x_0,2^{j_{k}}2^{i_0}\sigma))^{-{1\over2}-{\varepsilon-\eta\over{\bf N}}}$$ implies
    \begin{equation}\label{molecule e5}
    \begin{aligned}
    \|m_k\|_{L^2}
    &\leqslant  2  \omega(B(x_0,2^{j_{k-1}}2^{i_0}\sigma))^{-{1\over2}-{\varepsilon-\eta\over{\bf N}}}  \sigma^{{1\over \alpha} ({{\bf N}+2\varepsilon-2\eta\over {\bf N}}{p\over 2-p}-1) }\\
    &\leqslant 2\cdot 2^{ -{1\over {\bf N}\alpha} ({{\bf N}+2\varepsilon-2\eta\over {\bf N}}{p\over 2-p}-1)  k}     (C_\omega 2^{{\bf N}+1})^{{1\over2}+{\varepsilon-\eta\over{\bf N}}} (C_\omega 2^{\bf N})^{ {1\over {\bf N}\alpha} ({{\bf N}+2\varepsilon-2\eta\over {\bf N}}{p\over 2-p}-1)  } \\
    &\qquad\qquad\times     \omega(B(x_0,2^{j_{k}}2^{i_0}\sigma))^{({1\over 2}-{1\over p})}.    \end{aligned}
\end{equation}
    Therefore, $2^{-1}\cdot 2^{ {1\over {\bf N}\alpha} ({{\bf N}+2\varepsilon-2\eta\over {\bf N}}{p\over 2-p}-1)  k}     (2^{{\bf N}+1})^{-{1\over2}-{\varepsilon-\eta\over{\bf N}}} (C_\omega2^{\bf N})^{ -{1\over {\bf N}\alpha} ({{\bf N}+2\varepsilon-2\eta\over {\bf N}}{p\over 2-p}-1)  }  m_k   $ are $(p,2)$ atoms.

    Moreover, $\sum\limits_{k=1}^\infty 2^{ -{1\over \bf N\alpha} ({{\bf N}+2\varepsilon-2\eta\over \bf N}{p\over 2-p}-1)kp}<\infty.$
    As a consequence,  $\sum\limits_{k=0}^{\infty}m_k(x)$ gives the desired atomic decomposition and hence, by the Theorem \ref{atom}, belongs to $H_d^p(\R^N)$ with the norm not larger than the constant $C$, which depends only on $p, \bf N, \varepsilon, \eta$ and $C_\omega.$

    It remains to show that $\sum\limits_{k=0}^\infty {\overline{m}}_k \widetilde{\widetilde{\chi}}_{j_k}(x)$ also gives an atomic decomposition. To see this, let $N_{k'}=\sum\limits_{k=k'}^\infty \overline{m}_k$.
    Note that $\sum\limits_{k=0}^\infty \overline{m}_k= \int_{\R^N} m(x)d\omega(x)=0.$ Summing up by parts implies that
    $$
    \sum\limits_{k=0}^\infty \overline{m}_k\widetilde{\widetilde{\chi}}_k(x)=\sum\limits_{k'=0}^\infty (N_{k'}-N_{k'+1})\widetilde{\widetilde{\chi}}_{k'}(x)=\sum\limits_{k'=0}^\infty N_{k'+1}(\widetilde{\widetilde{\chi}}_{k'+1}(x)-\widetilde{\widetilde{\chi}}_{k'}(x)).
    $$
    Observe that the support of $\widetilde{\widetilde{\chi}}_{k'+1}(x)-\widetilde{\widetilde{\chi}}_{k'}(x)$ lies within $ B(x_0,2^{k'+1}\sigma) $
    and $$\int_{\R^N} (\widetilde{\widetilde{\chi}}_{k'+1}(x)-\widetilde{\widetilde{\chi}}_{k'}(x)) d\omega(x)=0$$ since $\int_{\R^N}\widetilde{\widetilde{\chi}}_{k'}(x)d\omega(x)=1 $ for all $k'$. And we also have
    $$
    |\widetilde{\widetilde{\chi}}_{k'+1}(x)-\widetilde{\widetilde{\chi}}_{k'}(x) | \leq
    {1 \over \int_{\R^N} \widetilde{\chi}_{k'+1}(y)d\omega(y)}+{1 \over \int_{\R^N} \widetilde{\chi}_{k'}(y)d\omega(y)}\leqslant {2 \over \int_{\R^N} \widetilde{\chi}_{k'}(y)d\omega(y)}={2\over \omega(\widetilde{\chi}_{k'})}.
    $$
    Now applying (\ref{molecule e7}),  we obtain that
    \begin{eqnarray}\label{molecule e6}
    |\widetilde{\widetilde{\chi}}_{k'+1}(x)-\widetilde{\widetilde{\chi}}_{k'}(x) | \leqslant {8C_\omega\over \omega( B(x_0, 2^{j_{k'+1}}2^{i_0}\sigma ) )}.
    \end{eqnarray}
    Applying the H\"older inequality and the estimates in (\ref{molecule e5}), we obtain that
    \begin{eqnarray*}
        |N_{k'+1}|&\leqslant& \sum\limits_{k=k'+1}^\infty \int_{\R^N} |m(x)\widetilde{\chi}_{j_{k}}(x)|d\omega(x)\\
        &\leqslant& C \sum\limits_{k=k'+1}^\infty \Big(\int_{\widetilde{\chi}_{j_{k}}} |m(x)|^2d\omega(x)\Big)^{1/2}
        \omega(\widetilde{\chi}_{j_{k}})^{1/2}\\
        &\leqslant& C \sum\limits_{k=k'+1}^\infty   2 \cdot 2^{ -{1\over \bf N \alpha} ({{\bf N }+2\varepsilon-2\eta\over {\bf N}}{p\over 2-p}-1)  k}     (2^{\bf N +1}C_\omega)^{{1\over2}+{\varepsilon-\eta\over\bf N}} (2^{\bf N}C_\omega)^{ {1\over {\bf N} \alpha} ({\bf N +2\varepsilon-2\eta\over \bf N}{p\over 2-p}-1)  }      \\
        &&\times \omega(B(x_0,2^{j_{k}}2^{i_0}\sigma))^{({1\over 2}-{1\over p})}\omega( B(x_0, 2^{j_k}2^{i_0}\sigma ))^{1/2}\\
        &\leqslant& C 2 \cdot 2^{ -{1\over \bf N \alpha} ({{\bf N} +2\varepsilon-2\eta\over {\bf N}}{p\over 2-p}-1)  (k'+1)}     (2^{\bf N +1}C_\omega)^{{1\over2}+{\varepsilon-\eta\over\bf N}} (2^{\bf N}C_\omega)^{ {1\over \bf N \alpha} ({\bf N +2\varepsilon-2\eta\over \bf N}{p\over 2-p}-1)  }  \\ &&\times \omega( B(x_0, 2^{j_{k'+1}}2^{i_0}\sigma ) )^{1-{1\over p}}.
    \end{eqnarray*}
    The estimate above and the size estimate of $\widetilde{\widetilde{\chi}}_{k'+1}(x)
    -\widetilde{\widetilde{\chi}}_{k'}(x)$ in (\ref{molecule e6}) imply
    \begin{align*}
        &|N_{k'+1}(\widetilde{\widetilde{\chi}}_{k'+1}(x)
        -\widetilde{\widetilde{\chi}}_{k'}(x))|\\
        &\leqslant C 2\cdot 2^{ -{1\over \bf N \alpha} ({{\bf N} +2\varepsilon-2\eta\over \bf N}{p\over 2-p}-1)  (k'+1)}     (2^{\bf N +1}C_\omega)^{{1\over2}+{\varepsilon-\eta\over\bf N}} (2^{\bf N}C_\omega)^{ {1\over \bf N \alpha} ({\bf N +2\varepsilon-2\eta\over \bf N}{p\over 2-p}-1)  }  \\
        &\quad\times {62^{\bf N}C_\omega\over \omega( B(x_0, 2^{j_{k'+1}}\sigma ) )}     \omega( B(x_0, 2^{j_{k'+1}}2^{i_0}\sigma ) )^{1-{1\over p}}\\
        &\leqslant C  2^{ -{1\over \bf N \alpha} ({{\bf N} +2\varepsilon-2\eta\over {\bf N}}{p\over 2-p}-1)  (k'+1)} \omega( B(x_0, 2^{j_{k'+1}}2^{i_0}\sigma ) )^{-{1\over p}}.
    \end{align*}
    Therefore, we can rewrite $N_{k'+1}(\widetilde{\widetilde{\chi}}_{k'+1}(x)
    -\widetilde{\widetilde{\chi}}_{k'}(x))$ as
    $$ N_{k'+1}(\widetilde{\widetilde{\chi}}_{k'+1}(x)
    -\widetilde{\widetilde{\chi}}_{k'}(x))= \alpha_{k'}\beta_{k'}(x), $$
    where $\alpha_{k'}=C 2^{ -{1\over \bf N \alpha} ({\bf N +2\varepsilon-2\eta\over \bf N}{p\over 2-p}-1)  (k'+1)}$
    and $\beta_{k'}(x)$ are $(p, 2)$ atoms. Hence, by the Theorem \ref{atom},
    $ \sum\limits_{k=0}^\infty \overline{m}_k\widetilde{\widetilde{\chi}}_k(x)$ belongs to $H_d^p(\R^N)$ with the norm does not exceed $C$.

    The proof of Theorem \ref{moleculeinHp} is concluded.
\end{proof}
Applying the molecule theory, we show the Theorem \ref{1.12}.
\begin{proof}[{\bf The proof of Theorem \ref{1.12}}]
Recall that $K(x,y),$ the kernel of $T,$ satisfies the smoothness
condition
    $$|K(x,y)-K(x,y')|\leqslant C\Big(\frac{\|y-y'\|}{\|x-y\|}\Big)^\varepsilon \frac{1}{\omega(B(x,d(x,y)))}\Big(\frac{d(x,y)}{\|x-y\|}\Big)^{M}$$
    with $M>\bf N/2$.

    We first show the sufficient condition for Theorem \ref{1.12}. Note that for ${\bf N\over \bf N +\varepsilon}<p\leqslant 1$, $L^2(\R^N,\omega)\cap H_d^p(\R^N,\omega),$ is dense in $H_d^p(\R^N,\omega).$ As mentioned, if $f\in L^2(\R^N,\omega)\cap H_d^p(\R^N,\omega)$ then $f$ has an atomic decomposition $f=\sum\limits_j \lambda_j a_j$ where the series converges in both $L^2(\R^N,\omega)$ and $H_d^p(\R^N,\omega).$ Therefore,
    to show that $T$ extends to be a bounded operator on $H_d^{p}(\R^N,\omega)$, by Theorem \ref{moleculeinHp}, it suffices to prove that for each $(p,2)$-atom $a$, $ m=T(a) $ is an $(p,2,\varepsilon, \eta)$-molecule with ${\bf N\over \bf N +\varepsilon-\eta}<p\leq1, 0<\eta<\varepsilon,$ up to a multiplication of a constant $C$. To this end, suppose that $a$ is an $(p,2)$ atom with the support $B(x_0,r).$ We write
    \begin{align*}
        &\Big( \int_{\R^N} m(x)^2 d\omega(x) \Big)\Big( \int_{\R^N} m(x)^2 \omega(B(x_0,\|x-x_0\|))^{1+{2\varepsilon-2\eta\over \bf N}} d\omega(x)\Big)^{({\bf N +2\varepsilon-2\eta\over \bf N}{p\over 2-p}-1)^{-1}}\\
        &\leqslant \Big( \int_{\R^N} m(x)^2 d\omega(x) \Big)\Big( \int_{d(x_0,x)\leq2r} m(x)^2\omega(B(x_0,\|x-x_0\|))^{1+{2\varepsilon-2\eta\over \bf N}} d\omega(x)\Big)^{({\bf N +2\varepsilon-2\eta\over \bf N}{p\over 2-p}-1)^{-1}}\\
        & \qquad+\Big( \int_{\R^N} m(x)^2 d\omega(x) \Big)\Big( \int_{d(x_0,x)>2r} m(x)^2 \omega(B(x_0,\|x-x_0\|))^{1+{2\varepsilon-2\eta\over \bf N}} d\omega(x)\Big)^{({\bf N +2\varepsilon-2\eta\over \bf N}{p\over 2-p}-1)^{-1}}\\
        &=:I+I\!I.
    \end{align*}    Observe that, by the $L^2$ boundedness of $T$ and the size condition on $a$, we have  $\|m\|_{L^2}^2\leqslant \omega(B(x_0,r))^{(1-{2\over p})}.$ For $I$, applying the doubling property on $\omega$ implies that
    \begin{align*}
        I&\leqslant C\omega(B(x_0,r))^{(1-{2\over p})}\omega(B(x_0,2r))^{(1+{2\varepsilon-2\eta\over \bf N})({\bf N +2\varepsilon-2\eta\over \bf N}{p\over 2-p}-1)^{-1}}
        \Big( \int_{\R^N} m(x)^2  d\omega(x)\Big)^{({\bf N +2\varepsilon-2\eta\over \bf N}{p\over 2-p}-1)^{-1}}\\
        &\leqslant  C\omega(B(x_0,r))^{(1-{2\over p})}\omega(B(x_0,r))^{(1+{2\varepsilon-2\eta\over \bf N})({\bf N +2\varepsilon-2\eta\over \bf N}{p\over 2-p}-1)^{-1}}
        \omega(B(x_0,r))^{(1-{2\over p})({\bf N +2\varepsilon-2\eta\over \bf N}{p\over 2-p}-1)^{-1}}\\
        &\leqslant C,
    \end{align*}
    where the last inequality follows from the fact that $(1+{2\varepsilon-2\eta\over \bf N}) + (1-{2\over p})=\frac{2-p}{p}\big({\bf N +2\varepsilon-2\eta\over \bf N}{p\over 2-p}-1\big)$ and thus, $(1+{2\varepsilon-2\eta\over \bf N})({\bf N +2\varepsilon-2\eta\over \bf N}{p\over 2-p}-1)^{-1}+ {(1-{2\over p})({\bf N +2\varepsilon-2\eta\over \bf N}{p\over 2-p}-1)^{-1}}=\frac{2-p}{p}.$

    To estimate $I\!I,$ observe that if $d(x_0,x)>2r$, by the support and the cancellation condition on $a$ and the smoothness condition on the kernel $K(x,y)$, we have
    \begin{align*}
        |m(x)|&= \int_{\|y-x_0)\|\leqslant r\leqslant \frac{1}{2}d(x_0,x)}[K(x,y)-K(x,x_0)]a(y)d\omega(y)\\
        &\leqslant C\int_{d(x_0,x)>2r\geqslant 2\|y-x_0\|} {1\over \omega(B(x_0, d(x_0,x)))} \Big( {\|y-x_0\|\over \|x-x_0\|} \Big)^\varepsilon \Big(\frac{d(x_0,x)}{\|x-x_0\|}\Big)^{M}  |a(y)|d\omega(y)\\
        &\leqslant C {1\over \omega(B(x_0, d(x_0,x)))} \Big( {r\over \|x-x_0\|} \Big)^\varepsilon \Big(\frac{d(x_0,x)}{\|x-x_0\|}\Big)^{M} \omega(B(x_0,r))^{1-{1\over p}} .
    \end{align*}
    The estimate of $m(x)$ for $d(x_0,x)>2r$ and the doubling property on $\omega$ give
    \begin{align*}
        I\!I&\leq
        C\omega(B(x_0,r))^{(1-{2\over p})}\Big( \int_{d(x_0,x)>2r} m(x)^2 \omega(B(x_0,\|x-x_0\|))^{1+{2\varepsilon-2\eta\over \bf N}}  d\omega(x)\Big)^{({\bf N +2\varepsilon-2\eta\over \bf N}{p\over 2-p}-1)^{-1}}\\
        &\leqslant C\omega(B(x_0,r))^{(1-{2\over p})}
        \Bigg( \int_{d(x_0,x)>2r}  {1\over \omega(B(x_0,d(x_0,x)))^2}  \Big( {r\over \|x-x_0\|} \Big)^{2\varepsilon}\\
        &\quad\times\Big(\frac{d(x_0,x)}{\|x-x_0\|}\Big)^{2M} \omega(B(x_0,\|x-x_0\|))^{1+{2\varepsilon-2\eta\over \bf N}} \omega(B(x_0,r))^{2-{2\over p}} d\omega(x)\Bigg)^{({\bf N +2\varepsilon-2\eta\over \bf N}{p\over 2-p}-1)^{-1}}.
    \end{align*}
    We now split $\{x: d(x_0,x)>2r\}$ into annuli in terms of the Dunkl and Euclidean metrics as follows.
    \begin{align*}
        I\!I
        &\leqslant C\omega(B(x_0,r))^{(1-{2\over p})}
        \Bigg(\sum\limits_{j,k\geq1}\ \ \int_{\substack{d(x_0,x)\sim 2^jr\\ \|x-x_0\|\sim 2^kd(x_0,x)} }  {1\over \omega(B(x_0, 2^j r))^2}  \Big( {r\over 2^k2^jr} \Big)^{2\varepsilon} \Big(2^{-k}\Big)^{2M}\\
        &\quad\times  \omega(B(x_0,2^k2^j r))^{1+{2\varepsilon-2\eta\over \bf N}} \omega(B(x_0,r))^{2-{2\over p}} d\omega(x)\Bigg)^{({\bf N +2\varepsilon-2\eta\over \bf N}{p\over 2-p}-1)^{-1}}\\
        &\leqslant C\omega(B(x_0,r))^{(1-{2\over p})}
        \Bigg(\sum\limits_{j,k\geq1}\ \ \omega(B(x_0,2^j r))  {1\over \omega(B(x_0, 2^j r))^2}  \Big( {r\over 2^k2^jr} \Big)^{2\varepsilon} \Big(2^{-k}\Big)^{2M}\\
        &\quad\times  \omega(B(x_0,2^k2^j r))^{1+{2\varepsilon-2\eta\over \bf N}} \omega(B(x_0,r))^{2-{2\over p}} \Bigg)^{({\bf N +2\varepsilon-2\eta\over \bf N}{p\over 2-p}-1)^{-1}}\\
        &\leqslant C\omega(B(x_0,r))^{(1-{2\over p})}\\
        &\quad\times
        \Bigg(\sum\limits_{j,k\geq1}  2^{k\cdot \bf N}  2^{-2k\varepsilon}2^{-2j\varepsilon} \Big(2^{-k}\Big)^{2M}
          2^{k(2\varepsilon-2\eta)}2^{j(2\varepsilon-2\eta)} \omega(B(x_0,r))^{{2\varepsilon-2\eta\over \bf N}+2-{2\over p}} \Bigg)^{({\bf N +2\varepsilon-2\eta\over \bf N}{p\over 2-p}-1)^{-1}}\\
        &\leqslant C\omega(B(x_0,r))^{(1-{2\over p})}
        \Bigg( \omega(B(x_0,r))^{{2\varepsilon-2\eta\over \bf N}+2-{2\over p}} \Bigg)^{({\bf N +2\varepsilon-2\eta\over \bf N}{p\over 2-p}-1)^{-1}}\\
        &\leqslant C,
    \end{align*}
    where the third inequality follows from the doubling property of the measure $\omega$, the fourth  inequality follows from the fact that $M>\bf N/2, \eta>0$, and the last inequality follows from the fact that
    $$ 1-{2\over p} +\Big({2\varepsilon-2\eta\over \bf N}+2-{2\over p}\Big)\cdot \Big({\bf N +2\varepsilon-2\eta\over \bf N}{p\over 2-p}-1\Big)^{-1}=0.  $$

    Finally, by the fact that $T^*(1)=0$, we obtain that $ \int_{\R^N} m(x) d\omega(x)=\int_{\R^N} T(a)(x)d\omega(x)=0$ and hence $m$ is the multiple of an $(p,2,\varepsilon,\eta)$ molecule. The proof of the sufficient implication of Theorem \ref{1.12} then follows from Theorem \ref{moleculeinHp}.

We now show the necessary condition of the Theorem \ref{1.12} for
the boundedness on $H_d^p(\R^N,\omega).$ Indeed, we will prove a general
result, that is, if $T$ is bounded on $L^2(\R^N,\omega)$ and on
$H_d^p(\R^N,\omega)$ then $\int_{\R^N} T(f)(x)d\omega(x)=0$ for $f\in
L^2(\R^N,\omega)\cap H_d^p(\R^N,\omega).$ This follows from the following general
result.
\begin{prop} \label{HpinLp}
    If $f\in L^2(\R^N,\omega)\cap H_d^p(\R^N,\omega), \frac{\bf N}{{\bf N}+\varepsilon}<p\leqslant 1,$ then there exists a constant $C$ independent of the $L^2(\R^N,\omega)$ norm of $f$ such that
    \begin{eqnarray}
    \|f\|_p\leqslant C\|f\|_{H_d^p}.
    \end{eqnarray}
\end{prop}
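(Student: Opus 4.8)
The plan is to deduce the embedding directly from the atomic decomposition of the Dunkl--Hardy space, combined with the elementary fact that a single $(p,2)$ atom has $L^p$ norm at most one and with the $p$-subadditivity of the functional $g\mapsto\|g\|_p^p$, which is valid because $p\leqslant 1$. Since $\varepsilon\leqslant 1$ we have $\frac{{\bf N}}{{\bf N}+\varepsilon}\geqslant\frac{{\bf N}}{{\bf N}+1}$, so the hypothesis $\frac{{\bf N}}{{\bf N}+\varepsilon}<p\leqslant 1$ is \emph{stronger} than the range $\frac{{\bf N}}{{\bf N}+1}<p\leqslant 1$ in which Theorem \ref{atom} was proved; hence the atomic machinery is available here without any additional work.

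First I would invoke Theorem \ref{atom}: for $f\in L^2(\R^N,\omega)\cap H_d^p(\R^N,\omega)$ write $f=\sum_{j=-\infty}^{\infty}\lambda_j a_j$, where each $a_j$ is a $(p,2)$ atom, supported in a cube $Q_j$ with $\|a_j\|_2\leqslant\omega(Q_j)^{\frac12-\frac1p}$ and $\int_{\R^N}a_j\,d\omega=0$, and $\sum_j|\lambda_j|^p\leqslant C\|f\|_{H_d^p}^p$ with $C$ independent of $f$; moreover, by the construction underlying Theorems \ref{atom} and \ref{tm3}, this atomic series converges in $L^2(\R^N,\omega)$. By H\"older's inequality applied on $Q_j$,
\[
\|a_j\|_p\leqslant\|a_j\|_2\,\omega(Q_j)^{\frac1p-\frac12}\leqslant\omega(Q_j)^{\frac12-\frac1p}\,\omega(Q_j)^{\frac1p-\frac12}=1.
\]
Next, since $0<p\leqslant 1$ the quantity $\|\cdot\|_p^p$ is subadditive, so for the partial sums $F_\ell=\sum_{|j|\leqslant\ell}\lambda_j a_j$ one has $\|F_\ell-F_m\|_p^p\leqslant\sum_{m<|j|\leqslant\ell}|\lambda_j|^p\to 0$ as $\ell,m\to\infty$; thus $(F_\ell)$ is Cauchy in $L^p(\R^N,\omega)$. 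As $(F_\ell)$ also converges to $f$ in $L^2(\R^N,\omega)$, a subsequence converges $\omega$-a.e.\ to $f$, whence the $L^p$ limit of $(F_\ell)$ is $f$ itself. Consequently
\[
\|f\|_p^p\leqslant\sum_{j}|\lambda_j|^p\,\|a_j\|_p^p\leqslant\sum_{j}|\lambda_j|^p\leqslant C\|f\|_{H_d^p}^p,
\]
which is the asserted estimate, with $C$ depending only on $p$, $\mathbf N$ and the structural constants of $\omega$.

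There is essentially no analytic obstacle in this argument; the whole burden has already been carried by Theorem \ref{atom} (which in turn rests on the Dunkl--Calder\'on--Zygmund operator theory and the Littlewood--Paley estimates established earlier). The only points demanding a little care are bookkeeping ones: verifying that $p>\frac{{\bf N}}{{\bf N}+\varepsilon}$ lies inside the range where the atomic decomposition holds, and reconciling the $L^2$ and $L^p$ modes of convergence of the atomic series so that the $L^p$ limit may legitimately be identified with $f$. Once those are settled, the normalization $\|a_j\|_p\leqslant 1$ and the $p$-subadditivity of $\|\cdot\|_p^p$ conclude the proof.
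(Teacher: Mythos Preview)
Your argument is correct, and in spirit it is the same as the paper's: both rely on decomposing $f$ into atomic pieces and bounding each piece in $L^p$ via H\"older against its $L^2$ norm. The only difference is packaging. You invoke Theorem~\ref{atom} as a black box, whereas the paper's proof is more self-contained: it works directly with the level-set decomposition $\Omega_l=\{S_{cw}(f)>2^l\}$ and the Calder\'on reproducing formula on the space of homogeneous type $(\R^N,\|\cdot\|,\omega)$, writes $f=\sum_l\sum_{Q\in B_l}\omega(Q)D_k(x,x_Q)\widetilde D_k(f)(x_Q)$, observes that the $l$-th block is supported in $\widetilde\Omega_l$, and then applies H\"older plus the $L^2$ estimate from Proposition~\ref{pr1} to obtain $\|f\|_p^p\lesssim\sum_l 2^{lp}\omega(\Omega_l)\lesssim\|S_{cw}(f)\|_p^p\lesssim\|f\|_{H_d^p}^p$. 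Since Theorem~\ref{atom} is itself proved by exactly this level-set argument (via Theorem~\ref{tm3}), your route is a legitimate and slightly cleaner repackaging of the same computation; the paper's version simply avoids the extra citation by unfolding the atomic decomposition on the spot.
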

Assuming Proposition \ref{HpinLp} for the moment, if $f\in
L^2(\R^N,\omega)\cap H^p(\R^N,\omega),$ by Proposition \ref{HpinLp},
then $f\in L^p(\R^N,\omega)\cap L^2(\R^N,\omega).$ Hence, by
interpolation, $f\in L^1(\R^N,\omega).$ To see the integral of $f$
is zero, we apply the Calder\'on reproducing formula,  $$f(x)=
\sum\limits_{k\in\mathbb{Z}}\sum\limits_{Q \in
Q^k}\omega(Q)D_k(x,x_Q){\widetilde D}_k(f)(x_Q),$$ where the series
converges in both $L^2(\R^N,\omega)$ and $H_d^p(\R^N,\omega).$ Let
$E_n(k,Q)$ be a finite set of $k\in\mathbb{Z}$ and $Q \in Q^k$ and
$E_n(k,Q)$ tends to the whole set $\lbrace (k,Q): k\in\mathbb{Z},
Q\in Q^k\rbrace.$ Therefore, $\sum\limits_{E_n^c(k,Q)} \omega(Q)
D_k(x,x_Q){\widetilde D}_k(f)(x_Q)$ converges to zero as $n$ tends
to infinity in both $L^2(\R^N,\omega)$ and $H_d^p(\R^N,\omega).$ We
obtain that
\begin{align*}
    &\bigg|\int_{\R^N} f(x)d\omega(x)\bigg|\\
    &\leqslant \Big|\int_{\R^N} \sum\limits_{E_n(k,Q)} \omega(Q) D_k(x,x_Q){\widetilde D}_k(f)(x_Q)d\omega(x)\Big|+  \bigg|\int_{\R^N} \sum\limits_{E_n^c(k,Q)} \omega(Q) D_k(x,x_Q){\widetilde D}_k(f)(x_Q)d\omega(x)\bigg|\\
    &\leq\Big|\int_{\R^N}  \sum\limits_{E_n^c(k,Q)} \omega(Q) D_k(x,x_Q){\widetilde D}_k(f)(x_Q) d\omega(x)\Big|\\
    &\leqslant C\|\sum\limits_{E_n^c(k,Q)}\omega(Q) D_k(x,x_Q){\widetilde D}_k(f)(x_Q) \|_{H_d^p}
    +C\|\sum\limits_{E_n^c(k,Q)} \omega(Q)D_k(x,x_Q){\widetilde D}_k(f)(x_Q)\|_2,
\end{align*}
where the second inequality follows from the fact that
$$\int_{\R^N} \sum\limits_{E_n(k,Q)}\omega(Q) D_k(x,x_Q){\widetilde
D}_k(f)(x_Q)d\omega(x)=0$$ by the cancellation property of
$D_k(x,x_Q).$ Letting $n$ tend to infinity gives the desired result
since the last two terms tend to zero as $n$ tends to infinity.

Now suppose that $T$ is bounded on both $L^2(\R^N,\omega)$ and
$H_d^p(\R^N,\omega)$ and $f\in L^2(\R^N,\omega)\cap H_d^p(\R^N,\omega),$ then $Tf
\in L^2(\R^N,\omega)\cap H_d^p(\R^N,\omega)$ and hence $\int_{\R^N}  Tf(x) d\omega(x)=0.$
The necessary implication of Theorem \ref{1.12} is concluded.

It remains to show Proposition \ref{HpinLp}. The key idea of the
proof is to apply the method of atomic decomposition for subspace
$L^2(\R^N,\omega)\cap H_{cw}^p(\R^N,\omega)$ as in the proof of
Proposition \ref{pr1}. More precisely, if $f\in L^2(\R^N,\omega)\cap
H_{cw}^p(\R^N,\omega),$ we set
\[\Omega_l=\left\{ x\in X: {S_{cw}}(f)(x)>2^l\right\},\]
\[
B_l=\left\{ Q: \omega(Q\cap\Omega_l)>{1\over2}\omega(Q) \text{ and }
\omega(Q \cap\Omega_{l+1})\leqslant {1\over2}\omega(Q)\right\}
\]
and
\[\widetilde{\Omega}_l=\left\{ x\in {\R^N}:  M(\chi_{\Omega_l})(x)>1/2\right\},\]
where $Q$ are Christ's dyadic cubes in space of homogeneous type
$(\R^N, \|\cdot\|, \omega)$ and $M$ is the Hardy--Littlewood maximal
function on $\R^N$ with respect to the measure $\omega$ and hence,
$\omega(\widetilde{\Omega}_l)\leqslant C\omega({\Omega}_l).$

Applying the decomposition of $f$ as in the proof of Proposition
\ref{pr1}, we write
\[
f(x)=\sum\limits_{l=-\infty}^\infty\sum\limits_{ Q\in B_l}
\omega(Q)D_k(x,x_Q) {\widetilde D}_k(f)(x_Q),
\]
where the series converges in both $L^2(\R^N,\omega)$ and
$H_{cw}^p(\R^N,\omega).$ Thus, for $\frac{\bf N}{{\bf
N}+\varepsilon}<p\leqslant 1,$
\[
\|f(x)\|_p^p\leq\sum\limits_{l=-\infty}^\infty\|\sum\limits_{ Q\in
B_l} \omega(Q)D_k(x,x_Q) {\widetilde D}_k(f)(x_Q)\|_p^p.
\]
Note that if $Q\in B_l$ then $Q\subseteq \widetilde{\Omega}_l.$
Therefore, $\sum\limits_{ Q\in B_l}  \omega(Q)D_k(x,x_Q) {\widetilde
D}_k(f)(x_Q)$ is supported in $\widetilde{\Omega}_l.$ Applying
H\"older inequality implies that
\[
\|f(x)\|_p^p\leq\sum\limits_{l=-\infty}^\infty
\mu(\widetilde{\Omega}_l)^{1-\frac{p}{2}}\|\sum\limits_{ Q\in B_l}
\omega(Q)D_k(x,x_Q) {\widetilde D}_k(f)(x_Q)\|_2^p.
\]
As in the proof of Proposition \ref{pr1}, we have
\[
\|\sum\limits_{ Q\in B_l}  \omega(Q)D_k(x,x_Q) {\widetilde
D}_k(f)(x_Q)\|_2\leqslant
C2^l\mu(\widetilde{\Omega}_l)^{\frac{1}{2}},
\]
which gives
\[
\|f(x)\|_p^p\leqslant C\sum\limits_{l=-\infty}^\infty
2^{lp}\mu({\Omega}_l)^{p}\leq C\|S_{cw}(f)\|^p_p\leqslant
C\|f\|^p_{H_{cw}^p}\leqslant C\|f\|_{H_d^p}^p
\]
since $\omega(\widetilde{\Omega}_l)\leqslant C\omega(\Omega_l).$
\end{proof}

\subsection{$T1$ Theorem on Dunkl-Hardy space $H_d^p$}

We show the Theorem \ref{1.13}.

\begin{proof}[{\bf The proof of Theorem \ref{1.13}}]
    By Proposition \ref{HpinLp}, we only need to show that if $T$ is a Dunkl-Calder\'on-Zygmund operator with $T^*(1)=0,$ then $T$ is bounded on the Dunkl-Hardy space $H_d^p(\R^N,\omega), \frac{\bf N}{\bf N +\varepsilon}<p\leqslant 1.$
    Following a similar idea used for the proof of the $T1$ Theorem \ref{1.2}, we consider first that $T$ also satisfies $T(1)=0.$ Since $L^2(\R^N,\omega)\cap H_d^p(\R^N,\omega)$ is dense in $H_d^p(\R^N,\omega),$ it suffices to show $\|T(f)\|_{H_d^p}\leqslant C\|f\|_{H_d^p}$ for $f\in L^2(\R^N,\omega)\cap H_d^p(\R^N,\omega).$ Observe that if $f\in L^2(\R^N,\omega)\cap H_d^p(\R^N,\omega),$ then $f(x)=\sum\limits_{j=-\infty}^\infty\sum\limits_{Q\in Q^j}w(Q)\lambda_{Q}\psi_Q(x,x_{Q})$ with $\|\{\sum\limits_{j=-\infty}^\infty\sum\limits_{Q\in Q^j}|\lambda_{Q}|^2\chi_{Q}\}^{\frac{1}{2}}\|_p\leqslant C\|f\|_{H_d^p}$, where the series converges in both $L^2(\R^N,\omega)$ and $H_d^P(\R^N,\omega).$ We have
    \begin{align*}
        \|T(f)\|_{H_d^p}&=\|S\big(T(f)\big)\|_p\\
        &=\|\{\sum\limits_{j=-\infty}^\infty\sum\limits_{Q\in Q^j}|q_Q\Big(\sum\limits_{k=-\infty}^\infty\sum\limits_{Q'\in Q^k}\omega(Q')\lambda_{Q'}T(\psi_{Q'}(\cdot,x_Q')\Big)(x_Q)|^2\chi_{Q}\}^{\frac{1}{2}}\|_p\\
        &=\|\{\sum\limits_{j=-\infty}^\infty\sum\limits_{Q\in Q^j}|\sum\limits_{k=-\infty}^\infty\sum\limits_{Q'\in Q^k}\omega(Q')\lambda_{Q'}q_QT\psi_{Q'}(x_Q,x_Q')|^2\chi_{Q}\}^{\frac{1}{2}}\|_p.
    \end{align*}
    Applying The Lemma \ref{Lem aoe}, we get
    $$|q_QT\psi_{Q'}(x_Q,x_Q')|\leqslant C r^{-|k-j|\varepsilon'}
    \frac1{V(x,y, r^{-j\vee -k}+d(x,y))}\Big(\frac{r^{-j\vee -k}}{r^{-j\vee -k}+d(x,y)}\Big)^\gamma$$
    and then applying the Lemma \ref{exchange} implies that
    $$\|T(f)\|_{H_d^p}\leqslant C \|\{\sum\limits_{k=-\infty}^\infty\sum\limits_{Q'\in Q^k}|\lambda_{Q'}|^2\chi_{Q'}\}^{\frac{1}{2}}\|_p\leqslant C\|f\|_{H_d^p}.$$

    To remove the condition $T(1)=0,$ set $ {\widetilde T}= T- \Pi_{T1}.$ Then ${\widetilde T}(1)={(\widetilde T)^*}(1)=0,$ so ${\widetilde T}$ is bounded on $H_d^p(\R^N,\omega).$ By a classical result, $\Pi_{T1}$ is bounded on the classical Hardy space $H_{cw}^p$ and thus, $T$ is bounded on $H_d^p(\R^N,\omega).$
    \end{proof}
    Finally, we show the Theorem \ref{1.14}.
\begin{proof}[{\bf The proof of Theorem \ref{1.14}}]
All we need to do is to check that $R_j(x,y),$ the kernel of the
Dunkl-Riesz transforms satisfy the conditions in the Theorem
\ref{1.13}. Indeed, it is known that $R_j, 1\leqslant j\leqslant N$
are bounded on $L^2(\R^N,\omega)$ and $H_d^1(\R^N,\omega),$ see
\cite{ADH}. Then by Proposition \ref{HpinLp} and then the proof of the necessary condition for Theorem \ref{1.13}, $R_j(1)=0$ and hence,
$(R_j)^*(1)=0$ since the Dunk-Riesz transforms are convolution
operators. It remains to see that $R_j(x,y)$ satisfy all kernel
conditions. To this end, observe that
$$\widehat{(R_j(f))}(\xi)=-i\frac{\xi_j}{\|\xi\|} {\widehat f }(\xi),$$
for $j=1,2,\cdots,N.$

Note that
$$ R_j(f)=-T_j(\triangle)^{-1/2}f=-c\int^\infty_0 T_je^{t\triangle}f\frac{dt}{\sqrt t}$$
and
$$e^{t\triangle}f(x)=\int_{\R^N} h_t(x,y)f(y)d\omega(y),$$
where the integral converges in $L^2$ and $h_t(x,y)$ is the heat
kernel. For all $x,y\in \R^N$ and $t>0,$
$$T_jh_t(x,y)=\frac{y_j-x_j}{2t}h_t(x,y).$$

We write the Riesz transforms as follows:
$$R_j f(x)= \int_{\mathbb R^N} R_j(x,y)f(y)d\omega(y).$$

To estimate the kernel $R_j(x,y),$ we recall the following estimates
for the Dunkl-heat kernel given in \cite[Theorem 3.1]{DH2}

\begin{itemize}
    \item[(a)] There are constants $C,c>0$ such that
    $$|h_t(x,y)|\leqslant C\frac1{V(x,y,\sqrt t)}\bigg(1+\frac{\|x-y\|}{\sqrt t}\bigg)^{-2}e^{-cd(x,y)^2/t},$$
    for every $t>0$ and for every $x,y\in \mathbb R^N$.
    \item[(b)] There are constants $C,c>0$ such that
    $$|h_t(x,y)-h(x,y')|\leqslant C\bigg(\frac{\|y-y'\|}{\sqrt t}\bigg)\frac1{V(x,y,\sqrt t)}\bigg(1+\frac{\|x-y\|}{\sqrt t}\bigg)^{-2}e^{-cd(x,y)^2/t},$$
    for every $t>0$ and for every $x,y,y'\in \mathbb R^N$ such that $\|y-y'\|<\sqrt t$.
\end{itemize}
We now estimate the kernel $R_j(x,y)$ as follows.

\begin{align*}
|R_j(x,y)|&\lesssim |y_j-x_j|\int_0^\infty \frac1{V(x,y,\sqrt t)}\frac t{\|x-y\|^2}e^{-cd(x,y)^2/t}\frac{dt}{t\sqrt t} \\
&\leqslant \frac1{\|x-y\|}\bigg(\int_0^{d(x,y)^2} + \int_{d(x,y)^2}^\infty\bigg) \frac1{V(x,y,\sqrt t)}e^{-cd(x,y)^2/t}\frac{dt}{\sqrt t}\\
&=: I_1 +I_2.
\end{align*}
For $t\leqslant d(x,y)^2$, by using the doubling condition we have
that
$$\omega(B(x,d(x,y)))\lesssim \Big(\frac {d(x,y)}{\sqrt
t}\Big)^{\mathbf N}\omega(B(x,\sqrt t))$$ and hence
\begin{equation}\label{eq2.9}
V(x,y,\sqrt t)^{-1}\lesssim \frac1{\omega(B(x,\sqrt t))}\lesssim
\Big(\frac {d(x,y)}{\sqrt t}\Big)^{\mathbf
N}\frac1{\omega(B(x,d(x,y)))}.
\end{equation}
We obtain
\begin{align*}
I_1&\lesssim   \frac1{\|x-y\|}\frac1{\omega(B(x,d(x,y)))} \int_0^{d(x,y)^2}\Big(\frac {d(x,y)}{\sqrt t}\Big)^{\mathbf N}e^{-cd(x,y)^2/t}\frac{dt}{\sqrt t}\\
&\lesssim   \frac1{\|x-y\|}\frac1{\omega(B(x,d(x,y)))} \int_0^{d(x,y)^2}\frac {d(x,y)^{\mathbf N}}{t^{\frac{1+\mathbf N}2}}\Big(\frac{t}{d(x,y)^2}\Big)^{\frac{1+\mathbf N}2}dt\\
&\lesssim  \frac{d(x,y)}{\|x-y\|}\frac1{\omega(B(x,d(x,y)))}.
\end{align*}
It is clear that for $t\geqslant d(x,y)^2$, by using the reversed
doubling condition, $$\Big(\frac {\sqrt
t}{d(x,y)}\Big)^N\omega(B(x,d(x,y)))\lesssim C\omega(B(x,\sqrt
t)),$$ we get
\begin{align*}
I_2&\lesssim \frac1{\|x-y\|}\int_{d(x,y)^2}^\infty \frac1{V(x,y,d(x,y))}\frac {d(x,y)^{N}}{t^{\frac{1+ N}2}}dt\\
&\lesssim  \frac{d(x,y)}{\|x-y\|}\frac1{\omega(B(x,d(x,y)))}.
\end{align*}
To see the smoothness estimates, we write
\begin{align*}
|R_j(x,y)-R_j(x,y')|
&\leqslant c |y_j-y_j'| \int_0^\infty |h_t(x,y)-h_t(x,y')|\frac{dt}{t\sqrt t}  \\
&\leqslant c|y_j-y_j'| \bigg(\int_0^{\|x-y\|^2} +
\int_{\|x-y\|^2}^\infty\bigg)
|h_t(x,y)-h_t(x,y')|\frac{dt}{t\sqrt t} \\
&=: I\!I_1 +I\!I_2.
\end{align*}
Since $\|y-y'\|<\frac 12d(x,y)$, we have $d(x,y')\leqslant
\frac32d(x,y)$
\begin{align*}
I\!I_1&\lesssim \frac{\|y-y'\|}{\|x-y\|^2}\frac1{\omega(B(x,d(x,y)))} \int_0^{\|x-y\|^2}\Big(\frac {d(x,y)}{\sqrt t}\Big)^N e^{-cd(x,y)^2/t}\frac{dt}{\sqrt t}\\
&\lesssim \frac{\|y-y'\|}{\|x-y\|^2}\frac1{\omega(B(x,d(x,y)))} \int_0^{\|x-y\|^2}\frac {d(x,y)^N}{t^{\frac{1+ N}2}}\Big(\frac{t}{d(x,y)^2}\Big)^{\frac{N}2}dt\\
&\lesssim \frac{\|y-y'\|}{\|x-y\|}\frac1{\omega(B(x,d(x,y)))}.
\end{align*}
To estimate $I\!I_2$, we have $\|y-y'\|<\frac 12d(x,y)\le\frac
12\|x-y\|<\sqrt t$ and the above condition (b) gives
\begin{align*}
I\!I_2&\lesssim \|y_j-y_j'\|\int_{\|x-y\|^2}^\infty
\|y-y'\|\frac1{V(x,y,\sqrt t)}
e^{-cd(x,y)^2/t}\frac{dt}{t^2}\\
&\lesssim \|y-y'\|^2 \frac1{\omega(B(x,d(x,y)))}\int_{\|x-y\|^2}^\infty t^{-2}dt \\
&\lesssim \frac{\|y-y'\|}{\|x-y\|}\frac1{\omega(B(x,d(x,y)))}.
\end{align*}
The estimate of the smoothness for $x$ variable is similar.  We
conclude that $R_j, 1\leqslant j\leqslant N,$ satisfy all conditions
in the Theorem \ref{1.13} and hence, the Dunk-Riesz transforms are
bounded on the Dunkl-Hardy space $H_d^p(\R^N,\omega)$ for $\frac{\bf
N}{{\bf N}+1}<p\leq1.$
\end{proof}

\bigskip
\bigskip
\bigskip

\noindent {\bf Acknowledgement}: Chaoqiang Tan is supported by National Natural
	Science Foundation  of China (Grant No. 12071272). Yanchang Han is supported NNSF of China (Grant No. 12071490) and Guangdong Province Natural Science Foundation  (Grant No. 2021A1515010053). M. Lee is supported by MOST 108-2115-M-008-002-MY2. Ji Li is supported by ARC DP 220100285.  Ji Li would like to thank Jorge Betancor for helpful discussions.

\bigskip
\bigskip

\bigskip
\bigskip

\medskip
\vskip 0.5cm
\noindent  Department of Mathematics, Shantou University, Shantou,
515063, R. China.

\noindent {\it E-mail address}: \texttt  { cqtan@stu.edu.cn }

\medskip
\vskip 0.5cm

\noindent School  of Mathematic Sciences, South China Normal University, Guangzhou, 510631, P.R. China.

\noindent {\it E-mail address}: \texttt{20051017@m.scnu.edu.cn}

\medskip
\vskip 0.5cm

\noindent Department of Mathematics, Auburn University, AL
36849-5310, USA.

\noindent {\it E-mail address}: \texttt{hanyong@auburn.edu}

\medskip
\vskip 0.5cm

\noindent Department of Mathematics National Central University Chung-Li 320, Taiwan Republic of China

\noindent {\it E-mail address}: \texttt{mylee@math.ncu.edu.tw}

\medskip
\vskip 0.5cm
\noindent Department of Mathematics, Macquarie University, NSW, 2109, Australia.

\noindent {\it E-mail address}: \texttt{ji.li@mq.edu.au}

\end{document}